\documentclass[10pt, reqno]{amsart}

\usepackage{amsmath, amsthm, amssymb}
\usepackage{setspace}
\usepackage[english]{babel}
\usepackage{mathrsfs}
\usepackage[all]{xy}
\usepackage{paralist}
\usepackage{graphicx}
\usepackage{accents}
\usepackage{enumerate}
\usepackage{mathtools}

\usepackage{csquotes}

\allowdisplaybreaks

\usepackage{hyperref}
\usepackage[dvipsnames]{xcolor}
\hypersetup{
	colorlinks=true,
	citecolor=blue!60!black,
	linkcolor=red!60!black,
	urlcolor=green!40!black,
	filecolor=yellow!50!black,
	breaklinks=true,
	pdfpagemode=UseNone,
	bookmarksopen=false,
}

\setlength{\oddsidemargin}{0in}
\setlength{\evensidemargin}{0in}
\setlength{\marginparwidth}{0in}
\setlength{\marginparsep}{0in}
\setlength{\marginparpush}{0in}
\setlength{\topmargin}{0.3in}
\setlength{\headsep}{14pt}
\setlength{\footskip}{.3in}
\setlength{\textheight}{8.2in}
\setlength{\textwidth}{6.3in}
\setlength{\parskip}{4pt}
\linespread{1.2}

\makeatletter
\newtheorem*{rep@theorem}{\rep@title}
\newcommand{\newreptheorem}[2]{%
\newenvironment{rep#1}[1]{%
 \def\rep@title{#2 \ref{##1}}%
 \begin{rep@theorem}}%
 {\end{rep@theorem}}}
 
\makeatother


\theoremstyle{plain}
\newtheorem{theorem}{\normalfont \scshape Theorem}[section]
\newtheorem{lemma}[theorem]{\normalfont \scshape Lemma}
\newtheorem{proposition}[theorem]{\normalfont \scshape Proposition}
\newtheorem{corollary}[theorem]{\normalfont \scshape Corollary}

\newreptheorem{theorem}{Theorem}
\newreptheorem{proposition}{Proposition}

\theoremstyle{definition}
\newtheorem{definition}[theorem]{\normalfont \scshape Definition}

\theoremstyle{remark}
\newtheorem{remark}[theorem]{\normalfont \scshape Remark}

\renewenvironment{proof}[1][\proofname]{\noindent{\scshape #1.\quad}}{\qed}


\numberwithin{equation}{section}

\def\XXint#1#2#3{{\setbox0=\hbox{$#1{#2#3}{\int}$ }
\vcenter{\hbox{$#2#3$ }}\kern-.6\wd0}}

\newcommand{\bfc}{{\bf c}}

\newcommand{\bfj}{{\bf j}}

\newcommand{\bfr}{{\bf r}}

\newcommand{\bfB}{{\bf B}}


\newcommand{\bbI}{\mathbb I}

\newcommand{\bbN}{\mathbb N}

\newcommand{\bbGamma}{\reflectbox{\rotatebox[origin=c]{180}{$\mathbb L$}}}


\newcommand{\calD}{\mathcal D}

\newcommand{\calH}{\mathcal H}

\newcommand{\calL}{\mathcal L}

\newcommand{\calO}{\mathcal O}

\newcommand{\calT}{\mathcal T}

\newcommand{\scrL}{\mathscr L}

\newcommand{\scrR}{\mathscr R}
\newcommand{\scrX}{\mathscr X}

\newcommand{\bp}{\begin{pmatrix}}
\newcommand{\ep}{\end{pmatrix}}
\newcommand{\p}{\partial}

\newcommand{\R}{\mathbb{R}}

\newcommand{\T}{\mathbb{T}}

\newcommand{\ve}{\varepsilon}

\newcommand{\dx}{\textnormal{d}x}

\newcommand{\dv}{\textnormal{d}v}
\newcommand{\dw}{\textnormal{d}w}
\newcommand{\dt}{\textnormal{d}t}
\newcommand{\ds}{\textnormal{d}s}

\newcommand{\ddt}{\frac{\textnormal{d}}{\textnormal{d}t}}
\newcommand{\tnd}{\textnormal{d}}

\newcommand{\weakto}{\rightharpoonup}
\newcommand{\weakstarto}{\overset{\star}{\rightharpoonup}}

\newcommand{\lt}{\left}
\newcommand{\rt}{\right}

\newcommand{\inttr}{\int_{\Omega\times\R^d}}

\newcommand{\intr}{\int_{\R^d}}

\newcommand{\renorm}[3]{\lt[#1\rt]_{#2 , #3}}




\pdfstringdefDisableCommands{\def\eqref#1{(\ref{#1})}}

\date{\today}

\begin{document}

\title[Nonlinear kinetic Fokker--Planck equations in bounded domains]{Global weak solutions to nonlinear kinetic Fokker--Planck equations in bounded domains under physical initial data}

\author{Young-Pil Choi}
\address{Department of Mathematics, Yonsei University, Seoul 03722, Republic of Korea}
\email{ypchoi@yonsei.ac.kr}

\author{Sihyun Song}
\address{Department of Mathematics, Yonsei University, Seoul 03722, Republic of Korea}
\email{ssong@yonsei.ac.kr}

\date{\today}
\keywords{Nonlinear kinetic Fokker--Planck equations, weak solutions, physical initial data, weighted Fisher information, inflow and reflection boundary conditions}
\thanks{ }

\begin{abstract}
We establish the global existence of weak solutions to a nonlinear kinetic Fokker--Planck equation with degenerate diffusion, under either inflow or partial absorption-reflection boundary conditions. The novelty of our approach lies in constructing solutions under solely the physical assumptions on the initial and boundary data, namely finite mass, kinetic energy, and entropy, with no additional regularity imposed. To overcome the lack of uniform ellipticity, we develop a new compactness principle based on weighted Fisher information, which yields strong $L^1$ convergence of approximate solutions. This framework provides a robust existence theory under only the physically relevant conditions, and applies uniformly to both inflow and reflection boundary settings.
\end{abstract}

\maketitle
\tableofcontents

\singlespacing
%
%
%
%
%
%
\section{Introduction}
The main purpose of this work is to investigate nonlinear kinetic Fokker--Planck equations in a bounded spatial domain $\Omega \subset \R^d$ with general boundary conditions. Our focus is on the initial-boundary value problem and the compactness properties of solutions, established under only the physically relevant assumptions of finite mass, energy, and entropy. 

We consider a particle distribution function $f = f(t,x,v) \geq 0$, depending on time $t \geq 0$, position $x \in \Omega$, and velocity $v \in \R^d$. From $f$, one naturally associates the macroscopic quantities: the local density $\rho_f$, mean velocity $u_f$, and temperature $\calT_f$, defined by
\begin{align*}
    \rho_f := \intr f\,\dv, \quad u_f := \begin{cases}
        0 & \rho_f = 0, \\
        \frac{1}{\rho_f}\intr vf\,\dv & \rho_f \ne 0,
    \end{cases}
    \quad \calT_f := \begin{cases}
        0 & \rho_f = 0, \\
        \frac{1}{d\rho_f}\intr |v-u_f|^2 \,\dv & \rho_f \ne 0.
    \end{cases}
\end{align*}

With these notations, the nonlinear kinetic Fokker--Planck equation under study takes the form
\begin{align}\label{eq: main}
    \begin{cases}
    \p_t f + v\cdot \nabla_x f = \nu_f \nabla_v\cdot \Big( \calT_f \nabla_v f + (v- u _f) f \Big) \quad \text{in} \quad (0,T)\times \Omega\times \R^d,\\
    f|_{t=0} = f^0,
    \end{cases}
\end{align}
where  $\nu_f$ denotes the collision frequency, which we assume throughout this work satisfies
\begin{equation} \label{ASSUMP NU}
    \begin{cases}
        \textnormal{$\nu_f = \nu(\rho_f, j_f, V_f)$ where $j_f := \rho_f u_f$ and $V_f := \rho_f \calT_f$},\\
        \nu\in C^0 \cap L^\infty(\R_+ \times \R^d \times \R_+ ; \R_+),\\
        \textnormal{either $\nu > 0$, or $\nu(\rho,j,V) \ge 0$ with equality if and only if $\rho = 0$}.
        \end{cases}
\end{equation}
This covers the cases of, for instance
\begin{align*}
    \nu(\rho, j, V) = 1, \; \frac{\rho}{1 + \rho}, \; \frac{\rho^{\alpha} \calT^{\beta}}{1 + \rho^{\alpha} \calT^{\beta}},\ \ldots .
\end{align*}

The study of generalized Fokker--Planck type equations can be traced back to the classical works of \cite{kirkwood1946, Green1952}, where they were introduced to describe the evolution of dilute gases. Variants of these models also arise in the context of inelastic granular flows \cite{BDS99}. Despite this long-standing interest, the mathematical analysis of the Cauchy problem \eqref{eq: main} has remained incomplete for decades, with major obstacles stemming from the nonlinear structure of the collision operator.

In the linear Fokker--Planck case, which corresponds to \eqref{eq: main} with $\nu \equiv \calT_f \equiv 1$ and $u_f \equiv 0$, a comprehensive theory has been developed even in the presence of Poisson interactions. Classical and weak solutions in the whole space have been extensively investigated; see, for instance \cite{Bou93, Deg86, HJ13, RW92, VO90} and \cite{CS95, CS97, Cas98, Vic91}, respectively, as well as the smoothing effects described in \cite{Bou95}. For initial-boundary value problems, significant progress has also been achieved in \cite{BCS97, Carrillo98, HK19, MH22, Mischler2010}, covering various types of boundary mechanisms including specular and diffusive reflections. 

Nonlinear Fokker--Planck equations, where the diffusion and drift coefficients depend on the solution itself, remain much less understood. Existing results are mostly confined to perturbative regimes near global Maxwellians, where the nonlinearity can be treated as a small perturbation. In this framework, global strong solutions have been obtained, for instance, in \cite{Bed17, LY21}. Further developments for the ellipsoidal Vlasov--Fokker--Planck model, designed to capture correct Prandtl numbers, were carried out in \cite{MM17, SJ19} and subsequent works. Numerical studies have also addressed the nonlinear Fokker--Planck dynamics, particularly in the design of asymptotic-preserving schemes and the investigation of long-time behavior \cite{AC23, FN22}. These contributions provide a rich perturbative theory for strong solutions near equilibrium, but they rely essentially on the smallness of perturbations and do not extend to large-data weak solutions.

Indeed, the theory of weak solutions for nonlinear Fokker--Planck operators remains largely unexplored. To the best of our knowledge, almost all previous works have been restricted to the case of constant collision frequency $\nu \equiv 1$, and to the spatially periodic setting $\T^d \times \R^d$. The first rigorous treatment of the Cauchy problem for \eqref{eq: main} in this framework was given in \cite{CG04} (see also \cite{CG03}). Their approach relied on two fundamental structural features of \eqref{eq: main}. The first is that the nonlinear Fokker--Planck equation can be interpreted as a gradient flow of the relative entropy with respect to the local Maxwellian. This insight becomes clear once one observes that
    \begin{align*}
        \nabla_v\cdot \lt(\calT_f \nabla_v f + (v-u_f) f \rt) = \nabla_v \cdot \lt(f \calT_f \nabla_v \log \frac{f}{M[f]} \rt), \quad M[f] := \frac{\rho_f}{(2\pi \calT_f)^{d/2}} e^{-\frac{|v-u_f|^2}{2\calT_f}}.
    \end{align*}
The second feature is the transport structure in the $(t,x)$ variables. By combining both perspectives into a discrete splitting scheme, Carlen and Gangbo constructed approximate solutions and proved their (weak) convergence to a weak solution of \eqref{eq: main}. A key requirement in their argument was the set of strong a priori assumptions
    \begin{align}\label{eq: CG}
        \int_{\T^d\times\R^d} (1 + |v|^6 + |\log f^0|) f^0 \,\dx\dv < +\infty, \qquad \int_{\T^d\times\R^d} |f^0|^2 \,\dx\dv < +\infty,
    \end{align}
which played a crucial role in displacement convexity estimates controlling entropy dissipation.
    
More recently, in the same periodic setting with $\nu \equiv 1$, the Cauchy problem for \eqref{eq: main} was resolved in \cite{CHY25}. Their analysis was carried out under a different set of assumptions, namely
    \begin{align}\label{eq: CHY}
        \int_{\T^d\times\R^d} (1+|v|^3+|\log f^0|) f^0 \,\dx\dv < +\infty, \qquad \|f^0\|_{L^\infty(\T^d\times\R^d)} < +\infty.
    \end{align}
Compared with the earlier work \cite{CG04}, the requirement on the initial velocity moments is significantly relaxed (from sixth-order moments down to third-order). On the other hand, the condition that $f^0$ be a member of $L^\infty$ is substantially stronger. To be more precise, following \cite{CHY25}, one can observe that the third-moment condition can be replaced by the weaker assumption
    \begin{align*}
        \int_{\T^d\times\R^d} |v|^{2+\ve} f^0 \,\dx\dv < +\infty \quad \textnormal{for some $\ve>0$},
    \end{align*}
but both the boundedness condition on $f^0$ and the requirement of moments beyond the energy level appear essential in their approach, and cannot be entirely removed.

 In this work, we extend the framework of nonlinear Fokker--Planck theory to general bounded domains and physically admissible initial data. Namely, by developing a new compactness method, based on a weighted Fisher information, we tackle the existence theory for \eqref{eq: main} under \textbf{solely the physical assumptions} on the initial data:
\begin{align} \label{init phys}
        f^0\ge 0, \quad \int_{\Omega\times\R^d} (1+|v|^2+|\log f^0|) f^0 \,\dx\dv < +\infty.    
\end{align}
The condition \eqref{init phys} ensures that the total mass, kinetic energy, and entropy are finite at $t=0$. Our main result is twofold: we construct solutions to \eqref{eq: main} under no additional hypotheses beyond \eqref{init phys}, and we establish a new \textbf{strong compactness property} for approximating sequences, a phenomenon not previously observed for nonlinear Fokker--Planck operators of the form \eqref{eq: main}.

To clarify the novelty, it is instructive to first recall the linear Fokker--Planck equation,
\begin{align}\label{eq: LINFP}
    \p_t f + v\cdot \nabla_x f  = \Delta_v f + \nabla_v\cdot (vf).
\end{align}
The diffusion in \eqref{eq: LINFP} endows the system with significant regularization; indeed, since the seminal work of H\"ormander \cite{H67}, the hypoelliptic operator $\calL f := \Delta_v f + \nabla_v\cdot (vf)$ is known to provide smoothing in all variables $(t,x,v)$ (see, for instance, \cite{HN04, LL08}). Consequently, if $\{f_m\}$ is a sequence of smooth solutions to \eqref{eq: LINFP} with uniform bounds of the type \eqref{init phys}, then ${f_m}$ converges \textbf{strongly} in $L^1((0,T)\times\Omega\times\R^d)$ to a limit $f$, which is again a solution to \eqref{eq: LINFP}. This strong compactness mechanism was crucially exploited in \cite{DL88FKB} to construct solutions to the Fokker--Planck--Boltzmann equation, where the nonlinearity stems from the Boltzmann collision operator but the diffusion remains linear.

However, the approach in \cite{DL88FKB} relies in an essential way on the Green's function of the linear operator $-\Delta_v$, and thus cannot be directly applied when the diffusion coefficient depends nonlinearly on the solution. It is therefore natural to expect difficulties when treating nonlinear equations of the type \eqref{eq: main}, or even simpler variants such as
\begin{align}\label{eq: FPT}
    \p_t f + v\cdot \nabla_x f = \calT_f \Delta_v f.
\end{align}
Suppose we attempt to construct solutions to \eqref{eq: FPT} by approximating with a sequence $\{f_m\}$ of smooth solutions to a regularized system. Under the basic assumption \eqref{init phys}, one of the few a priori bounds available (disregarding boundary contributions) is the entropy dissipation estimate
\[
    \int_{\Omega\times\R^d} f\log f \,\dx\dv + 4 \int_{(0,T)\times\Omega\times\R^d} \calT_f |\nabla_v \sqrt{f}|^2 \,\dt\dx\dv \le \int_{\Omega\times\R^d} f^0 \log f^0 \,\dx\dv.
\]
This yields the uniform control
\[
    \sup_m \int_{(0,T)\times\Omega\times\R^d} \calT_{f_m} \lt|\nabla_v \sqrt{f_m} \rt|^2 \dt\dx\dv < C < +\infty,
\]
which may be viewed as a ``nonlinearly weighted Fisher information.''

The central question we address in this work is whether such a bound is sufficient to recover, in the nonlinear setting, a phenomenon analogous to the linear case:
\begin{quote}
Can one deduce that the sequence $\{f_m\}$ is \textbf{strongly compact} in $L^1((0,T)\times\Omega\times\R^d)$?
\end{quote}
 
We show that under mild structural assumptions on the diffusion coefficient (in the model case $\calT_{f_m}$) and on the velocity averages of $f_m$, one can indeed recover strong convergence: $f_m \to f$ almost everywhere and in $L^1((0,T)\times \Omega \times \R^d)$. These assumptions are not artificial; they arise naturally in the construction of approximate solutions to the Cauchy problem for \eqref{eq: main} via regularization schemes. Formulated precisely in Proposition \ref{prop: weighted fisher}, this new compactness principle establishes strong $L^1$ compactness for nonlinear kinetic Fokker--Planck operators under weighted Fisher information bounds. It is flexible enough to handle merely measurable diffusion profiles and provides the essential compactness tool for treating bounded domains and rough initial data.

\noindent \textbf{Informal compactness principle.}
Under the assumption of bounded mass, energy, and entropy, we demonstrate that approximate solutions enjoy
\emph{strong $L^1$ compactness}, driven through the weighted Fisher information bound
together with mild hypotheses on the velocity-averages. Roughly speaking: if $\{f_m\}$ is weakly compact in $L^1$, velocity averages of the form $\lt\{\int f_m \psi(v)\,\dv\rt\}$ are compact in $L^1_{t,x}$, and the weighted Fisher information
\[            
 \int_{(0,T)\times\Omega\times\R^d} a_m(t,x) \lt|\nabla_v \sqrt{f_m} \rt|^2 \,\dt\dx\dv  
\]
remains uniformly bounded, then provided that $a_m\to a \ge 0$ pointwise, we can obtain $f_m \to f$ almost everywhere and in $L^1$, under a certain condition on the support of $a$. The precise formulation and proof of this result are given in Proposition \ref{prop: weighted fisher} of Section \ref{sec: comp}.

As a consequence, we are able to solve the initial-boundary value problem associated with \eqref{eq: main} under \textbf{only} the assumption \eqref{init phys}, up to a mild hypothesis on the regularity of the spatial domain $\Omega$ and appropriately prescribed boundary conditions. Our new compactness principle allows us to extract limits of approximate solutions in the strong topology of $L^1$, thereby overcoming the lack of classical regularity and considerably streamlining the overall argument. A detailed outline of the proof strategy will be presented in Section \ref{sec: strat}.

%
%
%
%
%
%
\subsection{Main results}
We are now in a position to state our main existence theorems. Two natural boundary-value problems are treated: the inflow problem and the partial absorption-reflection problem. Both are established under only the physically relevant assumptions on the initial and boundary data.  

Our first result addresses the inflow problem associated with \eqref{eq: main}, where boundary values are prescribed on the incoming set of characteristics.  We adopt the following nomenclature for $t\in [0,T]$: 
\begin{align*}
    &\Sigma_\pm^t := \{(s,x,v)\in [0,t]\times \p\Omega\times\R^d : \pm (n(x)\cdot v) > 0 \}, \\
    &\Sigma_0^t := \{(s,x,v) \in [0,t]\times \p\Omega \times \R^d : (n(x)\cdot v) = 0 \}, \\
    &\Sigma^t := \Sigma_+^t \cup \Sigma_-^t \cup \Sigma_0^t,
\end{align*}
and denote by $\tnd\sigma = \tnd\sigma(x)$ the Hausdorff measure on $\p\Omega$.

\begin{theorem}\label{thm: inflow}
    We assume that $\Omega\subset \R^d$ is a bounded open set with $C^1$ boundary, and outwards normal $n(x)$. Suppose the collision frequency $\nu$ satisfies the assumptions in \eqref{ASSUMP NU}. Let $f^0$ given with only the physically relevant assumptions \eqref{init phys}. Suppose $g$ is such that
    \begin{align*}
        g\ge 0, \quad \int_{(0,T)\times \p\Omega \times \R^d} (1 + |v|^2 + |\log g|) g \,|n(x)\cdot v| \,\dt\tnd\sigma\dv < +\infty.
    \end{align*}
    Then for any finite time horizon $T>0$, there exists a weak solution $(f,\gamma f)$ to the following inflow problem associated to \eqref{eq: main},
    \begin{align} \label{eq: inflow}
        \begin{cases}
            \p_t f + v\cdot \nabla_x f = \nu_f \nabla_v\cdot \Big( \calT_f \nabla_v f + (v- u _f) f \Big) \quad \text{in} \quad (0,T)\times \Omega\times\R^d, \\
            f|_{t=0} = f^0, \\
            \gamma f|_{\Sigma_-^T} = g,
        \end{cases}
    \end{align}
    in the sense that the following properties are satisfied:
    \begin{enumerate}[(I)]
    \item The solution satisfies
    \begin{equation*}
    \begin{split}
        &(1+|v|^2)f \in L^\infty(0,T;L^1(\Omega\times\R^d)), \quad f \in L^\infty(0,T;L\log L(\Omega\times\R^d)), \\
        &(1+|v|^2)\gamma f \in L^1(\Sigma^T, |n(x)\cdot v|\dt\tnd\sigma\dv), \quad \gamma f \in L\log L(\Sigma^T, |n(x)\cdot v|\dt\tnd\sigma\dv).
    \end{split}
    \end{equation*}
    \item The following very weak Green's formula holds for all $t\in [0,T]$ and $\phi\in C_c^\infty([0,t]\times\overline{\Omega}\times\R^d)$
    \begin{equation} \label{very weak}
    \begin{split}
        &\int_{\{t\}\times\Omega\times\R^d} f \varphi \,\dx\dv - \int_{\Omega\times\R^d} f^0 \varphi(0,\cdot,\cdot) \,\dx\dv + \int_{\Sigma_+^t} \gamma f \,\varphi \, (n(x)\cdot v)_+ \,\ds\tnd\sigma\dv \\
        &\quad = -\int_{\Sigma_-^t} g \varphi (n(x)\cdot v)_- \,\ds\tnd\sigma\dv   + \int_{(0,t)\times \Omega\times\R^d} f \nu_f \calT_f \Delta_v \varphi - \nu_f \nabla_v\varphi\cdot (v-u_f) f \; \ds\dx\dv.
    \end{split}
    \end{equation}
    \item The weak gradient $\nabla_v f$ is well-defined as a measurable function on $(0,T)\times\Omega\times\R^d$, and
    \begin{align*}
        \int_{(0,T)\times \Omega\times \R^d} \nu_f \calT_f |\nabla_v \sqrt{f}|^2 \,\dt\dx\dv < +\infty.
    \end{align*}
    Correspondingly, the weak Green's formula also holds, in other words we can replace the term $\int f \nu_f \calT_f \Delta_v \varphi$ of \eqref{very weak} with $\int - \nu_f \calT_f \nabla_v f \cdot \nabla_v\varphi$. 
    \item The energy inequality below holds for all $t\in (0,T]$
    \begin{equation*}
    \begin{split}
        &\int_{\{t\}\times \Omega\times \R^d} (1+|v|^2) f \,\dx\dv - \int_{\Omega \times \R^d} (1+|v|^2)f^0 \,\dx\dv  + \int_{\Sigma_+^t} (1+|v|^2) \gamma f(n(x)\cdot v)_+ \,\ds\tnd\sigma\dv \\
        &\quad \le \int_{\Sigma_-^t} (1+|v|^2) g\,|n(x)\cdot v| \,\ds\tnd\sigma\dv.
    \end{split}
    \end{equation*}
    \item The entropy inequality below holds for all $t\in (0,T]$
    \begin{equation*}
    \begin{split}
        &\int_{\{t\}\times\Omega\times\R^d} f\log f \,\dx\dv + \int_{\Sigma_+^t} \gamma f \log \gamma f \,(n(x)\cdot v)_+ \,\ds\tnd\sigma\dv \\
        &\quad + \int_{(0,t)\times \Omega\times\R^d}  \frac{\nu_f}{f \calT_f}|\calT_f \nabla_v f + (v-u_f)f|^2 \,\ds\dx\dv \\
        &\le  \int_{\Sigma_-^T} g\log g |n(x)\cdot v|\,\dt\tnd\sigma\dv +  \int_{\Omega\times\R^d} f^0 \log f^0 \,\dx\dv.
    \end{split}
    \end{equation*}
    \end{enumerate}
\end{theorem}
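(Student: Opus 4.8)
The plan is the classical regularize--estimate--compactify scheme, with the decisive new ingredient being the weighted Fisher information compactness of Proposition~\ref{prop: weighted fisher} at the degenerate limit. I would first introduce a family of approximate problems indexed by $m\in\N$: truncate the velocity variable to a ball $B_{R_m}$ with $R_m\to\infty$ (imposing a specular or absorbing condition on $\{|v|=R_m\}$), replace the nonlinear coefficients $\nu_f,\calT_f,u_f$ by smooth bounded approximations built from the macroscopic fields of a mollified $f$ (convolution in $(x,v)$) truncated in a large region, add a vanishing second-order term $\tfrac1m\Delta_v f$ --- which together with hypoellipticity of the transport--diffusion operator forces interior smoothness while keeping the first-order inflow structure in $x$ --- and approximate $f^0,g$ by smooth, compactly supported, strictly positive data with mass, energy and entropy converging to those of $f^0,g$ (truncation--mollification, with de la Vall\'ee Poussin used to preserve the entropy bound). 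For fixed $m$ this is a uniformly parabolic kinetic equation with smooth bounded coefficients and an inflow condition, so a nonnegative solution $f_m$ with a well-defined trace $\gamma f_m$ can be produced by a Schauder fixed-point argument over the linear kinetic Fokker--Planck theory with boundary data (as in~\cite{Carrillo98, Mischler2010, HK19}), regular enough to justify everything below.

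Second, I would derive the a priori estimates uniformly in $m$. Testing the equation by $1$, $|v|^2$, and $1+\log f_m$, and using that the collision operator conserves mass, momentum, and energy pointwise in $x$, yields: the mass bound; the energy balance relating $\int(1+|v|^2)f_m(t)$ and the outgoing flux $\int_{\Sigma_+^t}(1+|v|^2)\gamma f_m\,(n\cdot v)_+$ to $\int(1+|v|^2)f^0_m$ and the incoming flux $\int_{\Sigma_-^t}(1+|v|^2)g_m|n\cdot v|$ up to $o_m(1)$ errors; and the entropy balance, whose dissipation appears as $\int\frac{\nu_{f_m}}{f_m\calT_{f_m}}\big|\calT_{f_m}\nabla_v f_m+(v-u_{f_m})f_m\big|^2 = 4\int\nu_{f_m}\calT_{f_m}\big|\nabla_v\sqrt{f_m}+\tfrac{v-u_{f_m}}{2\calT_{f_m}}\sqrt{f_m}\big|^2$. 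Using $|a+b|^2\ge\tfrac12|a|^2-|b|^2$, the identity $\int|v-u_f|^2 f\,\dv = d\,V_f$, and the mass bound, the dissipation dominates the weighted Fisher information $\int\nu_{f_m}\calT_{f_m}|\nabla_v\sqrt{f_m}|^2$ up to a term $\le\|\nu\|_\infty d\cdot(\text{mass})$; one also reads off the boundary entropy flux bound. Hence $\{f_m\}$, and $\{\gamma f_m\}$ with the $|n\cdot v|$ weight, have uniformly bounded mass, energy and entropy, so are weakly compact in $L^1$ by Dunford--Pettis, and the regularization and $v$-truncation errors vanish as $m\to\infty$.

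Third, I would pass to the limit. Extract $f_m\weakto f$ in $L^1((0,T)\times\Omega\times\R^d)$, $\gamma f_m\weakto\gamma f$ in $L^1(\Sigma^T_\pm,|n\cdot v|\dt\tnd\sigma\dv)$, and (from the Fisher bound) a weak limit of $\sqrt{\nu_{f_m}\calT_{f_m}}\,\nabla_v\sqrt{f_m}$. A velocity-averaging argument --- available since the right-hand side is a $v$-divergence of an $L^1_{t,x,v}$ vector field, with equi-integrability furnished by the entropy and energy bounds --- gives strong $L^1_{t,x}$ convergence of the moments $\rho_{f_m},j_{f_m},V_{f_m}$, hence a.e. convergence along a subsequence; consequently $u_{f_m}\to u_f$, $\calT_{f_m}\to\calT_f$, $\nu_{f_m}\to\nu_f$ a.e. on $\{\rho_f>0\}$, while on $\{\rho_f=0\}$ one has $f=0$ and, from $\int_{\{\rho_f=0\}}\rho_{f_m}\to 0$, also $f_m\to 0$. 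Applying Proposition~\ref{prop: weighted fisher} with $a_m=\nu_{f_m}\calT_{f_m}$ --- which converges to $a=\nu_f\calT_f$ on $\{a>0\}=\{\rho_f>0\}$ (since $\calT_f>0$, and $\nu_f>0$ when $\rho_f>0$ by~\eqref{ASSUMP NU}), with $f$ vanishing on $\{a=0\}$ --- yields $f_m\to f$ a.e. and in $L^1((0,T)\times\Omega\times\R^d)$. Then one passes to the limit in~\eqref{very weak}: transport and time terms by weak convergence; the drift $\nu_{f_m}(v-u_{f_m})f_m\to\nu_f(v-u_f)f$ in $L^1_{\mathrm{loc}}$ using strong convergence of $f_m$, a.e. convergence of $u_{f_m}$, and the bound on $\int|v-u_{f_m}|^2 f_m$; the diffusion term by writing $\nu_{f_m}\calT_{f_m}\nabla_v f_m=2\sqrt{\nu_{f_m}\calT_{f_m}f_m}\cdot\sqrt{\nu_{f_m}\calT_{f_m}}\,\nabla_v\sqrt{f_m}$ (strong times weak in $L^2$), giving the weak Green's formula of~(III) and hence~(II). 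Property~(I) follows from the uniform bounds and Fatou; the Fisher bound in~(III) from weak lower semicontinuity; the energy inequality~(IV) and entropy inequality~(V) from weak lower semicontinuity of $\int(1+|v|^2)(\cdot)$, of $\int(\cdot)\log(\cdot)$ (with the moment bound), and of the convex dissipation $4\int\nu_f\calT_f|\nabla_v\sqrt f+\tfrac{v-u_f}{2\calT_f}\sqrt f|^2$ along $\sqrt{f_m}\to\sqrt f$ strongly, $\nabla_v\sqrt{f_m}\weakto\nabla_v\sqrt f$, and a.e. convergence of the coefficients; attainment of $f^0$ is encoded in~\eqref{very weak} as $t\to 0$.

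The main obstacle is this last compactness step: making the $L^1$-framework velocity-averaging rigorous with only finite mass/energy/entropy (no $L^p$, $p>1$) and a $\nabla_v$ on the right-hand side, checking in detail the hypotheses of Proposition~\ref{prop: weighted fisher} --- especially the condition on $\mathrm{supp}\,a$ and the behavior of $f_m$ on the degeneracy set $\{\rho_f=0\}$ --- and then using the resulting a.e./$L^1$ convergence to pass to the limit in the genuinely nonlinear, degenerate collision operator while retaining lower semicontinuity of the entropy dissipation. A secondary, bookkeeping-heavy point is the trace theory for the limit: one must verify $F:=\nu_f(\calT_f\nabla_v f+(v-u_f)f)\in L^1_{t,x,v}$ (which the Fisher and energy bounds give, since $\nu_f\calT_f\nabla_v f=2\sqrt{\nu_f\calT_f f}\cdot\sqrt{\nu_f\calT_f}\nabla_v\sqrt f$ and $\nu_f(v-u_f)f\in L^1$ by Cauchy--Schwarz), so that $\partial_t f+v\cdot\nabla_x f=\nabla_v\cdot F$ admits a Green's-formula trace to which $\gamma f_m$ converges, and controlling the $|v|=R_m$ truncation errors uniformly.
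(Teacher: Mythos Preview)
Your overall architecture matches the paper's: regularize, solve the approximate nonlinear problem by a fixed point over a linear kinetic Fokker--Planck theory, derive uniform mass/energy/entropy bounds, extract weak limits, upgrade to strong convergence via Proposition~\ref{prop: weighted fisher}, and pass to the limit. Two ingredients, however, are genuinely missing from your plan, and both are essential.

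\textbf{Gain of moments.} You assert ``strong $L^1_{t,x}$ convergence of the moments $\rho_{f_m},j_{f_m},V_{f_m}$'' from velocity averaging. For $\rho$ and $j$ this is fine (compactly supported $\psi$ plus the second-moment bound for tightness). But $V_{f_m}$ requires convergence of $\int |v|^2 f_m\,\dv$, and with only a uniform second-moment bound you cannot remove the velocity cutoff: you need $\sup_m\int_{|v|>R}|v|^2 f_m\to 0$ as $R\to\infty$, i.e.\ a uniform bound on some moment strictly above $2$. The paper obtains this by a Perthame-type multiplier argument (Lemma~\ref{lem: higher moments}, testing against $\langle v\rangle\,v\cdot(x-x_0)/\langle x-x_0\rangle$), which yields $\sup_\ve\int|v|^3 f_\ve\,\dt\dx\dv<\infty$ on bounded $\Omega$. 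Without this, you cannot show $\calT_{f_m}\to\calT_f$ a.e.\ on $\{\rho_f>0\}$, and hypothesis (P4) of Proposition~\ref{prop: weighted fisher} fails. Your velocity truncation to $B_{R_m}$ does not help: the relevant bounds must be uniform in $m$.

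\textbf{$L^1$ velocity averaging.} You correctly flag this as the main obstacle but do not say how to overcome it. Equi-integrability alone does not rescue the $L^1$ averaging lemma (the standard counterexamples involve concentration along characteristics). The paper's fix is to exploit that the approximate solutions are \emph{renormalized}: one applies the $L^2$-type averaging lemma (Lemma~\ref{lem: L2 vel avg}) to $\Gamma_\delta(f_\ve)=f_\ve/(1+\delta f_\ve)\in L^1\cap L^\infty$, checking that the right-hand side of the equation for $\Gamma_\delta(f_\ve)$ is of the form $H^1+\nabla_v\cdot H^2$ with $H^i$ bounded in $L^1$ uniformly in $\ve$ (this uses the Fisher bound), obtains compactness of $\int\Gamma_\delta(f_\ve)\psi\,\dv$, and then lets $\delta\downarrow 0$ using the equi-integrability of $\{f_\ve\}$ to transfer compactness to $\int f_\ve\psi\,\dv$. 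This renormalization step is precisely why the paper insists on constructing $f_\ve$ as a fixed point of the linear problem: the renormalization property is inherited from the linear theory of \cite{Zhu24}, which would not be automatic under your more ad hoc regularization (mollified coefficients, velocity truncation with an artificial boundary at $|v|=R_m$).

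A secondary remark: the paper's regularization keeps $v\in\R^d$ throughout and instead tames the drift by replacing $v$ with $[v]_{1,\ve}$ and $u_f,\calT_f$ by the bounded surrogates $u_f^{(\ve)},\calT_f^{(\ve)}$; this avoids the extra $\{|v|=R_m\}$ boundary layer you introduce, whose interaction with the trace theory and with the entropy identity would otherwise need separate treatment.
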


\begin{remark}
Several remarks are in order.
\begin{enumerate}
    \item By using the same strategies, one can obtain an analogous result for the periodic spatial domain $\Omega = \T^d$ (neglecting boundary conditions). This substantially improves the Cauchy theory for \eqref{eq: main}, showing that \eqref{init phys} suffices for weak solutions (compare with \eqref{eq: CG} and \eqref{eq: CHY}).
    \item Assuming additional spatial moment bounds on the initial and boundary datum, the result extends to unbounded domains, including $\Omega = \R^d$. The adaptations are straightforward.
    \item By $L\log L$ we refer to the (equivalence class of) measurable functions for which $\int |f|(1 + |\log f|) < +\infty.$
    \item For a given $f$, the function $\gamma f$ satisfying \eqref{very weak} is unique, and we refer to it as the \textbf{trace} of $f$. However, due to the lack of regularity, it is unclear whether a bounded trace operator exists in the corresponding function spaces (see \cite[Section 1.2.2]{Zhu24}). For this reason, we will always define a solution as a pair $(f,\gamma f)$.
\end{enumerate}
\end{remark}

It remains an open question whether the weak solutions obtained in Theorem \ref{thm: inflow} are renormalized in the sense of DiPerna--Lions \cite{DipernaLionsODE1989}. In related contexts such as alignment models \cite{shvydkoy2025}, every weak solution was shown to be renormalized, but the argument there relies on a "thickness condition" for the diffusion coefficient, a structural property not present in our model. Establishing renormalization in the present setting is therefore an interesting and nontrivial direction for future research. 

Using the same techniques, we can also address the case where both the collision frequency and the temperature are constant, in which case the model reduces to
\begin{align}\label{eq: const temp}
    \p_t g + v\cdot \nabla_x g = \Delta_v g + \nabla_v\cdot((v-u_g)g).
\end{align}
For this equation, global weak solutions were obtained in \cite{KMT2015} assuming initial data of class $L^\infty$. Classical solutions to \eqref{eq: const temp} within a perturbative framework were constructed in \cite{Choi16}, while the incompressible Euler and Navier--Stokes limits were subsequently established in \cite{CJ24, CJ26}. In contrast, our compactness method (see in particular Lemma \ref{lem: key compactness}) provides weak solutions under only the physically relevant assumptions \eqref{init phys}.
 
In addition, if the collision frequency $\nu$ fails to satisfy assumption \eqref{ASSUMP NU}, for example when $\nu(\rho,j,V)=\rho$, the weak formulation of \eqref{eq: main} may not even be well defined. Nevertheless, related models have been investigated in \cite{ImbertMouhot21, AnceschiZhu24}, where global solutions were obtained for
\[
    \p_t h + v\cdot \nabla_x h = \rho_h(\Delta_v h + \nabla_v\cdot(vh)),
\]
under Gaussian-type bounds on the initial data. The crucial observation there is that such Gaussian control propagates in time thanks to a \textbf{maximum principle}, which ensures $\rho_h$ remains uniformly bounded. This effectively reduces the situation to one with bounded collision frequency, bringing it back within the scope of assumptions analogous to \eqref{ASSUMP NU}.

Our second main theorem concerns boundary conditions of absorption-reflection type. In kinetic theory, such laws arise naturally when part of the incoming flux is reflected specularly while the rest is absorbed. This setting generalizes the inflow problem treated in Theorem \ref{thm: inflow}. Under the same physically relevant assumptions on the data, we again construct global weak solutions.

\begin{theorem}\label{thm: reflection}
    We assume that $\Omega$, $\nu$, $f^0$, and $T$ satisfy the assumptions of Theorem \ref{thm: inflow}. Let $\theta \in [0,1)$. Then the partial absorption-reflection problem
    \begin{align*}
        \begin{cases}
            \p_t f + v\cdot \nabla_x f = \nu_f \nabla_v\cdot \Big( \calT_f \nabla_v f + (v- u _f) f \Big) \quad \text{in} \quad (0,T)\times \Omega\times\R^d, \\
            f|_{t=0} = f^0, \\
        \gamma f(t,x,v) = \theta \gamma f (t,x,L_x v), \quad \forall (t,x,v) \in \Sigma_-^T, \\
        L_x (v) := v - 2(n(x)\cdot v) n(x)
        \end{cases}
    \end{align*}
    admits a weak solution $(f,\gamma f)$ in the sense that
    \begin{enumerate}[(I)]
        \item The solution satisfies
        \begin{equation*}
    \begin{split}
        &(1+|v|^2)f \in L^\infty(0,T;L^1(\Omega\times\R^d)), \quad f \in L^\infty(0,T;L\log L(\Omega\times\R^d)), \\
        &(1+|v|^2)\gamma f \in L^1(\Sigma^T_+, |n(x)\cdot v|\dt\tnd\sigma\dv), \quad \gamma f \in L\log L(\Sigma_+^T, |n(x)\cdot v|\dt\tnd\sigma\dv).
    \end{split}
    \end{equation*}
        \item The following very weak Green's formula holds for all $t\in [0,T]$ and $\varphi\in C_c^\infty([0,t]\times\overline{\Omega}\times\R^d)$:
        \begin{align*}
            &\int_{\{t\}\times \Omega\times\R^d} f\varphi\,\dx\dv - \int_{\Omega\times\R^d} f^0 \,\varphi(0,\cdot,\cdot,)\,\dx\dv\\
            &\quad + \int_{\Sigma_+^t} \gamma f(s,x,v)\,[\varphi(s,x,v) - \theta \varphi(s,x,L_x^{-1}v)] \,(n(x)\cdot v)_+ \,\ds\tnd\sigma\dv  \\
            &= \int_{(0,t)\times\Omega\times\R^d} f\nu_f \calT_f \Delta_v \varphi - \nu_f \nabla_v\varphi \cdot (v-u_f)f \,\ds\dx\dv.
        \end{align*}
        \item The weak gradient $\nabla_v f$ is well-defined as a measurable function on $(0,T)\times\Omega\times\R^d$, and
        \begin{align*}
            \int_{(0,T)\times \Omega\times \R^d} \nu_f \calT_f |\nabla_v \sqrt{f}|^2 \,\dt\dx\dv < +\infty.
        \end{align*}
        In particular, in similar manner as Theorem \ref{thm: inflow}, the very weak formulation can be replaced with the weak formulation.
        \item The energy inequality below holds
        \begin{align*}
            \int_{\{t\}\times\Omega\times\R^d} (1+|v|^2) f\,\dx\dv + (1-\theta) \int_{\Sigma_+^t} (1+|v|^2) \gamma f \,(n(x)\cdot v)_+ \,\ds\tnd\sigma\dv  \le \int_{\Omega\times\R^d} (1+|v|^2) f^0\,\dx\dv .
        \end{align*}
        \item The entropy inequality below holds
        \begin{align*}
            &\int_{\{t\}\times\Omega\times\R^d} f\log f \,\dx\dv + (1-\theta) \int_{\Sigma_+^t} \gamma f \log \gamma f \,(n(x)\cdot v)_+ \,\ds\tnd\sigma\dv \\
        &\quad + \int_{(0,t)\times \Omega\times\R^d}  \frac{\nu_f}{f \calT_f}|\calT_f \nabla_v f + (v-u_f)f|^2 \,\ds\dx\dv \\
        &\le \int_{\Omega\times\R^d} f^0 \log f^0 \,\dx\dv.
        \end{align*}
    \end{enumerate}
\end{theorem}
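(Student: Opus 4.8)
\noindent\emph{Proof strategy.}
The plan is to run the same construction that underlies Theorem~\ref{thm: inflow}, with the prescribed incoming datum $g$ replaced by the \emph{reflection operator} $\calR$ acting on the outgoing trace: for $(s,x,v)\in\Sigma_-^T$ set $(\calR\phi)(s,x,v):=\theta\,\phi(s,x,L_x v)$. The elementary facts that make everything work are that $v\mapsto L_x v$ is an isometry of $\R^d$ which preserves $|n(x)\cdot v|$ and $|v|^2$; consequently $\calR$ maps $L^1(\Sigma_+^T,|n(x)\cdot v|\dt\tnd\sigma\dv)$ into $L^1(\Sigma_-^T,|n(x)\cdot v|\dt\tnd\sigma\dv)$ with operator norm exactly $\theta<1$, and it interacts with $s\mapsto s\log s$ through the sign-definite defect
\[
\theta s\log(\theta s)=\theta s\log\theta+\theta s\log s\le\theta s\log s .
\]
The contraction coming from $\theta<1$ is what lets us close both the boundary fixed point and the a priori estimates; it is also why the borderline case $\theta=1$ is excluded.

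First I would reuse the family of regularized problems from the proof of Theorem~\ref{thm: inflow} (mollified and truncated macroscopic fields $\rho_f,j_f,V_f$, hence $\nu_f,\calT_f,u_f$, an added uniformly elliptic term $\ve\Delta_v$, and a velocity cutoff), so that the transport--diffusion operator is genuinely hypoelliptic. At each fixed regularization level the coefficients are frozen inside a Banach fixed-point loop; there the boundary condition is imposed in mild (characteristic) form, and the existence of the trace is obtained by solving $\gamma f|_{\Sigma_-}=\calR(\gamma f|_{\Sigma_+})$ as a Neumann series over successive specular reflections, which converges in the weighted $L^1$ norm because each reflection contributes a factor $\le\theta<1$. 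This produces, for every regularization level, a nonnegative smooth solution $f_m$ satisfying the reflection condition exactly. Testing against $1$ gives mass balance with absorptive loss $(1-\theta)$ on $\Sigma_+^t$; testing against $1+|v|^2$ and using $|L_x v|=|v|$ gives the energy identity with the same $(1-\theta)$ loss, hence the uniform bounds $(1+|v|^2)f_m\in L^\infty_tL^1_{x,v}$ and $(1+|v|^2)\gamma f_m\in L^1(\Sigma_+^T)$; the entropy computation, using that specular reflection is measure preserving together with the displayed inequality, yields the stated entropy dissipation bound with the negative part of the boundary entropy controlled by the energy flux through the usual Carleman estimate. In particular the weighted Fisher information $\int\nu_{f_m}\calT_{f_m}|\nabla_v\sqrt{f_m}|^2$ stays uniformly bounded and $\{f_m\}$ is equi-integrable by Dunford--Pettis.

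Next I would pass to the limit exactly as for the inflow problem. Velocity averaging applied to the kinetic transport equation (whose right-hand side is a $v$-divergence of a field bounded in $L^1$, as is checked from the Fisher and energy bounds) gives $L^1_{t,x}$-compactness of the moments $\rho_{f_m},j_{f_m},V_{f_m}$, hence their a.e.\ convergence and, by \eqref{ASSUMP NU}, the a.e.\ convergence $\nu_{f_m}\calT_{f_m}\to\nu_f\calT_f=:a\ge0$. Feeding this together with the weak $L^1$ compactness and the uniform Fisher bound into the weighted-Fisher compactness principle (Proposition~\ref{prop: weighted fisher}, cf.\ Lemma~\ref{lem: key compactness}) yields $f_m\to f$ a.e.\ and in $L^1((0,T)\times\Omega\times\R^d)$; this strong convergence identifies the limits of the nonlinear drift $\nu_{f_m}(v-u_{f_m})f_m$ and of $\nu_{f_m}\calT_{f_m}\nabla_v f_m$ and kills the $\ve\Delta_v$ term. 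The traces $\gamma f_m$, bounded in $L^1\cap L\log L$ on $\Sigma_\pm^T$ with the $|n(x)\cdot v|$ weight, are weakly relatively compact there; the kinetic Green's formula then lets one pass to the limit in the surface integrals, and since the relation $\gamma f_m(s,x,v)=\theta\gamma f_m(s,x,L_x v)$ is linear it survives the weak limit. The energy and entropy inequalities follow from weak lower semicontinuity (convexity of $s\mapsto s\log s$ and of the dissipation functional).

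The main obstacle is the same degeneracy already present in Theorem~\ref{thm: inflow}: $a=\nu_f\calT_f$ may vanish on $\{\rho_f=0\}$, so the delicate point is verifying the support hypothesis of Proposition~\ref{prop: weighted fisher}, i.e.\ that no mass of the limit concentrates where the diffusion degenerates, simultaneously with the boundary-trace passage to the limit. By contrast, the genuinely new ingredient here, namely carrying the reflection boundary condition through the approximation scheme, is comparatively soft precisely because $\theta<1$ turns the boundary map $\calR$ into a strict contraction, so no mass or entropy is created at $\p\Omega$ and the Neumann-series construction of the trace is unconditional.
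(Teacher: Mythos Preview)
Your overall architecture matches the paper's: regularize, solve a linear Fokker--Planck problem with reflection boundary, close the nonlinear loop by a fixed point, derive uniform energy/entropy/Fisher bounds with $(1-\theta)$ boundary factors, and then invoke Proposition~\ref{prop: weighted fisher} to upgrade to strong $L^1$ convergence. The paper does not, however, build the linear reflection solution via a Neumann series over successive specular reflections; instead it cites \cite{Zhu24} directly for existence, uniqueness, renormalization, and weighted decay of the linear reflection problem, and explicitly remarks (Remark following Theorem~\ref{thm: reflection}) that the iterated-inflow route you describe is the classical alternative. So on this point your approach is different but legitimate.

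There are two places where your sketch is too loose. First, calling the nonlinear loop a ``Banach fixed-point'' is not justified: freezing $(\rho,j,V)$ and iterating does not produce a contraction in any obvious metric, and the paper uses Schaefer's theorem instead, checking continuity (via uniqueness of the linear problem), compactness (via averaging lemmas), and a priori bounds on eigenvectors. Second, your claim that velocity averaging directly yields $L^1_{t,x}$-compactness of $\rho_{f_m},j_{f_m},V_{f_m}$ from an $L^1$-bounded right-hand side skips two nontrivial ingredients the paper needs: (i) since the sequence is only in $L^1$, averaging lemmas are applied not to $f_m$ but to the renormalizations $\Gamma_\delta(f_m)=f_m/(1+\delta f_m)$, with the passage back to $f_m$ done via equiintegrability (Lemma~\ref{lem: subcubic}); and (ii) compactness of $V_{f_m}$ in $L^1$ requires tightness of the second moment, which the paper obtains from a moment-gain lemma (Lemma~\ref{lem: higher moments}) giving a uniform $|v|^3$ bound. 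Without these, (P2) and (P4) of Proposition~\ref{prop: weighted fisher} are not verified. The ``support hypothesis'' you flag as the main obstacle is actually automatic once $\calT_{f_m}\to\calT_f$ a.e.\ on $\{\rho_f>0\}$: by definition $\{\nu_f\calT_f>0\}=\{\rho_f>0\}$ under \eqref{ASSUMP NU}.
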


\begin{remark}
Theorem \ref{thm: reflection} does not cover the case $\theta=1$, corresponding to purely specular reflection without absorption. This limitation reflects a well-known obstruction: see, for instance, \cite{GHJO20} for related discussions. In fact, when $\theta=1$ the dominance of grazing collisions is known to cause a loss of regularity in the trace $\gamma f$, and one cannot in general expect $\gamma f \in L^1(\Sigma_+^T,|n(x)\cdot v|\,\dt\tnd\sigma\dv)$. 
A formal way to observe this obstruction is through parts (IV) and (V) of Theorem \ref{thm: reflection}: when $\theta=1$, the boundary contributions vanish, leaving no available a priori control on the outgoing trace $\gamma f|_{\Sigma_+^T}$.
\end{remark}

\begin{remark}
A classical approach to reflection-type problems is to approximate them by a sequence of inflow problems, see for example \cite{Hamdache92, Carrillo98}. In this work, we adopt a different strategy. Since the theory of reflection problems has already been developed to a satisfactory level in the \textbf{linear} Fokker--Planck setting \cite[Section 2]{Zhu24}, we may rely on those results and approximate directly by reflection problems themselves, rather than reducing to the inflow case.
\end{remark}

%
%
%
%
%
%
\subsection{Strategy of proof}\label{sec: strat}

The proof of our main existence results proceeds through a classical approximation procedure, carefully adapted to the nonlinear diffusion structure of \eqref{eq: main} and to the lack of uniform ellipticity. The idea is to construct a family of approximate solutions, establish estimates uniform in the approximation parameter, and then pass to the limit by combining compactness arguments with stability properties of the nonlinear operator. Each step in the argument is designed to overcome a specific difficulty: solvability of the regularized problems, preservation of renormalization properties, derivation of uniform entropy bounds, compactness of velocity averages, and finally the identification of quadratic quantities such as the temperature.

\medskip

\noindent
\textbf{Step 1: Regularization and linearization.}
We begin by considering a regularized version of the inflow problem \eqref{eq: inflow}, where both the collision operator and the boundary data are smoothed. The regularization is designed so that the resulting system admits well-defined solutions for each fixed parameter $\ve>0$. At this stage we further linearize the problem, producing a linear kinetic Fokker--Planck equation with given coefficients. Existence of solutions to such linear problems is well established, and uniqueness was proved in \cite[Theorem 1.1]{Zhu24}. This uniqueness plays a central role in our approach, since it allows us later to view the nonlinear problem as a fixed point of a solution map.

\medskip

\noindent
\textbf{Step 2: Fixed-point construction.}
To recover the nonlinear structure, we rely on a fixed point argument, defining a suitable solution operator on the coefficient space. The uniqueness of the linear problem is instrumental in verifying the continuity of this operator (see \cite{Abdallah1994}). Moreover, one can check directly that the operator is compact. Then through Schaefer's fixed-point theorem, we obtain that this map admits a fixed point $(f_\ve, \gamma f_\ve)$ which is a solution to the nonlinear regularized problem. A key advantage of this construction is that $(f_\ve, \gamma f_\ve)$ inherits properties of the linear solutions, most importantly the renormalization property in the sense of DiPerna--Lions \cite{DipernaLionsODE1989}. This structural inheritance is crucial in the next step, where we prove uniform a priori estimates and apply an averaging lemma.

\medskip

\noindent
\textbf{Step 3: Uniform estimates.}
The renormalization property enables us to rigorously derive energy and entropy inequalities for $(f_\ve,\gamma f_\ve)$. These provide uniform control of the mass, the kinetic energy, and the entropy production, all independent of $\ve$. Such bounds already imply weak compactness of the sequence $\{f_\ve\}$ by the Dunford--Pettis criterion, so that we can pass to a weakly convergent subsequence. However, weak convergence of $(f_\ve,\gamma f_\ve)$ is not sufficient: to handle the nonlinear structure of \eqref{eq: main}, and to ensure that the energy and entropy inequalities persist in the limit (see \textbf{Step 6}), we aim to establish strong convergence of $f_\ve$. 
\medskip

\noindent
\textbf{Step 4: Velocity averages.}
We strive to demonstrate strong convergence of $f_\ve$, by verifying that these approximate solutions satisfy the hypotheses of the compactness principle stated in Proposition \ref{prop: weighted fisher}. A delicate part is showing convergence of the velocity averages $\lt\{\intr f_\ve \varphi(v) \dv\rt\}$ for each $\varphi\in C_c(\R^d)$. Standard velocity averaging lemmas are not directly applicable here: they require $L^p$ control with $p>1$, while in our setting we only have $L^1$ bounds. It is well known that averaging can fail in $L^1$ due to concentration phenomena along characteristics \cite[p. 123]{GLPS88}, \cite[p. 558]{GS02}. To overcome this, we exploit the renormalization property of $(f_\ve, \gamma f_\ve)$. By applying averaging lemmas to nonlinear transforms $\Gamma_\delta(f_\ve)$ with suitable decay, we obtain convergence of the averages $\{\int_{\R^d} \Gamma_\delta(f_\ve)\varphi\dv\}$ for each $\delta>0$. Then, using equiintegrability provided by the entropy bounds, we can pass from convergence of the averages of $\Gamma_\delta(f_\ve)$ back to convergence of the averages of $f_\ve$. Our procedure is greatly motivated by the works \cite{dipernalionsMaxwell1989, GolseSaintRaymond05}.

\medskip

\noindent
\textbf{Step 5: Higher moments and temperature convergence.}
Another obstacle is the identification of the temperatures $\calT_{f_\ve}$ in the limit. Strong convergence of the velocity averages is not enough to control these quadratic quantities. The crucial new ingredient is a \textbf{moment-gain lemma}, which shows that solutions to \eqref{eq: main} acquire additional velocity moments once integrated over time. This phenomenon was first observed for the pure transport equation in \cite{Perthame1992} and has since become a standard tool in kinetic theory, but to our knowledge it has not been applied to nonlinear Fokker--Planck operators. By adapting this technique we show that $\{|v|^3 f_\ve\}$ is uniformly bounded in $L^1$, which in turn ensures almost everywhere convergence of $\calT_{f_\ve}$ on the set where $\rho_f>0$. This suffices to identify the limiting temperature, and in particular, we obtain that all assumptions of Proposition \ref{prop: weighted fisher} hold.

\medskip

\noindent
\textbf{Step 6: Passage to the limit.}
Having verified that the approximate solutions satisfy the necessary compactness criteria, Proposition \ref{prop: weighted fisher} yields strong convergence $f_\ve \to f$ in $L^1((0,T)\times\Omega\times\R^d)$. With this convergence in hand, we can pass to the limit $\ve \downarrow 0$ in the nonlinear terms of the equation and recover a global weak solution to \eqref{eq: main}. It remains to verify that the energy and entropy inequalities also hold for $f$, by passing to the limit in those satisfied by $f_\ve$. Since these inequalities are formulated at fixed times $t\in[0,T]$, one typically needs to identify the weak limit of $f_\ve(t)$. This step is often delicate, as it usually requires establishing some temporal equicontinuity of ${f_\ve}$ (cf. \cite{GolseSaintRaymond05}). Here, however, the strong convergence guaranteed by Proposition \ref{prop: weighted fisher} allows us to bypass this difficulty entirely.  This concludes the proof strategy. The same reasoning applies to both types of boundary conditions considered in this work. For clarity we present the inflow case in full detail in Section \ref{sec: inflow}, while in Section \ref{sec: reflect} we indicate the minor adjustments required for partial absorption--reflection.

%
%
%
%
%
%

\subsection{Organization of the paper}

Section \ref{sec: comp} is devoted to the proof of our main compactness result, Proposition \ref{prop: weighted fisher}. 
In Section \ref{sec: inflow} we address the initial-boundary value problem with inflow data and establish Theorem \ref{thm: inflow}. 
Section \ref{sec: reflect} provides a sketch of the proof of Theorem \ref{thm: reflection}, whose detailed arguments are omitted since they closely parallel those of Section \ref{sec: inflow}. 
Finally, Appendix \ref{app: REG INIT} contains the construction of the regularized initial and boundary data used throughout the paper.

%
%
%
%
%
%
\section{Compactness from the weighted Fisher information} \label{sec: comp}

The compactness theory developed in this section provides the essential input for
our analysis of the initial--boundary value problems in Sections \ref{sec: inflow}
and \ref{sec: reflect}. The key observation is that dissipation in the velocity variable, even when modulated by a possibly degenerate coefficient, can be combined with velocity averaging in the space-time variables to produce strong compactness in all variables. More precisely, provided that we have control of a weighted Fisher information, convergence of velocity averages, and compatibility of the weights with the non-vacuum region of the limiting density, we can guarantee strong convergence of the sequence in $L^1_{t,x,v}$.

This principle is made precise in the following proposition.

\begin{proposition} \label{prop: weighted fisher}
    Let $\Omega\subset\R^d$ be an open subset (possibly unbounded). Assume that $\{f_m(t,x,v)\}$ is a nonnegative sequence such that
    \begin{enumerate}[(P1)]
        \item $f_m \weakto f$ weakly in $L^1((0,T)\times\Omega\times\R^d)$,
        \item For each $\psi \in C_c(\R^d)$, the sequence of averages $\left\{ \int_{\R^d} f_m \psi(v)\,\dv \right\}$ converges in $L^1((0,T)\times\Omega)$.
        \item The following weighted Fisher information is bounded:
        \begin{align*}
            \sup_m \int_{(0,T)\times\Omega\times\R^d} a_m(t,x) \lt|\nabla_v \sqrt{f_m} \rt|^2 \,\dt\dx\dv \le C.
        \end{align*}
        Here we require that each $a_m:(0,T)\times\Omega\to [0,\infty]$ is Lebesgue measurable.
        \item $a_m\to a$ a.e. in $D_+ := \{(t,x)\in (0,T)\times\Omega :a(t,x)>0\}$, and $a$ has the property that $D_+ = \{(t,x): \rho_f(t,x) > 0\}$ up to a Lebesgue-negligible set.
    \end{enumerate}
    Then modulo a subsequence it holds that $f_m \to f$ a.e. and strongly in $L^1((0,T)\times\Omega\times\R^d)$.
\end{proposition}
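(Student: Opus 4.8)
The plan is to localize the problem in velocity, using the weighted Fisher information together with the velocity-average convergence to upgrade weak $L^1$ convergence to a.e. convergence on the non-vacuum region $D_+$, and then to handle the vacuum region $D_0 := \{(t,x) : \rho_f(t,x) = 0\}$ by a separate argument exploiting that the mass of $f_m$ there must vanish in the limit.

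First I would reduce to showing a.e. convergence: once $f_m \to f$ a.e., strong $L^1$ convergence follows from (P1) (weak $L^1$ convergence gives equiintegrability via Dunford--Pettis, since a weakly convergent sequence in $L^1$ is relatively weakly compact, hence equiintegrable and tight), combined with Vitali's convergence theorem. So the heart of the matter is a.e. convergence. Fix a compactly supported mollifier $\chi_\delta$ in velocity; set $f_m^\delta := f_m \ast_v \chi_\delta$. On $D_+$, restrict further to a set $K \subset D_+$ on which $a_m \geq \alpha > 0$ for all large $m$ (such sets exhaust $D_+$ up to null sets by (P4) and Egorov, after passing to a subsequence so that $a_m \to a$ a.e. and using that $a > 0$ on $D_+$). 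On $K \times B_R$ (with $B_R \subset \R^d$ a fixed ball), (P3) gives $\int_{K \times B_R} |\nabla_v \sqrt{f_m}|^2 \leq C/\alpha$, hence $\sqrt{f_m}$ is bounded in $L^2_t L^2_x H^1_v(K \times B_R)$; together with the $L^1$-in-$(t,x)$ convergence of velocity averages $\int f_m \psi\,dv$ for all $\psi \in C_c$, a velocity-averaging-type / compactness argument in the $v$ variable (Aubin--Lions-type: $H^1_v$ compactly embeds into $L^2_{\mathrm{loc}}$ against the weak time-space structure) should yield that $\sqrt{f_m} \to \sqrt{f}$ strongly in $L^2(K \times B_R)$, hence $f_m \to f$ a.e. on $K \times B_R$. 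Exhausting $D_+$ by such $K$ and $\R^d$ by such $B_R$, diagonalizing, gives $f_m \to f$ a.e. on $D_+ \times \R^d$.

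For the vacuum region $D_0 = (0,T)\times\Omega \setminus D_+$: by (P4), $\rho_f = 0$ a.e. on $D_0$, i.e. $f = 0$ a.e. on $D_0 \times \R^d$. I would show $f_m \to 0$ in $L^1(D_0 \times \R^d)$ directly. Using (P2) with $\psi$ a cutoff equal to $1$ on $B_R$, the truncated mass $\int_{B_R} f_m\,dv$ converges in $L^1((0,T)\times\Omega)$ to $\int_{B_R} f\,dv \leq \rho_f$, which is $0$ a.e. on $D_0$; the tail $\int_{|v|>R} f_m\,dv$ is small uniformly in $m$ by tightness from (P1) (weak $L^1$ compactness gives tightness in all variables, in particular we can extract enough, or this tightness must be assumed/derived — here one uses that $\{f_m\}$ weakly convergent in $L^1$ is tight as a consequence of relative weak compactness). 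Hence $\int_{D_0}\int_{\R^d} f_m\,dv\,dx\,dt \to 0$. Combining with the a.e. convergence on $D_+$ and Vitali once more, $f_m \to f$ a.e. and in $L^1((0,T)\times\Omega\times\R^d)$.

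The main obstacle I anticipate is the $v$-compactness step on $K \times B_R$: the bound from (P3) only controls $\nabla_v \sqrt{f_m}$, not $\nabla_v f_m$, and only on a set where the weight is bounded below, so one must carefully interpolate between the $H^1_v$-regularity of $\sqrt{f_m}$ and the $L^1$-average convergence to conclude $v$-local strong compactness of $\sqrt{f_m}$ without any time or space regularity — the velocity averages alone do not obviously suffice, and one likely needs a quantitative argument comparing $f_m$ with its $v$-mollification $f_m^\delta$, estimating $\|f_m - f_m^\delta\|_{L^1_v}$ via $\|\nabla_v \sqrt{f_m}\|_{L^2_v}$ and $\|\sqrt{f_m}\|_{L^2_v}$ (the latter controlled by the local mass, itself handled by the averages), while for fixed $\delta$ the mollified averages converge by (P2). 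Making this two-parameter ($m \to \infty$ then $\delta \to 0$) argument rigorous, uniformly on the exhausting sets, is where the real work lies.
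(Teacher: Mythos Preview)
Your proposal is correct and follows essentially the same strategy as the paper: the mollification argument you describe in your final paragraph---comparing $f_m$ with $f_m^\delta = f_m *_v \chi_\delta$, bounding $\|f_m - f_m^\delta\|_{L^1}$ by $\delta$ times the Fisher information via $|\nabla_v f_m| = 2\sqrt{f_m}\,|\nabla_v\sqrt{f_m}|$ and Cauchy--Schwarz, and using (P2) to get strong $L^1$ convergence of $f_m^\delta$ for fixed $\delta$---is exactly the core of the paper's proof, as is your treatment of the vacuum region $D_0$. The paper organizes the reduction slightly differently, first isolating the constant-weight case as a standalone lemma and then reducing the general case to it via a level-set decomposition (applying the lemma to the sequences $\mathbf{1}_{\{a_m \ge 1/k\}} f_m$ for levels $1/k$ chosen to avoid the at-most-countably-many values $c$ with $|\{a=c\}|>0$), whereas you use Egorov to produce fixed sets $K\subset D_+$ on which $a_m \ge \alpha$ eventually; both reductions work and lead to the same two-parameter $(m\to\infty,\ \delta\to 0)$ conclusion. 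Your initial Aubin--Lions suggestion does not go through as stated (there is no time or space derivative control to feed into such a lemma), but you correctly supersede it with the mollification argument, which is the right mechanism.
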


The proof of Proposition \ref{prop: weighted fisher} proceeds in two main steps.
We first establish the constant-coefficient case $a_m\equiv 1$ (Lemma \ref{lem: key compactness}),
which is already nontrivial yet closely related to the recent work \cite{sampaio2024}.
Our argument relaxes several of the structural assumptions used there, showing
that they can be replaced by the more natural entropy and moment bounds available
for kinetic Fokker--Planck models.
We then treat general measurable coefficients $a_m$, which may vanish on subsets of
$(0,T)\times\Omega$. 
The main difficulty is that pointwise convergence $a_m\to a$ does not imply convergence of
level sets $\{a_m \ge c\}$, and naive truncation arguments may fail. 
To overcome this, we implement a careful level-set decomposition which localizes
the Fisher information and reduces the problem to the constant-coefficient setting on each level. This strategy allows us to avoid any uniform ellipticity assumptions, while still
retaining the strong compactness conclusion.

\begin{remark}
The assumptions in Proposition \ref{prop: weighted fisher} are natural and arise in diverse applications.
\begin{enumerate}
    \item (P1) is expected (along some subsequence) once one can verify that the physically relevant bounds in  \eqref{init phys} propagate with time. If $\Omega$ is unbounded, additional spatial moment conditions may be imposed on the initial data.
      \item (P2), the convergence of $\left\{\intr f_m \psi(v)\,\dv\right\}$ for each $\psi\in C_c(\R^d)$, is typically a consequence of velocity averaging lemmas \cite{GLPS88, GPS85, Glassey96}. While these lemmas usually yield convergence only along subsequences, assumption (P1) guarantees uniqueness of the limit and thus removes the need for subsequence extraction.
    \item (P3) naturally arises from entropy dissipation estimates for kinetic Fokker--Planck equations such as \eqref{eq: main}, \eqref{eq: LINFP}, and \eqref{eq: FPT}, which provide precisely the required Fisher-information bounds.
    \item (P4) is satisfied, for instance, when $a_m = \calT_{f_m}$ and $a = \calT_f$, provided that velocity averaging lemmas ensure convergence of the second moments of $f_m$ in $L^1((0,T)\times \Omega)$.
\end{enumerate}
\end{remark}

%
%
%
%
%
%
\subsection{Constant diffusion coefficient case}
 
We first establish the compactness in the baseline case $a_m\equiv1$, which is the
key building block for the general measurable weights.

\begin{lemma}\label{lem: key compactness}
    Let $\Omega$ be an open subset of $\R^d$ (not necessarily bounded), and $\{f_m(t,x,v)\}$ a sequence of nonnegative functions such that
    \begin{enumerate}[(L1)]
        \item The sequence $\{f_m\}$ is weakly convergent in $L^1((0,T)\times\Omega \times\R^d)$ to $f$.
        \item For each $\psi\in C_c(\R^d)$, the sequence of averages $\left\{\int_{\R^d} f_m \psi(v) \,\dv \right\}_m$ is compact in $L^1((0,T)\times \Omega)$.
        \item For each $\alpha,\ve>0$, there exists a measurable set $E\subset (0,T)\times\Omega$ with $\mathscr{L}^{d+1}(E) \le \ve$, such that the Fisher information enjoys the following bound
        \begin{align*}
            \int_{(K_\alpha \cap E^c) \times \R^d} \left|\nabla_v \sqrt{f_m} \right|^2 \dt\dx\dv \le C(\ve,\alpha),
        \end{align*}
        where $\scrL^{d+1}$ is the $(d+1)$-dimensional Lebesgue measure and
        \begin{align*}
            K_\alpha := \{(t,x)\in (0,T)\times\Omega: \rho_f(t,x) > \alpha \}.
        \end{align*}
    \end{enumerate}
    Then modulo a subsequence $f_m$ is convergent a.e. and strongly in $L^1((0,T)\times\Omega\times\R^d)$.
\end{lemma}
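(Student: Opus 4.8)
The plan is to show that the weak $L^1$ limit $f$ is in fact a strong limit by proving that no mass is lost — equivalently, that $\int f_m\,\dt\dx\dv \to \int f\,\dt\dx\dv$ on a suitable exhaustion, plus local equiintegrability — and that $f_m \to f$ a.e. The natural device is to combine (L2) with (L3) via a "second-moment-in-$v$ propagation of regularity" argument in the spirit of the DiPerna--Lions--Meyer velocity-averaging technology: on the non-vacuum set $K_\alpha$ (minus a small bad set $E$), the Fisher information bound says $\sqrt{f_m}$ is uniformly bounded in $L^2_tL^2_xH^1_v$ locally in $v$, so that combined with $L^1$-bounds one gains compactness in $v$, which upgrades the velocity-average compactness of (L2) to genuine strong compactness of $f_m$ itself.

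First I would set up the localization. Fix $\alpha>0$ and work on $K_\alpha$. Away from a bad set $E$ of small measure supplied by (L3), I have $\sup_m \int_{(K_\alpha\cap E^c)\times\R^d}|\nabla_v\sqrt{f_m}|^2 < \infty$. Writing $g_m := \sqrt{f_m}$, one has $g_m$ bounded in $L^2((K_\alpha\cap E^c)\times B_R)$ for every $R$ (since $f_m$ is weakly $L^1$, hence equiintegrable, hence $L^1$-bounded, so $g_m\in L^2$ on bounded $v$-sets after controlling the tails via the moment/entropy information implicit in weak $L^1$ compactness — here I would invoke that (L1) gives a Dunford--Pettis equiintegrable family, so the contribution of $\{|v|>R\}$ is uniformly small). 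Thus $\{g_m\}$ is bounded in $L^2_{t,x}H^1_v$ locally, and $\{g_m^2 = f_m\}$ is equiintegrable; the key point is then to transfer compactness from the $v$-variable (where we have an $H^1$ bound) and the velocity averages (where (L2) gives $L^1_{t,x}$ compactness) to joint strong compactness. Concretely: mollify in $v$, $f_m^\eta := f_m *_v \phi_\eta$; the $H^1_v$ bound on $\sqrt{f_m}$ plus equiintegrability controls $\|f_m^\eta - f_m\|_{L^1}$ uniformly in $m$ by $o_\eta(1)$ on $K_\alpha\cap E^c$ (this is a Poincaré-type estimate: $|f_m^\eta - f_m| \le \|\nabla_v\sqrt{f_m}\|_{L^2(B_\eta(v))}\cdot(\sqrt{f_m^\eta}+\sqrt{f_m})$ integrated, Cauchy--Schwarz), while for fixed $\eta$, $f_m^\eta(t,x,v) = \int f_m(t,x,w)\phi_\eta(v-w)\dw$ is a velocity average, hence compact in $L^1_{t,x}$ for each $v$ by (L2), and uniformly so in $v$ over compacts. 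Therefore $\{f_m\}$ is Cauchy in $L^1((K_\alpha\cap E^c)\times B_R)$ up to an arbitrarily small error, giving strong convergence there along a subsequence.

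Next I would remove the auxiliary sets. Letting $\ve\downarrow 0$ handles $E$ (diagonal extraction, using equiintegrability so the $L^1$-mass on $E$ is uniformly $o_\ve(1)$), letting $R\uparrow\infty$ handles the velocity tails (again by equiintegrability from (L1)), and letting $\alpha\downarrow 0$ exhausts $\{\rho_f>0\}$. On the vacuum set $\{\rho_f = 0\}$, I claim $f_m\to 0$ strongly: indeed $\int_{\{\rho_f=0\}\times\R^d} f_m = \int_{\{\rho_f=0\}} \rho_{f_m}$, and $\rho_{f_m}\to\rho_f = 0$ in $L^1((0,T)\times\Omega)$ — this last convergence follows from (L2) by a truncation argument (approximate $\mathbf 1$ by $\psi\in C_c$, control the tail $\int_{|v|>R} f_m$ uniformly via equiintegrability, and use that $\rho_f = \int f\,\dv$ is the weak limit from (L1)). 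Combining, $f_m\to f$ in $L^1((0,T)\times\Omega\times\R^d)$, and a further subsequence converges a.e.

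The main obstacle I anticipate is the tail control in $v$ making the above Poincaré/mollification estimate legitimate: the Fisher information is only controlled on bounded $v$-regions after I know $\sqrt{f_m}\in L^2$ there, and I must ensure the errors from $\{|v|>R\}$, from the bad set $E$, and from the vacuum region all shrink \emph{uniformly in $m$} — this is exactly where the Dunford--Pettis equiintegrability from (L1) is essential and must be used carefully, rather than any quantitative moment bound (which is not among the hypotheses of this lemma, unlike in Step 5 of the main proof). A secondary delicate point is that (L2)'s compactness is stated per test function $\psi$, so I must argue uniformity of the velocity-average compactness over a family $\{\phi_\eta(v-\cdot): v\in B_R\}$ for fixed $\eta$ — this follows since that family is relatively compact in $C_c(\R^d)$, allowing a finite-net reduction.
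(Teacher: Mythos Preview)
Your proposal is correct and follows essentially the same route as the paper: mollify in $v$, control $\|f_m - f_m^\eta\|_{L^1(K_\alpha\cap E^c)}$ by a Poincar\'e--Wirtinger estimate using the Fisher bound (the paper writes this as $\int|f_m - f_m^\delta|\,\dv \le \delta\int|\nabla_v f_m|\,\dv = 2\delta\int\sqrt{f_m}\,|\nabla_v\sqrt{f_m}| \le C\delta\sqrt{C(\ve,\alpha)}$), use (L2) to get strong $L^1$ convergence of $f_m^\eta$, and then decompose $(0,T)\times\Omega = K_\alpha^c \cup E \cup (K_\alpha\cap E^c)$ with equiintegrability handling the first two pieces. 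The one simplification the paper offers over your sketch is in the treatment of $f_m^\eta$: rather than a finite-net argument over the family $\{\phi_\eta(v-\cdot):v\in B_R\}$, the paper observes directly that $f_m^\eta \rightharpoonup f^\eta$ weakly in $L^1_{t,x,v}$ (trivial from (L1)) and $f_m^\eta \to f^\eta$ a.e.\ (immediate from (L2) at each fixed $v$), so Vitali gives $f_m^\eta \to f^\eta$ strongly in $L^1_{t,x,v}$ with no uniformity-in-$v$ issue to address.
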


\begin{remark} \label{rem: samp}
    The proof of Lemma \ref{lem: key compactness} is an adaptation and generalization of that of \cite[Proposition 2]{sampaio2024}; for completeness, we provide all details. Indeed, in \cite{sampaio2024}, the sequence $f_m$ is assumed to converge weakly$^*$ in $L^\infty$ (due to the quantum structure of the Landau--Fermi--Dirac equation), and assumption (L3) is replaced by an $L^2$-bound on the velocity gradients,
    \begin{align*}
        \int_{(K_\alpha\cap E^c)\times\R^d} |\nabla_v f_m|^2\,\dt\dx\dv \le C(\ve,\alpha).
    \end{align*}
Our contribution is to relax both requirements: 
weak$^*$ convergence in $L^\infty$ is replaced by equiintegrability and tightness in $L^1$, 
and the $L^2$-gradient bound is replaced by boundedness of the Fisher information.
\end{remark}

\begin{proof}[Proof of Lemma \ref{lem: key compactness}]
We first treat the case of bounded $\Omega$. Indeed, in the general case of unbounded $\Omega$, one can set $B_R$ as the standard open ball in $\R^d$ of radius $R>0$, apply the below proof to $\Omega\cap B_R$, and then extract diagonally by taking $R\uparrow \infty$.
    
    First we remark that the weak convergence assumption (L1) and the Dunford--Pettis theorem imply that
    \begin{align*}
        \lim_{R\to\infty} \sup_m \int_{(0,T)\times\Omega\times \{|v|>R\}} f_m(t,x,v)\,\dt\dx\dv = 0 .
    \end{align*}
    In particular, for each $\psi(v)\in C^0\cap L^\infty(\R^d)$, it follows that
    \begin{align} \label{eq: fish tight}
        \lim_{R\to\infty} \sup_m \int_{(0,T)\times\Omega\times \{|v|>R\}} f_m(t,x,v) \psi(v)\,\dt\dx\dv = 0 .
    \end{align}
    By utilizing the above tightness, and standard density arguments, we may therefore assume that assumption (L2) holds for each $\psi \in C^0\cap L^\infty (\R^d)$.
    
    Then, we observe that we can extract a subsequence such that (L2) holds for every $\psi\in C^0\cap L^\infty(\R^d)$ along this subsequence. Indeed, let us fix any $R>0$ and consider those $\psi\in C_c(B_R)$. Now $C_c(B_R) \subset L^1(B_R)$, and by standard approximation theory there is a Schauder basis of $L^1(B_R)$ which consists of smooth, compactly supported functions: thus, by a diagonal extraction we obtain a ($R$-dependent) subsequence for which $\left\{\intr f_m(t,x,v) \psi(v)\,\dv\right\}_m$ is convergent in $L^1((0,T)\times\Omega)$ for every $\psi \in C_c(B_R)$. Then taking $R\uparrow \infty$, we can utilize \eqref{eq: fish tight} to extract another diagonal subsequence, for which
    \begin{align} \label{eq: fish velavg}
        \int_{\R^d} f_m \psi(v) \,\dv \to \int_{\R^d} f \psi(v) \,\dv \text{ a.e. and in $L^1((0,T)\times\Omega)$} \quad \forall \psi\in C^0 \cap L^\infty(\R^d).
    \end{align}
    We remark that in particular, along this subsequence:
    \begin{align}\label{eq: fish: rho}
        \rho_{f_m} \to \rho_f \quad \text{a.e. and in }L^1((0,T)\times\Omega).
    \end{align}
    
    Now fix a mollifier $\eta\in C_c^\infty(\R^d)$ such that $\intr \eta(v)\,\dv = 1$ and $\textnormal{supp}(\eta)\subset B_1$. Denote
    \begin{align*}
        \eta_\delta(v) = \frac{1}{\delta^d} \eta\left(\frac{v}{\delta}\right), \quad f_m^\delta:= f_m * \eta_\delta, \quad f^\delta := f * \eta_\delta. 
    \end{align*}
    Then, assumption (L1) shows that for any $\varphi \in L^\infty((0,T)\times\Omega\times\R^d)$
    \begin{align*}
        \int_0^T \int_{\Omega\times\R^d} f_m^\delta(t,x,v) \varphi(t,x,v) \,\dx\dv\dt &= \int_0^T \int_{\R^d \times \Omega\times\R^d  } f_m(t,x,w) \eta_\delta(v-w) \varphi(t,x,v) \,\dw\dx\dv\dt \\
        &\xrightarrow[m\to\infty]{} \int_0^T \int_{\R^d \times \Omega\times\R^d } f(t,x,w) \eta_\delta(v-w) \varphi(t,x,v) \,\dw\dx\dv\dt \\
        &= \int_0^T \int_{\Omega\times\R^d} f^\delta(t,x,v) \varphi(t,x,v) \,\dx\dv\dt ,
    \end{align*}
    in other words
    \begin{align}\label{eq: fmdelta weak}
        f_m^\delta \xrightarrow[m\to\infty]{} f^\delta \quad \text{weakly in}\quad L^1((0,T)\times\Omega\times\R^d).
    \end{align}
    On the other hand, the velocity averaging \eqref{eq: fish velavg} implies that
    \begin{align*}
        f_m^\delta \xrightarrow[m\to\infty]{} f^\delta \quad \text{a.e. }(t,x,v)\in (0,T)\times\Omega\times\R^d.
    \end{align*}
    Combining this with \eqref{eq: fmdelta weak}, we deduce via the Vitali  convergence theorem (see for instance \cite[IV.8]{DunfordSchwartz}) that
    \begin{align}\label{eq: fmdelta L1}
        f_m^\delta \xrightarrow[m\to\infty]{} f^\delta \quad \text{strongly in} \quad L^1((0,T)\times\Omega\times\R^d).
    \end{align}
    
    Next, we write out using the definition of $f_m^\delta$:
    \begin{align*}
        \int_{\R^d} |f_m - f_m^\delta|\,\dv &= \intr  \left|f_m(v) - \int_{B_\delta} \eta_\delta(v_*) f_m(v-v_*) \,\dv_* \right|\,\dv \\
        &\le \int_{\R^d} \int_{B_\delta} \eta_\delta(v_*) |f_m(v-v_*) - f_m(v)| \,\dv_* \dv \\
        &\le \int_{\R^d} \int_{B_\delta} \int_0^1 \eta_\delta(v_*)  |v_*| |\nabla_v f_m(v- hv_*)| \,\tnd h \dv_* \dv,
    \end{align*}
    the last line following by the mean-value theorem. Utilizing $|v_*|\le \delta$ and the Fubini--Tonelli theorem, we can then estimate further as
    \begin{align*}
        \int_{\R^d} |f_m - f_m^\delta|\,\dv &\le \delta \int_{\R^d} \int_{B_\delta} \int_0^1 \eta_\delta(v_*) |\nabla_v f_m(v-hv_*)| \,\tnd h \dv_* \dv \\
        &\le \delta \int_0^1 \int_{B_\delta} \left(\int_{\R^d} |\nabla_v f_m(v)| \,\dv \right) \eta_\delta(v_*)\,\dv_* \tnd h \\
        &\le \delta \int_{\R^d} |\nabla_v f_m(v)|\,\dv.
    \end{align*}
    Integrating this over $(t,x)\in K_\alpha \cap E^c$ we therefore obtain a Poincar\`e--Wirtinger type inequality
    \begin{align*}
        \int_{K_\alpha \cap E^c} \int_{\R^d} |f_m - f_m^\delta|\,\dt\dx\dv &\le \delta \int_{(K_\alpha\cap E^c)\times \R^d} |\nabla_v f_m(v)|\,\dt\dx\dv. 
    \end{align*}
    To control this with the Fisher information, we observe first that since $\{f_m\}$ is weakly convergent, $\{f_m\}$ is a bounded subset of $L^1((0,T)\times\Omega\times\R^d)$. Thus we can compute
    \begin{align*}
        &\int_{(K_\alpha \cap E^c)\times\R^d} |\nabla_v f_m(v)| \,\dt\dx\dv \\
        &\quad = \int_{(K_\alpha \cap E^c)\times\R^d} 2\sqrt{f_m} \frac{|\nabla_v f_m|}{2\sqrt{f_m}} \mathbf{1}_{f_m>0} \,\dt\dx\dv \\
        &\quad \le 2\left(\int_{(K_\alpha \cap E^c)\times\R^d} f_m\,\dt\dx\dv\right)^{\frac{1}{2}} \left(\int_{(K_\alpha \cap E^c)\times\R^d} \frac{|\nabla_v f_m|^2}{4f_m} \mathbf{1}_{f_m>0} \,\dt\dx\dv\right)^{\frac{1}{2}} \\
        &\quad \le C \left(\int_{(K_\alpha \cap E^c)\times\R^d} \left|\nabla_v \sqrt{f_m}\right|^2 \,\dt\dx\dv\right)^{\frac{1}{2}} \\
        &\quad = C\sqrt{C(\ve,\alpha)}.
    \end{align*}
    Hence, we have
    \begin{equation} \label{eq: error}
        \int_{(K_\alpha \cap E^c)\times\R^d} |f_m - f_m^\delta|\,\dt\dx\dv \le C\delta \sqrt{C(\ve,\alpha)}.
    \end{equation}

    Now, we can verify that $f_m$ is a Cauchy sequence in $L^1((0,T)\times\Omega\times\R^d)$. Indeed, let us write out
    \begin{align*}
 \int_{(0,T)\times\Omega\times\R^d} |f_m - f_n| \,\dt\dx\dv  
        &= \int_{K_\alpha^c \times \R^d} |f_m - f_n|\,\dt\dx\dv + \int_{E\times \R^d} |f_m - f_n|\,\dt\dx\dv \\
        &\quad + \int_{(K_\alpha \cap E^c)\times \R^d} |f_m - f_n|\,\dt\dx\dv \\
        &=: I_1 + I_2 + I_3.
    \end{align*}
    
    For $I_1$, the triangle inequality and \eqref{eq: fish: rho} give
    \begin{align*}
        \limsup_{m,n\to\infty} I_1 &\le \limsup_{m,n\to\infty} \int_{K_\alpha^c} (\rho_{f_m} + \rho_{f_n}) \,\dt\dx  \xrightarrow[m,n\to\infty]{} \int_{K_\alpha^c} 2\rho_f \,\dt\dx  \le 2\alpha \mathscr{L}^{d+1}((0,T)\times\Omega),
    \end{align*}
    the last line following by definition of $K_\alpha$.

    For $I_2$, we find 
    \begin{align*}
        I_2 \le \int_E (\rho_{f_m} + \rho_{f_n}) \,\dt\dx.
    \end{align*}
    Since $\{\rho_{f_m}\}$ is strongly convergent in $L^1((0,T)\times\Omega)$ (see \eqref{eq: fish: rho}), it is also weakly convergent; in particular $\{\rho_{f_m}\}$ is equiintegrable in $(0,T)\times\Omega$. This implies the existence of a modulus of continuity $\omega:[0,\infty]\to [0,\infty]$ (independent of $m$) such that
    \begin{align*}
        \sup_m \int_E \rho_{f_m} \,\dt\dx \le \omega(\ve) , \quad \text{where} \quad  \omega(\ve)\xrightarrow[\ve\downarrow 0]{} 0.
    \end{align*}
    We deduce that
    \begin{align*}
     I_2 \le 2 \omega(\ve).   
    \end{align*}

    For $I_3$, we split it as
    \begin{align*}
        \limsup_{m,n\to\infty} I_3 &\le \limsup_{m,n\to\infty}\Bigg( \int_{(K_\alpha \cap E^c)\times \R^d} |f_m - f_m^\delta| \,\dt\dx\dv + \int_{(K_\alpha \cap E^c)\times\R^d} |f_n - f_n^\delta| \,\dt\dx\dv \\
        &\qquad \qquad \qquad + \int_{(K_\alpha \cap E^c)\times\R^d} |f_m^\delta - f_n^\delta| \,\dt\dx\dv \Bigg)\\
        &\le C\delta \sqrt{C(\ve,\alpha)},
    \end{align*}
    the last line following by \eqref{eq: error} and the fact that $\{f_m^\delta\}_m$ is Cauchy in $L^1((0,T)\times\Omega\times\R^d)$ (see \eqref{eq: fmdelta L1}). Collecting the estimates, we conclude that for any $\alpha,\delta,\ve>0$
    \begin{align*}
        \int_{(0,T)\times\Omega\times\R^d} |f_m - f_n|\,\dt\dx\dv \le 2\alpha \mathscr{L}^{d+1}((0,T)\times\Omega) + 2\omega(\ve) + C\delta \sqrt{C(\ve,\alpha)},
    \end{align*}
    where $C>0$ is independent of $\ve,\delta,\alpha$. We fix $\ve,\alpha>0$ and take $\delta\downarrow 0$ first to deduce that the left-hand side is bounded by $2\alpha \mathscr{L}^{d+1}((0,T)\times\Omega) + 2\omega(\ve)$. Then we take $\alpha,\ve\downarrow 0$ to complete the proof.
\end{proof}
%
%
%
%
%
%
\subsection{General diffusion coefficient case} 
We now prove Proposition \ref{prop: weighted fisher} for general measurable weights,
following the level-set localization strategy outlined above.

\begin{proof}[Proof of Proposition \ref{prop: weighted fisher}]
    We assume that $\Omega$ is bounded, since the case of unbounded domains follows
    by applying the argument to $\Omega\cap B_R$ and then resorting to a diagonal process.

    Our starting observation is that we can extract a strictly decreasing sequence $c_k\downarrow 0$
    with
    \begin{align*}
        \mathscr{L}^{d+1}(\{(t,x):a(t,x) = c_k\}) = 0 \quad \forall k\in \bbN .
    \end{align*}
    This is based upon the simple fact that a measurable function on a bounded domain cannot have ``too many level sets.'' Indeed, since
    \begin{align*}
        (0,T)\times\Omega \supset \bigcup_{c\in (0,1]} \{(t,x): a(t,x) = c\},
    \end{align*}
    we deduce that only at most countably many of the level sets can have non-zero Lebesgue measure. Without loss of generality, we set
    \begin{align*}
        c_k = \frac{1}{k},\quad k\in \bbN.
    \end{align*}

    The advantage of distinguishing level sets in this way is that, now, the pointwise convergence assumption (P4) implies convergence of the following indicator functions:
    \begin{align}\label{eq: ind ptw}
        \mathbf{1}_{\{a_m \ge 1/k\}}(t,x) \xrightarrow[m\to\infty]{} \mathbf{1}_{\{a \ge 1/k\}}(t,x) \quad \text{for almost every }(t,x)\in D_+.
    \end{align}
    Let us view $\mathbf{1}_{\{a_m\ge 1/k\}}$ as a sequence bounded in $L^\infty((0,T)\times\Omega \times \R^d)$. We will show that the sequence $\{\mathbf{1}_{D_+} \mathbf{1}_{\{a_m \ge 1/k\}} f_m\}$ satisfies the assumptions of Lemma \ref{lem: key compactness}, for each fixed $k$. 

    Toward this end, we first verify that
    \begin{align}\label{eq: prod weak}
        \mathbf{1}_{D_+} \mathbf{1}_{\{a_m\ge 1/k\}} f_m \xrightarrow[m\to\infty]{} \mathbf{1}_{D_+} \mathbf{1}_{\{a\ge 1/k\}} f \quad \text{weakly in }L^1((0,T)\times\Omega\times\R^d).
    \end{align}
    The starting step here is localization in the velocity variable. Owing to assumption (P1) and the Dunford--Pettis criterion, given $\ve>0$, we fix $R_\ve>0$ for which
    \begin{align*}
        \sup_m \int_{(0,T)\times\Omega\times (\R^d\setminus B_{R_\ve})} f_m \,\dt\dx\dv \le \ve.
    \end{align*}
    Similarly, since $f\in L^1((0,T)\times\Omega\times\R^d)$, we may assume without loss of generality that the same $R_\ve>0$ satisfies
    \begin{align*}
        \int_{(0,T)\times\Omega\times (\R^d\setminus B_{R_\ve})} f \,\dt\dx\dv \le \ve.
    \end{align*}
    Now the pointwise convergence in \eqref{eq: ind ptw} and Egorov's theorem show we can find a measurable subset $E_\ve \subset (0,T)\times\Omega\times B_{R_\ve}$ with $\mathscr{L}^{2d+1}(E_\ve)\le \ve$ such that
    \begin{align}\label{eq: ind unif}
        \mathbf{1}_{D_+} \mathbf{1}_{\{a_m\ge 1/k\}} \xrightarrow[m\to\infty]{} \mathbf{1}_{D_+} \mathbf{1}_{\{a \ge 1/k\}} \quad \text{uniformly in }[(0,T)\times\Omega\times B_{R_\ve}]\setminus E_\ve.
    \end{align}
    Since the family $\{f_m\}$ is equiintegrable and $f\in L^1((0,T)\times\Omega\times\R^d)$, we remark that there is a modulus of continuity $\omega:[0,\infty]\to [0,\infty]$, independent of $m$, for which
    \begin{align*}
        \sup_m \int_{E_\ve} f_m \,\dt\dx\dv , \; \int_{E_\ve} f\,\dt\dx\dv \le \omega(\ve) \xrightarrow[\ve\downarrow 0]{} 0 .
    \end{align*}
    Hence, for an arbitrary function $\varphi\in L^\infty((0,T)\times\Omega\times\R^d)$, we obtain that
    \begin{align*}
        &\int_{(0,T)\times\Omega\times\R^d} \mathbf{1}_{D_+} (\mathbf{1}_{\{a_m\ge 1/k\}} f_m - \mathbf{1}_{\{a \ge 1/k \} } f ) \varphi(t,x,v) \,\dt\dx\dv\\
        &\quad \le \|\varphi\|_{L^\infty} \int_{E_\ve} (f_m + f) \,\dt\dx\dv   + \int_{[(0,T)\times\Omega\times B_{R_\ve}]\setminus E_\ve} \mathbf{1}_{D_+} \, (\mathbf{1}_{\{a_m\ge 1/k\}} - \mathbf{1}_{\{a\ge 1/k\}} ) f \varphi \,\dt\dx\dv \\
        &\qquad + \int_{[(0,T)\times\Omega\times B_{R_\ve}]\setminus E_\ve} \mathbf{1}_{D_+} \mathbf{1}_{\{a_m\ge 1/k \} } (f_m - f) \varphi \,\dt\dx\dv  + \int_{(0,T)\times\Omega\times (\R^d\setminus B_{R_\ve}) } \mathbf{1}_{D_+} (f_m + f) \varphi \,\dt\dx\dv \\
        &\quad\le 2\|\varphi\|_{L^\infty} \omega(\ve) + \|\mathbf{1}_{D_+}\mathbf{1}_{\{a_m\ge 1/k\}} - \mathbf{1}_{D_+} \mathbf{1}_{\{a\ge 1/k\}}\|_{L^\infty([(0,T)\times\Omega\times B_{R_\ve}]\setminus E_\ve )} \|\varphi\|_{L^\infty} \|f\|_{L^1} \\
        &\qquad + \int_{[(0,T)\times\Omega\times B_{R_\ve}]\setminus E_\ve} \mathbf{1}_{D_+} \mathbf{1}_{\{a_m\ge 1/k \} } (f_m - f) \varphi \,\dt\dx\dv + 2\|\varphi\|_{L^\infty} \ve.
    \end{align*}
    Taking $m\to\infty$, the uniform convergence \eqref{eq: ind unif} and the weak convergence $f_m\weakto f$ show
    \begin{align*}
        \limsup_{m\to\infty} \int_0^T \inttr (\mathbf{1}_{\{a_m\ge 1/k\}} f_m - \mathbf{1}_{\{a \ge 1/k \} } f ) \varphi(t,x,v) \,\dt\dx\dv \le 2\|\varphi\|_{L^\infty} (\omega(\ve) + \ve).
    \end{align*}
    As $\ve>0$ is arbitrary we then take it to zero in order to conclude that \eqref{eq: prod weak} holds true.

    Next, we find that (L2) of Lemma \ref{lem: key compactness} holds for $\{\mathbf{1}_{D_+} \mathbf{1}_{\{a_m\ge 1/k\}} f_m\}$. Indeed, for each $\psi \in C_c(\R^d)$, writing $\psi$ into its positive and negative parts $\psi = \psi^+ - \psi^-$, we see that the following dominations hold:
    \begin{align*}
        \left|\intr \mathbf{1}_{D_+} \mathbf{1}_{\{a_m\ge 1/k\}}  f_m \psi^+(v)\,\dv \right| &= \mathbf{1}_{D_+} \mathbf{1}_{\{a_m\ge 1/k\}} \left|\intr f_m \psi^+(v)\,\dv\right| \le \intr f_m \psi^+(v) \,\dv ,\\
        \left|\intr \mathbf{1}_{D_+} \mathbf{1}_{\{a_m\ge 1/k\}} f_m \, \psi^-(v)\,\dv \right| &\le \intr f_m \psi^-(v)\,\dv.
    \end{align*}
    Since (P2) gives that $\intr f_m\psi^{\pm}(v)\,\dv$ is strongly convergent in $L^1((0,T)\times\Omega)$, it follows by the generalized Lebesgue dominated convergence theorem (or one can also use the Vitali convergence theorem) to deduce that
    \begin{align*}
        \mathbf{1}_{D_+} \mathbf{1}_{\{a_m\ge 1/k\}} \intr f_m \psi(v)\,\dv \xrightarrow[m\to\infty]{} \mathbf{1}_{D_+} \mathbf{1}_{\{a\ge 1/k\}} \intr f \psi(v) \,\dv \quad \text{in} \quad L^1((0,T)\times\Omega).
    \end{align*}

    Finally, (L3) holds for $\{\mathbf{1}_{D_+}\mathbf{1}_{\{a_m\ge 1/k\}} f_m\}_m$, since only $f_m$ is $v$-dependent:
    \begin{align*}
        \int_{(0,T)\times\Omega\times\R^d} \left|\nabla_v \sqrt{\mathbf{1}_{D_+} \mathbf{1}_{\{a_m\ge 1/k\}} f_m}\right|^2 \,\dt\dx\dv &\le \int_{\{a_m \ge 1/k\}\times\R^d} \left|\nabla_v \sqrt{f_m}\right|^2 \,\dt\dx\dv \\
        &\le k \int_{(0,T)\times\Omega\times\R^d} a_m(t,x) \left|\nabla_v \sqrt{f_m}\right|^2 \,\dt\dx\dv \\
        &\le Ck \quad \forall k\in \bbN.
    \end{align*}

    From Lemma \ref{lem: key compactness} we deduce that
    \begin{align} \label{eq: prod ptw}
        f_m \mathbf{1}_{\{a_m\ge 1/k\}}  \xrightarrow[m\to\infty]{}  f \mathbf{1}_{\{a\ge 1/k\}} \quad \text{a.e. in }D_+\times\R^d, \quad \forall k\in \bbN.
    \end{align}
    We can now assume, after a diagonal extraction, that we are along a subsequence for which \eqref{eq: prod ptw} holds for every $k$. It is then straightforward to check that $f_m \to f$ almost everywhere in $D_+\times\R^d$. Indeed, fix $(t,x,v) \in D_+\times \R^d$ for which \eqref{eq: prod ptw} holds. Then by definition, $a(t,x)>0$. Due to the Archimedean property, we can locate $k_* = k_*(t,x)\in \bbN$ for which
    \begin{align*}
        a(t,x) > 1/k_*,
    \end{align*}
    and by assumption (P4) we can suppose $a_m(t,x) > 1/k_*$ for all $m$ large enough. Since this implies $\mathbf{1}_{\{a_m \ge 1/k_*\}}(t,x) \equiv 1$ for all large enough $m$, we use \eqref{eq: prod ptw} to deduce
    \begin{align*}
        f_m(t,x,v) = f_m(t,x,v) \mathbf{1}_{\{a_m\ge 1/k_*\}}(t,x) \xrightarrow[m\to\infty]{} f(t,x,v) \mathbf{1}_{\{a\ge 1/k_*\}}(t,x) = f(t,x,v).
    \end{align*}
    Hence $f_m\to f$ a.e. in $D_+\times \R^d$.

    Finally, we need to discuss the convergence $f_m\to f$ on $\{a = 0\}$. Notice that thanks to the weak compactness of $\{f_m\}$, the arguments preceding \eqref{eq: fish velavg}--\eqref{eq: fish: rho} show $\varphi\equiv 1$ is admissible in assumption (P2). Hence
    \begin{align*}
        \rho_{f_m}\to \rho_f \quad \text{a.e. and in }L^1((0,T)\times\Omega).
    \end{align*}
    We remark $\rho_f(t,x) = 0$ is equivalent to $f(t,x,v) = 0$ a.e. $v\in \R^d$, and that (P4) implies $D_0 := \{(t,x):a(t,x)=0 \} = \{(t,x): \rho_f(t,x) = 0\}$. In this way,
    \begin{align*}
        \int_{D_0} \intr |f_m - f| \,\dv\dt\dx &= \int_{D_0} \intr f_m \,\dv\dt\dx  = \int_{D_0} \rho_{f_m} \,\dt\dx  \xrightarrow[m\to\infty]{} \int_{D_0} \rho_f \,\dt\dx  = 0,
    \end{align*}
    and therefore $f_m\to f$ in $L^1(D_0\times \R^d)$.     Altogether, modulo a subsequence, $f_m\to f$ a.e. on $(0,T)\times\Omega\times\R^d$,  and the strong convergence in $L^1((0,T)\times\Omega\times\R^d)$ follows from
    the Vitali convergence theorem.
    \end{proof}

The following corollary is a direct consequence, and discusses the case where $a(t,x)$ does not vanish anywhere.

\begin{corollary} 
    Let $\Omega\subset\R^d$ be an open subset. Assume that $\{f_m(t,x,v)\}$ satisfies
    \begin{enumerate}
        \item[(P1)] $f_m \weakto f$ weakly in $L^1((0,T)\times\Omega\times\R^d)$,
        \item[(P2)] For each $\psi \in C_c(\R^d)$, the sequence of averages $\left\{ \int_{\R^d} f_m \psi(v)\,\dv \right\}$ is compact in $L^1((0,T)\times\Omega)$.
        \item[(P3)] The following weighted Fisher information is bounded:
        \begin{align*}
            \int_{(0,T)\times\Omega\times\R^d} a_m(t,x) |\nabla_v \sqrt{f_m}|^2 \,\dt\dx\dv \le C,
        \end{align*}
        where $a_m:(0,T)\times\Omega\to [0,\infty]$ is Lebesgue measurable.
        \item[(P4')] $a_m(t,x) \to a(t,x) > 0$ almost every $(t,x)\in (0,T)\times \Omega$.
    \end{enumerate}
    Then modulo a subsequence it holds that $f_k \to f$ a.e. and strongly in $L^1((0,T)\times\Omega\times\R^d)$.
\end{corollary}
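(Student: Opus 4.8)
The plan is to reduce the corollary to Proposition \ref{prop: weighted fisher} by a harmless modification of the weights; the only real work is to arrange the structural hypothesis (P4) that ties the support of the limiting weight to the non-vacuum set $\{\rho_f>0\}$. Note that (P4') only asserts $a>0$ almost everywhere, so the set $D_+$ built from $a$ is all of $(0,T)\times\Omega$ modulo a null set; this need not coincide with $\{\rho_f>0\}$, and that discrepancy is the sole reason the proposition does not apply verbatim.

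First I would introduce the localized weights $b_m := a_m\,\mathbf{1}_{\{\rho_f>0\}}$ and $b := a\,\mathbf{1}_{\{\rho_f>0\}}$. Since $f\in L^1$ the density $\rho_f$ is measurable, hence $\{\rho_f>0\}$ is measurable and each $b_m$ is again a Lebesgue-measurable map into $[0,\infty]$. Then I would verify the hypotheses of Proposition \ref{prop: weighted fisher} for $\{f_m\}$ with weights $\{b_m\}$ and limit $b$. Hypothesis (P1) is untouched. For (P2), the stated precompactness of $\{\int_{\R^d} f_m\psi\,\dv\}$ in $L^1$ together with (P1) forces the full sequence to converge to $\int_{\R^d} f\psi\,\dv$ (every $L^1$-limit of a subsequence must agree with the weak-$L^1$ limit), which is exactly the convergence required in the proposition. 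Hypothesis (P3) follows from $0\le b_m\le a_m$, so the $b_m$-weighted Fisher information is dominated by the $a_m$-weighted one and remains bounded by $C$. Finally, for (P4): on $\{\rho_f>0\}$ one has $b_m=a_m\to a=b$ a.e.\ by (P4'), while on $\{\rho_f=0\}$ both $b_m$ and $b$ vanish identically, so $b_m\to b$ a.e.\ on $\{b>0\}$; and since $a>0$ a.e., the set $\{b>0\}$ equals $\{\rho_f>0\}$ up to a null set, which is the compatibility condition in (P4). Proposition \ref{prop: weighted fisher} then delivers, along a subsequence, $f_m\to f$ almost everywhere and strongly in $L^1((0,T)\times\Omega\times\R^d)$.

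The hard part is conceptual rather than technical: one must notice that strict positivity of the \emph{weight} does not imply non-vacuum of the \emph{solution}, and the proof of Proposition \ref{prop: weighted fisher} genuinely uses the identification $D_+=\{\rho_f>0\}$ — both in the level-set argument on $D_+$ and in the separate treatment of the vacuum region via $\rho_{f_m}\to\rho_f$. Truncating the weights by $\mathbf{1}_{\{\rho_f>0\}}$ reconciles the two hypotheses at no cost, since it only shrinks the weights, and only on a set where all the relevant contributions vanish in the limit anyway. If, in addition, one knew $\rho_f>0$ a.e.\ (for instance from a strong maximum principle for the solutions), this truncation would be unnecessary and the corollary would be an immediate specialization of Proposition \ref{prop: weighted fisher}.
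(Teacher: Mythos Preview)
Your argument is correct, but it differs from the paper's route. The paper does not apply Proposition~\ref{prop: weighted fisher} as a black box; instead it observes that the \emph{first half} of its proof --- the level-set argument yielding $f_m\to f$ a.e.\ on $D_+\times\R^d$ with $D_+:=\{a>0\}$ --- never uses the identification $D_+=\{\rho_f>0\}$. Since (P4') forces $D_+=(0,T)\times\Omega$ up to a null set, a.e.\ convergence holds on the whole domain, and Vitali finishes. Your approach instead keeps the proposition intact and manufactures compatibility with (P4) by truncating the weights via $\mathbf{1}_{\{\rho_f>0\}}$; this is slightly longer but has the virtue of treating the proposition as a closed statement rather than reopening its proof. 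Both arguments are clean; the paper's is marginally more economical, while yours would survive even if the proposition were quoted from elsewhere without access to its internals.
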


\begin{proof}
    Following the proof of Proposition \ref{prop: weighted fisher}, we have that $f_m\to f$ a.e. in $D_+\times\R^d$, where $D_+ := \{(t,x):a(t,x)>0\}$. Since (P4') implies $D_+ = (0,T)\times\Omega$ (up to a zero measure set), the result follows.
\end{proof}

%
%
%
%
%
%

\section{The inflow problem} \label{sec: inflow}

In this section we construct solutions to \eqref{eq: main} regarding the inflow problem, and namely prove Theorem \ref{thm: inflow}. Recall that $f^0$ is assumed only to satisfy \eqref{init phys}, whereas the inflow data $g$ satisfies
\begin{align}\label{ASSUMP g}
    g\ge 0, \qquad \int_{\Sigma_-^T} (1+|v|^2+|\log g|) g \,|n(x)\cdot v|\,\dt\tnd\sigma\dv < +\infty.
\end{align}

\noindent
\textbf{Setup of the problem.} We let $\Omega$ denote a bounded, open subset of $\R^d$ with $C^1$-boundary. Specifically, let us assume there exists $\mathfrak{b}\in C^1\cap W^{1,\infty}(\R^d;\R)$ with nonvanishing gradient such that
\begin{align*}
    \Omega = \{x\in \R^d : \mathfrak{b}(x) > 0 \}, \qquad 
    n(x) = - \frac{\nabla \mathfrak{b}(x)}{|\nabla \mathfrak{b}(x)|}.
\end{align*}
The effective boundary portion, \textit{i.e.} the set on which we impose the initial and boundary conditions, is
\begin{align*}
    \bbGamma := \lt( \{0\}\times \Omega\times \R^d \rt)  \cup \lt( (0,T] \times \Gamma_- \rt),
\end{align*}
where
\begin{align*}
    \Gamma_- := \{(x,v)\in \p\Omega\times\R^d : n(x)\cdot v < 0\}.
\end{align*}
Naturally, we also denote
\begin{align*}
    &\Gamma_0 := \{(x,v)\in \p\Omega\times\R^d : n(x)\cdot v = 0\},\\
    &\Gamma_+ := \{(x,v)\in \p\Omega\times\R^d : n(x)\cdot v > 0\}, \\
    &\Gamma = \Gamma_- \cup \Gamma_0 \cup \Gamma_+,
\end{align*}
and also, for $t\in [0,T]$, recall the notations
\begin{align*}
    \Sigma_{\pm}^t := [0,t]\times \Gamma_{\pm}.
\end{align*}
Regarding the phase domain, we use the notation
\begin{align*}
    &\calO := \Omega\times\R^d.
\end{align*}

\noindent
\textbf{A priori estimates.}  Recall that the nonlinear operator can be rewritten as
\begin{align*}
    \nu_f \nabla_v\cdot \lt(\calT_f \nabla_v f + (v-u_f)f \rt) = \nu_f \nabla_v\cdot \lt( f \calT_f \nabla_v \log \frac{f}{M[f]} \rt),
\end{align*}
where $M[f]$ is the local Maxwellian
\begin{align*}
M[f] = \frac{\rho_f}{(2\pi \calT_f)^{d/2}} e^{-\frac{|v-u_f|^2}{2\calT_f}}.
\end{align*}
Taking $1,v_i, |v|^2$ as test functions, we find the formal conservation of mass, momentum, and energy:
\begin{align*}
    \ddt \int_{\calO} (1,v, |v|^2) f \,\dx\dv + \int_{\Gamma} (1,v,|v|^2) \gamma f \,v\cdot n(x) \,\dx\dv  = 0.
\end{align*}

Next, regarding the $\calH$-functional, we take $(1+\log f)$ as a test function in \eqref{eq: main} to find that
\begin{align*}
 \ddt \int_{\calO} f\log f \,\dx\dv + \int_{\Gamma} (\gamma f \log \gamma f) (v\cdot n(x)) \,\tnd\sigma\dv + \int_{\calO} 4 \nu_f \calT_f |\nabla_v \sqrt{f}|^2 \,\dx\dv  = d\int_{\calO} \nu_f f\,\dx\dv.
\end{align*}
Clearly, the mass-conserving nature of $f$ shows that we can then obtain natural bounds for the entropy and also of the weighted Fisher information. Formal manipulations also show that
\begin{align*}
    \ddt \int_{\calO} f\log f \,\dx\dv+ \int_{\Gamma }(\gamma f \log \gamma f) (v\cdot n(x)) \,\tnd\sigma\dv + \int_{\calO} \frac{1}{f\calT_f}|\calT_f \nabla_v f + (v-u_f)f|^2 \,\dx\dv = 0,
\end{align*}
and thus the (negative of the) entropy is decreasing provided there is no net flux through the boundary.

%
%
%
%
%
%

\subsection{The regularized system}\label{subsec: the regularized system}
Given $r\in \R_+$ and $\ve_1,\ve_2>0$, we introduce the $\ve_1,\ve_2$-renormalization of $r$ as
\begin{align*}
    \renorm{r}{\ve_1}{\ve_2} := \frac{r}{\ve_1 + \ve_2(1 + r)}. 
\end{align*}
Similarly, for a vector $\bfr\in \R^d$, we define
\begin{align*}
    \renorm{\bfr}{\ve_1}{\ve_2} := \frac{\bfr}{\ve_1 + \ve_2(1 + |\bfr|)}.
\end{align*}
Using the above notations, for fixed $\ve>0$, we define the regularized temperature as
\begin{align*}
    \calT^{(\ve)}_f := \renorm{V_f}{\rho_f}{\ve} + \ve = \frac{V_f}{\rho_f + \ve(1 + V_f)} + \ve,
\end{align*}
and the regularized bulk velocity as
\begin{align*}
    u_f^{(\ve)} := \renorm{j_f}{\rho_f}{\ve} = \frac{j_f}{\rho_f + \ve (1 + |j_f|)}, \quad j_f := \rho_f u_f.
\end{align*}
It can be easily seen that in this way
\begin{equation}\label{eq: reg major}
\begin{split}
    &|\renorm{\bfr}{1}{\ve}| \le \min\lt\{\frac{1}{\ve}, |\bfr| \rt\} \quad \forall \bfr\in \R^d,  \quad \calT_{f}^{(\ve)} \le \min\lt\{\calT_f + \ve\; , \; \frac{1}{\ve} + \ve \rt\}, \quad |u_f^{(\ve)}| \le \min\lt\{|u_f|, \frac{1}{\ve}\rt\}.
    \end{split}
\end{equation}
We then consider the following system of equations
\begin{equation}\label{eq: reg}
    \begin{cases}
        \p_t f_\ve + v\cdot \nabla_x f_\ve = \left(\nu_{f_\ve} +  \ve\right) \nabla_v\cdot \left( \calT_{f_\ve}^{(\ve)} \nabla_v f_\ve +  (\renorm{v}{1}{\ve} - u_{f_\ve}^{(\ve)}) f_\ve \right), \\
        f_\ve|_{t=0} = f_\ve^0 \in C_c(\overline{\Omega}\times\R^d;\R_+), \\
        \gamma f_{\ve}|_{\Sigma_-^T} = g_\ve \in C_c([0,T]\times \p\Omega \times \R^d; \R_+),
    \end{cases}
\end{equation}
where the regularized initial and boundary datum are assumed to satisfy the conditions \eqref{init phys} and \eqref{ASSUMP g} uniformly in $\ve$, and also
\begin{align}\label{ASSUMP f0 g}
\begin{cases}
    f_\ve^0 \to f^0 \quad \text{in} \quad L^1(\calO),\\
    \displaystyle \lim_{\ve\downarrow 0}\int_{\calO} |v|^2 f_\ve^0 \,\dx\dv  = \int_{\calO} |v|^2 f^0 \,\dx\dv,\\[6pt]
    \displaystyle \lim_{\ve\downarrow 0}\int_{\calO} f_\ve^0 \log f_\ve^0 \,\dx\dv = \int_{\calO} f^0 \log f^0 \,\dx\dv, \\
    g_\ve \to g \quad \text{in} \quad L^1(\Sigma_-^T, |n(x)\cdot v|\,\dt\tnd\sigma\dv), \\
    \displaystyle \lim_{\ve\downarrow 0} \int_{\Sigma_-^T} |v|^2 g_\ve |n(x)\cdot v| \,\dt\tnd\sigma\dv = \int_{\Sigma_-^T} |v|^2 g |n(x)\cdot v| \,\dt\tnd\sigma\dv ,\\
    \displaystyle\lim_{\ve\downarrow 0} \int_{\Sigma_-^T} g_\ve \log g_\ve \, |n(x)\cdot v| \,\dt\tnd\sigma\dv = \int_{\Sigma_-^T} g \log g \, |n(x)\cdot v| \,\dt\tnd\sigma\dv.
    \end{cases}
\end{align}
Since the construction of such regularizations is rather delicate, we provide a proof in Appendix \ref{app: REG INIT}.

The solution $(f_\ve, \gamma f_\ve)$ to \eqref{eq: reg} will be found by means of a fixed point argument. Toward that end it is necessary for us to also discuss the linear counterpart of \eqref{eq: reg}. Consider the convex space
\begin{align*}
    \mathscr{X} := L_+^1((0,T)\times\Omega) \times [L^1((0,T)\times\Omega)]^d \times L_+^1((0,T)\times\Omega),
\end{align*}
and let $(\varrho, \bfj, V) \in \mathscr{X}$ be given. The linear system corresponding to \eqref{eq: reg} then reads (for fixed $\ve>0$)
\begin{equation}\label{eq: lin}
    \begin{cases}
    \p_t F + v\cdot \nabla_x F = (\nu(\varrho, \bfj, V) + \ve)  \left( \renorm{V}{\varrho}{\ve} + \ve \right) \Delta_v F + (\nu(\varrho, \bfj, V) + \ve) \nabla_v\cdot \Big( ( \renorm{v}{1}{\ve} - \renorm{\bfj}{\varrho}{\ve}) F \Big), \\
    F|_{t=0} = f_\ve^0 , \\
    \gamma F|_{\Sigma_-^T} = g_\ve.
    \end{cases}
\end{equation}
We refer to the following preliminary result for the well-posedness of \eqref{eq: lin}.

\begin{lemma}\label{lem: zhu}
    \cite[Theorem 1.1, Corollary 2.9]{Zhu24} Let the domain $\Omega\in C^{1,1}$ be bounded, and consider the inflow problem
    \begin{equation}\label{eq: zhu}
    \begin{cases}
        \p_t h + v\cdot \nabla_x h = \nabla_v\cdot (A \nabla_v h) + \bfB \cdot \nabla_v h + ch,\\
        \gamma h|_{\Sigma_-^T} = g.
    \end{cases}
    \end{equation}
    Assume that for some $\Lambda>0$ we have
    \begin{equation} \label{eq: Ellipticity and Bounds}
    \begin{split}
        &\textnormal{$A$ is a symmetric $d\times d$ matrix with bounded measurable coefficients,}\\
        &\Lambda^{-1} |\xi|^2 \le A\xi\cdot \xi \le \Lambda |\xi|^2 \quad \forall \xi\in \R^d, \quad |\bfB| + |c| \le \Lambda.
    \end{split}
    \end{equation}
    Then for any $k\ge 0$, there exists $l>0$ dependent only on $d,k$ such that the following conditions are fulfilled:
    \begin{enumerate}[(I)]
    \item Whenever $h^0:=h|_{t=0}$ and $g$ satisfy $\langle v \rangle^l h^0 \in L^\infty(\Omega\times\R^d)$ and $\langle v\rangle^l g \in L^\infty((0,T)\times\Omega\times\R^d)$, then there exists a unique bounded weak solution $(h, \gamma h)$ to \eqref{eq: zhu} such that $h|_{t=0} = h^0$ and $\gamma h|_{\Sigma_-^T} = g$.
    \item In particular, we have that
    \begin{align*}
    &h\in C([0,T];L^2(\calO)) \cap L^2((0,T)\times \Omega; H^1_v(\R^d)),\\
    &\gamma h \in L^2_{\rm loc}(\Sigma^T, |n(x)\cdot v|\,\dt\tnd\sigma\dv),
    \end{align*}
    and the following Green's formula holds for all $\varphi\in C_c^1([0,T]\times\overline{\Omega}\times\R^d)$ and $t\in [0,T]$:
    \begin{equation*}
    \begin{split}
        &\int_{\calO} h(t)\varphi(t) - \int_{\calO} h^0 \varphi(0) + \int_{\Sigma^t} \gamma h\, \varphi\, (n(x)\cdot v) \\
        &\quad = \int_{[0,t]\times\calO} h \Big(\p_s\varphi + v\cdot \nabla_x \varphi \Big) - A\nabla_v f\cdot \nabla_v \varphi + \varphi \bfB \cdot \nabla_v h + c h \varphi.
        \end{split}
    \end{equation*}
    \item There is a constant $C>0$ dependent only on $d,T,\Lambda,k,\Omega$ such that
    \begin{align*}
        &\|\langle v \rangle^k h\|_{L^\infty((0,T)\times\Omega\times\R^d)} \le C\Big(\|\langle v\rangle^l h^0 \|_{L^\infty(\Omega\times\R^d)} + \|\langle v\rangle^l g\|_{L^\infty(\Sigma_-^T)} \Big).
    \end{align*}
    \end{enumerate}
\end{lemma}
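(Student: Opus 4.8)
Since Lemma~\ref{lem: zhu} is borrowed verbatim from \cite{Zhu24}, the only ``proof'' required here is the citation, and I would simply invoke \cite[Theorem~1.1, Corollary~2.9]{Zhu24}. For transparency let me nonetheless outline the route by which such a statement is obtained, since it clarifies which structural features of \eqref{eq: zhu} are being used. The backbone is that $\p_t + v\cdot\nabla_x - \nabla_v\cdot(A\nabla_v)$ is a Kolmogorov-type operator, \emph{uniformly parabolic in the velocity variable} under \eqref{eq: Ellipticity and Bounds}, with the degeneracy confined to the $(t,x)$ directions. I would organise the argument in four stages: (i) an $L^2$ well-posedness theory via energy estimates plus a vanishing-viscosity (or Galerkin) approximation; (ii) the kinetic trace theory giving meaning to $\gamma h$ on $\Sigma^T$ and to the Green's formula of part~(II); (iii) uniqueness, read off from the energy identity for the difference of two solutions; and (iv) the weighted $L^\infty$ bound of part~(III), obtained by a De Giorgi--Nash--Moser iteration for the base estimate followed by propagation of polynomial velocity weights.

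For (i): testing \eqref{eq: zhu} against $h$ and integrating over $[0,t]\times\calO$, the transport term produces the boundary flux $\tfrac12\int_{\Sigma^t}(\gamma h)^2\,(n(x)\cdot v)\,\tnd\sigma\dv$; splitting $\Sigma^t=\Sigma_+^t\cup\Sigma_-^t\cup\Sigma_0^t$, the outgoing piece carries the favourable sign, the incoming piece equals $-\tfrac12\int_{\Sigma_-^t}g^2\,|n(x)\cdot v|$ and is moved to the right-hand side, and the characteristic piece $\Sigma_0^t$ contributes nothing. Coercivity of $\int A\nabla_v h\cdot\nabla_v h$ from \eqref{eq: Ellipticity and Bounds}, together with Young's inequality and Gr\"onwall to absorb the $\bfB,c$ terms, gives
\[
\sup_{0\le t\le T}\|h(t)\|_{L^2(\calO)}^2 + \|\nabla_v h\|_{L^2((0,T)\times\calO)}^2 + \|\gamma h\|_{L^2(\Sigma_+^T,\,|n(x)\cdot v|)}^2 \lesssim \|h^0\|_{L^2(\calO)}^2 + \|g\|_{L^2(\Sigma_-^T,\,|n(x)\cdot v|)}^2 .
\]
A solution attaining this bound is then produced by a standard approximation — regularise by adding $\delta\Delta_x$ to make the equation genuinely parabolic, or use a Galerkin scheme compatible with the inflow condition — and one passes to the limit using weak compactness from the uniform bound and weak lower semicontinuity of the boundary term. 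The regularity $h\in C([0,T];L^2)\cap L^2_{t,x}H^1_v$ of part~(II) then follows from the bound and the equation itself.

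The technically delicate point among (i)--(iii) is the trace theory (ii). Writing the right-hand side of \eqref{eq: zhu} as an element of $L^2_{t,x}H^{-1}_v$, one runs the standard renormalization/Green's-formula machinery for kinetic transport equations (Bardos, Mischler, Carrillo) to construct $\gamma h\in L^2_{\mathrm{loc}}(\Sigma^T,|n(x)\cdot v|\,\dt\tnd\sigma\dv)$ and to justify the identity of part~(II) against $\varphi\in C_c^1$; here the $C^{1,1}$-regularity of $\p\Omega$ is exactly what controls the geometry near the grazing set $\Sigma_0^T$. Uniqueness (iii) then follows at once: two solutions with the same data have difference $h=h_1-h_2$ with vanishing initial and inflow traces, and the energy bound above forces $h\equiv 0$.

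The genuinely hard stage, and the one I expect to be the main obstacle, is (iv). One first proves a \emph{local} $L^\infty$ estimate $\|h\|_{L^\infty(Q)}\lesssim\|h\|_{L^2(2Q)}$ on intrinsic kinetic cylinders $Q$ by an $L^2\to L^\infty$ De Giorgi iteration for kinetic Fokker--Planck operators with bounded measurable coefficients (uniform ellipticity of $A$ being exactly what powers this). To globalise in $v$ with a weight, one covers $\R^d_v$ by such cylinders: because of the Galilean/kinetic scaling the intrinsic cylinder centred at velocity $v_0$ is stretched by factors depending on $|v_0|$, so the local constant degrades polynomially as $|v_0|\to\infty$, and one compensates by demanding extra decay on the data — this is precisely the loss $l>k$ in the hypothesis. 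Equivalently, the weighted bound can be read off from comparison with explicit supersolutions $\psi=C_0 e^{\lambda t}\langle v\rangle^{-k}$: since $\langle v\rangle^{-k}$ carries no $x$-dependence the transport term annihilates it, and for $\lambda$ large (depending on $d,k,\Lambda$) $\psi$ is a supersolution of \eqref{eq: zhu} dominating $h$ on the parabolic boundary $\{t=0\}\cup\Sigma_-^T$, whence $\langle v\rangle^k h\in L^\infty$. The real difficulty, in either route, is justifying these comparison and iteration arguments at the level of weak solutions — that weak solutions genuinely obey the maximum principle against barriers and interact correctly with the weights — which is exactly why the renormalization and Green's-formula apparatus of step (ii) is indispensable.
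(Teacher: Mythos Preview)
Your proposal is correct and matches the paper's treatment: the paper gives no proof of this lemma at all, simply stating it as a citation of \cite[Theorem~1.1, Corollary~2.9]{Zhu24}, exactly as you note in your first sentence. The expository outline you add of the four-stage argument (energy estimates, trace theory, uniqueness, De Giorgi--Moser for the weighted $L^\infty$ bound) is extra content not present in the paper and is a reasonable sketch of what \cite{Zhu24} does, though for the purposes of this paper only the citation is needed.
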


To the case of \eqref{eq: lin}, we remark that in the notations of \eqref{eq: zhu}, we have
\begin{equation}\label{eq: abc}
\begin{split}
    &A = (\nu(\varrho,\bfj,V) + \ve) \left(\renorm{V}{\varrho}{\ve} + \ve \right) \bbI_{d\times d}, \\
    &\bfB = (\nu(\varrho,\bfj,V) + \ve) \Big(\renorm{v}{1}{\ve} - \renorm{\bfj}{\varrho}{\ve} \Big),\\
    &c = \lt(\nu(\varrho,\bfj,V) + \ve \rt) \nabla_v\cdot \renorm{v}{1}{\ve} = \lt(\nu(\varrho,\bfj,V) + \ve \rt) \frac{d + \ve d + \ve(d-1)|v|}{(1+\ve(1+|v|))^2} ,
\end{split}
\end{equation}
and that \eqref{eq: Ellipticity and Bounds} are satisfied as
\begin{equation}\label{eq: abc bounds}
\begin{split}
    &\ve^{2} \bbI_{d\times d} \le A \le \|\nu\|_{L^\infty} \left(\frac{1}{\ve} + \ve \right) \bbI_{d\times d},  \quad |\bfB| + |c| \le \|\nu\|_{L^\infty} \lt(d + \frac{10d}{\ve}\rt) .
    \end{split}
\end{equation}
We also remark that the bounds in \eqref{eq: abc bounds} are dependent on $\ve$ but independent of choice of $(\varrho, \bfj, V)\in \mathscr{X}$.

Prior to commencing the fixed point argument, let us recall and investigate some basic properties of the general equation \eqref{eq: zhu}. First of all, it is clear that thanks to Lemma \ref{lem: zhu} (II), \eqref{eq: abc bounds}, and the compact supports of $f^0$, $g$, the solution $h$ to \eqref{eq: lin} has arbitrary algebraic decay. Written explicitly, we have that for all $k\in \bbN$ there is an appropriate constant $C_{T,d,k,\ve}>0$ (again independent of $(\varrho, \bfj, V)$) for which
\begin{equation}\label{eq: lin: decay}
    F(t,x,v) \le \frac{C_{T,d,k,\ve}}{1 + |v|^k}.
\end{equation}
The decay \eqref{eq: lin: decay}, although $\ve$-dependent, helps justify many of the integration by parts that are to follow. Clearly, the above also implies that
\begin{equation} \label{eq: lin: Lp unif in X}
\textnormal{$ F \in L^\infty(0,T; L^1 \cap L^\infty(\Omega\times\R^d))$, \; independently of choice of $(\varrho, \bfj, V) \in \mathscr{X}$}.
\end{equation}

Further, we recall from \cite[Lemma 2.5]{Zhu24} that the solution $(h, \gamma h)$ to \eqref{eq: zhu} is renormalized in the sense of DiPerna--Lions \cite{DipernaLionsODE1989}. For convenience of the reader we recall the general definition here.

\begin{definition}
A pair $(h,\gamma h)$ is said to be a renormalized solution to \eqref{eq: zhu} if, for any pair $(\chi,\varphi)$ such that $\chi \in C^{1,1}(\R)$, $\chi(h) = O(h^2)$, $\chi''(h) = O(1)$ as $|h|\to\infty$, and $\varphi\in C_c^1([0,T]\times\overline{\Omega}\times\R^3)$, it holds for every $t\in (0,T]$ that
    \begin{align*}
        &\int_{\calO} \chi(h(t)) \varphi(t) \,\dx\dv - \int_{\calO} \chi(h^0) \varphi(0) \,\dx\dv + \int_{\Sigma^t} (n(x)\cdot v) \chi(\gamma h) \varphi \,\ds\tnd\sigma\dv\\
        &\quad= \int_{(0,t)\times\calO} \chi(h) (\p_t+v\cdot \nabla_x)\varphi \,\ds\dx\dv \\
        &\qquad + \int_{(0,t)\times\calO} \Big(- A \chi'(h) \nabla_v h \cdot \nabla_v \varphi - \varphi \chi''(h) A\nabla_v h \cdot \nabla_v h + \varphi \bfB \cdot \nabla_v (\chi(h)) + c h \chi'(h) \varphi \Big) \,\ds\dx\dv .
    \end{align*}
\end{definition}

In our equation \eqref{eq: lin}, this formula provides the nonnegativity of $F$ and $\gamma F$. Let us provide a proof in a slightly more general case.

\begin{lemma}[Maximum principle] \label{lem: nonnegative}
    Suppose there is a constant $\Lambda>0$ for which $A,\bfB$ satisfy \eqref{eq: Ellipticity and Bounds}. Let $(h,\gamma h)$ be a renormalized solution to
    \begin{align*}
    \begin{cases}
        \p_t h + v\cdot \nabla_x h = \nabla_v \cdot (A\nabla_v h) + \bfB \cdot \nabla_v h + ch \quad \text{in}\quad [0,T]\times\Omega\times\R^3,\\
        h|_{t=0} \ge 0,\\
        \gamma h|_{\Sigma_-^T} \ge 0.
    \end{cases}
    \end{align*}
    We assume that $c$ is possibly unbounded, but $\bfB$ admits a $v$-divergence $\nabla_v\cdot \bfB$ in the sense of distributions for which
    \begin{align*}
        |-\nabla_v\cdot \bfB + 2c| \le C < \infty.
    \end{align*}
    If $h \in L^1\cap L^\infty((0,T)\times\calO)$ and has sufficient decay at infinity, \textit{e.g.} $h=o(|v|^{-(d-1)/2})$, then
    \begin{align*}
        h\ge 0, \qquad \gamma h \ge 0.
    \end{align*}
\end{lemma}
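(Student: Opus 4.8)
The plan is to test the renormalized formulation of the equation against the nonlinearity $\chi(r) = (r_-)^2 = (\max\{-r,0\})^2$ — which is $C^{1,1}$ with $\chi(r)=O(r^2)$ and $\chi''\in L^\infty$, hence admissible (if strict $C^2$ is desired one approximates $\chi$ by smooth nonlinearities $\chi_\delta$ and passes to the limit at the end) — and to take the test function $\varphi=\zeta_R(v)$ depending on $v$ alone, with $\zeta_R=\eta_R^2$, $\eta_R\in C_c^\infty(\R^d)$, $0\le\eta_R\uparrow 1$, $\eta_R\equiv 1$ on $B_R$ and $|\nabla_v\eta_R|\lesssim 1/R$ supported on $\{R\le|v|\le 2R\}$; since $\Omega$ is bounded this lies in $C_c^1([0,T]\times\overline{\Omega}\times\R^d)$, and the transport terms $(\p_t+v\cdot\nabla_x)\varphi$ drop out. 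One then reads off the signs in the renormalized identity: $\chi(h^0)=0$ because $h^0\ge 0$; on $\Sigma_-^t$ the boundary term vanishes because $\gamma h\ge 0$ there (where $n\cdot v<0$); on $\Sigma_0^t$ it vanishes because $n\cdot v=0$; on $\Sigma_+^t$ the integrand is $\ge 0$; and the second-order term $-\zeta_R\chi''(h)A\nabla_v h\cdot\nabla_v h\le 0$ by ellipticity and $\chi''\ge 0$. What remains is
\[
\int_{\calO}\chi(h(t))\zeta_R + \int_{\Sigma_+^t}(n\cdot v)\chi(\gamma h)\zeta_R \;\le\; \int_{(0,t)\times\calO}\Big(-A\chi'(h)\nabla_v h\cdot\nabla_v\zeta_R + \zeta_R\,\bfB\cdot\nabla_v\chi(h) + c\,h\,\chi'(h)\,\zeta_R\Big)\,\ds\dx\dv.
\]

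The heart of the matter is the right-hand side, and here two structural facts do the work. First, the pointwise identity $r\chi'(r)=2\chi(r)$ (valid for $\chi=(\cdot)_-^2$ since $r\cdot(-2r_-)=2r_-^2$) turns the last term into $2c\,\chi(h)\,\zeta_R$. Second, one integrates the $\bfB$-term by parts in $v$ — legitimate at fixed $R$ because $\zeta_R$ has compact support, $\bfB\in L^\infty$ has a distributional $v$-divergence, and $\chi(h)=(h_-)^2\in W^{1,1}_{v,\mathrm{loc}}\cap L^\infty$ — to get $-\int\nabla_v\zeta_R\cdot\bfB\,\chi(h)-\int\zeta_R\chi(h)\nabla_v\cdot\bfB$. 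The point is that the last two contributions collapse into $\int\zeta_R\,\chi(h)\big(-\nabla_v\cdot\bfB+2c\big)$, which by hypothesis is bounded by $C\int_{(0,t)\times\calO}\chi(h)$; this is exactly where unboundedness of $c$ (and of $\nabla_v\cdot\bfB$) is permitted, because only the bounded combination $-\nabla_v\cdot\bfB+2c$ is ever integrated against the $L^1$ function $\chi(h)$ — one must never split them. The remaining $\nabla_v\zeta_R$-errors are supported on $\{R\le|v|\le 2R\}$: $\int\nabla_v\zeta_R\cdot\bfB\,\chi(h)=O(R^{-1}\int\chi(h))$, while a Young splitting (facilitated by the choice $\zeta_R=\eta_R^2$, so that $|\nabla_v\zeta_R|^2/\zeta_R\lesssim R^{-2}$) absorbs $\int A\chi'(h)\nabla_v h\cdot\nabla_v\zeta_R$ into the nonpositive second-order term at the cost of an $O(R^{-2})\int\chi(h)$ remainder. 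Since $h\in L^1\cap L^\infty$ gives $\chi(h)\le\|h\|_{L^\infty}h\in L^1((0,T)\times\calO)$, and the decay $h=o(|v|^{-(d-1)/2})$ kills any genuine surface contribution at $|v|=\infty$, letting $R\to\infty$ and discarding the nonnegative boundary term yields
\[
\phi(t):=\int_{\calO}\chi(h(t))\,\dx\dv \;\le\; C\int_0^t\phi(s)\,\ds,\qquad \phi(0)=0,\quad \phi\in L^1(0,T),
\]
so Grönwall's inequality forces $\phi\equiv 0$, i.e. $h_-=0$ and hence $h\ge 0$ a.e.

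Finally, for the trace one revisits the \emph{exact} renormalized identity now that $h\ge 0$ a.e.: every right-hand-side integrand carries a factor $\chi(h)$, $\chi'(h)$ or $\chi''(h)$ that vanishes a.e., and $\chi(h^0)=0$, $\chi(h(t))=0$, so the identity reduces to $\int_{\Sigma^t}(n\cdot v)\chi(\gamma h)\zeta_R=0$; decomposing into $\Sigma_+^t,\Sigma_-^t,\Sigma_0^t$ exactly as before leaves $\int_{\Sigma_+^t}(n\cdot v)\chi(\gamma h)\zeta_R=0$, and $R\to\infty$ gives $(\gamma h)_-=0$ on $\Sigma_+^T$, whence $\gamma h\ge 0$. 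The main obstacle I anticipate is purely technical: making every integration by parts and the passage $R\to\infty$ rigorous under the weak hypotheses assumed here (only $h\in L^1\cap L^\infty$ plus decay, with no ellipticity on the time–space gradient), and in particular checking that the decay rate $h=o(|v|^{-(d-1)/2})$ is precisely what annihilates the surface terms at $|v|=\infty$ produced by the $\bfB$-integration by parts, while keeping the bounded combination $-\nabla_v\cdot\bfB+2c$ intact throughout. For the application to \eqref{eq: lin} this step is much easier, since there $A$ is independent of $v$ and $c$ is in fact bounded, so the surface terms at infinity and the Young splitting are unnecessary.
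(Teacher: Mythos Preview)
Your proposal is correct and follows essentially the same approach as the paper: the same renormalization $\chi(h)=(h_-)^2$, the same key algebraic identity $h\chi'(h)=2\chi(h)$, the same integration-by-parts in $v$ to collapse the $\bfB$- and $c$-terms into the bounded combination $-\nabla_v\cdot\bfB+2c$, and the same Gr\"onwall-then-revisit-the-identity structure. The only difference is that the paper takes $\varphi\equiv 1$ directly (justified ``through approximation''), whereas you spell out the cutoff $\zeta_R=\eta_R^2$ and the $R\to\infty$ passage explicitly --- this is precisely the approximation the paper alludes to, so your version is if anything a more careful execution of the same argument.
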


\begin{proof}
    We take $\chi(h) := \mathbf{1}_{h<0} |h|^2$ in the renormalization formula. Since $h\in L^2((0,T)\times \calO)$, it is straightforward to check (through approximation) that $\varphi\equiv 1$ is admissible in the weak formulation of the renormalized equation as a test function. Let us notice that $\chi'' = 2 \cdot \mathbf{1}_{h<0} \ge 0$, and $\chi\lt(\gamma h|_{\Sigma_-^T} \rt) = \chi(h^0) \equiv 0$. Moreover, recalling the ellipticity condition \eqref{eq: Ellipticity and Bounds} on $A$, we find
    \begin{equation} \label{eq: nonnegativity}
    \begin{split}
        &\int_{\calO} \chi(h(t)) \,\dx\dv + \int_{\Sigma_+^t} (n(x)\cdot v)_+ \chi(\gamma h) \,\tnd\sigma\dv \\
        &\quad= \int_{(0,t)\times \calO} - 2 \cdot \mathbf{1}_{h<0} A\nabla_v h \cdot \nabla_v h + \bfB\cdot \nabla_v (\chi(h)) + c h \chi'(h)  \; \ds\dx\dv \\
        &\quad\le \int_{(0,t)\times \calO} \chi(h) (-\nabla_v\cdot \bfB) + c h \chi'(h) \; \ds\dx\dv \\
        &\quad= \int_{(0,t)\times\calO} \chi(h) (-\nabla_v \cdot \bfB + 2c) \; \ds\dx\dv \\
        &\quad\le C \int_{(0,t)\times \calO} \chi(h) \; \ds\dx\dv.
    \end{split}
    \end{equation}
    In the above, we integrated by parts with respect to $v$, which is justified due to the fact that $\bfB$ is bounded and $h$ decays sufficiently. Since $\chi\ge 0$, we can discard the boundary term on the left-hand side in order to apply Gr\"onwall's lemma, deducing that for each $t\in [0,T]$ it holds $\chi(h(t)) = 0$ a.e. $(x,v)\in\Omega\times\R^3$. Fubini's theorem then shows $\chi(h) = 0$ a.e. in $(0,T)\times\Omega\times\R^3$.
    
    Eq. \eqref{eq: nonnegativity} then simply reduces to
    \begin{align*}
        \int_{\Sigma_+^T} (n(x)\cdot v)_+ \chi(\gamma h) \;\ds\dx\dv \le 0,
    \end{align*}
    and this implies $\gamma h \ge 0$ in $\Sigma_+^T$, as desired.
\end{proof}

The application of Lemma \ref{lem: nonnegative} to \eqref{eq: lin} is clear. Indeed, in the notations of \eqref{eq: abc}, we simply have
\begin{align*}
    |-\nabla_v\cdot \bfB + 2c| = |c|,
\end{align*}
the latter of which is bounded (see \eqref{eq: abc bounds}). Hence $F\ge 0$ and $\gamma F \ge 0$.

Furthermore, in regard of the solution $(F,\gamma F)$ to \eqref{eq: lin}, by utilizing the fact that $F$ is bounded, we can use the renormalization formula to deduce $L^1\cap L^\infty$-bounds for $\gamma F$ as well. Namely, we can obtain that
\begin{equation} \label{lin: boundary Lp is uniform in X}
\begin{split}
&\textnormal{$\gamma F\in L^1 \cap L^\infty(\Sigma^T, |n(x)\cdot v| \dt\tnd\sigma\dv )$,} \textnormal{ independently of choice of $(\varrho,  \bfj, V) \in \mathscr{X}$}.
\end{split}
\end{equation}

As the arguments which lead to this are well-organized in \cite{Mischler2000, Mischler2010}, here we provide only the ideas. One considers the renormalization formula for \eqref{eq: lin} with nonlinearity $\chi(F) = F^p$ and $\varphi\equiv 1$ (done by approximation, recalling \eqref{eq: lin: Lp unif in X}). Splitting the boundary as $\Sigma^T = \Sigma_+^T \cup \Sigma_-^T$, and using that $g\in L^1\cap L^\infty(\Sigma_-^T,(n(x)\cdot v)_- \dt\tnd\sigma\dv)$, we can deduce an appropriate bound for $\chi(\gamma F) = |\gamma F|^p$. This can also be done in the following way. By methods such as convolution-translation regularizations \cite{BlouzaLeDret2001}, or by further regularizing the initial and boundary datum, we can assume $F$ is sufficiently regular, say H\"older continuous; then the trace of $F(t)$ on $\p\Omega\times\R^3$ is naturally well-defined, and it clearly coincides with $\gamma F(t)$. The usual trace theorem for Sobolev regular functions then shows \eqref{lin: boundary Lp is uniform in X}. To make this logic rigorous, one then exploits the lower semicontinuity of $L^p$-norms.

Let us now enter the details of the fixed point theorem.
%
%
%
%
%
%
\subsection{Fixed point operator}\label{sec: fixed}

Given $(\varrho, \bfj, V)\in \mathscr{X}$ let $(F,\gamma F)$ denote the solution to \eqref{eq: lin}. We define the operator $\mathscr{R}:\mathscr{X}\to \mathscr{X}$ with 
\begin{align*}
    \scrR (\varrho, \bfj, V) = (\rho_F, j_F, V_F) = \lt(\intr F\,\dv, \; \intr vF \,\dv, \; \frac{1}{d}\intr |v-u_F|^2 F\,\dv \rt).
\end{align*}
It is straightforward to observe that a solution to \eqref{eq: reg} is realized by a fixed point of $\mathscr{R}$.

Expanding the variance, a direct computation shows that
\begin{align}\label{eq: variance and second moment}
    V_F = \frac{1}{d}\lt( \intr |v|^2 F\,\dv - \frac{|j_F|^2}{\rho_F} \rt) \le \frac{1}{d} \intr |v|^2 F \,\dv,
\end{align}
and therefore in view of \eqref{eq: lin: decay} it is clear that $\rho_F, j_F,$ and $V_F$ are all integrable over $(0,T)\times\Omega$. Furthermore, since the solution $(F,\gamma F)$ to \eqref{eq: lin} is unique, we readily check that the operator $\scrR$ is well-defined.

The next lemmas demonstrate that all conditions of Schaefer's fixed-point theorem are verified.

\begin{lemma} \label{lem: R continuous}
    The map $\mathscr{R}: \scrX\to \scrX$ is continuous.
\end{lemma}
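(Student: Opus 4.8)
The plan is to prove continuity of $\scrR$ by a sequential argument: take $(\varrho_n, \bfj_n, V_n) \to (\varrho, \bfj, V)$ in $\scrX$, let $(F_n, \gamma F_n)$ and $(F, \gamma F)$ denote the corresponding unique solutions to \eqref{eq: lin}, and show $(\rho_{F_n}, j_{F_n}, V_{F_n}) \to (\rho_F, j_F, V_F)$ in $\scrX$. First I would extract a subsequence (not relabeled) and use the coefficient bounds \eqref{eq: abc bounds}, which are uniform in $(\varrho, \bfj, V) \in \scrX$, together with the uniform decay estimate \eqref{eq: lin: decay} and the uniform $L^\infty(0,T;L^1\cap L^\infty(\calO))$ bound \eqref{eq: lin: Lp unif in X}: these give enough compactness (e.g. weak-$*$ in $L^\infty$, and via the hypoelliptic averaging/energy estimate from Lemma \ref{lem: zhu}(II), namely the uniform bound in $L^2((0,T)\times\Omega;H^1_v)$) to pass to a limit $\tilde F$ along the subsequence. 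The key point is that the coefficients $A_n, \bfB_n, c_n$ built from $(\varrho_n, \bfj_n, V_n)$ via \eqref{eq: abc} converge pointwise a.e.\ (after a further subsequence, since $L^1$ convergence gives a.e.\ convergence along a subsequence) to $A, \bfB, c$; combined with their uniform $L^\infty$ bounds this yields convergence in every $L^p_{\rm loc}$, $p<\infty$, so one can pass to the limit in the weak (Green's) formulation of \eqref{eq: lin} for $F_n$ and conclude that $(\tilde F, \gamma \tilde F)$ solves \eqref{eq: lin} with the limiting coefficients.

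Next I would invoke the uniqueness statement of Lemma \ref{lem: zhu}(I) (valid since $f_\ve^0$ and $g_\ve$ have the required $\langle v\rangle^l$-weighted $L^\infty$ bounds — indeed compact support): uniqueness forces $\tilde F = F$. Since this limit is independent of the subsequence, the full sequence $F_n$ converges to $F$ (in the relevant topologies, in particular a.e.\ and weakly in $L^1$). It then remains to upgrade this to convergence of the moments $\rho_{F_n}, j_{F_n}, V_{F_n}$ in $L^1((0,T)\times\Omega)$. Here I would use the uniform algebraic decay \eqref{eq: lin: decay}: for any $k$, $F_n(t,x,v) \le C_{T,d,k,\ve}(1+|v|)^{-k}$ with constant independent of $n$, so $(1+|v|^2)F_n$ is dominated by an $L^1((0,T)\times\calO)$ function uniformly in $n$; combined with $F_n \to F$ a.e., the dominated convergence theorem (applied on $(0,T)\times\calO$, integrating out $v$) gives $\int_{\R^d}(1, v, |v|^2) F_n \,\dv \to \int_{\R^d}(1,v,|v|^2)F\,\dv$ strongly in $L^1((0,T)\times\Omega)$. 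Finally, $V_{F_n} = \frac{1}{d}\big(\int |v|^2 F_n\,\dv - |j_{F_n}|^2/\rho_{F_n}\big)$ via \eqref{eq: variance and second moment}; since $V_{F_n}$ is itself uniformly dominated (again by \eqref{eq: lin: decay} and \eqref{eq: variance and second moment}, $0 \le V_{F_n} \le \frac1d \int |v|^2 F_n\,\dv$) and converges a.e.\ (the ratio $|j_{F_n}|^2/\rho_{F_n}$ converges a.e.\ on $\{\rho_F>0\}$, and on $\{\rho_F = 0\}$ one has $F=0$ a.e.\ in $v$ so both sides vanish), dominated convergence again yields $V_{F_n} \to V_F$ in $L^1((0,T)\times\Omega)$.

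The main obstacle I anticipate is the passage to the limit in the Green's formulation when the diffusion matrix $A_n$ varies: one needs $A_n \nabla_v F_n \cdot \nabla_v \varphi \to A \nabla_v F \cdot \nabla_v \varphi$ in $L^1$, which requires weak $L^2$ convergence of $\nabla_v F_n$ (from the uniform Lemma \ref{lem: zhu}(II) bound) against the strongly convergent (in $L^2_{\rm loc}$) sequence $A_n \nabla_v \varphi$ — the ellipticity constants $\Lambda^{-1} = \ve^2$ and $\Lambda = \|\nu\|_{L^\infty}(\ve^{-1}+\ve)$ from \eqref{eq: abc bounds} are fixed (only $\ve$-dependent, not $n$-dependent), so this is legitimate, but it must be checked that the a.e.\ convergence $A_n \to A$ plus uniform boundedness really does upgrade to the strong $L^2_{\rm loc}$ convergence of $A_n\nabla_v\varphi$ needed to pair against a weakly convergent sequence. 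A secondary subtlety is the handling of the (possibly unbounded in $v$, but locally bounded) zeroth-order coefficient $c_n$ in \eqref{eq: abc}, which is controlled using the decay \eqref{eq: lin: decay}; and the treatment of the vacuum set $\{\rho_F = 0\}$ when passing to the limit in $V_{F_n}$, which is handled as indicated above. None of these is deep given the uniform estimates already recorded, but they are where the care lies.
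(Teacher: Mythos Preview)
Your overall strategy matches the paper's --- extract a subsequence, pass to the limit in the weak formulation using strong $L^p$ convergence of the coefficients, invoke uniqueness from Lemma~\ref{lem: zhu} to identify the limit, then argue full-sequence convergence of the moments --- but there is a genuine gap in the second half. You assert that once the weak-$*$ limit is uniquely identified, ``the full sequence $F_n$ converges to $F$ (in the relevant topologies, in particular a.e.\ and weakly in $L^1$)''. The a.e.\ part is unjustified: uniqueness of the weak-$*$ limit along every subsequence gives only full-sequence weak-$*$ convergence, and nothing in your toolkit (uniform $L^\infty$ bounds, uniform $L^2_{t,x} H^1_v$ bounds) upgrades this to pointwise convergence of $F_n$ in $(t,x,v)$. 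Your dominated-convergence step for the moments then collapses, since it rests precisely on $F_n \to F$ a.e.

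The paper never claims a.e.\ convergence of $F_n$. Instead, once weak-$*$ convergence of $F_n$ to $F$ is established (solely to pin down limits), it applies classical velocity averaging lemmas directly to the linear equation for $F_n$: since the coefficients obey \eqref{eq: abc bounds} uniformly in $n$ and the decay \eqref{eq: lin: decay} is uniform, the averages $\int_{\R^d} \varphi(v) F_n\,\dv$ are strongly compact in $L^1((0,T)\times\Omega)$ for any $\varphi$ of at most polynomial growth. Taking $\varphi = 1, v_i, |v|^2$ gives compactness of $\rho_{F_n}$, $j_{F_n}$, $\int |v|^2 F_n\,\dv$, whose limits are identified via the weak-$*$ convergence already in hand; the treatment of $V_{F_n}$ is then as you describe. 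So the missing ingredient in your plan is precisely the averaging-lemma step --- the ``hypoelliptic averaging'' you mention in passing but never deploy. Incidentally, the obstacle you flag (passing to the limit in $A_n\nabla_v F_n\cdot\nabla_v\varphi$) is not the real difficulty: the paper sidesteps it via the very weak form, and in any case it is routine once the uniform $L^2_{t,x}H^1_v$ bound is granted.
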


\begin{proof}
    It is enough to show sequential continuity. We consider a sequence for which
    \begin{equation} \label{lin: continuity assumption}
(\varrho_n, \bfj_n, V_n) \to (\varrho , \bfj, V) \quad \text{in }\scrX.
    \end{equation}
    Denote by $(F_n, \gamma F_n)$ the unique solution to \eqref{eq: lin} which corresponds to $(\varrho_n, \bfj_n, V_n)$. Analogously, denote by $(F, \gamma F)$ the solution corresponding to $(\varrho , \bfj, V)$.

    Let us pass to a subsequence for which the convergences in \eqref{lin: continuity assumption} hold almost everywhere; later we will show that this passage to the subsequence is unnecessary. By the dominated convergence theorem, we can then first remark that
    \begin{equation}\label{lin: continuity renorm}
    \begin{split}
        &\renorm{V}{\varrho_n}{\ve} \to \renorm{V}{\varrho}{\ve} \quad \text{in }L^p((0,T)\times\Omega),\\
        &\renorm{\bfj_n}{\varrho_n}{\ve} \to \renorm{\bfj}{\varrho}{\ve} \quad \text{in }L^p((0,T)\times\Omega) \quad \forall p\in [1,\infty). 
    \end{split}
    \end{equation}
    Similarly, by the continuity and boundedness assumption of $\nu$, we second note that
    \begin{equation}\label{lin: continuity nu}
        \nu(\varrho_n, \bfj_n, V_n) \to \nu(\varrho, \bfj, V) \quad \textnormal{a.e. and in }L^p((0,T)\times\Omega), \quad \forall p\in [1,\infty).
    \end{equation}
    
    Next, in virtue of \eqref{eq: lin: decay} and \eqref{lin: boundary Lp is uniform in X} it is clear that $F_n$ (respectively $\gamma F_n)$) is weakly$^*$ compact in $L^\infty(0,T;L^1\cap L^\infty(\calO))$ (respectively $L^1\cap L^\infty(\Sigma^T, |n(x)\cdot v|\dt\tnd\sigma\dv)$). Passing to the weakly$^*$ convergent subsequence, in view of the strong convergences in \eqref{lin: continuity renorm} and \eqref{lin: continuity nu}, we can straightforwardly pass to the limit in \eqref{eq: lin} (that is satisfied by the $(F_n, \gamma F_n)$) to deduce that the weak$^*$ limit is a solution to \eqref{eq: lin} with $( \varrho, \bfj, V)$. Owing to the uniqueness property in Lemma \ref{lem: zhu}, we deduce that the weak$^*$ limit is necessarily $(F, \gamma F)$. Moreover, since the limit is uniquely identified along any weakly$^*$ convergent subsequence, we deduce that $(F_n, \gamma F_n) \to (F, \gamma F)$ weakly$^*$ without having to pass to subsequences.
    
Then, we notice from \eqref{eq: lin: decay} that the application of classical averaging lemmas (see for example \cite{dipernalionsMaxwell1989, DipernaLionsMeyer, perthamesouganidis}) show that for any $\varphi \in C^0(\R^3)$ with at most polynomial growth, the sequence of averages
    \begin{align}\label{eq: poly vel avg}
        \left\{\int_{\R^3} \varphi(v) F_n(t,x,v)\,\dv \right\}
    \end{align}
    is strongly compact in $L^1((0,T)\times\Omega)$. Choosing $\varphi(v) = 1,v_i$ in \eqref{eq: poly vel avg} we deduce that $\rho_{F_n}$ and $j_{F_n}$ are compact in $L^1((0,T)\times\Omega)$. To obtain the compactness of $V_{F_n}$ we argue as follows. Taking $\varphi(v) = |v|^2$ we have that $\left\{\int_{\R^3} |v|^2 F_n\,\dv\right\}_n$ is strongly compact in $L^1((0,T)\times\Omega)$. Then, from \eqref{eq: variance and second moment} we recall
    \begin{align*}
        V_{F_n} &:= \frac{1}{d} \int_{\R^3} |v - u_{F_n}|^2 F_n\,\dv  = \frac{1}{d}\lt( \int_{\R^3} |v|^2 F_n\,\dv - \frac{|j_{F_n}|^2}{\rho_{F_n}} \rt),
    \end{align*}
    and that all terms on the right-hand side are pointwise convergent (up to some further subsequence). It follows that $V_{F_n}$ itself is a.e. convergent. Also, since
    \begin{align*}
        0 \le V_{F_n} \le \int_{\R^3} |v|^2 F_n\,\dv,
    \end{align*}
    the right-hand side of which is strongly convergent in $L^1((0,T)\times\Omega)$, we deduce via the Vitali convergence theorem that $V_{F_n}$ is strongly convergent in $L^1((0,T)\times\Omega)$.
    
    To conclude the proof, we recall that $F_n\weakstarto F$ \textbf{without} passing to subsequences. Thus any convergent subsequences of $\{\rho_{F_n}\}, \{j_{F_n}\},$ and $\{V_{F_n}\}$ necessarily converge to $\rho_F, j_F$, and $V_F$. Again we deduce that, from the unique identification of the limit, it must be that the full sequences $\rho_{F_n}, j_{F_n}$, and $V_{F_n}$ are strongly convergent in $L^1((0,T)\times\Omega)$ to $\rho_F, j_F$, and $V_F$. The proof is complete.
\end{proof}

\begin{remark}
    Strictly speaking, velocity averaging lemmas such as those stated in \cite{DipernaLionsMeyer} are stated for kinetic equations which are imposed to hold in the whole domain $\calD'((0,T)\times \R^d \times \R^d)$. These lemmas have been subsequently employed to general open bounded domains, but the authors could not locate a suitable reference with a satisfactory explanation on why this is possible. The argument can proceed as follows. One represents $\Omega$ appropriately as the union of countably many open balls $\{\Omega_i\}_{i=1}^\infty$. Then we take a partition of unity $\{\eta_i\}_{i=1}^\infty$ subordinate to the $\Omega_i$, satisfying
    \begin{align}\label{eq: partition of unity}
        \eta_i \in C_c^\infty(\Omega_i), \qquad \sum_{i=1}^\infty \eta_i(x) = 1 \quad \forall x\in \Omega.
    \end{align}
    Let us suppose we have a sequence $F_m = F_m(t,x,v)$ satisfying an appropriate system of kinetic equations in $\calD'((0,T)\times\Omega\times\R^d)$. Then we note that since $\textnormal{supp}(\eta_i) \Subset \Omega$, the function $\eta_i(x) F_m(t,x,v)$ now satisfies an appropriate kinetic equation in $\calD'((0,T)\times \R^d \times \R^d)$. Hence, for each fixed $i$ we apply the velocity averaging lemma (for $\R^d\times \R^d$) to deduce, for each $\varphi\in C_c^\infty(\R^d_v)$, the compactness in $L^1((0,T)\times\Omega)$ of the averages $\lt\{ \intr \varphi(v) \eta_i(x) F_m(t,x,v)\,\dv\rt\}_m$. By diagonally extracting and passing to further subsequences as necessary, we can assume that we are now along some subsequence for which $\lt\{ \intr \varphi(v) \eta_i(x) F_m(t,x,v)\,\dv\rt\}_m$ is convergent in $L^1((0,T)\times\Omega)$ for every $i$. The convergence of the averages $\lt\{\intr \varphi(v) F_m(t,x,v)\dv\rt\}$ then follows by using \eqref{eq: partition of unity} and the Lebesgue dominated convergence theorem to sum over $i$.
\end{remark}

\begin{lemma}\label{lem: R comp}
    The map $\mathscr{R}:\scrX\to \scrX$ is compact.
\end{lemma}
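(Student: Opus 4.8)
The plan is to prove something slightly stronger than required: that the \emph{entire} image $\scrR(\scrX)$ is relatively compact in $\scrX$. Since this a fortiori implies that $\scrR$ maps bounded subsets of $\scrX$ into relatively compact sets, it gives exactly the compactness needed for Schaefer's theorem. What makes this possible is that all the a priori bounds collected in Section~\ref{subsec: the regularized system} — in particular the algebraic decay \eqref{eq: lin: decay} and the bound \eqref{eq: lin: Lp unif in X} in $L^\infty(0,T;L^1\cap L^\infty(\calO))$ — are uniform over \emph{all} of $\scrX$, with constants depending only on $d,T,\Omega,\nu$ and the fixed regularization parameter $\ve$; no largeness restriction on $(\varrho,\bfj,V)$ is needed.

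Concretely, I would take an arbitrary sequence $(\varrho_n,\bfj_n,V_n)\in\scrX$ and let $(F_n,\gamma F_n)$ denote the corresponding solutions of \eqref{eq: lin}. By \eqref{eq: lin: decay} and \eqref{eq: lin: Lp unif in X}, $\{F_n\}$ has a uniform algebraic decay in $v$ and is bounded in $L^\infty(0,T;L^1\cap L^\infty(\calO))$; combining these, it is bounded in $L^p((0,T)\times\calO)$ for every $p\in[1,\infty]$. Moreover, by \eqref{eq: abc bounds} the coefficients $A,\bfB,c$ in \eqref{eq: lin} are bounded uniformly in $n$ (for the fixed $\ve$), and since $\nu(\varrho_n,\bfj_n,V_n)+\ve$ and $\renorm{V_n}{\varrho_n}{\ve}+\ve$ are $v$-independent, the right-hand side of \eqref{eq: lin} can be written as $g_n=\nabla_v^2:(a_n F_n)+\nabla_v\cdot(b_n F_n)$ with $a_n,b_n$ bounded uniformly in $n$, so $g_n$ is of the form $\nabla_v^2$ and $\nabla_v$ of quantities bounded uniformly in $n$ in $L^2((0,T)\times\calO)$. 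Reading \eqref{eq: lin} as a kinetic transport equation $\p_t F_n+v\cdot\nabla_x F_n=g_n$, the velocity averaging lemma — applied exactly as in the proof of Lemma~\ref{lem: R continuous} (see \cite{dipernalionsMaxwell1989, DipernaLionsMeyer, perthamesouganidis}, together with the partition-of-unity reduction to bounded domains described in the remark following that proof) — yields: for every $\varphi\in C^0(\R^d)$ of at most polynomial growth, $\{\intr\varphi(v)F_n\,\dv\}$ is relatively compact in $L^1((0,T)\times\Omega)$.

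It then remains to extract. Taking $\varphi(v)=1,v_i,|v|^2$ shows that $\{\rho_{F_n}\}$, $\{j_{F_n}\}$ and $\{\intr|v|^2 F_n\,\dv\}$ are relatively compact in $L^1((0,T)\times\Omega)$, so along a subsequence all three converge a.e.\ and in $L^1$. By the variance identity \eqref{eq: variance and second moment}, $V_{F_n}=\tfrac1d\big(\intr|v|^2 F_n\,\dv-|j_{F_n}|^2/\rho_{F_n}\big)$ then converges a.e.\ — on $\{\rho_F>0\}$ directly, and on $\{\rho_F=0\}$ because there $0\le V_{F_n}\le\tfrac1d\intr|v|^2 F_n\,\dv\to0$ — and, being dominated by $\tfrac1d\intr|v|^2 F_n\,\dv$, which converges in $L^1$, the Vitali convergence theorem upgrades this to convergence in $L^1((0,T)\times\Omega)$. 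Hence $\scrR(\varrho_n,\bfj_n,V_n)=(\rho_{F_n},j_{F_n},V_{F_n})$ converges in $\scrX$ along this subsequence; as the sequence was arbitrary, $\scrR(\scrX)$ is relatively compact.

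I do not expect a serious obstacle here: the argument essentially duplicates the compactness portion already carried out inside the proof of Lemma~\ref{lem: R continuous}, and is in fact shorter since the input sequence need not converge. The one point that requires care is that the velocity averaging estimate, and the a priori bounds feeding it, be uniform in $n$; this is precisely what \eqref{eq: lin: decay}, \eqref{eq: lin: Lp unif in X} and \eqref{eq: abc bounds} supply, at the (here harmless) cost of dependence on the fixed parameter $\ve$. The only conceptual subtlety worth flagging is that this uniformity holds over all of $\scrX$ rather than merely over bounded subsets, which is what lets us conclude precompactness of the full image instead of only the image of bounded sets.
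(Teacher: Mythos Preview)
Your proposal is correct and follows essentially the same approach as the paper's proof, which is a brief sketch pointing to the decay bound \eqref{eq: lin: decay} and the velocity averaging lemmas, exactly the ingredients you invoke. Your observation that the bounds \eqref{eq: lin: decay}, \eqref{eq: lin: Lp unif in X}, \eqref{eq: abc bounds} are uniform over all of $\scrX$ (not just bounded subsets), hence that the full image $\scrR(\scrX)$ is relatively compact, is a correct sharpening that the paper does not make explicit but which is implicit in its setup.
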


\begin{proof}
    Since the proof is much easier than showing continuity of $\mathscr{R}$, we provide only a rough sketch of the ideas. Let $(\varrho_n, \bfj_n, V_n)$ be bounded in $\scrX$. Denote by $(F_n, \gamma F_n)$ the corresponding solution to \eqref{eq: lin}. We want to show that
    \begin{align*}
        \mathscr{R}(\varrho_n, \bfj_n, V_n) := (\rho_{F_n}, j_{F_n}, V_{F_n} )
    \end{align*}
    is convergent in $\scrX$ modulo some subsequence. As before, it is enough to show that $\left\{\int_{\R^3}\varphi(v) F_n\,\dv\right\}_n$ is strongly compact in $L^1((0,T)\times\Omega)$ for $\varphi(v) = 1,v,|v|^2$, which follows easily owing to \eqref{eq: lin: decay} and averaging lemmas.
\end{proof}

Finally, it is straightforward to check that

\begin{lemma} \label{lem: R eigen}
    The set of eigenvectors
    \begin{align*}
        \scrX_\scrR := \lt\{(\varrho, \bfj, V)\in \scrX \; \middle| \; (\varrho, \bfj, V) = \lambda \mathscr{R}(\varrho, \bfj, V) \text{ for some }\lambda\in [0,1] \rt\}
    \end{align*}
    is bounded in $\scrX$.
\end{lemma}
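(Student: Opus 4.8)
The plan is to reduce the $\scrX$-bound on an eigenvector to a bound on the mass and second velocity moment of the associated linear solution, and to extract the latter directly from the algebraic decay estimate \eqref{eq: lin: decay}, which is uniform over the coefficient space. Concretely, let $(\varrho,\bfj,V)\in\scrX_\scrR$, so that $(\varrho,\bfj,V)=\lambda\,\scrR(\varrho,\bfj,V)=\lambda(\rho_F,j_F,V_F)$ for some $\lambda\in[0,1]$, where $(F,\gamma F)$ is the solution of \eqref{eq: lin} with coefficients $(\varrho,\bfj,V)$. The case $\lambda=0$ gives the zero element, so assume $\lambda\in(0,1]$; since $\lambda\le 1$ it then suffices to bound $\|(\rho_F,j_F,V_F)\|_{\scrX}$ by a constant depending only on $\ve$ (and on $d,T,\Omega,\|\nu\|_{L^\infty}$ and the frozen data $f_\ve^0,g_\ve$), but \emph{not} on the triple $(\varrho,\bfj,V)$.

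First I would invoke \eqref{eq: lin: decay}: fixing any exponent $k\ge d+3$, there is a constant $C=C_{T,d,k,\ve}>0$, independent of $(\varrho,\bfj,V)\in\scrX$, with $0\le F(t,x,v)\le C(1+|v|^k)^{-1}$. Since $\Omega$ is bounded and $T<\infty$, integrating this bound against $1$ and against $|v|^2$ over $\R^d_v$ and then over $(0,T)\times\Omega$ produces finite constants $M_0(\ve)$ and $M_2(\ve)$, still independent of $(\varrho,\bfj,V)$, such that $\int_0^T\int_{\calO}F\,\dt\dx\dv\le M_0$ and $\int_0^T\int_{\calO}|v|^2F\,\dt\dx\dv\le M_2$. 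From here the three components are immediate: $\|\rho_F\|_{L^1((0,T)\times\Omega)}\le M_0$; using $|v|\le\tfrac12(1+|v|^2)$ we get $\|j_F\|_{L^1((0,T)\times\Omega)}\le\int_0^T\int_{\calO}|v|F\,\dt\dx\dv\le\tfrac12(M_0+M_2)$; and by \eqref{eq: variance and second moment} one has $0\le V_F\le\tfrac1d\int_{\R^d}|v|^2F\,\dv$, whence $\|V_F\|_{L^1((0,T)\times\Omega)}\le\tfrac1d M_2$. Consequently $\|(\varrho,\bfj,V)\|_{\scrX}=\lambda\|(\rho_F,j_F,V_F)\|_{\scrX}\le M_0+\tfrac12(M_0+M_2)+\tfrac1d M_2=:C_*(\ve)$, so $\scrX_\scrR$ is contained in the closed ball of radius $C_*(\ve)$ in $\scrX$.

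There is no serious obstacle here: the argument hinges entirely on the fact --- already recorded in \eqref{eq: lin: decay} and \eqref{eq: lin: Lp unif in X} --- that the decay, hence all velocity moments, of $F$ is controlled \emph{uniformly} in the coefficient triple, the $\ve$-dependence being harmless since $\ve$ is frozen throughout the fixed-point step. If one prefers an argument not relying on the pointwise bound, the same conclusion follows from the formal mass and energy identities for \eqref{eq: lin}: testing with $\varphi\equiv 1$ and $\varphi=|v|^2$, using that $c$ and $\bfB$ are bounded by $\ve$-dependent constants (see \eqref{eq: abc bounds}) and that $\gamma F\ge 0$ (Lemma \ref{lem: nonnegative}) to discard the outgoing boundary contributions, and closing with Gr\"onwall's lemma; the only point needing care is the admissibility of $\varphi=|v|^2$ as a test function, once more supplied by the decay \eqref{eq: lin: decay}. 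Either way, combined with Lemmas \ref{lem: R continuous} and \ref{lem: R comp}, this verifies the last hypothesis of Schaefer's fixed-point theorem.
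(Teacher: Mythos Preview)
Your proof is correct and is precisely the natural way to fill in what the paper leaves as ``straightforward to check'': the paper does not actually supply a proof of this lemma, and the intended argument is exactly yours --- the decay estimate \eqref{eq: lin: decay} (and \eqref{eq: lin: Lp unif in X}) is uniform in $(\varrho,\bfj,V)\in\scrX$, so the image of $\scrR$ is already contained in a fixed bounded set of $\scrX$, making the eigenvector condition trivially satisfied.
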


We can now state the existence of solutions to \eqref{eq: reg}.

\begin{lemma} \label{lem: ve weak sol}
    There exists a weak solution $(f_\ve, \gamma f_\ve)$ to the equation \eqref{eq: reg}, in the sense that
    \begin{align*}
        (f_\ve, \gamma f_\ve) \in L^2((0,T)\times \Omega; H^1_v(\R^d)) \times L^2_{\rm loc}(\Sigma^T, |n(x)\cdot v| \dt\tnd\sigma\dv)
    \end{align*}
    satisfies for each $t\in (0,T]$ and $\varphi\in C_c^\infty([0,t]\times\overline\Omega\times\R^d)$:
    \begin{equation*}
    \begin{split}
    &\int_{\calO} f_\ve(t) \varphi(t,x,v) \,\dx\dv - \int_{\calO} f_\ve^0 \varphi(0,x,v)\,\dx\dv + \int_{\Sigma_+^t} \gamma f_\ve \, \varphi \, (n(x)\cdot v)_+ \ds\tnd\sigma\dv\\
    &\quad = - \int_{\Sigma_-^t} g_\ve \, \varphi \, (n(x)\cdot v)_- \ds\tnd\sigma\dv +\int_{(0,t)\times \calO} f_\ve (\p_s \varphi + v\cdot \nabla_x \varphi) \,\ds\dx\dv\\
    &\qquad + \int_{(0,t)\times \calO} (\nu_{f_\ve} + \ve) \lt(\Delta_v \varphi \calT_{f_\ve}^{(\ve)} f_\ve - \nabla_v\varphi \cdot \lt(\renorm{v}{1}{\ve}-u_{f_\ve}^{(\ve)} \rt) f_\ve \rt) \,\ds\dx\dv.
    \end{split}
    \end{equation*}
    Furthermore, the solution is renormalized, and for each $k\in \bbN$ it satisfies for some constant $C_{T,d,k,\ve}>0$
    \begin{align}\label{eq: ve decay}
        f_\ve(t,x,v) \le \frac{C_{T,d,k,\ve}}{(1+|v|)^k}.
    \end{align}
    In particular,
    \begin{equation}\label{eq: ve memberships}
    \begin{split}
        f_\ve \in L^\infty(0,T; L^1\cap L^\infty(\calO)),  \quad \gamma f_\ve \in L^1 \cap L^\infty (\Sigma^T, |n(x)\cdot v| \dt \tnd\sigma \dv).
        \end{split}
    \end{equation}
\end{lemma}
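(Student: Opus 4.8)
The plan is to obtain $(f_\ve,\gamma f_\ve)$ as a fixed point of the operator $\scrR:\scrX\to\scrX$ constructed in Section \ref{sec: fixed}, and then to read off all the asserted properties from the linear theory of Lemma \ref{lem: zhu} via the fixed point identity. First I would invoke Schaefer's fixed point theorem: Lemma \ref{lem: R continuous} gives continuity of $\scrR$, Lemma \ref{lem: R comp} gives compactness, and Lemma \ref{lem: R eigen} gives boundedness of the set $\scrX_\scrR$ of eigenvectors; moreover $\scrR$ does map the convex set $\scrX$ into itself, since for $(F,\gamma F)$ solving \eqref{eq: lin} the nonnegativity $F\ge 0$ (Lemma \ref{lem: nonnegative}, applicable because $|-\nabla_v\cdot\bfB+2c|=|c|$ is bounded by \eqref{eq: abc bounds}) forces $\rho_F\ge 0$ and $V_F\ge 0$, while \eqref{eq: variance and second moment} together with the decay \eqref{eq: lin: decay} gives integrability of $\rho_F,j_F,V_F$ over $(0,T)\times\Omega$. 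This yields $(\varrho,\bfj,V)\in\scrX$ with $(\varrho,\bfj,V)=\scrR(\varrho,\bfj,V)$.

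Next I would set $f_\ve:=F$ and $\gamma f_\ve:=\gamma F$, where $(F,\gamma F)$ solves \eqref{eq: lin} for this fixed $(\varrho,\bfj,V)$. The fixed point identity reads $(\varrho,\bfj,V)=(\rho_{f_\ve},j_{f_\ve},V_{f_\ve})$, so the frozen coefficients $A,\bfB,c$ of \eqref{eq: abc} become exactly the nonlinear coefficients of \eqref{eq: reg}: by definition $\renorm{V}{\varrho}{\ve}+\ve=\renorm{V_{f_\ve}}{\rho_{f_\ve}}{\ve}+\ve=\calT^{(\ve)}_{f_\ve}$ and $\renorm{\bfj}{\varrho}{\ve}=\renorm{j_{f_\ve}}{\rho_{f_\ve}}{\ve}=u^{(\ve)}_{f_\ve}$. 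Substituting this identification into the Green's formula of Lemma \ref{lem: zhu}(II) for \eqref{eq: lin} produces precisely the weak formulation of \eqref{eq: reg} stated in the lemma, and the regularity $(f_\ve,\gamma f_\ve)\in L^2((0,T)\times\Omega;H^1_v(\R^d))\times L^2_{\rm loc}(\Sigma^T,|n(x)\cdot v|\,\dt\tnd\sigma\dv)$ is inherited from Lemma \ref{lem: zhu}(II).

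Finally I would record the remaining structural properties. The renormalization property is inherited directly: by \cite[Lemma 2.5]{Zhu24} the linear solution $(F,\gamma F)$ is renormalized for \eqref{eq: zhu} with the coefficients \eqref{eq: abc}, and since these now coincide with the nonlinear ones, $(f_\ve,\gamma f_\ve)$ is a renormalized solution of \eqref{eq: reg}. The algebraic decay \eqref{eq: ve decay} follows from Lemma \ref{lem: zhu}(III) together with the $\ve$-dependent but $(\varrho,\bfj,V)$-uniform bounds \eqref{eq: abc bounds} and the compact supports of $f_\ve^0,g_\ve$, exactly as \eqref{eq: lin: decay} was obtained. Then \eqref{eq: ve decay} yields $f_\ve\in L^\infty(0,T;L^1\cap L^\infty(\calO))$, and \eqref{lin: boundary Lp is uniform in X} gives $\gamma f_\ve\in L^1\cap L^\infty(\Sigma^T,|n(x)\cdot v|\,\dt\tnd\sigma\dv)$, which is \eqref{eq: ve memberships}.

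The genuinely delicate work has already been absorbed into the auxiliary lemmas — in particular, the proof of continuity of $\scrR$ had to combine the $\ve$-uniform decay \eqref{eq: lin: decay}, velocity averaging, and the uniqueness part of Lemma \ref{lem: zhu} to identify the limits of $\rho_{F_n},j_{F_n},V_{F_n}$ without extracting subsequences. Granting Lemmas \ref{lem: R continuous}--\ref{lem: R eigen}, the present lemma is essentially bookkeeping; the one point to stay careful about is the order in which the fixed point equation is read — one first solves the \emph{linear} problem \eqref{eq: lin} with coefficients frozen from $(\varrho,\bfj,V)$, and only the fixed point identity closes the loop so that those coefficients equal the macroscopic fields of the resulting $f_\ve$, thereby turning the linear weak formulation into the nonlinear one.
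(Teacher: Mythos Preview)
Your proposal is correct and follows exactly the paper's approach: apply Schaefer's fixed-point theorem via Lemmas \ref{lem: R continuous}--\ref{lem: R eigen}, then read off the weak formulation, renormalization, decay \eqref{eq: ve decay}, and memberships \eqref{eq: ve memberships} from the linear theory \eqref{eq: lin: decay}, \eqref{eq: lin: Lp unif in X}, \eqref{lin: boundary Lp is uniform in X} through the fixed-point identification. Your write-up is in fact more explicit than the paper's (which dispatches the lemma in two sentences), spelling out why $\scrR$ maps $\scrX$ into itself and how the frozen linear coefficients become the nonlinear ones at the fixed point.
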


\begin{proof}[Proof of Lemma \ref{lem: ve weak sol}]
    Collecting Lemmas \ref{lem: R continuous}, \ref{lem: R comp}, \ref{lem: R eigen} we find that Schaefer's fixed-point theorem is applicable. The properties regarding renormalization, and the regularity imposed above, follow directly from \eqref{eq: lin: decay}, \eqref{eq: lin: Lp unif in X}, \eqref{lin: boundary Lp is uniform in X}, and the fact that the solution $(f_\ve, \gamma f_\ve)$ is realized through a fixed point of the linear problem \eqref{eq: lin}.
\end{proof}

%
%
%
%
%
%

\subsection{Uniform estimates} \label{subsec: uniform estimates}
We begin this subsection by providing energy estimates which are uniform in the regularization parameter $\ve$. 

\begin{lemma}\label{lem: energy}
    Let $(f_\ve, \gamma f_\ve)$ denote the solution to the regularized problem \eqref{eq: reg} as constructed in Lemma \ref{lem: ve weak sol}. Then
    \begin{align*}
        &\textnormal{$\{(1+|v|^2)f_\ve\}_{\ve\in (0,1]}$ is uniformly bounded in $L^\infty(0,T;L^1(\calO))$}, \\
        &\textnormal{$\{(1+|v|^2)\gamma f_\ve\}_{\ve\in (0,1]}$ is uniformly bounded in $L^1(\Sigma^T, |n(x)\cdot v|\dt\tnd\sigma \dv)$}.
    \end{align*}
\end{lemma}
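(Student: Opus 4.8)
The plan is to derive, from the weak formulation of the regularized problem in Lemma~\ref{lem: ve weak sol}, the usual mass and energy identities by using the non-compactly-supported test functions $\varphi\equiv 1$ and $\varphi=1+|v|^2$. Both choices are admissible once one approximates $1+|v|^2$ by $\varphi_R(v):=(1+|v|^2)\zeta(v/R)$ with $\zeta\in C_c^\infty(\R^d)$, $\zeta\equiv 1$ on $B_1$, and sends $R\to\infty$: the rapid algebraic decay \eqref{eq: ve decay} of $f_\ve$ (together with the analogous decay of the trace $\gamma f_\ve$, inherited from the linear theory in the same way as \eqref{lin: boundary Lp is uniform in X}) makes every term converge and annihilates the commutator errors produced by $\nabla_v\zeta(v/R)$. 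With $\varphi\equiv 1$ the divergence-form collision term vanishes and one gets the mass identity, so that $M_\ve(t):=\int_\calO f_\ve(t)\,\dx\dv\le \int_\calO f_\ve^0\,\dx\dv+\int_{\Sigma_-^T} g_\ve|n(x)\cdot v|\,\dt\tnd\sigma\dv$, which is bounded uniformly in $\ve$ by \eqref{ASSUMP f0 g}. With $\varphi=1+|v|^2$ (so $\Delta_v\varphi=2d$, $\nabla_v\varphi=2v$), setting $E_\ve(t):=\int_\calO(1+|v|^2)f_\ve(t)\,\dx\dv$, one obtains
\begin{equation*}
    E_\ve(t)+\int_{\Sigma_+^t}(1+|v|^2)\gamma f_\ve\,(n(x)\cdot v)_+ = E_\ve(0)+\int_{\Sigma_-^t}(1+|v|^2)g_\ve|n(x)\cdot v| + \int_{(0,t)\times\calO}(\nu_{f_\ve}+\ve)\Big(2d\,\calT_{f_\ve}^{(\ve)}f_\ve-2v\cdot(\renorm{v}{1}{\ve}-u_{f_\ve}^{(\ve)})f_\ve\Big).
\end{equation*}

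The next step is to bound the collision term on the right by $\int_\calO|v|^2 f_\ve$ plus the (bounded) mass. The contribution $-2v\cdot\renorm{v}{1}{\ve}f_\ve=-\tfrac{2|v|^2}{1+\ve(1+|v|)}f_\ve\le 0$ is favorable and is simply discarded. Since $\calT_{f_\ve}^{(\ve)}$ depends only on $(t,x)$, one has $\int_{\R^d}\calT_{f_\ve}^{(\ve)}f_\ve\,\dv=\calT_{f_\ve}^{(\ve)}\rho_{f_\ve}\le(\calT_{f_\ve}+\ve)\rho_{f_\ve}=V_{f_\ve}+\ve\rho_{f_\ve}\le\tfrac1d\int_{\R^d}|v|^2 f_\ve\,\dv+\rho_{f_\ve}$, using the elementary bounds in \eqref{eq: reg major}, the variance identity \eqref{eq: variance and second moment}, and $\ve\le 1$. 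For the drift term, Young's inequality gives $\int_{\R^d}2v\cdot u_{f_\ve}^{(\ve)}f_\ve\,\dv\le\int_{\R^d}|v|^2 f_\ve\,\dv+|u_{f_\ve}^{(\ve)}|^2\rho_{f_\ve}$, and since $|u_{f_\ve}^{(\ve)}|\le|u_{f_\ve}|$ by \eqref{eq: reg major} while $|j_{f_\ve}|^2\le\rho_{f_\ve}\int_{\R^d}|v|^2 f_\ve\,\dv$ by Cauchy--Schwarz, we get $|u_{f_\ve}^{(\ve)}|^2\rho_{f_\ve}\le|j_{f_\ve}|^2/\rho_{f_\ve}\le\int_{\R^d}|v|^2 f_\ve\,\dv$ (with both sides zero on $\{\rho_{f_\ve}=0\}$). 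Using $\nu_{f_\ve}+\ve\le\|\nu\|_{L^\infty}+1$ from \eqref{ASSUMP NU} and integrating in $x$, the collision term at time $s$ is bounded by $C\big(E_\ve(s)+M_\ve\big)\le C(E_\ve(s)+1)$ with $C$ independent of $\ve$.

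Discarding the nonnegative outgoing flux on the left, the energy identity then yields $E_\ve(t)\le E_\ve(0)+\int_{\Sigma_-^T}(1+|v|^2)g_\ve|n(x)\cdot v|+C\int_0^t(E_\ve(s)+1)\,\ds$, where the first two terms are bounded uniformly in $\ve$ by \eqref{ASSUMP f0 g}; Gr\"onwall's lemma gives $\sup_{t\in[0,T]}E_\ve(t)\le C(T)$ uniformly in $\ve\in(0,1]$, which is the first assertion. Reinserting this bound into the energy identity and rearranging, the outgoing boundary flux is controlled by $\int_{\Sigma_+^T}(1+|v|^2)\gamma f_\ve(n(x)\cdot v)_+\le E_\ve(0)+\int_{\Sigma_-^T}(1+|v|^2)g_\ve|n(x)\cdot v|+C(T)\le C'(T)$, again uniformly in $\ve$. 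Since $\gamma f_\ve|_{\Sigma_-^T}=g_\ve$ satisfies the uniform bound in \eqref{ASSUMP f0 g} and the measure $|n(x)\cdot v|\dt\tnd\sigma\dv$ vanishes on $\Sigma_0^T$, this gives the second assertion over all of $\Sigma^T$.

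I expect the only genuinely delicate point to be the justification of the unbounded test functions $\varphi\equiv 1$ and $\varphi=1+|v|^2$ in the weak formulation, and the a priori finiteness of all the quantities appearing; this is precisely where the $\ve$-dependent decay \eqref{eq: ve decay} and the corresponding decay of $\gamma f_\ve$ are needed. Once that truncation-and-limit step is set up, the remainder is the elementary chain of inequalities above, and the $\ve$-dependence enters only through the harmless factor $\nu_{f_\ve}+\ve\le\|\nu\|_{L^\infty}+1$.
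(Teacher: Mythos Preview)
Your proof is correct and follows essentially the same approach as the paper: derive the energy identity by testing against $1+|v|^2$ (justified via cutoffs and the decay \eqref{eq: ve decay}), estimate the collision term by $C\int(1+|v|^2)f_\ve$ using $\calT_{f_\ve}^{(\ve)}\le\calT_{f_\ve}+\ve$, $|u_{f_\ve}^{(\ve)}|\le|u_{f_\ve}|$, and Jensen/Cauchy--Schwarz, then close with Gr\"onwall and re-insert the bound to control the outgoing boundary flux. The only cosmetic difference is that the paper bounds $|v\cdot\renorm{v}{1}{\ve}|\le|v|^2$ rather than discarding $-2v\cdot\renorm{v}{1}{\ve}f_\ve\le 0$ by sign as you do, and it does not separately record the mass balance; neither point changes the structure of the argument.
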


\begin{proof}
Owing to \eqref{eq: lin: decay}, we readily notice that $\varphi(v)=1+|v|^2$ is admissible as a test function into the (very) weak formulation of \eqref{eq: reg}. The rigorous derivation can be done by taking the standard method of using smooth cutoffs, which we omit here and refer to for instance \cite{Abdallah1994}. The identity that is obtained is
\begin{equation}\label{eq: energy est}
\begin{split}
    &\int_{\calO} (1+|v|^2) f_\ve(t) \,\dx\dv - \int_{\calO} (1+|v|^2) f_\ve^0 \,\dx\dv + \int_{\Sigma_+^t} (1+|v|^2) \gamma f_\ve (n(x)\cdot v)_+ \ds\tnd\sigma\dv \\
    &\quad= - \int_{\Sigma_-^t} (1+|v|^2) g_\ve (n(x)\cdot v)_- \ds\tnd\sigma\dv\\
    &\qquad + \int_{(0,t)\times \calO} (\nu_{f_\ve} + \ve) \lt(2 d \calT_{f_\ve}^{(\ve)} f_\ve - 2 v \cdot (\renorm{v}{1}{\ve} - u_{f_\ve}^{(\ve)}) f_\ve \rt) \; \ds\dx\dv.
\end{split}
\end{equation}
It is enough to estimate the last integral of the right-hand side. We note that $\calT_{f_\ve}^{(\ve)} \le \calT_{f_\ve} + 1$ holds for all $\ve\in (0,1]$, which gives:
\begin{align*}
    \int_{(0,t)\times \calO} (\nu_{f_\ve} + \ve) \calT_{f_\ve}^{(\ve)} f_\ve  \;\ds\dx\dv &\le C(\|\nu\|_{L^\infty}) \int_{(0,t)\times \calO} \calT_{f_\ve}^{(\ve)} f_\ve \,\ds\dx\dv \quad (\because \nabla_v\cdot \renorm{v}{1}{\ve} = O(1) ) \\
    &\le C(\|\nu\|_{L^\infty}) \int_{(0,t)\times \calO} (1 + \calT_{f_\ve}) f_\ve \,\ds\dx\dv \quad (\because \eqref{eq: reg major}) \\
    &\le C \int_{(0,t)\times \calO} (1 + |v|^2) f_\ve \,\ds\dx\dv.
\end{align*}
Next, we estimate
\begin{align*}
    &\int_{(0,t)\times \calO} (\nu_{f_\ve} + \ve) (-2v\cdot (\renorm{v}{1}{\ve} -u_{f_\ve}^{(\ve)}) f_\ve) \,\ds\dx\dv\\
    &\quad \le C(\|\nu\|_{L^\infty}) \int_{(0,t)\times \calO} \lt( |v|^2 f_\ve + |v| |u_{f_\ve}^{(\ve)}| f_\ve \rt) \,\ds\dx\dv\\
    &\quad \le C(\|\nu\|_{L^\infty}) \int_{(0,t)\times \calO} \lt(\frac{3}{2}|v|^2 f_\ve + |u_{f_\ve}^{(\ve)}|^2 f_\ve \rt) \,\ds\dx\dv\\
    &\quad \le C(\|\nu\|_{L^\infty}) \int_{(0,t)\times \calO} \lt(|v|^2 f_\ve + |u_{f_\ve}|^2 f_\ve \rt) \,\ds\dx\dv \quad (\because \eqref{eq: reg major}) \\
    &\quad \le C(\|\nu\|_{L^\infty}) \int_{(0,t)\times \calO} |v|^2 f_\ve \,\ds\dx\dv,
\end{align*}
the last line following by Jensen's inequality $\rho_{f_\ve}|u_{f_\ve}|^2 \le \intr |v|^2 f_\ve\dv$. Collecting these estimates, we deduce via \eqref{eq: energy est} and Gr\"onwall's lemma that
\begin{align*}
    \sup_{t\in [0,T]} \int_{\calO} (1+|v|^2) f_\ve \,\dx\dv \le C_T,  \quad \int_{\Sigma_+^T} (1+|v|^2) \gamma f_\ve (n(x)\cdot v)_+ \ds\tnd\sigma\dv \le C_T,
\end{align*}
with the constant $C_T>0$ dependent on $g$ and $f^0$ but independent of $\ve$ (recall the uniform bounds that are due to \eqref{ASSUMP f0 g}).
\end{proof}

However, the lemma above is not enough for the passage to $\ve\to 0$, as it does not provide a method for us to obtain compactness for the temperature in any way. For that reason we make the observation that multiplier methods \cite{Perthame1992, perthame1996, Perthame2004}, originally stated for the transport equation and utilized thorougly in the study of the BGK equation \cite{ChenZhang2016, ChoiYun20}, can be adapted to kinetic Fokker--Planck equations under fairly mild assumptions. We provide the statement in rather general form for the sake of future applications.

\begin{lemma}[Higher moments] \label{lem: higher moments}
    Let $(h_m,\gamma h_m)$ denote a sequence of nonnegative very weak solutions to 
    \begin{align*}
    \begin{cases}
    \p_t h_m + v\cdot \nabla_x h_m = a_m(t,x) \Delta_v h_m + b_m(t,x) \nabla_v\cdot (\bfc_m(t,x,v) h_m) + d_m(t,x,v) h_m,\\
    h_m|_{t=0} = h_m^0
    \end{cases}
    \end{align*}
    in $(0,T)\times \calO$. Suppose that for some $k \ge 1$:
    \begin{align*}
        &\textnormal{$\{h_m^0\}$ is bounded in $L^1 \lt( \calO, \, \lt<v\rt>^k\dx\dv \rt)$},  \\
        &\textnormal{$\{h_m\}$ is bounded in $L^\infty\left( 0,T ;L^1 \lt(\calO, \lt<v\rt>^k \dx\dv \rt) \right)$}, \\
        &\textnormal{$\{a_m h_m\}$ is bounded in $L^1\left((0,T)\times\calO, \lt<v\rt>^{k-2}\dt\dx\dv\right)$}, \\
        &\textnormal{$\{b_m |\bfc_m| h_m\}$ is bounded in $L^1\left((0,T)\times\calO, \,\lt<v\rt>^{k-1}\dt\dx\dv\right)$}, \\
        &\textnormal{$\{d_m h_m\}$ is bounded in $L^1\lt((0,T)\times\calO, \, \lt<v\rt>^k\dt\dx\dv \rt)$}, \\
        &\textnormal{$\{\gamma h_m\}$ is bounded in $L^1 \lt( \Sigma^T, \lt<v\rt>^k |n(x)\cdot v|\dt\tnd\sigma\dv \rt)$}. 
    \end{align*}
    Then we have
    \begin{align*}
        \left\{\lt<v\rt>^{k+1} h_m \right\}_m \textnormal{ is bounded in $L^1((0,T)\times\Omega\times\R^d)$}.
    \end{align*}
\end{lemma}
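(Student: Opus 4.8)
The plan is to test the equation satisfied by $h_m$ against the classical Perthame-type multiplier
\[
    \varphi(x,v) := \langle v\rangle^{k-1}\,(x\cdot v), \qquad \langle v\rangle := (1+|v|^2)^{1/2},
\]
which is time-independent and whose only relevant feature is the transport identity
\[
    v\cdot\nabla_x\varphi = \langle v\rangle^{k-1}\,|v|^2 = \langle v\rangle^{k+1} - \langle v\rangle^{k-1}.
\]
Inserting $\varphi$ into the weak formulation therefore turns the free-transport part into exactly the quantity $\langle v\rangle^{k+1}h_m$ we wish to control, modulo the lower-order remainder $\langle v\rangle^{k-1}h_m$. Since $\Omega$ is bounded, writing $R_\Omega := \sup_{x\in\overline\Omega}|x|<\infty$ and estimating $|x\cdot v|\le R_\Omega\langle v\rangle$ crudely after differentiating, one obtains the elementary pointwise bounds
\[
    |\varphi|\le R_\Omega\langle v\rangle^{k}, \qquad |\nabla_v\varphi|\le C(k)\,R_\Omega\langle v\rangle^{k-1}, \qquad |\Delta_v\varphi|\le C(d,k)\,R_\Omega\langle v\rangle^{k-2};
\]
no cancellation is used. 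The exponents $k$, $k-1$, $k-2$ arising here are precisely those in the six hypotheses of the lemma, which is the point behind its formulation.

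First I would deal with the admissibility of $\varphi$. It is smooth on $\overline\Omega\times\R^d$ and, since $[0,T]\times\overline\Omega$ is compact, the only obstruction is polynomial growth in $v$, which I would remove in the standard way: replace $\varphi$ by $\varphi\,\zeta_R(v)$ with $\zeta_R\in C_c^\infty(\R^d)$, $\zeta_R\equiv1$ on $B_R$, $\mathrm{supp}\,\zeta_R\subset B_{2R}$; insert $\varphi\zeta_R\in C_c^\infty([0,T]\times\overline\Omega\times\R^d)$ into the very weak formulation for $h_m$ (concretely the one in Lemma~\ref{lem: ve weak sol} when $h_m=f_\ve$); and let $R\to\infty$. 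The terms generated by $\nabla_v\zeta_R$ and $\Delta_v\zeta_R$ are supported in $\{R\le|v|\le2R\}$ and carry weights bounded by $\langle v\rangle^{k}$, $\langle v\rangle^{k-1}$, or $\langle v\rangle^{k-2}$ against the relevant coefficient-times-$h_m$, so they vanish by dominated convergence against the assumed bounds; the good term $\int\langle v\rangle^{k+1}h_m\zeta_R$ increases to $\int\langle v\rangle^{k+1}h_m$ (possibly $+\infty$ a priori) and is handled by monotone convergence. Taking moreover the time endpoint $t=T$, this produces, for each $m$, the identity
\[
\begin{aligned}
    \int_0^T\!\!\int_{\calO}\langle v\rangle^{k+1}h_m
    &= \int_0^T\!\!\int_{\calO}\langle v\rangle^{k-1}h_m
       + \int_{\calO}\bigl(h_m(T)-h_m^0\bigr)\varphi
       + \int_{\Sigma^T}\gamma h_m\,\varphi\,(n(x)\cdot v) \\
    &\quad - \int_0^T\!\!\int_{\calO}\bigl(a_m h_m\,\Delta_v\varphi - b_m h_m\,\bfc_m\cdot\nabla_v\varphi + d_m h_m\,\varphi\bigr),
\end{aligned}
\]
whose right-hand side is controllable term by term (the precise signs being immaterial, since each term is estimated in absolute value).

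It remains to estimate the right-hand side, using the pointwise bounds above and the six uniform weighted-$L^1$ hypotheses. Since $k\ge1$, one has $\int_0^T\!\int_{\calO}\langle v\rangle^{k-1}h_m\le T\sup_t\|\langle v\rangle^k h_m(t)\|_{L^1(\calO)}$; also $|\int_{\calO}(h_m(T)-h_m^0)\varphi|\le R_\Omega\bigl(\sup_t\|\langle v\rangle^k h_m(t)\|_{L^1(\calO)}+\|\langle v\rangle^k h_m^0\|_{L^1(\calO)}\bigr)$; and, since $|\varphi\,(n(x)\cdot v)|\le R_\Omega\langle v\rangle^k|n(x)\cdot v|$, the boundary term is bounded by $R_\Omega\|\gamma h_m\|_{L^1(\Sigma^T,\,\langle v\rangle^k|n(x)\cdot v|\,\dt\tnd\sigma\dv)}$. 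For the last three terms, $|\int a_m h_m\Delta_v\varphi|\le C(d,k)\,R_\Omega\|a_m h_m\|_{L^1(\langle v\rangle^{k-2}\dt\dx\dv)}$, $|\int b_m h_m\,\bfc_m\cdot\nabla_v\varphi|\le C(k)\,R_\Omega\|b_m|\bfc_m|h_m\|_{L^1(\langle v\rangle^{k-1}\dt\dx\dv)}$, and $|\int d_m h_m\,\varphi|\le R_\Omega\|d_m h_m\|_{L^1(\langle v\rangle^k\dt\dx\dv)}$. Each of these is uniformly bounded in $m$, hence $\sup_m\int_0^T\!\int_{\calO}\langle v\rangle^{k+1}h_m<\infty$, which is the claim.

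The step requiring the most care is the admissibility/limiting argument, not any single estimate: one must carry through $R\to\infty$ even though $\int\langle v\rangle^{k+1}h_m$ is not known to be finite beforehand (handled by monotonicity of the good term), and — unlike in energy or entropy estimates — the boundary and time-endpoint contributions have \emph{no} favorable sign here, because $x\cdot v$ changes sign, so they must be controlled in absolute value rather than dropped. The differential algebra for $\nabla_v\varphi$ and $\Delta_v\varphi$ is routine but should be checked to confirm the stated polynomial weights. I note finally that in the intended application $h_m=f_\ve$ carries the $\ve$-dependent rapid decay \eqref{eq: ve decay}, so $\varphi$ may be used directly (with all constants above independent of $\ve$) and the cutoff step is only needed for the general statement, which is phrased so that the same computation runs under only the listed integrability — exactly what survives the limit $\ve\downarrow 0$.
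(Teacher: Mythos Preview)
Your proof is correct and follows essentially the same Perthame-multiplier approach as the paper; the only difference is that the paper uses the normalized test function $\varphi(x,v)=\langle v\rangle^{k-1}\dfrac{v\cdot(x-x_0)}{\langle x-x_0\rangle}$ (for fixed $x_0\in\Omega$), whose transport term carries a positive factor bounded below via $1-\tfrac{|x-x_0|^2}{\langle x-x_0\rangle^2}\ge(1+\mathrm{diam}(\Omega)^2)^{-1}$, whereas you drop the $\langle x-x_0\rangle$ denominator and obtain the cleaner identity $v\cdot\nabla_x\varphi=\langle v\rangle^{k+1}-\langle v\rangle^{k-1}$ directly. Since $\Omega$ is bounded either choice works, and your version is a harmless simplification.
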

\begin{proof}
The proof is an adaptation of the results in \cite{Perthame1992}. For a fixed $x_0\in \Omega$, we admit
\begin{align*}
    \varphi(x,v) = \lt<v\rt>^{k-1} \frac{v\cdot (x-x_0)}{\lt<x-x_0\rt>}
\end{align*}
as a test function in the weak formulation for $h_m$. The overcoming of the non-compact support of $\varphi$ can be done through the usual cutoff procedure, which is well-illustrated in say \cite[Lemma 2.2]{ChenZhang2016} and thus omitted here. By using the computation
\begin{align*}
    &v\cdot \nabla_x\varphi = \frac{\lt<v\rt>^{k+1}}{\lt<x-x_0\rt>} \left(1 - \frac{(v\cdot (x-x_0))^2}{|v|^2\lt<x-x_0\rt>^2}\right),
\end{align*}
we obtain that
\begin{equation}\label{eq: higher moments estimate}
\begin{split}
    &\int_{(0,T)\times \calO} h_m \frac{\lt<v\rt>^{k+1}}{\lt<x-x_0\rt>} \left(1 - \frac{(v\cdot (x-x_0))^2}{|v|^2 \lt<x-x_0\rt>^2} \right) \;\dt\dx\dv \\
    &\quad= \int_{(0,T)\times \calO} \bfc_m(t,x,v) b_m(t,x) h_m \cdot \nabla_v \varphi \;\dt\dx\dv - \int_{(0,T)\times\calO} a_m(t,x) h_m \Delta_v\varphi \;\dt\dx\dv \\
    &\qquad + \int_{(0,T)\times \calO} d_m h_m \varphi \;\dt\dx\dv + \int_{\calO} (h_m(T) - h_m(0)) \,\varphi \;\dx\dv + \int_{\Sigma^T} \gamma h_m \, \varphi \, v\cdot n(x) \;\dt\tnd\sigma\dv \cr
    &\quad=: \sum_{i=1}^5 R_i.
\end{split}
\end{equation}
We find a lower-bound for the left-hand side first. Since
\begin{align*}
    1 - \frac{(v\cdot (x-x_0))^2}{|v|^2 \lt<x-x_0\rt>^2} \ge 1 - \frac{|x-x_0|^2}{\lt<x-x_0\rt>^2} = \frac{1}{1+|x-x_0|^2} \ge \frac{1}{1 + [\textnormal{diam}(\Omega)]^2 },
\end{align*}
we discover that the left-hand side of \eqref{eq: higher moments estimate} satisfies
\begin{align*}
    &\int_{(0,T)\times\calO} h_m \frac{\lt<v\rt>^{k+1}}{\lt<x-x_0\rt>} \left(1 - \frac{(v\cdot (x-x_0))^2}{|v|^2 \lt<x-x_0\rt>^2} \right) \dt\dx\dv  \ge \frac{1}{1+[\textnormal{diam}(\Omega)]^3} \int_{(0,T)\times\calO} \lt<v\rt>^{k+1} h_m \;\dt\dx\dv.
\end{align*}
The remaining terms are estimated by using:
\begin{align*}
        \varphi &= O(|v|^k), \quad  \nabla_v\varphi  = \left( (k-1) \lt<v \rt>^{k-3} \frac{v\cdot (x-x_0)}{\lt<x-x_0\rt>} v + \lt<v\rt>^{k-1} \frac{x-x_0}{\lt<x-x_0\rt>} \right) ,\\
        \Delta_v \varphi &= \left((k-1)(k-3)\left<v\right>^{k-5}|v|^2 \frac{v\cdot (x-x_0)}{\left<x-x_0\right>} + 4(k-1) \left<v\right>^{k-3} \frac{v\cdot (x-x_0)}{\left<x-x_0\right>} \right) ,
    \end{align*}
which lead immediately to
\begin{align*}
    &R_1 \lesssim_k \int_{(0,T)\times\calO} |b_m \bfc_m| \lt<v\rt>^{k-1} h_m \;\dt\dx\dv \le C, \quad R_2 \lesssim_k \int_{(0,T)\times \calO} |a_m h_m| \lt<v\rt>^{k-2} \;\dt\dx\dv \le C,\\
    &R_3 \le \int_{(0,T)\times \calO} \lt<v\rt>^k |d_m h_m| \;\dt\dx\dv \le C,  \quad R_4 \le \int_{\calO} \lt<v\rt>^k (h_m(T) - h_m(0)) \; \dx\dv \le C, \\
    &R_5 \le \int_{\Sigma^T} |v|^k \gamma h_m |v\cdot n(x)| \; \dt\tnd\sigma\dv \le C,
\end{align*}
by assumption on the coefficients. Collecting the estimates we deduce the result.
\end{proof}

Particularly, it is thus deduced that

\begin{lemma}\label{lem: ve higher moments}
    Let $(f_\ve,\gamma f_\ve)$ denote the solution to \eqref{eq: reg} as constructed in Lemma \ref{lem: ve weak sol}. Then
    \begin{align*}
        \sup_{\ve \in (0,1]} \int_{(0,T)\times \calO} |v|^3 f_\ve(t) \;\dt\dx\dv \le C_T
    \end{align*}
    for some constant $C_T>0$.
\end{lemma}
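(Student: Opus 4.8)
The plan is to invoke Lemma \ref{lem: higher moments} with $k=2$, applied to the sequence $(f_\ve,\gamma f_\ve)$ constructed in Lemma \ref{lem: ve weak sol}. Expanding the regularized equation \eqref{eq: reg} into the form required by that lemma,
\[
\p_t f_\ve + v\cdot\nabla_x f_\ve = \big(\nu_{f_\ve}+\ve\big)\calT_{f_\ve}^{(\ve)}\Delta_v f_\ve + \big(\nu_{f_\ve}+\ve\big)\nabla_v\cdot\Big(\big(\renorm{v}{1}{\ve}-u_{f_\ve}^{(\ve)}\big)f_\ve\Big),
\]
one reads off $a_\ve := (\nu_{f_\ve}+\ve)\calT_{f_\ve}^{(\ve)}$, $b_\ve := \nu_{f_\ve}+\ve$, $\bfc_\ve(t,x,v) := \renorm{v}{1}{\ve}-u_{f_\ve}^{(\ve)}$, and $d_\ve\equiv 0$. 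Since $\nu_{f_\ve}$, $\calT_{f_\ve}^{(\ve)}$, $u_{f_\ve}^{(\ve)}$ are macroscopic quantities depending only on $(t,x)$, the structural requirements of Lemma \ref{lem: higher moments} are met, and its conclusion that $\{\langle v\rangle^{3} f_\ve\}$ is bounded in $L^1((0,T)\times\calO)$ is precisely the assertion (indeed stronger than $\int |v|^3 f_\ve \le C_T$).

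It then remains to verify the six hypotheses of Lemma \ref{lem: higher moments}, uniformly in $\ve\in(0,1]$. The $\langle v\rangle^2$-weighted $L^1$ bounds on $\{f_\ve^0\}$ and on $\{f_\ve\}$ (in $L^\infty_t L^1$) are exactly \eqref{ASSUMP f0 g} and Lemma \ref{lem: energy}; the hypothesis on $\{d_\ve f_\ve\}$ is vacuous since $d_\ve\equiv0$; and the trace bound, requiring $\{\gamma f_\ve\}$ bounded in $L^1(\Sigma^T,\langle v\rangle^2|n(x)\cdot v|\dt\tnd\sigma\dv)$, follows by splitting $\Sigma^T=\Sigma_+^T\cup\Sigma_-^T\cup\Sigma_0^T$: the piece over $\Sigma_0^T$ vanishes since $n(x)\cdot v=0$ there, the piece over $\Sigma_+^T$ is controlled by the $(1+|v|^2)$-weighted boundary bound of Lemma \ref{lem: energy}, and the piece over $\Sigma_-^T$ uses $\gamma f_\ve=g_\ve$ together with the uniform-in-$\ve$ bound on $\int_{\Sigma_-^T}(1+|v|^2)g_\ve|n(x)\cdot v|$ provided by \eqref{ASSUMP g}--\eqref{ASSUMP f0 g}.

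The only genuine computation lies in the two coefficient-weighted bounds. For $\{a_\ve f_\ve\}$ in $L^1((0,T)\times\calO,\langle v\rangle^{0}\dt\dx\dv)$ I would estimate $a_\ve\le(\|\nu\|_{L^\infty}+1)(\calT_{f_\ve}+1)$ via \eqref{eq: reg major}, and then use $\int_{\R^d}\calT_{f_\ve}f_\ve\,\dv=\rho_{f_\ve}\calT_{f_\ve}\le\frac1d\int_{\R^d}|v|^2 f_\ve\,\dv$ to reduce everything to the energy bound of Lemma \ref{lem: energy}. For $\{b_\ve|\bfc_\ve|f_\ve\}$ in $L^1((0,T)\times\calO,\langle v\rangle\dt\dx\dv)$, \eqref{eq: reg major} gives $|\renorm{v}{1}{\ve}|\le|v|$ and $|u_{f_\ve}^{(\ve)}|\le|u_{f_\ve}|$, so the integrand is $\lesssim (|v|+|u_{f_\ve}|)(1+|v|)f_\ve$; Young's inequality splits this into terms $\lesssim(|v|^2+1)f_\ve+|u_{f_\ve}|^2 f_\ve$, and the Jensen bound $\rho_{f_\ve}|u_{f_\ve}|^2\le\int_{\R^d}|v|^2 f_\ve\,\dv$ once more brings everything back to the energy estimate.

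I do not anticipate a real obstacle: once Lemma \ref{lem: higher moments} is available the argument is essentially bookkeeping, matching each hypothesis against Lemma \ref{lem: energy} and the elementary regularization bounds \eqref{eq: reg major}. The only delicate point — that the test function used inside Lemma \ref{lem: higher moments} has non-compact support — is already handled there by the standard cutoff procedure, which is legitimized in our setting by the fast velocity decay \eqref{eq: ve decay} of $f_\ve$ for each fixed $\ve$.
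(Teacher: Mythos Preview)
Your proposal is correct and follows essentially the same approach as the paper: identify the coefficients $a_\ve,b_\ve,\bfc_\ve,d_\ve$ exactly as you do, invoke Lemma \ref{lem: higher moments} with $k=2$, and reduce the two nontrivial hypotheses to the energy bound of Lemma \ref{lem: energy} via \eqref{eq: reg major} and Jensen's inequality. The only cosmetic difference is that the paper uses Cauchy--Schwarz rather than Young's inequality in estimating the cross-term $\int |v||u_{f_\ve}|f_\ve$, but both routes are equivalent here.
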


\begin{proof}
We set (in the notations of Lemma \ref{lem: higher moments})
\begin{align*}
    \begin{cases}
        a_\ve(t,x) = (\nu_{f_\ve} + \ve) \calT_{f_\ve}^{(\ve)} , \\
        b_\ve(t,x) = (\nu_{f_\ve} + \ve), \\
        \bfc_\ve(t,x,v) = \renorm{v}{1}{\ve} - u_{f_\ve}^{(\ve)}, \\
        d_\ve(t,x,v) \equiv 0.
    \end{cases}
\end{align*}
Owing to Lemma \ref{lem: energy}, to show that all conditions of Lemma \ref{lem: higher moments} are satisfied (with $k=2$), it is enough to obtain bounds for $a_\ve f_\ve$ and $b_\ve |\bfc_\ve| f_\ve$. Now \eqref{eq: reg major} shows
\begin{align*}
    &|a_\ve f_\ve| \le  (\|\nu\|_{L^\infty} + 1) (1 + \calT_{f_\ve}) f_\ve, \\
    &|b_\ve |\bfc_\ve| f_\ve| \le (\|\nu\|_{L^\infty} + 1) (| \renorm{v}{1}{\ve}| + |u_{f_\ve}^{(\ve)}|) f_\ve \le (\|\nu\|_{L^\infty}+1) (|v| + |u_{f_\ve}|) f_\ve
\end{align*}
Then clearly for any $\ve\in (0,1]$
\begin{align*}
    \int_{(0,T)\times \calO} |a_\ve f_\ve| \;\dt\dx\dv  &\le C(\|\nu\|_{L^\infty}) \lt(\int_{(0,T)\times \calO} f_\ve + \int_{(0,T)\times \Omega} V_{f_\ve} \rt)\dt\dx\dv \\
    &\le C(\|\nu\|_{L^\infty}) \int_{(0,T)\times \calO} (1+|v|^2) f_\ve  \;\dt\dx\dv \\
    &\le C(\|\nu\|_{L^\infty}, f^0).
\end{align*}
Next, we estimate
\begin{align*}
    &\int_{(0,T)\times \calO} b_\ve |\bfc_\ve| f_\ve \lt<v\rt> \;\dt\dx\dv\\
    &\quad\le (\|\nu\|_{L^\infty}+1) \lt(\int_{(0,T)\times \calO} (1+|v|^2) f_\ve + \int_{(0,T)\times \calO} |v| |u_{f_\ve}| f_\ve \rt) \dt\dx\dv \\
    &\quad\le C(\|\nu\|_{L^\infty}, f^0) \lt(1 + \lt(\int_{(0,T)\times \calO} |v|^2 f_\ve \,\dt\dx\dv \rt)^{1/2} \lt(\int_{(0,T)\times \Omega} \rho_{f_\ve} |u_{f_\ve}|^2 \,\dt\dx \rt)^{1/2} \rt) \\
    &\quad\le C(\|\nu\|_{L^\infty},f^0) \int_{(0,T)\times\calO} (1+|v|^2) f_\ve \,\dt\dx\dv \\
    &\quad\le C(\|\nu\|_{L^\infty}, f^0) .
\end{align*}
From the second line, we used Lemma \ref{lem: energy} and the Cauchy--Schwarz inequality to get the third line, then $\rho_{f_\ve}|u_{f_\ve}|^2 \le \intr |v|^2 f_\ve$ (Jensen's inequality) to obtain the fourth, and finally Lemma \ref{lem: energy} again to deduce the last bound.
\end{proof}

As one final step before passage to the limit, we now prove the entropy identity.

\begin{lemma} \label{lem: entropy}
    The solution $(f_\ve,\gamma f_\ve)$ to the regularized problem \eqref{eq: reg} satisfies the entropy evolution identity
    \begin{align*}
        &\ddt \int_{\calO} f_\ve(t) \log f_\ve(t) \,\dx\dv + \int_{\Gamma_+} \gamma f_\ve(t) \log \gamma f_\ve(t) (n(x)\cdot v)_+ \,\tnd\sigma\dv + 4 \int_{\calO} (\nu_{f_\ve} + \ve) \calT_{f_\ve}^{(\ve)} \lt|\nabla_v\sqrt{f_\ve}\rt|^2 \,\dx\dv \\
        &\quad = - \int_{\Gamma_-} g_\ve(t) \log g_\ve(t) (n(x)\cdot v)_- \,\tnd\sigma\dv  +  \int_{\calO} (\nu_{f_\ve} + \ve) f_\ve(t) \lt(\nabla_v\cdot \renorm{v}{1}{\ve} \rt) \,\dx\dv.
    \end{align*}
    In particular, we have
    \begin{align*}
        &\sup_{t\in [0,T]} \int_{\calO} f_\ve(t) \log f_\ve(t) \,\dx\dv \le C_T, \\
        &\int_{\Sigma_+^T} \gamma f_\ve \log \gamma f_\ve \, (n(x)\cdot v)_+ \,\ds\tnd\sigma\dv \le C_T , \\
        &\int_{(0,T)\times \calO} (\nu_{f_\ve} + \ve) |\nabla_v \sqrt{f_\ve}|^2 \,\ds\dx\dv \le C_T
    \end{align*}
    for some $C_T>0$ independent of $\ve$.
\end{lemma}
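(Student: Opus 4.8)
The plan is to obtain the identity by feeding a regularized version of $r\mapsto r\log r$ into the renormalization formula satisfied by $(f_\ve,\gamma f_\ve)$ (the renormalized solution concept is the one in the Definition preceding Lemma~\ref{lem: nonnegative}, valid for $(f_\ve,\gamma f_\ve)$ by Lemma~\ref{lem: ve weak sol} and \cite[Lemma 2.5]{Zhu24}), and then to pass to a limit. Concretely, for fixed $\delta>0$ I would take $\chi_\delta(r):=(r+\delta)\log(r+\delta)-\delta\log\delta$, extended to $r<0$ in a $C^{1,1}$, quadratically growing way; since $f_\ve,\gamma f_\ve,f_\ve^0\ge0$ only the values on $[0,\infty)$ are used, and there $\chi_\delta\in C^{1,1}$, $\chi_\delta(r)=O(r^2)$ and $\chi_\delta''(r)=(r+\delta)^{-1}=O(1)$ at infinity, so $\chi_\delta$ is admissible. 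I would use $\varphi\equiv1$ as the test function; its lack of compact support in $v$ is absorbed by the usual velocity cutoff $\zeta_n(v)\uparrow1$, legitimate because \eqref{eq: ve decay} makes every term involving $\nabla_v\zeta_n$ or $\Delta_v\zeta_n$ (supported where $|v|\sim n$, i.e.\ where $f_\ve$ is negligible) vanish as $n\to\infty$. Recalling from \eqref{eq: abc}--\eqref{eq: abc bounds} that \eqref{eq: reg} is of the form \eqref{eq: zhu} with $A=(\nu_{f_\ve}+\ve)\calT_{f_\ve}^{(\ve)}\bbI$, $\bfB=(\nu_{f_\ve}+\ve)(\renorm{v}{1}{\ve}-u_{f_\ve}^{(\ve)})$ and $c=\nabla_v\cdot\bfB=(\nu_{f_\ve}+\ve)\nabla_v\cdot\renorm{v}{1}{\ve}$ (all bounded for fixed $\ve$), and since $\nabla_v\varphi\equiv0$, the renormalization formula collapses; integrating the $\bfB$-term by parts in $v$ (again justified by \eqref{eq: ve decay}) and using $\nabla_v\cdot\bfB=c$ one reaches
\begin{align*}
&\int_\calO\chi_\delta(f_\ve(t))\,\dx\dv+\int_{\Sigma^t}(n(x)\cdot v)\,\chi_\delta(\gamma f_\ve)\,\ds\tnd\sigma\dv-\int_\calO\chi_\delta(f_\ve^0)\,\dx\dv\\
&\qquad=\int_{(0,t)\times\calO}\Big(-\chi_\delta''(f_\ve)\,(\nu_{f_\ve}+\ve)\calT_{f_\ve}^{(\ve)}\,|\nabla_v f_\ve|^2+(\nu_{f_\ve}+\ve)\big(\nabla_v\cdot\renorm{v}{1}{\ve}\big)\big(f_\ve\chi_\delta'(f_\ve)-\chi_\delta(f_\ve)\big)\Big)\,\ds\dx\dv.
\end{align*}

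Next I would send $\delta\downarrow0$. On the left, $\chi_\delta(r)\to r\log r$ with $|\chi_\delta(r)|\lesssim r|\log r|+r$, and \eqref{eq: ve decay} (giving $f_\ve\in L^\infty$ with arbitrary algebraic decay, hence $f_\ve|\log f_\ve|\in L^1(\calO)$) together with \eqref{eq: ve memberships} lets dominated convergence identify the limits as $\int_\calO f_\ve(t)\log f_\ve(t)$, $\int_{\Sigma^t}(n\cdot v)\gamma f_\ve\log\gamma f_\ve$ and $\int_\calO f_\ve^0\log f_\ve^0$. On the right, $\chi_\delta''(f_\ve)|\nabla_v f_\ve|^2=(f_\ve+\delta)^{-1}|\nabla_v f_\ve|^2\uparrow 4|\nabla_v\sqrt{f_\ve}|^2$ (using $\nabla_v f_\ve=0$ a.e.\ on $\{f_\ve=0\}$), so monotone convergence and $A\ge0$ give $\int\chi_\delta''(f_\ve)A|\nabla_v f_\ve|^2\uparrow4\int(\nu_{f_\ve}+\ve)\calT_{f_\ve}^{(\ve)}|\nabla_v\sqrt{f_\ve}|^2$; and $f_\ve\chi_\delta'(f_\ve)-\chi_\delta(f_\ve)=f_\ve-\delta\log\frac{f_\ve+\delta}{\delta}$, where $0\le\delta\log\frac{f_\ve+\delta}{\delta}\le f_\ve$ (from $\log(1+s)\le s$) and $\to0$ pointwise, so dominated convergence (majorant $|c|f_\ve$, with $c$ bounded) sends this term to $\int c f_\ve=\int(\nu_{f_\ve}+\ve)f_\ve(\nabla_v\cdot\renorm{v}{1}{\ve})$. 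Since the left side converges to finite limits, the dissipation integral is finite. Splitting $\Sigma^t=\Sigma_+^t\cup\Sigma_-^t$ and using $\gamma f_\ve=g_\ve$ on $\Sigma_-^T$, then differentiating the resulting integrated identity in $t$, yields precisely the asserted entropy identity.

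For the uniform bounds, note first that the right-hand side of the integrated identity is controlled uniformly in $\ve$: $\int_\calO f_\ve^0\log f_\ve^0$ and $\int_{\Sigma_-^T}g_\ve\log g_\ve\,|n\cdot v|$ are bounded by \eqref{ASSUMP f0 g} and the uniform validity of \eqref{init phys}, \eqref{ASSUMP g} for the regularized data, while an elementary estimate on \eqref{eq: abc} shows $\big|(\nu_{f_\ve}+\ve)\nabla_v\cdot\renorm{v}{1}{\ve}\big|\le C(\|\nu\|_{L^\infty},d)$ for $\ve\in(0,1]$ (the $|v|$ in the numerator being dominated by the $|v|^2$ in the denominator), so $\int_{(0,t)\times\calO}(\nu_{f_\ve}+\ve)f_\ve(\nabla_v\cdot\renorm{v}{1}{\ve})\le C\,T\sup_t\int_\calO(1+|v|^2)f_\ve\le C_T$ by Lemma~\ref{lem: energy}. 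On the left, the dissipation term is nonnegative while the two $L\log L$ quantities are sign-indefinite; but for any nonnegative $h$, any measure $d\mu$ and any Gaussian $M$, the pointwise inequality $x\log\frac{x}{y}-x+y\ge0$ gives $\int h\log h\,d\mu\ge\int h\log M\,d\mu-\int M\,d\mu$, so writing $\log M=-c_1-c_2|v|^2$ and invoking Lemma~\ref{lem: energy} (with $d\mu=\dx\dv$, and with $d\mu=(n(x)\cdot v)_+\ds\tnd\sigma\dv$ on $\Sigma_+^t$, for which $\int M\,d\mu\le C_T$) one obtains $\int_\calO f_\ve(t)\log f_\ve(t)\ge-C_T$ and $\int_{\Sigma_+^t}\gamma f_\ve\log\gamma f_\ve\,(n\cdot v)_+\ge-C_T$, uniformly in $\ve,t$. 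Combining these in the integrated identity yields $\int_{(0,T)\times\calO}(\nu_{f_\ve}+\ve)\calT_{f_\ve}^{(\ve)}|\nabla_v\sqrt{f_\ve}|^2\le C_T$, which is the Fisher-information bound; reinserting it then gives the upper bounds $\sup_t\int_\calO f_\ve(t)\log f_\ve(t)\le C_T$ and $\int_{\Sigma_+^T}\gamma f_\ve\log\gamma f_\ve\,(n\cdot v)_+\le C_T$.

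I expect the main obstacle to be the rigorous execution of the $\delta\downarrow0$ passage for a solution of merely $L^2H^1_v$ regularity: checking that every renormalized integrand lies in $L^1$ for each fixed $\delta$, that $\varphi\equiv1$ and the $v$-integration by parts are legitimate through cutoffs controlled by \eqref{eq: ve decay}, and, above all, extracting the lower bound on the sign-indefinite boundary entropy $\int_{\Sigma_+^t}\gamma f_\ve\log\gamma f_\ve\,(n\cdot v)_+$ from the energy estimate — it is precisely this step that makes the dissipation bound, and hence all three estimates, uniform in $\ve$. Once the identity is in hand, the remainder is bookkeeping on top of Lemma~\ref{lem: energy} and the regularizations in \eqref{ASSUMP f0 g}.
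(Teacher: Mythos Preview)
Your approach is essentially the same as the paper's: the regularized entropy $\chi_\delta(r)=(r+\delta)\log(r+\delta)-\delta\log\delta$ is exactly the paper's choice $z\log(\delta+z)+\delta\log(1+z/\delta)$ (just rewritten), and the passage $\delta\downarrow0$ via monotone convergence on the dissipation and dominated convergence on the drift term $f_\ve\chi_\delta'(f_\ve)-\chi_\delta(f_\ve)=f_\ve-\delta\log\tfrac{f_\ve+\delta}{\delta}$ coincides with the paper's route. Two minor differences: the paper reaches the drift term by first writing $\chi_\delta'\,\nabla_v\!\cdot((\renorm{v}{1}{\ve}-u^{(\ve)})f_\ve)$ as $-\frac{f_\ve}{\delta+f_\ve}(\renorm{v}{1}{\ve}-u^{(\ve)})\!\cdot\!\nabla_v f_\ve$ and then integrating by parts via $\frac{f_\ve}{\delta+f_\ve}\nabla_v f_\ve=\nabla_v(f_\ve-\delta\log(\delta+f_\ve)+\delta\log\delta)$, arriving at the same expression; and for the $\int\chi_\delta$ terms the paper exploits the monotonicity $\partial_\delta\chi_\delta\ge0$ rather than your explicit majorant $r|\log r|+r$ (which is in fact valid uniformly in $\delta\in(0,1]$ once one notes $r\log r\le\chi_\delta(r)\le(r+1)\log(r+1)$). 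Your explicit relative-entropy argument for the lower bounds on $\int f_\ve\log f_\ve$ and $\int_{\Sigma_+^t}\gamma f_\ve\log\gamma f_\ve\,(n\cdot v)_+$ is a useful clarification that the paper leaves implicit; note also that the Fisher bound you obtain carries the weight $\calT_{f_\ve}^{(\ve)}$ (as needed later in Lemma~\ref{lem: ptw}), which is the form actually used downstream.
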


\begin{proof}
    Let us first remark that the decay \eqref{eq: ve decay}, and the assumption that $\Omega$ is bounded, imply that $f_\ve \log f_\ve \in L^\infty(0,T;L^1(\Omega\times\R^d))$ and $\gamma f_\ve \log \gamma f_\ve \in L^1(\Sigma^T,|n(x)\cdot v|\dt\tnd\sigma\dv)$ (although not necessarily uniformly in $\ve$). This can be seen, for instance, by following the classical estimate performed in \cite[pp.7--8]{JKO98}. 

    Now, although the solution $f_\ve$ is renormalized, we notice that we cannot take $z\mapsto z\log z$ as a renormalizing function since the Hessian of $z\log z$ blows up near $z=0$. From this perspective we take the ``regularized'' renormalizing function
    \begin{align*}
        \chi_\delta(z) := z \log (\delta + z) + \delta \log\left(1 + \frac{z}{\delta}\right), \quad z\ge 0, \quad \delta\in (0,1].
    \end{align*}
    In this way, the classical inequality $\log(1+z)\le z$ shows
    \begin{align}\label{eq: chidelta ineq}
        \chi_\delta(f_\ve) \le \frac{(f_\ve)^2}{\delta} + f_\ve (\log \delta + 1),
    \end{align}
    and therefore $\chi_\delta(f_\ve)$ enjoys the same tail decay as in \eqref{eq: ve decay}. Next, by direct computations:
    \begin{align*}
        &\chi_\delta'(z) = \log(\delta+z) + 1, \quad \chi_\delta''(z) = \frac{1}{\delta+z}.
    \end{align*}
    Hence, we readily find that $(\chi_\delta(z),1)$ is an admissible pair in the renormalization formula for $f_\ve$. The identity that is obtained is
    \begin{equation} \label{eq: ent prod}
    \begin{split}
        &\int_{\calO} \chi_\delta(f_\ve)(t)\,\dx\dv - \int_{\calO} \chi_\delta(f_\ve^0) \,\dx\dv + \int_{\Sigma_+^t} \chi_\delta(\gamma f_\ve) (n(x)\cdot v)_+ \,\ds\tnd\sigma\dv\\
        &\quad + \int_{(0,t)\times\calO} (\nu_{f_\ve} + \ve) \calT_{f_\ve}^{(\ve)} \frac{4f_\ve}{\delta+f_\ve} |\nabla_v \sqrt{f_\ve}|^2 \,\ds\dx\dv \\
        &= -\int_{\Sigma_-^t} \chi_\delta(g_\ve) (n(x)\cdot v)_- \ds\tnd\sigma\dv - \int_{(0,t)\times \calO} (\nu_{f_\ve} + \ve) \frac{1}{\delta+f_\ve} \nabla_v f_\ve \cdot ( \renorm{v}{1}{\ve} -u_{f_\ve}^{(\ve)} ) f_\ve \;\ds\dx\dv.
    \end{split}
    \end{equation}
    We now investigate the limit of each term in \eqref{eq: ent prod} as $\delta\downarrow 0$. In regard of $\chi_\delta$, we first observe that pointwise:
\begin{equation*}
\begin{split}
    |\chi_\delta(z) - z \log z| &= \lt| z \log \lt(1 + \frac{\delta}{z}\rt) + \delta \log \lt(1+\frac{z}{\delta}\rt) \rt|  = \lt|z \log \lt(1 + \frac{\delta}{z}\rt) + \delta \log (\delta + z) - \delta \log \delta \rt| 
    \end{split}
\end{equation*}
tends to zero as $\delta\downarrow 0$. In fact, from
\begin{align*}
    \frac{\p}{\p \delta} \chi_\delta(z) = \log \lt(1 + \frac{z}{\delta}\rt) \ge 0,
\end{align*}
we find that the function $\chi_\delta(z)$ decreases to $z\log z$ as $\delta\downarrow 0$. Therefore, to apply the dominated convergence theorem to the integrals $\int \chi_\delta(\cdot)$, it is sufficient to prove that the integral is finite when $\delta=1$ (the maximal value). It is clearly from \eqref{eq: chidelta ineq} and \eqref{eq: ve decay} that
\begin{align*}
    \int_{\calO} \chi_1(f_\ve)(t) \,\dx\dv \le \int_{\calO} \Big( |f_\ve(t)|^2 + f_\ve(t) \Big) \,\dx\dv \le C_\ve,
\end{align*}
and thus we conclude
\begin{align*}
    \int_{\calO} \chi_\delta(f_\ve)(t) \,\dx\dv \xrightarrow[\delta\downarrow 0]{} \int_{\calO} f_\ve(t) \log f_\ve(t) \,\dx\dv.
\end{align*}
In precisely the same manner, using \eqref{eq: ve memberships}, we obtain that
\begin{align*}
    \int_{\Sigma_+^t} \chi_\delta(\gamma f_\ve) (n(x)\cdot v)_+ \,\ds\tnd\sigma\dv \xrightarrow[\delta\downarrow 0]{} \int_{\Sigma_+^t} \gamma f_\ve \log \gamma f_\ve (n(x)\cdot v)_+ \,\ds\tnd\sigma\dv.
\end{align*}
The convergences of $\int_{\calO} \chi_\delta(f_\ve^0)$ and $\int_{\Sigma_-^t} \chi_\delta(g_\ve)(n(x)\cdot v)_-$ follow analogously.

Next we discuss the weighted Fisher information, in other words the second line of \eqref{eq: ent prod}. Since $\frac{4z}{\delta + z} \uparrow 4$ as $\delta\downarrow 0$, it is immediately deduced via the monotone convergence theorem that
    \begin{align*}
         \lim_{\delta\downarrow 0} \int_{(0,t)\times \calO} (\nu_{f_\ve} + \ve) \calT_{f_\ve}^{(\ve)} \frac{4f_\ve}{\delta+f_\ve} |\nabla_v \sqrt{f_\ve}|^2 \,\ds\dx\dv  = 4\int_{(0,t)\times \calO} (\nu_{f_\ve} + \ve) \calT_{f_\ve}^{(\ve)} |\nabla_v \sqrt{f_\ve}|^2 \,\ds\dx\dv.
    \end{align*}

Finally, let us focus on the last integral of the right-hand side of \eqref{eq: ent prod}. We use the simple algebraic manipulation
    \begin{align*}
        \frac{1}{\delta + f_\ve} f_\ve \nabla_v f_\ve  &= \nabla_v \Big( f_\ve - \delta \log (\delta + f_\ve) \Big) = \nabla_v \Big( f_\ve - \delta \log (\delta + f_\ve) + \delta \log \delta \Big)
    \end{align*}
    in order to rewrite the integral as
    \begin{align}
        &-\int_{(0,t)\times \calO} (\nu_{f_\ve} + \ve) \frac{1}{\delta+f_\ve} \nabla_v f_\ve \cdot ( \renorm{v}{1}{\ve} -u_{f_\ve}^{(\ve)} ) f_\ve \;\ds\dx\dv \nonumber \\
        &\quad= -\int_{(0,t)\times\calO} (\nu_{f_\ve} + \ve) \nabla_v \Big( f_\ve - \delta \log (\delta + f_\ve) + \delta \log \delta \Big) \cdot (\renorm{v}{1}{\ve} - u_{f_\ve}^{(\ve)}) \;\ds\dx\dv \nonumber\\
        &\quad= \int_{(0,t)\times \calO} (\nu_{f_\ve} + \ve) \Big( f_\ve - \delta\log(\delta + f_\ve) + \delta \log \delta \Big) \,\lt(\nabla_v\cdot \renorm{v}{1}{\ve}\rt) \;\ds\dx\dv. \label{eq: ent: last}
    \end{align}
    Using the relations
    \begin{align*}
        &0 \le z - \delta \log (\delta + z) + \delta \log \delta = z + \delta \log \lt(1 - \frac{z}{\delta+z}\rt) \le z - \frac{\delta z}{\delta + z} \le z, \\
        &\nabla_v\cdot \renorm{v}{1}{\ve} = O(1),
    \end{align*}
    we deduce from the dominated convergence theorem that \eqref{eq: ent: last} converges as $\delta\downarrow 0$ to
    \begin{align*}
        \int_{(0,t)\times \calO} (\nu_{f_\ve} + \ve) f_\ve \, \lt(\nabla_v\cdot \renorm{v}{1}{\ve}\rt) \;\ds\dx\dv.    
    \end{align*}
    Collecting all, we find that \eqref{eq: ent prod} reads in the limit
    \begin{equation}\label{eq: entropy identity}
    \begin{split}
        &\int_{\calO} f_\ve(t) \log f_\ve(t) \,\dx\dv - \int_{\calO} f_\ve^0 \log f_\ve^0 \,\dx\dv + \int_{\Sigma_+^t} \gamma f_\ve \log \gamma f_\ve (n(x)\cdot v)_+ \,\ds\tnd\sigma\dv\\
        &\quad + 4\int_{(0,t)\times\calO} (\nu_{f_\ve} + \ve) \calT_{f_\ve}^{(\ve)} |\nabla_v \sqrt{f_\ve}|^2 \,\dx\dv \\
        &= -\int_{\Sigma_-^t} g_\ve \log g_\ve (n(x)\cdot v)_- \ds\tnd\sigma\dv + \int_{(0,t)\times \calO} (\nu_{f_\ve} + \ve) f_\ve \, \lt(\nabla_v\cdot \renorm{v}{1}{\ve}\rt)\,\dx\dv,
    \end{split}
    \end{equation}
    which is precisely the desired entropy identity.
    
    To obtain the final assertion, we simply note that \eqref{eq: entropy identity} and the assumptions on the initial and boundary datum give us
    \begin{align*}
        &\sup_{t\in [0,T]} \int_{\calO} f_\ve(t) \log f_\ve(t) \,\dx\dv + \int_{\Sigma_+^T} \gamma f_\ve \log \gamma f_\ve (n(x)\cdot v)_+ \,\ds\tnd\sigma\dv  + \int_{(0,T)\times\calO} 4(\nu_{f_\ve} + \ve) |\nabla_v \sqrt{f_\ve}|^2 \,\ds\dx\dv \\
        &\quad\le C_T + C(d,\|\nu\|_{L^\infty}) \int_{(0,T)\times \Omega} f_\ve \;\ds\dx\dv  \le C(d, T, \|\nu\|_{L^\infty}, f^0),
    \end{align*}
    where the last inequality is due to Lemma \ref{lem: energy}. 
\end{proof}

Lemmas \ref{lem: energy} and \ref{lem: entropy} imply by virtue of the Dunford--Pettis criteria that modulo a subsequence there exists a weak limit $(f,\gamma)$ such that
\begin{align*}
    &f_\ve \weakto f \quad \text{weakly in }L^1((0,T)\times \calO), \quad \gamma f_\ve \weakto \gamma f \quad \text{weakly in }L^1(\Sigma_+^T, (n(x)\cdot v)_+ \dt\tnd\sigma\dv).
\end{align*}
From hereon we will always assume that we are along this weakly convergent subsequence.
%
%
%
%
%
%
\subsection{Renormalization and compactness}

We begin this section by showing that we can pass to the limit in the macroscopic variables. Let us first record the specific averaging lemma we will use, see \cite[Theorem 6]{DipernaLionsMeyer}.

\begin{lemma}\label{lem: L2 vel avg}
    Assume that $h_m \in L^2((0,T)\times\calO)$ is a sequence of solutions to
    \begin{align*}
        \p_t h_m + v\cdot \nabla_x h_m = H_m^1 + \nabla_v\cdot H_m^2 \quad \text{in } \calD'((0,T)\times \calO),
    \end{align*}
    where the $H_m^i$ are bounded in $L^1((0,T)\times\calO)$ for $i=1,2$. Then for each $\varphi\in C_c^\infty(\R^d)$ the sequence of averages $\lt\{\intr h_m \varphi(v)\,\dv\rt\}_m$ is compact in $L^1((0,T)\times\Omega)$.
\end{lemma}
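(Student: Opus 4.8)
\emph{Approach.} The plan is to prove Lemma~\ref{lem: L2 vel avg} --- which is \cite[Theorem~6]{DipernaLionsMeyer} --- by the classical Fourier-analytic velocity-averaging argument, adapted to the mixed regularity $h_m\in L^2$, $H_m^i\in L^1$. \emph{Step~1 (localization and reduction).} First I would multiply the transport equation by a cutoff $\zeta\in C_c^\infty((0,T)\times\Omega)$ and extend by zero: then $\zeta h_m$ solves a kinetic transport equation on $\R_t\times\R^d_x\times\R^d_v$ whose right-hand side is again bounded in $L^1$, the only new term being $h_m(\p_t+v\cdot\nabla_x)\zeta$, which is $L^1$-bounded because $\{h_m\}$ is bounded in $L^2$; this is the partition-of-unity device from the remark following Lemma~\ref{lem: R continuous}, and it reduces matters to $\Omega=\R^d$. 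Writing $\rho_m:=\intr h_m\,\varphi(v)\,\dv$, the compact support of $\varphi$ and Cauchy--Schwarz show $\{\rho_m\}$ is bounded in $L^2((0,T)\times\Omega)$, hence equiintegrable on bounded sets; by the Fréchet--Kolmogorov criterion it then suffices to prove uniform smallness of translates, $\sup_m\|\rho_m(\cdot+y)-\rho_m\|_{L^1(K)}\to 0$ as $y\to 0$, for every bounded $K$.

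\emph{Step~2 (splitting across the characteristic variety).} Next I would take the Fourier transform in $(t,x)$ with dual variable $(\tau,\xi)$, so that $i(\tau+v\cdot\xi)\hat h_m=\hat H_m^1+\nabla_v\cdot\hat H_m^2$, fix $\chi\in C^\infty(\R)$ vanishing on $[-1,1]$ and equal to $1$ off $[-2,2]$, and, with a parameter $\lambda=\lambda(\tau,\xi)>0$, decompose
\[
 \hat\rho_m(\tau,\xi)=\intr\varphi\,\hat h_m\Big(1-\chi\big(\tfrac{\tau+v\cdot\xi}{\lambda}\big)\Big)\,\dv+\intr\varphi\,\hat h_m\,\chi\big(\tfrac{\tau+v\cdot\xi}{\lambda}\big)\,\dv=:I_m+J_m.
\]
For $I_m$, the boundedness of $\mathrm{supp}\,\varphi$ makes the slab $\{v:|\tau+v\cdot\xi|\le 2\lambda\}$ meet it in a set of $v$-measure $\lesssim\lambda/|\xi|$, so Cauchy--Schwarz gives $|I_m|\lesssim(\lambda/|\xi|)^{1/2}\|\varphi\,\hat h_m(\tau,\xi,\cdot)\|_{L^2_v}$, a quantity square-summable over $(\tau,\xi)$ by Plancherel and the $L^2$ bound. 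For $J_m$, I would substitute $\hat h_m\,\chi(\cdot/\lambda)=\tfrac{\chi((\tau+v\cdot\xi)/\lambda)}{i(\tau+v\cdot\xi)}(\hat H_m^1+\nabla_v\cdot\hat H_m^2)$ (legitimate where $\chi\ne 0$) and integrate by parts in $v$; since the multiplier and its $v$-gradient are $O(\lambda^{-1})$ and $O(|\xi|\lambda^{-2})$ on $\mathrm{supp}\,\varphi$, and $\|\hat H_m^i(\tau,\xi,\cdot)\|_{L^1_v}\le\|H_m^i\|_{L^1}\le C$, this yields $|J_m|\lesssim\lambda^{-1}+|\xi|\lambda^{-2}$, uniformly in $(\tau,\xi)$ and $m$.

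\emph{Step~3 (optimization and conclusion).} I would then choose $\lambda\sim(1+|\tau|+|\xi|)^\theta$ with a suitable $\theta\in(\tfrac12,1)$ --- and, in the region $|\tau|\gg|\xi|$, where $\mathrm{supp}\,\varphi$ lies entirely outside the slab, simply $\lambda\sim|\tau|$, so $I_m=0$ and $|J_m|\lesssim(1+|\tau|)^{-1}$ --- to obtain a bound $|\hat\rho_m(\tau,\xi)|\le g(\tau,\xi)+G_m(\tau,\xi)$ with $g$ a fixed function decaying in $|(\tau,\xi)|$ and $\{G_m\}$ controlled in $L^2$ with a frequency gain. Together with the a priori $L^2$-bound on $\{\rho_m\}$, this forces $\sup_m\int_{|(\tau,\xi)|\ge M}|\hat\rho_m|^2\to 0$ as $M\to\infty$, hence the equicontinuity of translates required in Step~1; Fréchet--Kolmogorov then gives relative compactness of $\{\rho_m\}$ in $L^1_{\rm loc}$, which on a bounded domain is relative compactness in $L^1((0,T)\times\Omega)$, as asserted.

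\emph{Main obstacle.} The genuinely delicate point --- and the reason I would in practice invoke \cite[Theorem~6]{DipernaLionsMeyer} rather than reprove it --- is the endpoint nature of the source regularity: with $H_m^i$ only in $L^1$, the term $J_m$ carries no Hilbert-space gain, so $\{\rho_m\}$ need not lie in any positive-order Sobolev space and the clean $H^{1/2}$-type argument of the $L^2$/$L^2$ theory \cite{GLPS88} does not apply. Concretely, after optimization the $L^1$-source contribution decays only like $|\xi|^{-1/2}$ and not at all in $\tau$ near the characteristic cone $\{\tau+v\cdot\xi=0\}$, which fills a large portion of frequency space; reconciling this with a \emph{full} $L^1$ (as opposed to merely $L^q$, $q<2$) conclusion is exactly what the finer Besov/Lorentz-scale estimates of \cite{DipernaLionsMeyer} accomplish, and is the technical heart of the argument.
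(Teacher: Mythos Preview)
The paper does not prove this lemma at all: it is stated with the attribution ``see \cite[Theorem 6]{DipernaLionsMeyer}'' and then used as a black box. Your proposal goes beyond the paper by sketching the Fourier-analytic argument behind that citation, and you correctly identify the endpoint difficulty (the $L^1$ source term yields no Hilbert-space gain, so the clean $H^{1/2}$ argument of \cite{GLPS88} fails and one needs the finer analysis of \cite{DipernaLionsMeyer}). Since you yourself conclude that in practice one would invoke the reference rather than reprove it, your bottom line agrees with the paper's treatment; the sketch you give is a reasonable outline of the DiPerna--Lions--Meyer proof, though as you note the optimization in Step~3 and the handling of the low-decay $J_m$ contribution are precisely where the real work lies and your sketch does not fully close that gap.
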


However, this lemma is not applicable in our situation since the $L^2((0,T)\times \calO)$-norm of $f_\ve$ is not uniformly bounded (it depends on the regularized initial datum $f_\ve^0$). Toward this end, inspired by the works \cite{dipernalionsMaxwell1989, DipernaLions89Annals, GolseSaintRaymond05} (see also \cite{Masmoudi10}), we resort to the advantage that $f_\ve$ is renormalized.

\begin{lemma} \label{lem: subcubic}
    Let $(f_\ve,\gamma f_\ve)$ denote the solution to \eqref{eq: reg} as constructed through Lemma \ref{lem: ve weak sol}. Then for any $\varphi\in C^0(\R^d)$ with subcubic growth, the averages $\lt\{\intr f_\ve \varphi(v)\,\dv\rt\}_\ve$ are compact in $L^1((0,T)\times\Omega)$.
\end{lemma}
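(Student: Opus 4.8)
The plan is to circumvent the failure of the $L^2$ velocity-averaging lemma (Lemma \ref{lem: L2 vel avg}) — which is due to $\|f_\ve\|_{L^2}$ blowing up with the regularized datum $f_\ve^0$ — by passing through a \emph{bounded} renormalization. For $\delta\in(0,1]$ I would set $\Gamma_\delta(z):=z/(1+\delta z)$, so that $0\le\Gamma_\delta'\le1$, $|\Gamma_\delta''|\le2\delta$, $z\Gamma_\delta'(z)\le\Gamma_\delta(z)\le\min\{z,1/\delta\}$, $|\Gamma_\delta''(z)|\,z\le1$, and $\Gamma_\delta$ is an admissible renormalizing function for $(f_\ve,\gamma f_\ve)$. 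Since $\Gamma_\delta(f_\ve)^2\le\delta^{-1}\Gamma_\delta(f_\ve)\le\delta^{-1}f_\ve$, the mass bound of Lemma \ref{lem: energy} makes $\{\Gamma_\delta(f_\ve)\}_\ve$ bounded in $L^2((0,T)\times\calO)$ for each fixed $\delta$ (with constant depending on $\delta$, which is harmless).

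First I would write the renormalized equation for $\Gamma_\delta(f_\ve)$, restricted to interior test functions, in the form $\p_t\Gamma_\delta(f_\ve)+v\cdot\nabla_x\Gamma_\delta(f_\ve)=H^1_\ve+\nabla_v\cdot H^2_\ve$ in $\calD'((0,T)\times\calO)$, with $H^2_\ve=D_\ve\Gamma_\delta'(f_\ve)\nabla_v f_\ve+\Gamma_\delta(f_\ve)\bfB$ and $H^1_\ve=-D_\ve\Gamma_\delta''(f_\ve)|\nabla_v f_\ve|^2+c\,f_\ve\Gamma_\delta'(f_\ve)-c\,\Gamma_\delta(f_\ve)$, where $D_\ve:=(\nu_{f_\ve}+\ve)\calT_{f_\ve}^{(\ve)}$ and $\bfB$, $c=\nabla_v\cdot\bfB$ are the drift coefficients from \eqref{eq: abc}. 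The crucial claim is that $\{H^1_\ve\}$ and $\{H^2_\ve\}$ are bounded in $L^1((0,T)\times\calO)$ uniformly in $\ve$. For the zeroth-order pieces one observes that, unlike $\bfB$, the coefficient $c=(\nu_{f_\ve}+\ve)\nabla_v\cdot\renorm{v}{1}{\ve}$ is bounded uniformly in $\ve$ (since $\nabla_v\cdot\renorm{v}{1}{\ve}=O(1)$, as used repeatedly above), so $|c\,f_\ve\Gamma_\delta'(f_\ve)|+|c\,\Gamma_\delta(f_\ve)|\lesssim f_\ve$, integrable by Lemma \ref{lem: energy}. For the quadratic term, $|\nabla_v f_\ve|^2=4f_\ve|\nabla_v\sqrt{f_\ve}|^2$ and $|\Gamma_\delta''(f_\ve)|\,f_\ve\le1$ bring it below $4D_\ve|\nabla_v\sqrt{f_\ve}|^2$, integrable by the weighted Fisher information estimate of Lemma \ref{lem: entropy} (cf. \eqref{eq: entropy identity}), i.e. $\int_{(0,T)\times\calO}D_\ve|\nabla_v\sqrt{f_\ve}|^2\,\dt\dx\dv\le C_T$. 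The genuinely degenerate term $D_\ve\Gamma_\delta'(f_\ve)\nabla_v f_\ve=2D_\ve\Gamma_\delta'(f_\ve)\sqrt{f_\ve}\,\nabla_v\sqrt{f_\ve}$ I would bound by Cauchy--Schwarz through $\bigl(\int D_\ve\Gamma_\delta'(f_\ve)f_\ve\bigr)^{1/2}\bigl(\int D_\ve|\nabla_v\sqrt{f_\ve}|^2\bigr)^{1/2}$, the first factor controlled via $D_\ve f_\ve\le(\|\nu\|_{L^\infty}+1)(\calT_{f_\ve}+1)f_\ve$ and $\intr\calT_{f_\ve}f_\ve\,\dv=V_{f_\ve}\le d^{-1}\intr|v|^2 f_\ve\,\dv$. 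Finally $|\Gamma_\delta(f_\ve)\bfB|\le(\|\nu\|_{L^\infty}+1)(|v|+|u_{f_\ve}|)f_\ve$, integrable since $\intr|v|f_\ve\,\dv\ge|j_{f_\ve}|=\rho_{f_\ve}|u_{f_\ve}|$ and by Lemma \ref{lem: energy}. Granting these bounds, Lemma \ref{lem: L2 vel avg} — applied after a partition-of-unity localization as in the remark following Lemma \ref{lem: R continuous} — gives: for every $\delta\in(0,1]$ and every $\psi\in C_c^\infty(\R^d)$, the averages $\bigl\{\intr\Gamma_\delta(f_\ve)\psi(v)\,\dv\bigr\}_\ve$ are compact in $L^1((0,T)\times\Omega)$.

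Next I would remove the truncation. Using $f_\ve-\Gamma_\delta(f_\ve)=\delta f_\ve^2/(1+\delta f_\ve)\le\min\{\delta f_\ve^2,f_\ve\}$, splitting over $\{f_\ve\le K\}$ and $\{f_\ve>K\}$, and invoking de la Vall\'ee Poussin through the entropy bound $\sup_t\intr f_\ve\log f_\ve\,\dv\le C_T$ of Lemma \ref{lem: entropy} (together with the standard lower bound $-\int f_\ve\log f_\ve\le C(1+\int|v|^2 f_\ve)$), I obtain $\sup_\ve\int_{(0,T)\times\Omega\times B_M}(f_\ve-\Gamma_\delta(f_\ve))\,\dt\dx\dv\to0$ as $\delta\downarrow0$, for each $M>0$. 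For $\varphi\in C^0(\R^d)$ with subcubic growth, the bound $\int_{(0,T)\times\calO}|v|^3 f_\ve\,\dt\dx\dv\le C_T$ of Lemma \ref{lem: ve higher moments} yields the uniform tail estimate $\sup_\ve\int_{(0,T)\times\Omega\times(\R^d\setminus B_R)}f_\ve|\varphi|\le C_T\sup_{|v|>R}\bigl(|\varphi(v)|/|v|^3\bigr)\to0$, so it suffices to treat $\varphi\in C_c(\R^d)$, and then, by $L^\infty$-approximation on a fixed compact set together with $\int_{(0,T)\times\Omega}\rho_{f_\ve}\,\dt\dx\le C_T$, it suffices to treat $\varphi\in C_c^\infty(\R^d)$. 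For such $\varphi$ I would estimate $\intr(f_\ve-f_{\ve'})\varphi\,\dv$ in $L^1((0,T)\times\Omega)$ by adding and subtracting the truncated averages: the $\Gamma_\delta$-difference becomes small after a diagonal extraction over $\delta=1/n$ in the compactness just proved, while the two correction terms are bounded by $\|\varphi\|_{L^\infty}\sup_\eta\int_{(0,T)\times\calO}(f_\eta-\Gamma_\delta(f_\eta))\,\dt\dx\dv$, which vanishes as $\delta\downarrow0$. Hence $\bigl\{\intr f_\ve\varphi\,\dv\bigr\}$ is Cauchy, and therefore compact, in $L^1((0,T)\times\Omega)$; and since $f_\ve\weakto f$ identifies the limit uniquely, the full sequence converges.

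The hard part will be the uniform $L^1$ control of the source terms $H^1_\ve,H^2_\ve$: the diffusion and drift coefficients are degenerate and $\ve$-dependent, so a crude $L^2$-gradient bound is unavailable, and the argument hinges on two observations — that the weighted Fisher information $\int D_\ve|\nabla_v\sqrt{f_\ve}|^2$ is exactly the quantity that absorbs the degenerate gradient term through Cauchy--Schwarz, and that the apparently $\ve$-singular zeroth-order coefficient $c=(\nu_{f_\ve}+\ve)\nabla_v\cdot\renorm{v}{1}{\ve}$ is in fact uniformly bounded in $\ve$. Everything else (the truncation removal, the reduction to $C_c^\infty$, and the diagonal Cauchy argument) is routine once these are in place.
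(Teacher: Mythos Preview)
Your proposal is correct and follows essentially the same route as the paper: renormalize via $\Gamma_\delta(z)=z/(1+\delta z)$, rewrite the equation as $\p_t\Gamma_\delta(f_\ve)+v\cdot\nabla_x\Gamma_\delta(f_\ve)=H_\ve^1+\nabla_v\cdot H_\ve^2$, control $H_\ve^1,H_\ve^2$ in $L^1$ uniformly in $\ve$ via the weighted Fisher bound of Lemma~\ref{lem: entropy} together with $\nabla_v\cdot\renorm{v}{1}{\ve}=O(1)$ and $|u_{f_\ve}^{(\ve)}|\le|u_{f_\ve}|$, apply Lemma~\ref{lem: L2 vel avg}, then peel off the truncation via equiintegrability and extend to subcubic $\varphi$ through the third-moment bound of Lemma~\ref{lem: ve higher moments}. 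The only cosmetic difference is that the paper concludes compactness of $\bigl\{\intr f_\ve\varphi\,\dv\bigr\}_\ve$ directly from the uniform estimate $\|\Gamma_\delta(f_\ve)-f_\ve\|_{L^1}\to0$ rather than via your diagonal Cauchy argument, but the two are equivalent.
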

\begin{proof}
Consider the renormalizing function
\begin{align*}
    \Gamma_\delta(f_\ve) = \frac{f_\ve}{1 + \delta f_\ve},
\end{align*}
which is a bounded weak solution to
\begin{equation}\label{eq: renorm vel avg}
\begin{split}
    \p_t \Gamma_\delta(f_\ve) + v\cdot \nabla_x \Gamma_\delta(f_\ve) &= \Gamma_\delta'(f_\ve) (\nu_{f_\ve} + \ve) \nabla_v \cdot \lt( \calT_{f_\ve}^{(\ve)} \nabla_v f_\ve \rt)   + \Gamma_\delta'(f_\ve) (\nu_{f_\ve} + \ve) \lt( \nabla_v\cdot \renorm{v}{1}{\ve} \rt) f_\ve \\
    &\quad + (\nu_{f_\ve} + \ve) (\renorm{v}{1}{\ve} - u_{f_\ve}^{(\ve)}) \cdot \nabla_v \Gamma_\delta(f_\ve) .
\end{split}
\end{equation}
We observe that the following identities
\begin{align*}
    &\Gamma_\delta'(f_\ve) \nabla_v\cdot \lt( \calT_{f_\ve}^{(\ve)} \nabla_v f_\ve \rt)  = \nabla_v \cdot \lt( \calT_{f_\ve}^{(\ve)} \Gamma_\delta'(f_\ve) 2\sqrt{f_\ve} \nabla_v \sqrt{f_\ve} \rt) - \Gamma_\delta''(f_\ve) \calT_{f_\ve}^{(\ve)} |\nabla_v \sqrt{f_\ve}|^2 4f_\ve ,\\
    &(\renorm{v}{1}{\ve}-u_{f_\ve}^{(\ve)}) \cdot \nabla_v \Gamma_\delta(f_\ve) = \nabla_v\cdot \lt( (\renorm{v}{1}{\ve}-u_{f_\ve}^{(\ve)}) \Gamma_\delta(f_\ve) \rt) - \Gamma_\delta(f_\ve) \nabla_v\cdot \renorm{v}{1}{\ve}, 
\end{align*}
allow us to rewrite the right-hand side of \eqref{eq: renorm vel avg} as
\begin{align*}
    \eqref{eq: renorm vel avg} &= (\nu_{f_\ve} + \ve) \nabla_v\cdot \lt( \calT_{f_\ve}^{(\ve)} \Gamma_\delta'(f_\ve) 2\sqrt{f_\ve} \nabla_v \sqrt{f_\ve} + \lt(\renorm{v}{1}{\ve} - u_{f_\ve}^{(\ve)}\rt) \Gamma_\delta(f_\ve) \rt) \\
    &\quad + (\nu_{f_\ve} + \ve) \lt( -\Gamma_\delta''(f_\ve) \calT_{f_\ve}^{(\ve)} |\nabla_v \sqrt{f_\ve}|^2 4f_\ve + \lt(\Gamma_\delta'(f_\ve) f_\ve - \Gamma_\delta(f_\ve) \rt) \lt(\nabla_v\cdot \renorm{v}{1}{\ve}\rt) \rt).
\end{align*}
That is, we can write
\begin{equation} \label{eq: h1 h2}
    \begin{split}
    &\p_t \Gamma_\delta(f_\ve) + v\cdot \nabla_x \Gamma_\delta(f_\ve) = H_\ve^1 + \nabla_v\cdot H_\ve^2, \\
    &H_{\ve}^1 := (\nu_{f_\ve} + \ve) \lt( -\Gamma_\delta''(f_\ve) \calT_{f_\ve}^{(\ve)} |\nabla_v \sqrt{f_\ve}|^2 4f_\ve + \lt(\Gamma_\delta'(f_\ve) f_\ve - \Gamma_\delta(f_\ve) \rt) \lt(\nabla_v\cdot \renorm{v}{1}{\ve}\rt) \rt), \\
    &H_\ve^2 := (\nu_{f_\ve} + \ve) \lt( 2\calT_{f_\ve}^{(\ve)} \Gamma_\delta'(f_\ve) \sqrt{f_\ve} \nabla_v \sqrt{f_\ve} + \lt(\renorm{v}{1}{\ve} - u_{f_\ve}^{(\ve)}\rt) \Gamma_\delta(f_\ve) \rt).
    \end{split}
\end{equation}
Then, the relations
\begin{align*}
    \Gamma_\delta(f_\ve)\le \min\lt\{\frac{1}{\delta}, f_\ve \rt\}, \quad \Gamma_\delta'(f_\ve) = \frac{1}{(1+\delta f_\ve)^2}, \quad \Gamma_\delta''(f_\ve) = -\frac{2\delta}{(1+\delta f_\ve)^3},
\end{align*}
along with the bounds
\begin{align*}
    &\int_{(0,T)\times \calO} (\nu_{f_\ve} + \ve) \calT_{f_\ve}^{(\ve)} |\nabla_v \sqrt{f_\ve}|^2 \; \dt\dx\dv \le C_T \ (\because \textnormal{Lemma \ref{lem: entropy}}), \quad \nabla_v\cdot \renorm{v}{1}{\ve} = O(1), \quad |u_{f_\ve}^{(\ve)}| \le |u_{f_\ve}|,
\end{align*}
allow us to observe that
\begin{align*}
    \textnormal{$\{H_\ve^1\}, \{H_\ve^2\}$ are bounded in $L^1((0,T)\times \calO)$}.
\end{align*}
Since $\Gamma_\delta(f_\ve) \in L^1\cap L^\infty((0,T)\times \calO)$, we deduce from Lemma \ref{lem: L2 vel avg} that for each $\delta>0$ and $\varphi\in C_c(\R^d)$, the sequence of averages $\lt\{ \intr \Gamma_\delta(f_\ve) \varphi(v)\,\dv\rt\}_\ve$ is compact in $L^1((0,T)\times \Omega)$.

To see that the compactness propagates to $f_\ve$, we proceed as in \cite{GolseSaintRaymond05}. Recalling $\{f_\ve\}$ is weakly convergent in $L^1((0,T)\times \calO)$, we write, for any $M>0$
\begin{align*}
    |\Gamma_\delta(f_\ve) - f_\ve| = \frac{\delta (f_\ve)^2}{1+\delta f_\ve}  = \frac{\delta (f_\ve)^2}{1+\delta f_\ve}\mathbf{1}_{f_\ve \le M} + \frac{\delta (f_\ve)^2}{1+\delta f_\ve} \mathbf{1}_{f_\ve > M}  \le \delta M f_\ve + f_\ve \mathbf{1}_{f_\ve > M}.
\end{align*}
The weak convergence of $f_\ve$ implies that $f_\ve \mathbf{1}_{f_\ve > M} = o(1)_{L^1((0,T)\times \calO)}$, uniformly in $\ve$, as $M\uparrow \infty$. We fix $M$ large enough, next take $\delta\downarrow 0$ to deduce that
\begin{align*}
    \|\Gamma_\delta (f_\ve) - f_\ve \|_{L^1((0,T)\times \calO)} \xrightarrow[\delta\downarrow 0]{} 0.
\end{align*}
This then implies that for each $\varphi\in C_c(\R^d)$, the averages $\lt\{\intr f_\ve \varphi(v)\,\dv\rt\}_\ve$ are also compact in $L^1((0,T)\times\Omega)$. Finally, the compactness can be extended to those $\varphi\in C(\R^d)$ with subcubic growth by using the tightness that is provided by the third moment bounds in Lemma \ref{lem: ve higher moments}.
\end{proof}

We can now show the strong convergence of the distributions $\{f_\ve\}$ themselves.

\begin{lemma}\label{lem: ptw}
    Let $f$ denote the weak $L^1((0,T)\times \calO)$ limit of $\{f_\ve\}$. Then up to some subsequence,
\[
        f_\ve \to f \quad \text{a.e. and strongly in }L^1((0,T)\times \calO).
\]
\end{lemma}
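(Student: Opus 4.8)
The plan is to verify that the sequence $\{f_\ve\}$, along the weakly convergent subsequence already fixed above, satisfies the hypotheses (P1)--(P4) of the compactness principle in Proposition \ref{prop: weighted fisher}, applied with the weights $a_\ve := (\nu_{f_\ve}+\ve)\,\calT_{f_\ve}^{(\ve)}$ and limit $a := \nu_f\,\calT_f$. Hypothesis (P1) is precisely the weak $L^1((0,T)\times\calO)$ convergence $f_\ve\weakto f$ recorded after Lemma \ref{lem: entropy}. For (P3) I would return to the entropy identity of Lemma \ref{lem: entropy}: the initial and boundary entropy contributions are bounded uniformly in $\ve$ by \eqref{ASSUMP g} and \eqref{ASSUMP f0 g}, and the remaining left-hand terms are bounded below using the energy estimate of Lemma \ref{lem: energy}, so that
\[
    \sup_{\ve\in(0,1]}\int_{(0,T)\times\calO}(\nu_{f_\ve}+\ve)\,\calT_{f_\ve}^{(\ve)}\,\bigl|\nabla_v\sqrt{f_\ve}\bigr|^2\,\dt\dx\dv\le C_T,
\]
while each $a_\ve$ is a finite, Lebesgue measurable function of $(t,x)$ alone. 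For (P2), Lemma \ref{lem: subcubic} already provides compactness in $L^1((0,T)\times\Omega)$ of $\bigl\{\intr f_\ve\,\psi(v)\,\dv\bigr\}$ for every $\psi\in C_c(\R^d)$; since $f_\ve\weakto f$, any $L^1$-convergent subsequence of these averages must converge to $\intr f\,\psi(v)\,\dv$, so after at most a further subsequence the averages converge in $L^1$ as required.

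The substantive point is (P4). First I would apply Lemma \ref{lem: subcubic} with the subcubic weights $\psi\equiv1$, $\psi=v_i$, and $\psi=|v|^2$ and, after extracting a further subsequence, obtain the a.e.\ convergences $\rho_{f_\ve}\to\rho_f$, $j_{f_\ve}\to j_f$, and $\intr|v|^2 f_\ve\,\dv\to\intr|v|^2 f\,\dv$ on $(0,T)\times\Omega$. On the non-vacuum set $\{\rho_f>0\}$ the variance identity \eqref{eq: variance and second moment} then forces $V_{f_\ve}=\tfrac1d\bigl(\intr|v|^2 f_\ve\,\dv-|j_{f_\ve}|^2/\rho_{f_\ve}\bigr)\to V_f$ a.e.\ (no division by zero survives in the limit), and $V_f<\infty$ a.e.\ since $(1+|v|^2)f\in L^\infty(0,T;L^1(\calO))$. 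Consequently, as $\ve\downarrow0$,
\[
    \calT_{f_\ve}^{(\ve)}=\frac{V_{f_\ve}}{\rho_{f_\ve}+\ve(1+V_{f_\ve})}+\ve\ \longrightarrow\ \frac{V_f}{\rho_f}=\calT_f \qquad\text{a.e. on }\{\rho_f>0\},
\]
using that $V_{f_\ve}\to V_f<\infty$ and $\rho_{f_\ve}\to\rho_f>0$, while $\nu_{f_\ve}=\nu(\rho_{f_\ve},j_{f_\ve},V_{f_\ve})\to\nu_f$ a.e.\ by the continuity of $\nu$ in \eqref{ASSUMP NU}; hence $a_\ve\to a=\nu_f\calT_f$ a.e.\ on $\{\rho_f>0\}$. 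It then remains to check the compatibility condition, namely that $D_+:=\{a>0\}$ agrees with $\{\rho_f>0\}$ up to a Lebesgue-null set: by \eqref{ASSUMP NU} the factor $\nu_f$ is positive exactly where $\rho_f>0$ (or positive everywhere), and $\calT_f$ is positive exactly where $\rho_f>0$, since for an $L^1$ velocity profile the vanishing $\calT_f(t,x)=0$ together with $\rho_f(t,x)>0$ would force $|v-u_f(t,x)|^2 f(t,x,v)=0$ for a.e.\ $v$, hence $f(t,x,\cdot)=0$ a.e.\ and $\rho_f(t,x)=0$, a contradiction.

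With (P1)--(P4) in hand, Proposition \ref{prop: weighted fisher} delivers, modulo a further subsequence, that $f_\ve\to f$ almost everywhere and strongly in $L^1((0,T)\times\calO)$, which is the assertion. The main obstacle is (P4): one has to show simultaneously that the regularization artifacts in $\calT_{f_\ve}^{(\ve)}$ — the additive $\ve$'s and the renormalization bracket in its definition — genuinely disappear on the non-vacuum region, which hinges on the finiteness of the limits $\rho_f$ and $V_f$ there together with $\rho_f>0$, and that the support of the limiting weight $a=\nu_f\calT_f$ is exactly $\{\rho_f>0\}$, which is what makes Proposition \ref{prop: weighted fisher} applicable with this choice of weight. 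The remaining items (P1)--(P3) are a direct assembly of Lemmas \ref{lem: energy}, \ref{lem: entropy}, and \ref{lem: subcubic}.
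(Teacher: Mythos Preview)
Your proposal is correct and follows essentially the same approach as the paper: both verify (P1)--(P4) of Proposition \ref{prop: weighted fisher} with the weights $a_\ve=(\nu_{f_\ve}+\ve)\calT_{f_\ve}^{(\ve)}$ and $a=\nu_f\calT_f$, drawing (P1)--(P3) from the uniform estimates and Lemma \ref{lem: subcubic}, and establishing (P4) by passing from the convergence of $\rho_{f_\ve}$, $j_{f_\ve}$, and $\intr|v|^2 f_\ve\,\dv$ to that of $V_{f_\ve}$ via the variance identity, then to $\calT_{f_\ve}^{(\ve)}\to\calT_f$ and $\nu_{f_\ve}\to\nu_f$ on $\{\rho_f>0\}$. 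Your justification that $\{\calT_f>0\}=\{\rho_f>0\}$ up to a null set (via the $L^1$ nature of $f(t,x,\cdot)$ ruling out Dirac concentrations) is in fact more explicit than the paper's one-line appeal to ``the definition of the temperature.''
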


\begin{proof}
    We check that the conditions of Proposition \ref{prop: weighted fisher} are valid for $f_\ve$ and $a_\ve := (\nu_{f_\ve} + \ve) \calT_{f_\ve}^{(\ve)}$. We already know that $f_\ve\weakto f$ weakly, and that the averages $\lt\{\intr f_\ve \varphi(v)\dv\rt\}$ are compact in $L^1((0,T)\times\Omega)$. This gives (P1) and (P2). On the other hand, we recall from Lemma \ref{lem: entropy} that
\[
        \int_{(0,T)\times\calO} (\nu_{f_\ve} + \ve) \calT_{f_\ve}^{(\ve)} |\nabla_v \sqrt{f_\ve}|^2 \;\dt\dx\dv \le C_T,
\]
    and thus (P3) is true. Note that Lemma \ref{lem: subcubic} provides, up to some subsequence, that the following convergences hold almost everywhere and in $L^1((0,T)\times \Omega)$:
    \begin{equation} \label{eq: MAC convergence}
        \rho_{f_\ve} \to \rho_f, \quad j_{f_\ve} \to j_f, \quad  \intr |v|^2 f_\ve \,\dv \to \intr |v|^2 f \,\dv.
    \end{equation}
    The last convergence of the second moments, and the relation in \eqref{eq: variance and second moment}, allows us to deduce again that (recall the arguments after \eqref{eq: poly vel avg})
    \begin{align}\label{eq: V convergence}
        V_{f_\ve} \to V_f \quad \text{a.e. and in}\quad L^1((0,T)\times\Omega).
    \end{align}
    The convergences \eqref{eq: MAC convergence}, \eqref{eq: V convergence} then imply also that
    \begin{equation} \label{eq: U T convergence}
    \begin{split}
        u_{f_\ve} \to u_f \quad \text{a.e. in } D_+ := \{(t,x)\in (0,T)\times \Omega : \rho_f(t,x) > 0\}, \\
        \calT_{f_\ve} \to \calT_f \quad \text{a.e. in } D_+ := \{(t,x)\in (0,T)\times \Omega : \rho_f(t,x) > 0\}.
    \end{split}
    \end{equation}
    Moreover,
    \begin{equation} \label{eq: Tve convergence}
        \calT_{f_{\ve}}^{(\ve)} := \renorm{V_{f_\ve}}{\rho_{f_\ve}}{\ve} + \ve = \frac{V_{f_\ve}}{\rho_{f_\ve} + \ve} \to \frac{V_f}{\rho_f} = \calT_f \quad \text{a.e. in }D_+.
    \end{equation}
    By the assumption that $\nu_{f_\ve}$ is continuous, the above convergences also imply
    \begin{align}\label{eq: NU convergence}
        \nu_{f_\ve} + \ve = \nu(\rho_{f_\ve}, j_{f_\ve}, V_{f_\ve}) \to \nu(\rho_f, j_f, V_f) = \nu_f  \quad \text{a.e. in }(0,T)\times \Omega.
    \end{align}
    Finally, by definition of the temperature $\calT_f$ (and the assumption \eqref{ASSUMP NU} on $\nu$), we have that
    \begin{align*}
        \nu_f \calT_f = 0 \quad \text{if and only if}\quad \rho_f = 0,
    \end{align*}
    therefore this and \eqref{eq: Tve convergence}, \eqref{eq: NU convergence} show (P4) holds with $a = \nu_f \calT_f$. Applying Proposition \ref{prop: weighted fisher} we deduce that up to some subsequence $f_\ve$ converges almost everywhere and strongly in $L^1((0,T)\times \calO)$.
\end{proof}



\subsection{Passing to the limit: the very weak formulation} \label{sec: very weak}
Using the convergences implied by Lemma \ref{lem: ptw}, \eqref{eq: MAC convergence}, \eqref{eq: U T convergence}, \eqref{eq: Tve convergence}, \eqref{eq: NU convergence}, and the weak convergence $\gamma f_\ve \weakto \gamma f$ in $L^1(\Sigma_+^T, |n(x)\cdot v|\dt\tnd\sigma\dv)$, we aim to pass to the limit $\ve\downarrow 0$ in the very weak formulation of \eqref{eq: reg}. We recall that $f_\ve$ satisfies, for any $t\in (0,T]$ and $\varphi\in C_c^\infty([0,t]\times \overline{\Omega} \times \R^d)$
\begin{equation} \label{eq: ve weak form}
\begin{split}
    &\int_{\calO} f_\ve(t) \varphi(t,x,v) \,\dx\dv - \int_{\calO} f_\ve^0 \varphi(0,x,v)\,\dx\dv + \int_{\Sigma_+^t} \gamma f_\ve \, \varphi \, (n(x)\cdot v)_+ \,\ds\tnd\sigma\dv \\
    &\quad= - \int_{\Sigma_-^t} g_\ve \, \varphi \, (n(x)\cdot v)_- \,\ds\tnd\sigma\dv +\int_{(0,t)\times \calO} f_\ve (\p_s \varphi + v\cdot \nabla_x \varphi) \,\ds\dx\dv\\
    &\qquad + \int_{(0,t)\times \calO} (\nu_{f_\ve} + \ve) \lt(\Delta_v \varphi \calT_{f_\ve}^{(\ve)} f_\ve - \nabla_v\varphi \cdot \lt(\renorm{v}{1}{\ve}-u_{f_\ve}^{(\ve)} \rt) f_\ve \rt) \, \ds\dx\dv.
\end{split}\end{equation}
%
%
%
%
%
%

\medskip
\noindent \textbf{Step 1: Convergence of the left-hand side.}
Since Lemma \ref{lem: ptw} implies
\[
    \lt\| \|f_\ve(t,\cdot,\cdot) - f(t,\cdot,\cdot)\|_{L^1(\calO)} \rt\|_{L^1(0,T)} \xrightarrow[\ve\downarrow 0]{} 0, 
\]
there exists a subsequence such that for almost every $t\in (0,T]$:
\[
    \|f_\ve(t,\cdot,\cdot) - f(t,\cdot,\cdot)\|_{L^1(\calO)} \xrightarrow[\ve\downarrow 0]{} 0.
\]
Hence, we have
\[
    \int_{\calO} |f_\ve(t) - f(t)| |\varphi(t,x,v)| \,\dx\dv \le \|\varphi\|_{L^\infty} \int_{\calO} |f_\ve(t) - f(t)| \,\dx\dv \to 0 \quad \text{a.e. }t\in (0,T],
\]
from which we deduce that the first term of the left-hand side converges as
\[
    \int_{\calO} f_\ve(t) \varphi(t,x,v) \,\dx\dv \to \int_{\calO} f(t) \varphi(t,x,v) \,\dx\dv,
\]
at least a.e. $t$.

The term containing the initial data converges as $\int_{\calO} f_\ve^0 \varphi(0,x,v) \to \int_{\calO} f^0 \varphi(0,x,v)$ since we assume $f_\ve^0 \to f^0$ at least weakly in $L^1(\calO)$.

For the term containing the trace, the weak convergence $\gamma f_\ve \weakto \gamma f$ in $L^1(\Sigma_+^T, |n(x)\cdot v|\dt\tnd\sigma\dv)$ implies
\[
    \int_{\Sigma_+^t} \gamma f_\ve \, \varphi \, (n(x)\cdot v)_+ \,\ds\tnd\sigma\dv \to \int_{\Sigma_+^t} \gamma f \, \varphi \, (n(x)\cdot v)_+ \,\ds\tnd\sigma\dv \quad \forall t\in (0,T].
\]

We conclude that the left-hand side of \eqref{eq: ve weak form} is convergent for a.e. $t$ in the limit $\ve\downarrow 0$.
%
%
%
%
%
%

\medskip
\noindent \textbf{Step 2: Convergence of the right-hand side.}  Since $g_\ve \to g$ in $L^1(\Sigma_-^T, |n(x)\cdot v| \dt\tnd\sigma\dv)$,
\begin{align*}
    \int_{\Sigma_-^t} g_\ve \, \varphi \, (n(x)\cdot v)_- \,\ds\tnd\sigma\dv \to \int_{\Sigma_-^t} g \varphi \, (n(x)\cdot v)_- \,\ds\tnd\sigma\dv.
\end{align*}
The convergence of the terms that are linear in $f_\ve$ is clear. The only nontrivial remaining convergence is that of the last line of \eqref{eq: ve weak form}. Let us begin by demonstrating

\begin{lemma}
For each $\varphi\in C_c^\infty([0,t]\times \overline{\Omega}\times \R^d)$ we have
\begin{align}\label{eq: PASS weak form 1}
    \int_{(0,t)\times \calO} (\nu_{f_\ve} + \ve) \Delta_v \varphi \calT_{f_\ve}^{(\ve)} f_\ve \,\ds\dx\dv  \to \int_{(0,t)\times \calO} \nu_{f} \Delta_v \varphi \calT_f f \,\ds\dx\dv.
\end{align}
\end{lemma}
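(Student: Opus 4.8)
The plan is to promote the pointwise convergences already recorded in \eqref{eq: MAC convergence}--\eqref{eq: NU convergence} and Lemma \ref{lem: ptw} to the strong $L^1$-convergence $\calT_{f_\ve}^{(\ve)} f_\ve \to \calT_f f$ on $(0,t)\times\calO$; once this is available, \eqref{eq: PASS weak form 1} follows at once because the remaining factor $(\nu_{f_\ve}+\ve)\Delta_v\varphi$ is bounded uniformly in $\ve$ and converges a.e. The delicate point, and what I expect to be the main obstacle, is that $\calT_{f_\ve}^{(\ve)}$ is \emph{not} bounded uniformly in $\ve$ and \emph{a priori} converges only on the non-vacuum set $D_+=\{\rho_f>0\}$; on $D_0=\{\rho_f=0\}$ one knows merely $V_{f_\ve}\to V_f=0$. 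I will therefore avoid any direct equiintegrability estimate for $\calT_{f_\ve}^{(\ve)} f_\ve$ and instead combine an exact mass identity with Scheffé's lemma.

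First I would use the elementary bound $\calT_{f_\ve}^{(\ve)}\le\calT_{f_\ve}+1$ for $\ve\in(0,1]$, which gives $0\le\calT_{f_\ve}^{(\ve)}\rho_{f_\ve}\le V_{f_\ve}+\rho_{f_\ve}$ pointwise in $(t,x)$, and then check the pointwise limit
\[
  \calT_{f_\ve}^{(\ve)}\rho_{f_\ve}=\frac{V_{f_\ve}\,\rho_{f_\ve}}{\rho_{f_\ve}+\ve}+\ve\,\rho_{f_\ve}\xrightarrow[\ve\downarrow 0]{}V_f\qquad\text{a.e. in }(0,T)\times\Omega,
\]
valid on $D_+$ because $\rho_{f_\ve}\to\rho_f>0$ and $V_{f_\ve}\to V_f$ by \eqref{eq: MAC convergence}--\eqref{eq: V convergence}, and on $D_0$ because $V_{f_\ve}\to 0$ and $\rho_{f_\ve}\to 0$. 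Since the dominating sequence $\{V_{f_\ve}+\rho_{f_\ve}\}$ converges to $V_f+\rho_f$ in $L^1((0,T)\times\Omega)$, the generalized dominated convergence theorem yields
\[
  \int_{(0,t)\times\calO}\calT_{f_\ve}^{(\ve)} f_\ve\,\ds\dx\dv=\int_{(0,t)\times\Omega}\calT_{f_\ve}^{(\ve)}\rho_{f_\ve}\,\ds\dx\xrightarrow[\ve\downarrow 0]{}\int_{(0,t)\times\Omega}V_f\,\ds\dx=\int_{(0,t)\times\calO}\calT_f f\,\ds\dx\dv,
\]
the right-hand side being finite because $(1+|v|^2)f\in L^\infty(0,T;L^1(\calO))$ and $\Omega$ is bounded.

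Next I would verify $\calT_{f_\ve}^{(\ve)} f_\ve\to\calT_f f$ a.e. on $(0,t)\times\calO$: on $D_+\times\R^d$ this is immediate from \eqref{eq: Tve convergence} and Lemma \ref{lem: ptw}; on $D_0\times\R^d$ one has $\calT_f f=0$ (since $f=0$ a.e. there), while $\int_{D_0\times\R^d}\calT_{f_\ve}^{(\ve)} f_\ve=\int_{D_0}\calT_{f_\ve}^{(\ve)}\rho_{f_\ve}\to\int_{D_0}V_f=0$, so a nonnegative sequence of vanishing integral has a subsequence tending to $0$ a.e. there. Having $\calT_{f_\ve}^{(\ve)} f_\ve\ge 0$, this a.e. convergence, and the convergence of the integrals from the previous step, Scheffé's lemma upgrades the convergence to $\calT_{f_\ve}^{(\ve)} f_\ve\to\calT_f f$ strongly in $L^1((0,t)\times\calO)$ along that subsequence; as the limit is independent of the extraction, the convergence holds along the full (already fixed) subsequence.

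Finally, using $\|(\nu_{f_\ve}+\ve)\Delta_v\varphi\|_{L^\infty}\le(\|\nu\|_{L^\infty}+1)\|\Delta_v\varphi\|_{L^\infty}$ and $(\nu_{f_\ve}+\ve)\Delta_v\varphi\to\nu_f\Delta_v\varphi$ a.e. (by \eqref{eq: NU convergence}), I would split
\[
  (\nu_{f_\ve}+\ve)\Delta_v\varphi\,\calT_{f_\ve}^{(\ve)} f_\ve-\nu_f\Delta_v\varphi\,\calT_f f=(\nu_{f_\ve}+\ve)\Delta_v\varphi\bigl(\calT_{f_\ve}^{(\ve)} f_\ve-\calT_f f\bigr)+(\nu_{f_\ve}+\ve-\nu_f)\Delta_v\varphi\,\calT_f f;
\]
the first summand tends to $0$ in $L^1((0,t)\times\calO)$ by the previous paragraph, and the second by dominated convergence, its modulus being bounded by $2(\|\nu\|_{L^\infty}+1)|\Delta_v\varphi|\,\calT_f f\in L^1((0,t)\times\calO)$. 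Integrating over $(0,t)\times\calO$ gives \eqref{eq: PASS weak form 1}. The crux is the second paragraph: the bound $\calT_{f_\ve}^{(\ve)}\le\calT_{f_\ve}+1$ does not by itself yield equiintegrability of $\calT_{f_\ve}^{(\ve)} f_\ve$ (because $\calT_{f_\ve}$ can be large where $\rho_{f_\ve}$ is small), and it is the \emph{exact} identity $\int\calT_{f_\ve}^{(\ve)}\rho_{f_\ve}\to\int V_f$ — available precisely because the full variance $V_{f_\ve}$ converges strongly in $L^1$ — that, via Scheffé's lemma, unlocks the strong $L^1$-convergence of the nonlinear product.
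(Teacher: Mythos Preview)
Your proof is correct and takes a genuinely different route from the paper's. The paper first reduces to separable test functions $\varphi=\psi(s,x)\eta(v)$, integrates out $v$ against $\Delta\eta$ to obtain the $(t,x)$-function $\calT_{f_\ve}^{(\ve)}\rho_{\Delta\eta}^\ve$, and then proves strong $L^1((0,T)\times\Omega)$-convergence of this quantity via the Vitali theorem (equiintegrability coming from the domination by $\rho_{f_\ve}+V_{f_\ve}$) on $D_+$, together with a direct estimate on $D_0$; the $\nu$-factor is handled by Egorov's theorem and a telescoping splitting. You instead establish the stronger fact $\calT_{f_\ve}^{(\ve)} f_\ve\to\calT_f f$ in $L^1$ of the \emph{full} phase space $(0,t)\times\calO$, using Scheff\'e's lemma: pointwise convergence on $D_+\times\R^d$ comes for free, on $D_0\times\R^d$ you extract it from vanishing of the integral, and convergence of the total mass follows from the generalized dominated convergence theorem applied to $\calT_{f_\ve}^{(\ve)}\rho_{f_\ve}$. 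Your approach avoids the density reduction to tensor-product test functions and the Egorov argument, and yields a reusable intermediate result; the paper's approach stays in the lower-dimensional $(t,x)$-space throughout, which makes the equiintegrability transparent. One cosmetic correction: the displayed formula should read $\calT_{f_\ve}^{(\ve)}\rho_{f_\ve}=\frac{V_{f_\ve}\rho_{f_\ve}}{\rho_{f_\ve}+\ve(1+V_{f_\ve})}+\ve\rho_{f_\ve}$ (you dropped the $(1+V_{f_\ve})$ in the denominator), but this does not affect any of the limits you claim.
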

\begin{proof}
Resorting to a density argument, we may assume without loss of generality that the test function $\varphi\in C_c^\infty([0,t]\times\overline\Omega\times\R^d)$ takes the form
\[
    \varphi(s,x,v) = \psi(s,x) \eta(v), \quad \psi\in C^\infty([0,t]\times \overline{\Omega}), \quad \eta\in C_c^\infty(\R^d).
\]
Then, with the notations
\[
 \rho_{\Delta\eta}^\ve(s,x) := \intr f_\ve(s,x,v) \Delta\eta(v) \,\dv, \quad \rho_{\Delta\eta}(s,x) := \intr f(s,x,v) \Delta\eta(v)\,\dv.
\]
Note that \eqref{eq: PASS weak form 1} is equivalent to showing that
\begin{align} \label{eq: PASS weak form 1, EQUIV}
    \int_{(0,t)\times \Omega} \psi(s,x) \lt[ (\nu_{f_\ve} + \ve) \calT_{f_\ve}^{(\ve)} \rho_{\Delta\eta}^\ve - \nu_f \calT_f \rho_{\Delta\eta} \rt] \,\ds\dx\dv \to 0.
\end{align}
Hence we only need show that $(\nu_{f_\ve} + \ve) \calT_{f_\ve}^{(\ve)} \rho_{\Delta\eta}^\ve \weakto \nu_f \calT_f \rho_{\Delta\eta}$ weakly in $L^1([0,T]\times \Omega)$. We will prove a stronger statement and show that
\begin{align}\label{eq: stronger}
     \lt[ (\nu_{f_\ve} + \ve) \calT_{f_\ve}^{(\ve)} \rho_{\Delta\eta}^\ve - \nu_f \calT_f \rho_{\Delta\eta} \rt] \to 0 \quad \text{in} \quad L^1((0,T)\times\Omega).
\end{align}
By the averaging effect of Lemma \ref{lem: subcubic}, up to some subsequence it holds that
\begin{align} \label{eq: RHO ETA}
    \rho_{\Delta\eta}^\ve \xrightarrow[\ve\downarrow 0]{} \rho_{\Delta\eta} := \intr f\,\Delta\eta(v)\,\dv \quad \text{a.e. and in} \quad L^1((0,T)\times\Omega).
\end{align}
Taking the product with the (regularized) temperature, we can then observe the following bound, due to Young's inequality
\begin{align}\label{eq: eta YOUNG}
    \calT_{f_\ve}^{(\ve)} |\rho_{\Delta\eta}^\ve| \le C \|\Delta\eta\|_{L^\infty} \lt(1 + \calT_{f_\ve}^{(\ve)}\rt)  \rho_{f_\ve} = C(\|\Delta\eta\|_{L^\infty}) (\rho_{f_\ve} + V_{f_\ve}),
\end{align}
the right-hand side of which is strongly convergent in $L^1((0,T)\times\Omega)$ by \eqref{eq: MAC convergence}, \eqref{eq: V convergence}. As a result, the family $\lt\{\calT_{f_\ve}^{(\ve)} \rho_{\Delta\eta}^\ve \rt\}_\ve$ is equiintegrable in $(0,T)\times\Omega$. Since \eqref{eq: RHO ETA} implies
\begin{align*}
    \calT_{f_\ve}^{(\ve)} \rho_{\Delta\eta}^\ve \xrightarrow[\ve\downarrow 0]{} \calT_f \rho_{\Delta\eta} \quad \text{a.e. in}\quad D_+ := \{(t,x)\in (0,T)\times\Omega: \rho_f(t,x)>0\},
\end{align*}
we deduce from the Vitali convergence theorem that
\begin{align}\label{eq: T ETA D+}
    \calT_{f_\ve}^{(\ve)} \rho_{\Delta\eta}^\ve \to \calT_{f} \rho_{\Delta\eta} \quad \text{in} \quad L^1(D_+).
\end{align}
On the other hand, on the set $D_0 := [(0,T)\times \Omega]\setminus D_+$, we have $f(t,x,v) = 0$ a.e. $v\in \R^d$, which gives that $\rho_{\Delta\eta} = \calT_f = 0$ in $D_0$. Therefore, 
\begin{equation} \label{eq: T ETA D0}
\begin{split}
    \lim_{\ve\downarrow 0} \int_{D_0} \lt| \calT_{f_\ve}^{(\ve)} \rho_{\Delta\eta}^\ve - \calT_f \rho_{\Delta\eta} \rt| \,\dt\dx &= \lim_{\ve\downarrow 0}\int_{D_0} \calT_{f_\ve}^{(\ve)} \lt|\rho_{\Delta\eta}^\ve \rt| \,\dt\dx \\
    &\le C(\|\Delta\eta\|_{L^\infty}) \lim_{\ve\downarrow 0} \int_{D_0} (\rho_{f_\ve} + V_{f_\ve}) \,\dt\dx\\
    &= C \int_{D_0} (\rho_f + V_f) \,\dt\dx\\
    &= 0,
\end{split}
\end{equation}
the inequality due to \eqref{eq: eta YOUNG}. In other words this proves $\calT_{f_\ve}^{(\ve)} \rho_{\Delta\eta}^\ve \to \calT_f \rho_{\Delta\eta}$ in $L^1(D_0)$. Combining this with \eqref{eq: T ETA D+}, we find
\begin{align}\label{eq: PASS prod T eta}
    \calT_{f_\ve}^{(\ve)} \rho_{\Delta\eta}^\ve \to \calT_f \rho_{\Delta\eta} \quad \text{in} \quad L^1((0,T)\times\Omega).
\end{align}
To complete the proof, we recall that $(\nu_{f_\ve} + \ve) \to \nu_f$ pointwise a.e. in $(0,T)\times \Omega$ (see \eqref{eq: NU convergence}), and $\|\nu_{f_\ve} + \ve\|_{L^\infty} \le \|\nu\|_{L^\infty} + 1$. By Egorov's theorem we can extract a set $E\subset (0,T)\times\Omega$ with arbitrarily small Lebesgue measure for which $\nu_{f_\ve} \to \nu_f$ uniformly outside $E$. Then, writing the difference in \eqref{eq: PASS weak form 1, EQUIV} as
\begin{equation} \label{eq: j1 j2}
\begin{split}
     \lt|(\nu_{f_\ve} + \ve) \calT_{f_\ve}^{(\ve)} \rho_{\Delta\eta}^\ve - \nu_f \calT_f \rho_{\Delta\eta} \rt|  
    &\le \lt| (\nu_{f_\ve} + \ve - \nu_f)  \calT_{f_\ve}^{(\ve)} \rho_{\Delta\eta}^\ve \rt| + \lt|\nu_f \lt( \calT_{f_\ve}^{(\ve)} \rho_{\Delta\eta}^\ve - \calT_f \rho_{\Delta\eta} \rt) \rt|\\
    &=: J_1 + J_2,
\end{split}
\end{equation}
we see that $J_1\to 0$ in $L^1([(0,T)\times\Omega]\setminus E)$ and $J_2\to 0$ in $L^1((0,T)\times\Omega)$, thanks to \eqref{eq: PASS prod T eta}. Taking the Lebesgue measure of $E$ to zero, we then see that $\|J_1\|_{L^1(E)}\to 0$ since $\nu$ is bounded and the family $\lt\{\calT_{f_\ve}^{(\ve)} \rho_{\Delta\eta}^\ve\rt\}$ is equiintegrable. It follows that
\begin{align*}
    (\nu_{f_\ve} + \ve) \calT_{f_\ve}^{(\ve)} \rho_{\Delta\eta}^\ve \to \nu_f \calT_f \rho_{\Delta\eta} \quad \text{in}\quad L^1((0,T)\times \Omega),
\end{align*}
and this proves \eqref{eq: stronger}. Clearly, the latter then implies \eqref{eq: PASS weak form 1, EQUIV}.
\end{proof}

Finally, we check that:
\begin{lemma}
In the limit as $\ve\downarrow 0$, we have the following convergences regarding the right-hand side of \eqref{eq: ve weak form}:
\begin{align}
    &\int_{(0,t)\times \calO} (\nu_{f_\ve} + \ve) \nabla_v \varphi \cdot \renorm{v}{1}{\ve}  f_\ve \;\ds\dx\dv \to \int_{(0,t)\times \calO} \nu_f \nabla_v\varphi \cdot v f \;\ds\dx\dv,  \label{eq: PROD renorm v} \\
    &\int_{(0,t)\times \calO} (\nu_{f_\ve} + \ve) \nabla_v\varphi \cdot u_{f_\ve}^{(\ve)} f_\ve \;\ds\dx\dv \to \int_{(0,t)\times \calO} \nu_f \nabla_v\varphi \cdot u_f f \;\ds\dx\dv. \label{eq: PROD u}
\end{align}
\end{lemma}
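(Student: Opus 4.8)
The plan is to read off both limits from the strong $L^1$ and a.e.\ convergence $f_\ve\to f$ established in Lemma \ref{lem: ptw}, combined with the pointwise facts $\nu_{f_\ve}+\ve\to\nu_f$ a.e.\ (see \eqref{eq: NU convergence}), $\renorm{v}{1}{\ve}\to v$ for every $v$, and $u_{f_\ve}^{(\ve)}\to u_f$ a.e.\ in $D_+$ (which follows from \eqref{eq: MAC convergence} exactly as \eqref{eq: U T convergence} was obtained, since on $D_+$ one has $\rho_{f_\ve}+\ve(1+|j_{f_\ve}|)\to\rho_f>0$). The convergence \eqref{eq: PROD renorm v} is the straightforward one: since $\varphi\in C_c^\infty$, the field $\nabla_v\varphi$ is bounded and supported in $\{|v|\le R\}$ for some $R>0$, so the coefficient $g_\ve(s,x,v):=(\nu_{f_\ve}+\ve)\,\nabla_v\varphi\cdot\renorm{v}{1}{\ve}$ obeys $\sup_\ve\|g_\ve\|_{L^\infty}\le(\|\nu\|_{L^\infty}+1)\|\nabla_v\varphi\|_{L^\infty}R=:C$ and $g_\ve\to g:=\nu_f\,\nabla_v\varphi\cdot v$ a.e. Then $\int g_\ve f_\ve-\int gf=\int g_\ve(f_\ve-f)+\int(g_\ve-g)f$; the first piece is at most $C\|f_\ve-f\|_{L^1((0,t)\times\calO)}\to0$ by Lemma \ref{lem: ptw}, and the second tends to $0$ by dominated convergence, the integrand being bounded by $2Cf\in L^1$. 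This gives \eqref{eq: PROD renorm v}.

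For \eqref{eq: PROD u} the difficulty is that $u_{f_\ve}^{(\ve)}$ and $u_f$ are not uniformly bounded and $u_f$ is only meaningful on $D_+$, so no naive pointwise argument survives on the vacuum set. The remedy is to absorb the bulk velocity into the momentum. By linearity and a density argument (as already used in the proof of \eqref{eq: PASS weak form 1}) we may take $\varphi(s,x,v)=\psi(s,x)\eta(v)$ with $\psi\in C^\infty([0,t]\times\overline\Omega)$ and $\eta\in C_c^\infty(\R^d)$; writing $\bfrho_{\nabla\eta}^\ve(s,x):=\intr\nabla\eta(v)f_\ve\,\dv$ and $\bfrho_{\nabla\eta}(s,x):=\intr\nabla\eta(v)f\,\dv$, the left side of \eqref{eq: PROD u} becomes $\int_{(0,t)\times\Omega}\psi\,(\nu_{f_\ve}+\ve)\,u_{f_\ve}^{(\ve)}\cdot\bfrho_{\nabla\eta}^\ve\,\ds\dx$, and it is enough to show $(\nu_{f_\ve}+\ve)\,u_{f_\ve}^{(\ve)}\,\bfrho_{\nabla\eta}^\ve\to\nu_f u_f\bfrho_{\nabla\eta}$ in $L^1((0,T)\times\Omega)$. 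From $|u_{f_\ve}^{(\ve)}|\le|u_{f_\ve}|$ (see \eqref{eq: reg major}) and $|\bfrho_{\nabla\eta}^\ve|\le\|\nabla\eta\|_{L^\infty}\rho_{f_\ve}$ we get the $\ve$-uniform domination $|(\nu_{f_\ve}+\ve)u_{f_\ve}^{(\ve)}\bfrho_{\nabla\eta}^\ve|\le(\|\nu\|_{L^\infty}+1)\|\nabla\eta\|_{L^\infty}|j_{f_\ve}|$, and since $j_{f_\ve}\to j_f$ in $L^1((0,T)\times\Omega)$ by \eqref{eq: MAC convergence}, this envelope is equiintegrable.

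To conclude I would identify the a.e.\ limit and apply the Vitali convergence theorem. On $D_+$ the three factors converge ($\nu_{f_\ve}+\ve\to\nu_f$, $u_{f_\ve}^{(\ve)}\to u_f$, and $\bfrho_{\nabla\eta}^\ve\to\bfrho_{\nabla\eta}$ a.e., the last because Lemma \ref{lem: subcubic} yields $L^1$-compactness of $\{\bfrho_{\nabla\eta}^\ve\}$ while the weak limit $f_\ve\weakto f$ pins the limit to $\bfrho_{\nabla\eta}$), so the product tends to $\nu_f u_f\bfrho_{\nabla\eta}$ a.e.\ on $D_+$. On $D_0=\{\rho_f=0\}$ one has $f(s,x,\cdot)\equiv0$, hence $\bfrho_{\nabla\eta}=0$ and $j_f=0$ there, so the candidate limit vanishes and the product, being controlled by $C|j_{f_\ve}|\to C|j_f|=0$, also tends to $0$ a.e.\ on $D_0$. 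Thus $(\nu_{f_\ve}+\ve)u_{f_\ve}^{(\ve)}\bfrho_{\nabla\eta}^\ve\to\nu_f u_f\bfrho_{\nabla\eta}$ a.e.\ on $(0,T)\times\Omega$; Vitali then upgrades this to $L^1((0,T)\times\Omega)$ convergence, and testing against the bounded function $\psi$ and undoing the product reduction yields \eqref{eq: PROD u}. The step I expect to require the most care is precisely this passage on the vacuum region $D_0$: one cannot work pointwise with $u_f$ there, and the resolution is that $\rho_{f_\ve}u_{f_\ve}=j_{f_\ve}$ is globally $L^1$-convergent and furnishes the dominating function that forces both the sequence and its limit to collapse on $D_0$.
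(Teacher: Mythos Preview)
Your proof is correct and follows essentially the same strategy as the paper: for \eqref{eq: PROD renorm v} both arguments telescope using the uniform bound on the coefficient and the strong $L^1$ convergence of $f_\ve$, and for \eqref{eq: PROD u} both reduce to tensor-product test functions, integrate in $v$, dominate by $C|j_{f_\ve}|$, and handle the split $D_+/D_0$ in the same way. The only cosmetic difference is that the paper first proves $u_{f_\ve}^{(\ve)}\bfrho_{\nabla\eta}^\ve\to u_f\bfrho_{\nabla\eta}$ in $L^1$ and then attaches the $\nu$-factor via Egorov and a $J_1+J_2$ telescoping, whereas you fold $\nu_{f_\ve}+\ve$ into the product from the start and invoke Vitali once; your route is slightly more streamlined but not materially different.
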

\begin{proof}
Since $\|\nu_{f_\ve} + \ve \|_{L^\infty} \le \|\nu\|_{L^\infty} + 1$, we have that
\begin{align*}
    (\nu_{f_\ve} + \ve) f_\ve \le C(\|\nu\|_{L^\infty}) f_\ve,
\end{align*}
which implies that the sequence $\{(\nu_{f_\ve} + \ve) f_\ve\}$ is weakly compact in $L^1((0,T)\times \calO)$; by \eqref{eq: NU convergence} and Lemma \ref{lem: ptw}, the sequence is almost everywhere convergent. By the Vitali convergence theorem
\begin{align*}
    (\nu_{f_\ve} + \ve) f_\ve \to \nu_f f \quad \text{in}\quad L^1((0,T)\times \calO).
\end{align*}
It follows that the difference of \eqref{eq: PROD renorm v} can be estimated as
\begin{align*}
    &\lt|\int_{(0,t)\times \calO} (\nu_{f_\ve} + \ve) \nabla_v \varphi \cdot \renorm{v}{1}{\ve}  f_\ve \;\ds\dx\dv - \int_{(0,t)\times \calO} \nu_f \nabla_v\varphi \cdot v f \;\ds\dx\dv \rt| \\
    &\quad\le \int_{(0,t)\times \calO} \lt| (\nu_{f_\ve} + \ve) f_\ve  - \nu_f f \rt|  |\nabla_v \varphi| \lt|\renorm{v}{1}{\ve} \rt| \;\ds\dx\dv  + \int_{(0,t)\times \calO} |\nabla_v \varphi| \lt|\renorm{v}{1}{\ve} - v \rt| \nu_f f \;\ds\dx\dv \\
    &\quad\le \textnormal{diam}(\textnormal{supp}\varphi) \|\nabla_v\varphi\|_{L^\infty} \int_{(0,t)\times \calO} |(\nu_{f_\ve} + \ve) f_\ve - \nu_f f| \;\ds\dx\dv \\
    &\qquad + \ve ( 1 + \textnormal{diam}(\textnormal{supp}(\varphi))) \|\nu\|_{L^\infty} \int_{(0,t)\times \calO} f \,\ds\dx\dv \quad \lt(\because \lt|\renorm{v}{1}{\ve} - v\rt| = \frac{\ve ( 1 + |v|)}{1 + \ve (1 + |v|)} \rt) \\
    &\quad\xrightarrow[\ve\downarrow 0]{} 0.
\end{align*}
Toward the end of \eqref{eq: PROD u} now, we proceed similarly as in the proof of \eqref{eq: PASS weak form 1, EQUIV}. Taking $\varphi$ to be of the form $\varphi(s,x,v) = \psi(s,x) \eta(v)$, it is enough to show that
\begin{align*}
    (\nu_{f_\ve} + \ve) u_{f_\ve}^{(\ve)} \rho_{\Delta\eta}^\ve \to \nu_f u_f \rho_{\Delta\eta} \quad \text{weakly in}\quad L^1((0,T)\times\Omega).
\end{align*}
Again we will prove the stronger statement that the convergence holds strongly. As aforementioned the ideas are almost the same and thus we provide only a sketch of proof. Using the relation
\begin{align*}
    |u_{f_\ve}^{(\ve)}\rho_{\Delta\eta}^\ve | \le C(\|\Delta_\eta\|_{L^\infty}) \rho_{f_\ve} |u_{f_\ve}|,
\end{align*}
we deduce similarly as in the arguments in \eqref{eq: eta YOUNG} and \eqref{eq: T ETA D0} that $u_{f_\ve}^{(\ve)} \rho_{\Delta\eta}^\ve$ is strongly convergent in $L^1((0,T)\times \Omega)$. In particular, $\lt\{u_{f^{\ve}}^{(\ve)} \rho_{\Delta\eta}^\ve\rt\}$ is equiintegrable over $(0,T)\times \Omega$. Then borrowing Egorov's theorem again, we have that $(\nu_{f_\ve}+\ve) \to \nu_f$ uniformly outside of a small Lebesgue-measure set. Combining the two given information we deduce through telescoping as in \eqref{eq: j1 j2} that
\begin{align*}
    (\nu_{f_\ve} + \ve) u_{f_\ve}^{(\ve)} \rho_{\Delta\eta}^\ve \to \nu_f u_f \rho_{\Delta\eta} \quad \text{strongly in}\quad L^1((0,T)\times\Omega).
\end{align*}
This completes the proof of \eqref{eq: PROD u}.
\end{proof}

Collecting all, we find that for a.e. $t\in (0,T]$ it holds that
\begin{align*}
    &\int_{\calO} f(t) \varphi(t,x,v) \,\dx\dv - \int_{\calO} f^0 \varphi(0,x,v)\,\dx\dv + \int_{\Sigma_+^t} \gamma f \, \varphi \, (n(x)\cdot v)_+ \,\ds\tnd\sigma\dv\\
    &\quad= - \int_{\Sigma_-^t} g \, \varphi \, (n(x)\cdot v)_- \,\ds\tnd\sigma\dv  + \int_{(0,t)\times \calO} \nu_f \lt( \Delta_v \varphi \, \calT_f f - \nabla_v\varphi \cdot (v - u_f) f \rt) \; \ds\dx\dv.
\end{align*}
That is, up to a modification of $f$ in a $\scrL^1([0,T])$-zero set, the map
\begin{align*}
    (0,T] \ni t \mapsto \int_{\calO} f(t) \varphi(t,x,v) \,\dx\dv
\end{align*}
is absolutely continuous, and up to this version, the very weak formulation holds for all $t\in (0,T]$.
%
%
%
%
%
%
\subsection{Passing to the limit: the weak formulation} \label{sec: weak}
If we are to use the weak formulation (instead of the very weak formulation), the only difference is that we must show 
\begin{align} \label{eq: PASS: WEAK}
    \int_{(0,t)\times \calO} (\nu_{f_\ve} + \ve) \nabla_v\varphi \cdot \calT_{f_\ve}^{(\ve)} \nabla_v f_\ve \,\ds\dx\dv  \to \int_{(0,t)\times \calO} \nu_f \nabla_v\varphi \cdot \calT_f \nabla_v f \,\ds\dx\dv. 
\end{align}
To give a sense to this identity, in virtue of the entropy dissipation in Lemma \ref{lem: entropy}, we rewrite
\begin{align}\label{eq: WEAK rewrite}
    (\nu_{f_\ve} + \ve) \cdot \calT_{f_\ve}^{(\ve)} \nabla_v f_\ve = 2  \lt[\sqrt{\nu_{f_\ve} + \ve} \lt(\calT_{f_\ve}^{(\ve)}\rt)^{1/2}\sqrt{f_\ve}\rt] \lt[ \sqrt{\nu_{f_\ve} + \ve}\lt(\calT_{f_\ve}^{(\ve)}\rt)^{1/2} \nabla_v \sqrt{f_\ve} \rt].
\end{align}
Let us investigate the convergence of each term in the brackets.
\begin{lemma} \label{lem: prod sqrt STR0}
    The sequence $\lt\{ \sqrt{\nu_{f_\ve} + \ve} \lt(\calT_{f_\ve}^{(\ve)}\rt)^{1/2}\sqrt{f_\ve}\rt\}$ converges strongly in $L^2((0,T)\times \calO)$ to $\sqrt{\nu_f}\calT_f^{1/2} \sqrt{f}$.
\end{lemma}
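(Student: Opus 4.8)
\emph{Proof proposal.} The plan is to reduce the assertion to an $L^1$-convergence of the energy-type density $(\nu_{f_\ve}+\ve)\calT^{(\ve)}_{f_\ve}f_\ve$ together with almost everywhere convergence of its square root, and then to upgrade to strong $L^2$-convergence via the classical fact that a.e.\ convergence plus convergence of $L^2$-norms implies strong convergence in $L^2$ (the Radon--Riesz/Brezis--Lieb mechanism, i.e.\ uniform convexity of $L^2$: apply Fatou's lemma to the pointwise-nonnegative quantity $2a^2+2b^2-(a-b)^2=(a+b)^2$). As a preliminary remark one checks that $\nu_f\calT_f f\in L^1((0,T)\times\calO)$, so the claimed limit indeed lies in $L^2$; this follows from $\nu_f\le\|\nu\|_{L^\infty}$ and $\int_{(0,T)\times\calO}\calT_f f=\int_{(0,T)\times\Omega}V_f\le\int_{(0,T)\times\calO}|v|^2 f<+\infty$, using Lemma \ref{lem: energy}.

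The first main step is the convergence of the $L^2$-norms. Integrating in $v$,
\[
\int_{(0,T)\times\calO}(\nu_{f_\ve}+\ve)\calT^{(\ve)}_{f_\ve}f_\ve\,\dt\dx\dv=\int_{(0,T)\times\Omega}(\nu_{f_\ve}+\ve)\,\calT^{(\ve)}_{f_\ve}\rho_{f_\ve}\,\dt\dx,
\]
and likewise with $\calO$ replaced by $D_0\times\R^d$ or $D_+\times\R^d$. The point is that, whereas $\calT^{(\ve)}_{f_\ve}$ itself is controlled pointwise only away from vacuum, the product $\calT^{(\ve)}_{f_\ve}\rho_{f_\ve}$ converges a.e.\ on all of $(0,T)\times\Omega$ to $V_f=\calT_f\rho_f$: on $D_+$ because $\rho_{f_\ve}\to\rho_f>0$ and $V_{f_\ve}\to V_f$ by \eqref{eq: MAC convergence} and \eqref{eq: V convergence}, and on $D_0$ because $0\le\calT^{(\ve)}_{f_\ve}\rho_{f_\ve}\le V_{f_\ve}+\rho_{f_\ve}\to 0$ (recall \eqref{eq: reg major}). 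Since moreover $\calT^{(\ve)}_{f_\ve}\rho_{f_\ve}\le V_{f_\ve}+\rho_{f_\ve}$ everywhere and the right-hand side converges strongly in $L^1((0,T)\times\Omega)$ by \eqref{eq: MAC convergence}, \eqref{eq: V convergence}, the generalized dominated convergence theorem yields $\calT^{(\ve)}_{f_\ve}\rho_{f_\ve}\to V_f$ in $L^1((0,T)\times\Omega)$. Multiplying by $\nu_{f_\ve}+\ve$, which is uniformly bounded by $\|\nu\|_{L^\infty}+1$ and converges a.e.\ to $\nu_f$ by \eqref{eq: NU convergence}, gives $(\nu_{f_\ve}+\ve)\calT^{(\ve)}_{f_\ve}\rho_{f_\ve}\to\nu_f V_f$ in $L^1((0,T)\times\Omega)$. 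Integrating, $\big\|\sqrt{(\nu_{f_\ve}+\ve)\calT^{(\ve)}_{f_\ve}f_\ve}\big\|_{L^2((0,T)\times\calO)}^2\to\int_{(0,T)\times\Omega}\nu_fV_f=\big\|\sqrt{\nu_f}\calT_f^{1/2}\sqrt f\big\|_{L^2((0,T)\times\calO)}^2$, and the same computation restricted to $D_0$ shows $\int_{D_0\times\R^d}(\nu_{f_\ve}+\ve)\calT^{(\ve)}_{f_\ve}f_\ve\to\int_{D_0}\nu_fV_f=0$; hence the $L^2(D_+\times\R^d)$-norms also converge to the correct value.

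The second step is pointwise convergence. On $D_+\times\R^d$ one has $\calT^{(\ve)}_{f_\ve}\to\calT_f$ by \eqref{eq: Tve convergence}, $\nu_{f_\ve}+\ve\to\nu_f$ by \eqref{eq: NU convergence}, and $f_\ve\to f$ a.e.\ by Lemma \ref{lem: ptw}, so $\sqrt{(\nu_{f_\ve}+\ve)\calT^{(\ve)}_{f_\ve}}\sqrt{f_\ve}\to\sqrt{\nu_f}\calT_f^{1/2}\sqrt f$ a.e.\ on $D_+\times\R^d$. Combining this a.e.\ convergence with the $L^2(D_+\times\R^d)$-norm convergence from the first step, the Radon--Riesz argument gives strong convergence in $L^2(D_+\times\R^d)$. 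On $D_0\times\R^d$ the limit $\sqrt{\nu_f}\calT_f^{1/2}\sqrt f$ vanishes identically since $\calT_f=0$ there, and $\big\|\sqrt{(\nu_{f_\ve}+\ve)\calT^{(\ve)}_{f_\ve}f_\ve}\big\|_{L^2(D_0\times\R^d)}^2=\int_{D_0\times\R^d}(\nu_{f_\ve}+\ve)\calT^{(\ve)}_{f_\ve}f_\ve\to0$. Adding the contributions of $D_+\times\R^d$ and $D_0\times\R^d$ gives the claimed strong convergence in $L^2((0,T)\times\calO)$.

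The main obstacle is exactly the degenerate vacuum region $D_0$: there the regularized diffusion coefficient $(\nu_{f_\ve}+\ve)\calT^{(\ve)}_{f_\ve}$ need not converge pointwise, nor even remain bounded, so one cannot hope to control the integrand $\sqrt{(\nu_{f_\ve}+\ve)\calT^{(\ve)}_{f_\ve}f_\ve}$ pointwise on the full phase space. The device that resolves this is to never examine $\calT^{(\ve)}_{f_\ve}$ in isolation on $D_0$, but only the product $\calT^{(\ve)}_{f_\ve}\rho_{f_\ve}$, whose $\ve$-uniform domination by $V_{f_\ve}+\rho_{f_\ve}$ and a.e.\ convergence to $V_f=0$ on $D_0$ are automatic; the $L^2$-norm convergence this produces is precisely the input needed to run the Radon--Riesz upgrade and thereby bypass the absence of any pointwise control of $\sqrt{(\nu_{f_\ve}+\ve)\calT^{(\ve)}_{f_\ve}f_\ve}$ on $D_0\times\R^d$.
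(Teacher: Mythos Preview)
Your proof is correct and follows essentially the same route as the paper: both reduce the question to $L^1$-convergence of $(\nu_{f_\ve}+\ve)\calT^{(\ve)}_{f_\ve}\rho_{f_\ve}$ (via the domination $\calT^{(\ve)}_{f_\ve}\rho_{f_\ve}\le V_{f_\ve}+\rho_{f_\ve}$), treat $D_0$ by showing the $L^2$-norm there vanishes, and then upgrade to strong $L^2$-convergence. The only organizational difference is that the paper first establishes weak $L^2$-convergence explicitly (via Egorov on $D_+$) and then invokes ``weak $+$ norm $\Rightarrow$ strong'', whereas you go directly through ``a.e.\ $+$ norm $\Rightarrow$ strong'' (Brezis--Lieb/Fatou); your version is slightly more streamlined here, though the paper's weak-convergence computation is reused in the subsequent Lemma~\ref{lem: nabla SQRT WEAK}.
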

\begin{proof}
    First, we prove weak convergence. Using the inequality:
    \begin{align*}
        \calT_{f_\ve}^{(\ve)} f_\ve \le C (1 + \calT_{f_\ve}) f_\ve,
    \end{align*}
    and also that $\nu$ is universally bounded, we can straightforwardly observe that $\lt\{ \sqrt{\nu_{f_\ve} + \ve}\lt(\calT_{f_\ve}^{(\ve)}\rt)^{1/2}\sqrt{f_\ve}\rt\}$ is bounded in $L^2((0,T)\times \calO)$. To see that the weak $L^2$ limit is $\sqrt{\nu_f}\calT_f^{1/2} \sqrt{f}$, we first fix a compactly supported test function $\psi \in C_c^\infty((0,T)\times \calO)$. Then, using the convergences provided by Lemma \ref{lem: ptw}, \eqref{eq: U T convergence}, \eqref{eq: NU convergence}, we resort to Egorov's theorem, extracting a set $E\subset \textnormal{supp}(\psi) \cap (D_+ \times \R^d)$ with small Lebesgue measure, for which 
    \begin{align*}
    \textnormal{$\sqrt{\nu_{f_\ve} + \ve}\lt(\calT_{f_\ve}^{(\ve)}\rt)^{1/2}\sqrt{f_\ve} \to \sqrt{\nu_f}\calT_f^{1/2} \sqrt{f}$\quad uniformly in $(\textnormal{supp}(\psi) \cap (D_+ \times \R^d) )\setminus E$.}
    \end{align*}
    Then, using that $\sqrt{\nu_{f_\ve}+\ve}\lt(\calT_{f_\ve}^{(\ve)}\rt)^{1/2}\sqrt{f_\ve}$ is equiintegrable over $(0,T)\times \calO$ (owing to it being bounded in $L^2$), we let the measure of $E$ tend to zero to deduce that
    \begin{align} \label{eq: sqrt dist}
        \int_{D_+ \times \R^d} \sqrt{\nu_{f_\ve} + \ve} \lt(\calT_{f_\ve}^{(\ve)}\rt)^{1/2}\sqrt{f_\ve} \, \psi(t,x,v) \,\dt\dx\dv \to \int_{D_+\times \R^d} \sqrt{\nu_f} \calT_f^{1/2} \sqrt{f} \,\psi(t,x,v) \,\dt\dx\dv.
    \end{align}
    In regard of the complement $D_0 = ((0,T)\times \Omega) \setminus D_+$, we can easily check that
    \begin{equation} \label{eq: sqrt D0}
    \begin{split}
        &\int_{D_0\times \R^d} \lt|\sqrt{\nu_{f_\ve} + \ve} \lt(\calT_{f_\ve}^{(\ve)}\rt)^{1/2} \sqrt{f_\ve} - \sqrt{\nu_f} \calT_f^{1/2} \sqrt{f} \rt|^2 \,\dt\dx\dv  \\
        &\quad= \int_{D_0 \times \R^d} (\nu_{f_\ve} + \ve) \calT_{f_\ve}^{(\ve)} f_\ve \,\dt\dx\dv\\
        &\quad= \int_{D_0} (\nu_{f_\ve} + \ve) \calT_{f_\ve}^{(\ve)} \rho_{f_\ve} \,\dt\dx \\
        &\quad\le (\|\nu\|_{L^\infty}+1) \int_{D_0} (1 + \calT_{f_\ve}) \rho_{f_\ve} \,\dt\dx \\
        &\quad= C \int_{D_0} (\rho_{f_\ve} + V_{f_\ve}) \,\dt\dx \xrightarrow[\ve\downarrow 0]{} 0.
    \end{split}
    \end{equation}
    Combining this with \eqref{eq: sqrt dist}, it is therefore clear that for all $\psi\in C_c^\infty((0,T)\times\calO)$
    \begin{align*}
        \int_{(0,T)\times \calO} \sqrt{\nu_{f_\ve} + \ve} \lt(\calT_{f_\ve}^{(\ve)}\rt)^{1/2}\sqrt{f_\ve} \, \psi(t,x,v) \,\dt\dx\dv \to \int_{(0,T)\times \calO} \sqrt{\nu_f} \calT_f^{1/2} \sqrt{f} \,\psi(t,x,v) \,\dt\dx\dv.
    \end{align*}
    To relax the condition on the test function to $\psi\in L^2((0,T)\times \calO)$, we simply use that $C_c^\infty((0,T)\times \calO)$ is dense in $L^2((0,T)\times \calO)$ to conclude that \eqref{eq: sqrt dist} holds for all $\psi\in L^2$. Hence the weak $L^2$ limit of $\sqrt{\nu_{f_\ve} + \ve}\lt(\calT_{f_\ve}^{(\ve)}\rt)^{1/2}\sqrt{f_\ve}$ is necessarily $\sqrt{\nu_f}\calT_f^{1/2} \sqrt{f}$.

    In virtue of the arguments so far, to show that the $L^2$ convergence is in fact strong, it is enough to prove the norm convergence
    \begin{align}\label{eq: L2 str}
        \lim_{\ve\downarrow 0} \int_{(0,T)\times \calO} (\nu_{f_\ve} + \ve) \calT_{f_\ve}^{(\ve)} f_\ve \,\dt\dx\dv = \int_{(0,T)\times \calO} \nu_f \calT_f f \,\dt\dx\dv.
    \end{align}
    Notice that in \eqref{eq: sqrt D0}, we already showed
    \begin{align*}
        \sqrt{\nu_{f_\ve} + \ve} \lt(\calT_{f_\ve}^{(\ve)}\rt)^{1/2} \sqrt{f_\ve} \to \sqrt{\nu_f} \calT_f^{1/2} \sqrt{f} \quad \text{in} \quad L^2(D_0\times \R^d), 
    \end{align*}
    and therefore, it is enough to prove that the integral of the left-hand side of \eqref{eq: L2 str} converges in the domain $D_+\times\R^d$.
    Towards this goal, we first observe that \eqref{eq: L2 str} is equivalent to showing
    \begin{align}\label{eq: L2 NORM}
        \lim_{\ve\downarrow 0} \int_{D_+} (\nu_{f_\ve} + \ve) \calT_{f_\ve}^{(\ve)} \rho_{f_\ve} \,\dt\dx = \int_{D_+} \nu_f \calT_f \rho_f \,\dt\dx.
    \end{align}
    Using \eqref{eq: MAC convergence}, \eqref{eq: U T convergence}, \eqref{eq: NU convergence}, and Egorov's theorem, we can again extract a set $E\subset D_+$ with small Lebesgue measure for which 
    \begin{align}\label{eq: rho T alpha UNIF}
        (\nu_{f_\ve} + \ve) \calT_{f_\ve}^{(\ve)} \rho_{f_\ve} \to \nu_f \calT_f \rho_f \quad \text{uniformly in $D_+\setminus E$}.
    \end{align}
    Hence, it is enough to prove that $\lt\{(\nu_{f_\ve} + \ve)\calT_{f_\ve}^{(\ve)} \rho_{f_\ve} \rt\}$ cannot concentrate on the small set $E$. By using the definition of the (regularized) temperature, we note  the estimate
    \begin{equation} \label{eq: RHO T alpha}
    \begin{split}
        \int_E (\nu_{f_\ve} + 1) \calT_{f_\ve}^{(\ve)} \rho_{f_\ve} \,\dt\dx &\le (\|\nu\|_{L^\infty} + 1) \int_E \lt(1 + \calT_{f_\ve}\rt) \rho_{f_\ve} \,\dt\dx \\
        &= (\|\nu\|_{L^\infty}+1) \lt(\int_E \rho_{f_\ve} \,\dt\dx +  \int_E V_{f_\ve} \,\dt\dx \rt).
    \end{split}
    \end{equation}
    Since $\{\rho_{f_\ve}\}$ and $\{V_{f_\ve}\}$ are both equiintegrable over $(0,T)\times \Omega$ (they are strongly convergent in $L^1$ by \eqref{eq: MAC convergence} and \eqref{eq: V convergence}), we observe that the right-hand side of \eqref{eq: RHO T alpha} vanishes as $\scrL^{d+1}(E) \downarrow 0$. This combined with \eqref{eq: rho T alpha UNIF} proves \eqref{eq: L2 NORM} and hence the assertion of the lemma.
\end{proof}

Finally, we discuss the Fisher information arising in \eqref{eq: WEAK rewrite}.

\begin{lemma} \label{lem: nabla SQRT WEAK}
    The sequence $\lt\{ \sqrt{\nu_{f_\ve}+\ve}\lt(\calT_{f_\ve}^{(\ve)}\rt)^{1/2} \nabla_v \sqrt{f_\ve}\rt\}$ converges weakly in $L^2((0,T)\times \calO)$ to $\sqrt{\nu_f}\calT_f^{1/2} \nabla_v \sqrt{f}$.
\end{lemma}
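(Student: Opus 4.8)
The plan is to first record that Lemma \ref{lem: entropy} supplies the uniform bound
$\sup_{\ve}\int_{(0,T)\times\calO}(\nu_{f_\ve}+\ve)\calT_{f_\ve}^{(\ve)}|\nabla_v\sqrt{f_\ve}|^2\,\dt\dx\dv\le C_T$,
so the sequence $\bigl\{\sqrt{\nu_{f_\ve}+\ve}\,(\calT_{f_\ve}^{(\ve)})^{1/2}\nabla_v\sqrt{f_\ve}\bigr\}$ is bounded in $L^2((0,T)\times\calO;\R^d)$ and admits a weakly convergent subsequence with some limit $G$. It then suffices to identify $G$, and since the limit will turn out to be uniquely determined, this also removes any need to pass to a further subsequence.

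The structural point I would exploit is that the weight $\sqrt{\nu_{f_\ve}+\ve}\,(\calT_{f_\ve}^{(\ve)})^{1/2}$ depends only on $(t,x)$. Hence for every vector field $\psi\in C_c^\infty((0,T)\times\Omega\times\R^d;\R^d)$, integrating by parts in $v$ gives
\[
\int_{(0,T)\times\calO}\sqrt{\nu_{f_\ve}+\ve}\,(\calT_{f_\ve}^{(\ve)})^{1/2}\nabla_v\sqrt{f_\ve}\cdot\psi\,\dt\dx\dv=-\int_{(0,T)\times\calO}\Bigl(\sqrt{\nu_{f_\ve}+\ve}\,(\calT_{f_\ve}^{(\ve)})^{1/2}\sqrt{f_\ve}\Bigr)\,\nabla_v\cdot\psi\,\dt\dx\dv,
\]
which is legitimate because $f_\ve\in L^2((0,T)\times\Omega;H^1_v(\R^d))$ is bounded (Lemma \ref{lem: ve weak sol}), so $\sqrt{f_\ve}$ possesses an $L^2$ weak $v$-gradient and no boundary contribution in $v$ appears. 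Passing to the limit, the left-hand side tends to $\int G\cdot\psi$, while the right-hand side tends to $-\int \sqrt{\nu_f}\,\calT_f^{1/2}\sqrt{f}\,(\nabla_v\cdot\psi)$ by the strong $L^2((0,T)\times\calO)$ convergence $\sqrt{\nu_{f_\ve}+\ve}\,(\calT_{f_\ve}^{(\ve)})^{1/2}\sqrt{f_\ve}\to\sqrt{\nu_f}\,\calT_f^{1/2}\sqrt{f}$ from Lemma \ref{lem: prod sqrt STR0}. Thus
\[
\int_{(0,T)\times\calO}G\cdot\psi\,\dt\dx\dv=-\int_{(0,T)\times\calO}\sqrt{\nu_f}\,\calT_f^{1/2}\sqrt{f}\,(\nabla_v\cdot\psi)\,\dt\dx\dv\qquad\forall\,\psi\in C_c^\infty((0,T)\times\Omega\times\R^d;\R^d).
\]

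I would then read this identity as saying that $h:=\sqrt{\nu_f}\,\calT_f^{1/2}\sqrt{f}\in L^2((0,T)\times\calO)$ (the strong $L^2$ limit of the first bracket of \eqref{eq: WEAK rewrite}, by Lemma \ref{lem: prod sqrt STR0}) has distributional $v$-gradient $\nabla_v h=G\in L^2$, i.e. $h\in L^2((0,T)\times\Omega;H^1_v(\R^d))$. On $D_0\times\R^d$ one has $h=0$ a.e. since $\calT_f=0$ on $D_0$, and slicing in $(t,x)$ shows $G=\nabla_v h=0$ a.e. there (consistent with $f=0$ a.e. in $v$ on $D_0$). On $D_+\times\R^d$, where $\sqrt{\nu_f}\,\calT_f^{1/2}>0$ a.e. by the identification $D_+=\{\rho_f>0\}$ together with the structural hypothesis on $\nu$ (as established in Lemma \ref{lem: ptw}), the $v$-independence of $\sqrt{\nu_f}\,\calT_f^{1/2}$ lets me factor it out of $\nabla_v h$ and define $\nabla_v\sqrt{f}:=G/\bigl(\sqrt{\nu_f}\,\calT_f^{1/2}\bigr)$, extended by $0$ on $D_0\times\R^d$; this is measurable on $(0,T)\times\calO$, satisfies $\int\nu_f\calT_f|\nabla_v\sqrt f|^2=\int_{D_+\times\R^d}|G|^2<\infty$, and by construction $G=\sqrt{\nu_f}\,\calT_f^{1/2}\nabla_v\sqrt f$. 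Since $h$ is fixed, $G$ is uniquely determined, so the whole (fixed) subsequence converges weakly to $G$, which is the assertion of the lemma. Combining this with Lemma \ref{lem: prod sqrt STR0} and the factorization \eqref{eq: WEAK rewrite}, the product of a strongly $L^2$-convergent and a weakly $L^2$-convergent sequence passes to the limit and yields \eqref{eq: PASS: WEAK}.

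The main obstacle is not the functional analysis — the $v$-independence of the diffusion coefficient makes the integration by parts transparent — but the bookkeeping on the degenerate set: one must isolate $D_+$ from $D_0$, use that $\calT_f$ (hence $h$) vanishes on $D_0$ so that $G$ does too, and only then divide by $\sqrt{\nu_f}\,\calT_f^{1/2}$ on $D_+$ to give meaning to $\nabla_v\sqrt f$. A naive attempt to divide $G$ by $\sqrt{\nu_f}\,\calT_f^{1/2}$ globally, or to claim $\sqrt{f_\ve}\weakto\sqrt f$ in an unweighted $L^2((0,T)\times\Omega;H^1_v(\R^d))$, would fail, since the Fisher-information control is available only through the degenerate weight $(\nu_{f_\ve}+\ve)\calT_{f_\ve}^{(\ve)}$.
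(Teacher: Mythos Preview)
Your proof is correct and follows essentially the same route as the paper: use the entropy bound to get $L^2$-boundedness, integrate by parts in $v$ (exploiting that the weight is $v$-independent), and identify the weak limit via the convergence of $\sqrt{\nu_{f_\ve}+\ve}\,(\calT_{f_\ve}^{(\ve)})^{1/2}\sqrt{f_\ve}$ established in Lemma \ref{lem: prod sqrt STR0}. Your treatment is in fact more explicit than the paper's in constructing $\nabla_v\sqrt f$ as a measurable function on $D_+\cup D_0$, which cleanly addresses part (III) of Theorem \ref{thm: inflow}; one small remark is that the justification for $\sqrt{f_\ve}\in L^2((0,T)\times\Omega;H^1_v)$ is more directly obtained from the entropy dissipation together with the uniform lower bound $(\nu_{f_\ve}+\ve)\calT_{f_\ve}^{(\ve)}\ge\ve^2$, rather than from $f_\ve\in H^1_v$ alone.
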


\begin{proof}
    From Lemma \ref{lem: entropy}, we observe that $\lt\{\sqrt{\nu_{f_\ve}+\ve} \lt(\calT_{f_\ve}^{(\ve)}\rt)^{1/2} \nabla_v \sqrt{f_\ve}\rt\}$ is bounded in $L^2((0,T)\times \calO)$. It is thus enough to identify the weak limit. Clearly, the arguments surrounding \eqref{eq: sqrt dist} and \eqref{eq: sqrt D0} imply that for all $\psi\in C_c^\infty((0,T)\times\calO)$:
    \begin{align*}
        &\int_{(0,T)\times \calO}  \sqrt{\nu_{f_\ve} + \ve}\lt(\calT_{f_\ve}^{(\ve)}\rt)^{1/2} \sqrt{f_\ve} \, \p_{v_i} \psi \,\dt\dx\dv \to \int_{(0,T)\times \calO} \sqrt{\nu_f} \calT_f^{1/2} \sqrt{f} \, \p_{v_i} \psi \,\dt\dx\dv, \quad i=1,2,\ldots, d.
    \end{align*}
    By definition of the weak derivative, we find
    \begin{align*}
        \textnormal{$\sqrt{\nu_{f_\ve}+\ve}\lt(\calT_{f_\ve}^{(\ve)}\rt)^{1/2} \nabla_v \sqrt{f_\ve} \to \sqrt{\nu_f}\calT_f^{1/2} \nabla_v \sqrt{f}$ \quad in \quad $\calD'((0,T)\times \calO)$}.
    \end{align*}
    Then, through a density argument, we obtain the assertion of the lemma.
\end{proof}

Combining Lemma \ref{lem: prod sqrt STR0} and Lemma \ref{lem: nabla SQRT WEAK}, we conclude that the term in \eqref{eq: WEAK rewrite} converges weakly in $L^1((0,T)\times\calO)$ as
\begin{align*}
    2(\nu_{f_\ve} + \ve)  \lt[ \lt(\calT_{f_\ve}^{(\ve)}\rt)^{1/2}\sqrt{f_\ve}\rt] \lt[ \lt(\calT_{f_\ve}^{(\ve)}\rt)^{1/2} \nabla_v \sqrt{f_\ve} \rt] \weakto 2 \nu_f \lt[ \calT_f^{1/2} \sqrt{f} \rt] \lt[ \calT_f^{1/2} \nabla_v \sqrt{f} \rt].
\end{align*}
This proves \eqref{eq: PASS: WEAK}.

%
%
%
%
%
%

\subsection{The energy inequality} \label{sec: energy}
In this section we revisit Lemma \ref{lem: energy} and prove an energy inequality which holds for the solution $f$.

\begin{proposition} \label{prop: energy}
    The limit $f$ satisfies the following energy inequality:
    \begin{align*}
        &\int_{\calO} (1+|v|^2) f(t) \,\dx\dv - \int_{\calO} (1+|v|^2) f^0 \,\dx\dv + \int_{\Sigma_+^t} (1+|v|^2) \gamma f (n(x)\cdot v)_+ \,\ds\tnd\sigma\dv \\
    &\quad\le -\int_{\Sigma_-^t} (1+|v|^2) g (n(x)\cdot v)_-.
    \end{align*}
\end{proposition}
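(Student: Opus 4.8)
The plan is to pass to the limit $\ve\downarrow0$ in the energy identity \eqref{eq: energy est} for the regularized solutions $(f_\ve,\gamma f_\ve)$ of Lemma \ref{lem: energy}, using the strong $L^1$-convergence $f_\ve\to f$ from Lemma \ref{lem: ptw}, the weak convergence $\gamma f_\ve\weakto\gamma f$, and the macroscopic convergences \eqref{eq: MAC convergence}--\eqref{eq: NU convergence}; the reason one gets an inequality rather than an equality is the lower semicontinuity of the quadratic-weight terms on the left. For $\int_\calO(1+|v|^2)f_\ve(t)$ I would pass to a subsequence along which $\|f_\ve(t)-f(t)\|_{L^1(\calO)}\to0$ for a.e.\ $t$ (as in Section \ref{sec: very weak}, Step~1), extract further so that $f_\ve(t)\to f(t)$ a.e.\ in $\calO$, and apply Fatou; a routine ``every subsequence has a sub-subsequence'' argument upgrades this to $\int_\calO(1+|v|^2)f(t)\le\liminf_\ve\int_\calO(1+|v|^2)f_\ve(t)$ for a.e.\ $t$. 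For the outgoing term, where only weak convergence is available, I would use weak-$L^1$ lower semicontinuity against the nonnegative weight: test against $(1+|v|^2)\wedge N$, pass to the limit, then let $N\uparrow\infty$ by monotone convergence, obtaining $\int_{\Sigma_+^t}(1+|v|^2)\gamma f\,(n(x)\cdot v)_+\le\liminf_\ve\int_{\Sigma_+^t}(1+|v|^2)\gamma f_\ve\,(n(x)\cdot v)_+$. The datum term $\int_\calO(1+|v|^2)f_\ve^0$ converges exactly by \eqref{ASSUMP f0 g}.

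On the right-hand side, the incoming term converges strongly: from $g_\ve\to g$ a.e., the convergence of $\int_{\Sigma_-^T}|v|^2 g_\ve|n(x)\cdot v|$ in \eqref{ASSUMP f0 g}, and nonnegativity, Scheff\'e's lemma yields $(1+|v|^2)g_\ve|n(x)\cdot v|\to(1+|v|^2)g|n(x)\cdot v|$ in $L^1(\Sigma_-^T)$, hence on $\Sigma_-^t$. The real work is to show that the remaining ``collision'' term
\begin{equation*}
  \mathrm{Err}_\ve:=\int_{(0,t)\times\calO}(\nu_{f_\ve}+\ve)\Big(2d\,\calT_{f_\ve}^{(\ve)}f_\ve-2v\cdot(\renorm{v}{1}{\ve}-u_{f_\ve}^{(\ve)})f_\ve\Big)\,\ds\dx\dv
\end{equation*}
tends to $0$ — this is the rigorous counterpart of formal energy conservation. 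I would split it into three contributions. The piece $2d\int(\nu_{f_\ve}+\ve)\calT_{f_\ve}^{(\ve)}f_\ve$ converges to $2d\int_{(0,t)\times\Omega}\nu_f\calT_f\rho_f$, since $(\nu_{f_\ve}+\ve)\calT_{f_\ve}^{(\ve)}f_\ve\to\nu_f\calT_f f$ in $L^1((0,T)\times\calO)$, being the square of the strongly $L^2$-convergent family of Lemma \ref{lem: prod sqrt STR0}. For the piece with $v\cdot\renorm{v}{1}{\ve}=|v|^2/(1+\ve(1+|v|))$, the integrand is dominated by $(\|\nu\|_{L^\infty}+1)|v|^2 f_\ve$, which is uniformly integrable and tight on $(0,T)\times\calO$ because of the uniform bound $\sup_\ve\int|v|^3 f_\ve<\infty$ (Lemma \ref{lem: ve higher moments}); combined with the a.e.\ convergences \eqref{eq: NU convergence} and Lemma \ref{lem: ptw}, the Vitali convergence theorem gives convergence to $-2\int_{(0,t)\times\Omega}\nu_f\big(\intr|v|^2 f\,\dv\big)\,\dx$. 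For the last piece I would integrate in $v$ first, $\int_\calO(\nu_{f_\ve}+\ve)v\cdot u_{f_\ve}^{(\ve)}f_\ve\,\dx\dv=\int_\Omega(\nu_{f_\ve}+\ve)u_{f_\ve}^{(\ve)}\cdot j_{f_\ve}\,\dx$, and note $|u_{f_\ve}^{(\ve)}\cdot j_{f_\ve}|\le\rho_{f_\ve}|u_{f_\ve}|^2\le\intr|v|^2 f_\ve\,\dv$ by \eqref{eq: variance and second moment} and Jensen, a strongly $L^1((0,T)\times\Omega)$-convergent (hence equiintegrable) bound; since $u_{f_\ve}^{(\ve)}\cdot j_{f_\ve}\to u_f\cdot j_f$ a.e.\ on $D_+$ and $\to0$ a.e.\ on $D_0$, Vitali gives convergence to $2\int_{(0,t)\times\Omega}\nu_f\rho_f|u_f|^2$. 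Adding the three limits and using the identity $\intr|v|^2 f\,\dv=d\rho_f\calT_f+\rho_f|u_f|^2$, the total equals $\int_{(0,t)\times\Omega}\nu_f\big(2d\calT_f\rho_f-2(d\rho_f\calT_f+\rho_f|u_f|^2)+2\rho_f|u_f|^2\big)=0$, so $\mathrm{Err}_\ve\to0$.

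Collecting these facts, along the chosen subsequence the right-hand side of \eqref{eq: energy est} converges to $-\int_{\Sigma_-^t}(1+|v|^2)g\,(n(x)\cdot v)_-$, and hence so does its left-hand side; feeding in the two lower-semicontinuity estimates and the exact convergence of $\int_\calO(1+|v|^2)f_\ve^0$ (through $\liminf(a_\ve+b_\ve)\ge\liminf a_\ve+\liminf b_\ve$, applied to the sum of the $f_\ve(t)$-energy and the outgoing flux) yields the stated inequality for a.e.\ $t$, and then for every $t\in(0,T]$ on the absolutely continuous version of $f$ from Section \ref{sec: very weak} (the map $t\mapsto\int_\calO(1+|v|^2)f(t)$ being a supremum of continuous functions, hence lower semicontinuous, while the boundary terms are continuous in $t$). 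I expect the main obstacle to be precisely the vanishing of $\mathrm{Err}_\ve$: it is neither pointwise small nor of one sign, so one must genuinely identify its limit, and this rests simultaneously on the sharp identification of the temperature-weighted density (Lemma \ref{lem: prod sqrt STR0}) and on the second-moment equiintegrability furnished by the moment-gain estimate (Lemma \ref{lem: ve higher moments}). The fixed-time lower semicontinuity of $\int_\calO(1+|v|^2)f_\ve(t)$ is a lesser, but still non-automatic, point, handled by the subsequence extraction above.
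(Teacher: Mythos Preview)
Your proof is correct and follows the paper's approach closely: lower semicontinuity for the energy and outgoing-trace terms, exact convergence of the data terms via \eqref{ASSUMP f0 g}, and identification of the collision remainder through the macroscopic convergences \eqref{eq: MAC convergence}--\eqref{eq: NU convergence} together with Lemma \ref{lem: prod sqrt STR0}. The only difference is that you establish \emph{exact} convergence of the friction piece $\int(\nu_{f_\ve}+\ve)\,2v\cdot\renorm{v}{1}{\ve}\,f_\ve$ by Vitali (using the third-moment bound of Lemma \ref{lem: ve higher moments} for uniform integrability of $|v|^2 f_\ve$), whereas the paper moves this nonnegative term to the left and applies only Fatou's lemma; in both arguments the resulting $2\int\nu_f|v|^2 f$ cancels against $2d\int\nu_f\calT_f\rho_f+2\int\nu_f\rho_f|u_f|^2$, so the conclusions coincide.
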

The proof of the proposition consists in showing that, in the energy inequality satisfied by the regularized solution $f_\ve$, the left-hand side is lower semicontinuous whereas the right-hand side converges in the limit $\ve\downarrow 0$. Indeed, for the convenience of the reader, let us recall \eqref{eq: energy est} here:
\begin{equation} \label{eq: recall EGY}
\begin{split}
    &\int_{\calO} (1+|v|^2) f_\ve(t) \,\dx\dv - \int_{\calO} (1+|v|^2) f_\ve^0 \,\dx\dv + \int_{\Sigma_+^t} (1+|v|^2) \gamma f_\ve (n(x)\cdot v)_+ \,\ds\tnd\sigma\dv \\
    &\quad + \int_{(0,t)\times \calO} (\nu_{f_\ve} + \ve) 2v\cdot \renorm{v}{1}{\ve} f_\ve  \,\ds\dx\dv \\
    &= - \int_{\Sigma_-^t} (1+|v|^2) g_\ve (n(x)\cdot v)_- \,\ds\tnd\sigma\dv+ \int_{(0,t)\times \calO} (\nu_{f_\ve} + \ve) \lt(2 d \calT_{f_\ve}^{(\ve)} f_\ve + 2 v \cdot u_{f_\ve}^{(\ve)} f_\ve \rt) \,\ds\dx\dv.
\end{split}
\end{equation}
%
%
%
%
%
%
\subsubsection{Lower semicontinuity of the left-hand side}
By the lower semicontinuity of the moments under weak convergence:
\begin{align*}
    &\int_{\calO}(1+|v|^2) f(t) \,\dx\dv + \int_{\Sigma_+^t} (1+|v|^2) \gamma f (n(x)\cdot v)_+ \,\ds\tnd\sigma\dv\\
    &\quad \le \liminf_{\ve\downarrow 0} \int_{\calO} (1+|v|^2) f_\ve(t) \,\dx\dv + \int_{\Sigma_+^t} (1+|v|^2) \gamma f_\ve (n(x)\cdot v)_+ \,\ds\tnd\sigma\dv.
\end{align*}
On the other hand, the assumption \eqref{ASSUMP f0 g} on the regularized initial data gives
\begin{align*}
    \lim_{\ve\downarrow 0} \int_{\calO} (1+|v|^2) f_\ve^0 \,\dx\dv = \int_{\calO} (1+|v|^2) f^0 \,\dx\dv.
\end{align*}
Lastly, by Lemma \ref{lem: ptw}, \eqref{eq: NU convergence}, $\renorm{v}{1}{\ve}\to v$, and Fatou's lemma, the final integral of the left-hand side is lower semicontinuous as
\begin{align*}
    \int_{(0,t)\times \calO} 2\nu_f |v|^2 f_\ve \,\ds\dx\dv \le \liminf_{\ve\downarrow 0} \int_{(0,t)\times\calO} (\nu_{f_\ve} + \ve) 2v\cdot \renorm{v}{1}{\ve} f_\ve \;\ds\dx\dv.
\end{align*}
%
%
%
%
%
%
\subsubsection{Convergence of the right-hand side}
We now investigate the right-hand side of \eqref{eq: recall EGY}. First, again by the conditions \eqref{ASSUMP f0 g} for the regularized boundary data:
\begin{align*}
    \lim_{\ve\downarrow 0} \int_{\Sigma_-^t} (1+|v|^2) g_\ve (n(x)\cdot v)_- \,\ds\tnd\sigma\dv = \int_{\Sigma_-^t} (1+|v|^2) g (n(x)\cdot v)_- \,\ds\tnd\sigma\dv.
\end{align*}
Only the convergence of the last integral in \eqref{eq: recall EGY} remains in question. Let us consider the convergence of the following integral first:
\begin{lemma} 
In the limit $\ve\downarrow 0$, we have that
\begin{align}\label{eq: egy mass}
    \int_{(0,t)\times \calO} (\nu_{f_\ve} + \ve) d \calT_{f_\ve}^{(\ve)} f_\ve \,\ds\dx\dv \to \int_{(0,t)\times \calO} \nu_f d\calT_f f \,\ds\dx\dv.
\end{align}
\end{lemma}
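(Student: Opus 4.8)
The plan is to notice that the integrand in \eqref{eq: egy mass} depends on the velocity variable only through $f_\ve$ itself, so that after integrating in $v$ one has
\[
    \int_{(0,t)\times \calO} (\nu_{f_\ve} + \ve)\, d\, \calT_{f_\ve}^{(\ve)} f_\ve \,\ds\dx\dv = d\int_{(0,t)\times\Omega} (\nu_{f_\ve}+\ve)\,\calT_{f_\ve}^{(\ve)}\,\rho_{f_\ve}\,\ds\dx ,
\]
and likewise the limiting integral equals $d\int_{(0,t)\times\Omega}\nu_f\calT_f\rho_f\,\ds\dx$. Thus it suffices to prove that $(\nu_{f_\ve}+\ve)\calT_{f_\ve}^{(\ve)}\rho_{f_\ve}\to\nu_f\calT_f\rho_f$ in $L^1((0,t)\times\Omega)$; in fact this is precisely the localized version of the norm convergence \eqref{eq: L2 NORM} obtained in the course of proving Lemma \ref{lem: prod sqrt STR0}, so the argument runs along the same lines, and I would recycle it.

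First I would record the domination. From \eqref{eq: reg major} one has $\calT_{f_\ve}^{(\ve)}\le\calT_{f_\ve}+1$ for every $\ve\in(0,1]$, while $\nu$ is bounded by $\|\nu\|_{L^\infty}$; since $\rho_{f_\ve}\calT_{f_\ve}=V_{f_\ve}$, this yields the pointwise bound
\[
    0 \le (\nu_{f_\ve}+\ve)\,\calT_{f_\ve}^{(\ve)}\,\rho_{f_\ve} \le (\|\nu\|_{L^\infty}+1)\,(\rho_{f_\ve}+V_{f_\ve}) =: h_\ve ,
\]
where, by \eqref{eq: MAC convergence} and \eqref{eq: V convergence}, the dominating sequence satisfies $h_\ve\to(\|\nu\|_{L^\infty}+1)(\rho_f+V_f)=:h$ almost everywhere and strongly in $L^1((0,T)\times\Omega)$. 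Then I would check almost everywhere convergence of the integrand: on $D_+$ the convergences \eqref{eq: MAC convergence}, \eqref{eq: Tve convergence} and \eqref{eq: NU convergence} give directly $(\nu_{f_\ve}+\ve)\calT_{f_\ve}^{(\ve)}\rho_{f_\ve}\to\nu_f\calT_f\rho_f$ a.e.; on the vacuum set $D_0$ one has $\rho_f=0$, hence $V_f=0$ and $\nu_f\calT_f\rho_f=0$ a.e., and since the dominating $h_\ve\to h=0$ a.e. there, the squeeze $0\le(\nu_{f_\ve}+\ve)\calT_{f_\ve}^{(\ve)}\rho_{f_\ve}\le h_\ve$ forces the integrand to converge to $0$ a.e. on $D_0$ as well.

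Finally I would conclude by the generalized dominated convergence theorem (equivalently Vitali's theorem, $\{h_\ve\}$ being equiintegrable since it is $L^1$-convergent): writing $g_\ve:=(\nu_{f_\ve}+\ve)\calT_{f_\ve}^{(\ve)}\rho_{f_\ve}$, one has $|g_\ve|\le h_\ve$ with $h_\ve\to h$ in $L^1$, and $g_\ve\to g:=\nu_f\calT_f\rho_f$ a.e., hence $g_\ve\to g$ in $L^1((0,T)\times\Omega)$, a fortiori in $L^1((0,t)\times\Omega)$; multiplying by $d$ then gives \eqref{eq: egy mass}. I do not expect any genuine obstacle here, as every required convergence of $\rho_{f_\ve}$, $V_{f_\ve}$, $\calT_{f_\ve}^{(\ve)}$ and $\nu_{f_\ve}$ is already in hand; the only point deserving a line of care is the behavior on $D_0$, where the pointwise convergence of $\calT_{f_\ve}^{(\ve)}$ is not asserted and one must instead use the squeeze against $h_\ve\to 0$.
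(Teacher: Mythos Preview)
Your proof is correct and follows essentially the same route as the paper: reduce to the $(t,x)$-integral, dominate by $(\|\nu\|_{L^\infty}+1)(\rho_{f_\ve}+V_{f_\ve})$, use pointwise convergence on $D_+$ and the squeeze on $D_0$, then pass to the limit. The only cosmetic difference is that the paper phrases the final step via Egorov plus equiintegrability whereas you invoke the generalized dominated convergence theorem directly; these are equivalent here.
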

\begin{proof}
Notice that proving \eqref{eq: egy mass} is equivalent to showing
\begin{align}\label{eq: egy mass 2}
    \int_{(0,t)\times\Omega} (\nu_{f_\ve}+\ve) \calT_{f_\ve}^{(\ve)}\rho_{f_\ve} \,\ds\dx \to \int_{(0,t)\times\Omega} \nu_f \calT_f \rho_f \,\ds\dx.
\end{align}
By \eqref{eq: MAC convergence}, \eqref{eq: Tve convergence}, and \eqref{eq: NU convergence}, the integrand of the left-hand side of \eqref{eq: egy mass 2} converges pointwise to $\nu_f \calT_f  \rho_f$ in $D_+:= \{(t,x): \rho_f(t,x)>0\}$. We claim now that the convergence holds strongly in $L^1(D_+)$. Similarly as before, we can resort to Egorov's theorem thus it is enough to prove that $\{(\nu_{f_\ve}+\ve)\calT_{f_\ve}^{(\ve)} \rho_{f_\ve}\}$ does not concentrate on Lebesgue-null sets of $(0,T)\times \Omega$. Since
\begin{align*}
    (\nu_{f_\ve} + \ve) \calT_{f_\ve}^{(\ve)} \rho_{f_\ve} &\le  (\|\nu\|_{L^\infty} + 1) (1 + \calT_{f_\ve}) \rho_\ve \\
    &\le C (\rho_\ve + V_{f_\ve}),
\end{align*}
the right-hand side of which is convergent in $L^1((0,T)\times\Omega)$, we deduce that $\{(\nu_{f_\ve}+\ve)\calT_{f_\ve}^{(\ve)} \rho_{f_\ve}\}$ is indeed equiintegrable over $(0,T)\times \Omega$.

On the other hand, on $D_0:= ((0,T)\times\Omega)\setminus D_+$, we have
\begin{align*}
    \lim_{\ve\downarrow 0} \int_{D_0} |(\nu_{f_\ve} + \ve) \calT_{f_\ve}^{(\ve)}\rho_{f_\ve} - \nu_f \calT_f d \rho_f | \,\ds\dx &= \lim_{\ve\downarrow 0} \int_{D_0} (\nu_{f_\ve} + \ve) \calT_{f_\ve}^{(\ve)} \rho_{f_\ve} \,\ds\dx \\
    &\le (\|\nu\|_{L^\infty} + 1) \lim_{\ve\downarrow 0} \int_{D_0} (1 + \calT_{f_\ve}) \rho_{f_\ve}  \,\ds\dx \\
    &\le C \lim_{\ve\downarrow 0} \int_{D_0} (\rho_{f_\ve} + V_{f_\ve}) \,\ds\dx \\
    &= C \int_{D_0} (\rho_f + V_f) \,\ds\dx \quad (\because \eqref{eq: MAC convergence}, \eqref{eq: V convergence}) \\
    &= 0.
\end{align*}
Altogether, we deduce
\begin{align*}
    (\nu_{f_\ve} + \ve) \calT_{f_\ve}^{(\ve)} \rho_{f_\ve} \to \nu_f \calT_f \rho_\ve \quad \text{in} \quad L^1((0,T)\times\Omega),
\end{align*}
and therefore the integral in \eqref{eq: egy mass 2} converges as $\ve\downarrow 0$. This proves the convergence of \eqref{eq: egy mass}.
\end{proof}

Finally, we check the convergence of the last term remaining in \eqref{eq: recall EGY}:

\begin{lemma}
In the limit $\ve\downarrow 0$, we have that
\begin{align*}
    \int_{(0,t)\times \calO} (\nu_{f_\ve} + \ve) v\cdot u_{f_\ve}^{(\ve)}  f_\ve \,\ds\dx\dv \to \int_{(0,t)\times \Omega} \nu_f \rho_f |u_f|^2 \,\ds\dx.
\end{align*}
\end{lemma}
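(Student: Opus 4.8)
The plan is to reduce the claim to a statement about the macroscopic fields and then repeat the Vitali-type argument already used for \eqref{eq: egy mass}. Since $\nu_{f_\ve}$ and $u_{f_\ve}^{(\ve)}$ do not depend on $v$, we have $\int_{(0,t)\times\calO}(\nu_{f_\ve}+\ve)\,v\cdot u_{f_\ve}^{(\ve)}\,f_\ve\,\ds\dx\dv=\int_{(0,t)\times\Omega}(\nu_{f_\ve}+\ve)\,u_{f_\ve}^{(\ve)}\cdot j_{f_\ve}\,\ds\dx$, and likewise $\nu_f\rho_f|u_f|^2=\nu_f\,u_f\cdot j_f$. Hence it suffices to prove
\[
(\nu_{f_\ve}+\ve)\,u_{f_\ve}^{(\ve)}\cdot j_{f_\ve}\;\longrightarrow\;\nu_f\,u_f\cdot j_f\qquad\text{in }L^1((0,t)\times\Omega),
\]
which upon integration yields the lemma (and in fact a slightly stronger, strong-convergence statement).

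First I would check pointwise convergence on $D_+:=\{(s,x):\rho_f(s,x)>0\}$. By \eqref{eq: MAC convergence} we have $\rho_{f_\ve}\to\rho_f$ and $j_{f_\ve}\to j_f$ a.e., so on $D_+$ the regularized velocity $u_{f_\ve}^{(\ve)}=j_{f_\ve}/(\rho_{f_\ve}+\ve(1+|j_{f_\ve}|))$ converges a.e. to $j_f/\rho_f=u_f$; together with \eqref{eq: NU convergence} this gives $(\nu_{f_\ve}+\ve)\,u_{f_\ve}^{(\ve)}\cdot j_{f_\ve}\to\nu_f\,u_f\cdot j_f$ a.e. on $D_+$. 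The essential equiintegrability input is the Jensen estimate already exploited in Lemma \ref{lem: energy}: from $|u_{f_\ve}^{(\ve)}|\le|u_{f_\ve}|$ and $j_{f_\ve}=\rho_{f_\ve}u_{f_\ve}$,
\[
(\nu_{f_\ve}+\ve)\,\bigl|u_{f_\ve}^{(\ve)}\cdot j_{f_\ve}\bigr|\le(\|\nu\|_{L^\infty}+1)\,\rho_{f_\ve}|u_{f_\ve}|^2\le(\|\nu\|_{L^\infty}+1)\intr|v|^2 f_\ve\,\dv,
\]
and the right-hand side converges strongly in $L^1((0,T)\times\Omega)$ by \eqref{eq: MAC convergence}, hence is equiintegrable; so $\{(\nu_{f_\ve}+\ve)\,u_{f_\ve}^{(\ve)}\cdot j_{f_\ve}\}$ is equiintegrable, and the Vitali convergence theorem upgrades the a.e.\ convergence on $D_+$ to $L^1(D_+)$ convergence.

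It remains to handle the vacuum set $D_0:=[(0,T)\times\Omega]\setminus D_+$. There $f(s,x,\cdot)=0$ a.e.\ in $v$, so $j_f=0$ and $u_f=0$, making the limiting integrand vanish on $D_0$; and the same Jensen bound gives $\int_{D_0}(\nu_{f_\ve}+\ve)\,|u_{f_\ve}^{(\ve)}\cdot j_{f_\ve}|\,\ds\dx\le(\|\nu\|_{L^\infty}+1)\int_{D_0}\intr|v|^2 f_\ve\,\dv\,\ds\dx$, which tends to $(\|\nu\|_{L^\infty}+1)\int_{D_0}\intr|v|^2 f\,\dv\,\ds\dx=0$ using the strong $L^1$ convergence of the second moments from \eqref{eq: MAC convergence}. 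Adding the contributions of $D_+$ and $D_0$ gives the claimed $L^1((0,t)\times\Omega)$ convergence, and integrating over $(0,t)\times\Omega$ completes the proof.

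I do not expect any genuine obstacle here: the only delicate point — recurring throughout Sections \ref{sec: very weak} and \ref{sec: energy} — is that $u_{f_\ve}$ carries no a priori control near vacuum, and it is precisely the Jensen inequality $\rho_{f_\ve}|u_{f_\ve}|^2\le\intr|v|^2 f_\ve\,\dv$ that dominates the quadratic momentum term by a second moment already known to converge strongly in $L^1$ via the averaging Lemma \ref{lem: subcubic}. If one prefers to avoid Vitali, one can equivalently insert an Egorov set $E$ of small measure on which the convergence on $D_+$ is uniform and absorb $\|\cdot\|_{L^1(E)}$ into the equiintegrability, exactly as in the proof of \eqref{eq: egy mass}.
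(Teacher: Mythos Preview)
Your proof is correct and follows essentially the same route as the paper: both rewrite the $v$-integral as a macroscopic integral (the paper writes $\rho_{f_\ve}u_{f_\ve}\cdot u_{f_\ve}^{(\ve)}$, you write $u_{f_\ve}^{(\ve)}\cdot j_{f_\ve}$, which are identical since $j_{f_\ve}=\rho_{f_\ve}u_{f_\ve}$), establish a.e.\ convergence on $D_+$, invoke the Jensen bound $\rho_{f_\ve}|u_{f_\ve}|^2\le\intr|v|^2 f_\ve\,\dv$ and the strong $L^1$ convergence of second moments for equiintegrability, apply Vitali on $D_+$, and handle $D_0$ by the same domination.
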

\begin{proof}
Notice first that the integral in concern can be rewritten as
\begin{align}\label{eq: mass last int}
    \int_{(0,t)\times \calO} (\nu_{f_\ve} + \ve) v\cdot u_{f_\ve}^{(\ve)}  f_\ve \,\ds\dx\dv = \int_{(0,t)\times \Omega} (\nu_{f_\ve} + \ve) \rho_{f_\ve} u_{f_\ve} \cdot u_{f_\ve}^{(\ve)} \,\ds\dx.
\end{align}
On one hand, it is clear from \eqref{eq: MAC convergence}, \eqref{eq: U T convergence} that the integrand of the right-hand side converges pointwise to
\begin{align}\label{eq: rho u^2 ptw}
    (\nu_{f_\ve} + \ve)(\rho_{f_\ve} u_{f_\ve} \cdot u_{f_\ve}^{(\ve)}) \to \nu_f \rho_f |u_f|^2 \quad \text{a.e. in} \quad D_+ .
\end{align}
On the other hand, due to \eqref{eq: reg major} and Jensen's inequality, the following domination also holds:
\begin{align*}
    (\nu_{f_\ve} + \ve) \lt|\rho_{f_{\ve}} u_{f_\ve} \cdot u_{f_\ve}^{(\ve)}\rt| \le (\|\nu\|_{L^\infty} + 1) \rho_{f_\ve} |u_{f_\ve}|^2 \le C(\|\nu\|_{L^\infty}) \int_{\R^d} |v|^2 f_\ve \,\dv.
\end{align*}
By the averaging Lemma \ref{lem: subcubic},
\begin{align}\label{eq: v^2}
    \int_{\R^d} |v|^2 f_\ve \,\dv \to \intr |v|^2 f \,\dv \quad \text{in}\quad L^1((0,T)\times\Omega).
\end{align}
This and the pointwise convergence \eqref{eq: rho u^2 ptw} imply, through the Vitali convergence theorem, that 
\begin{align*}
    (\nu_{f_\ve} + \ve)(\rho_{f_\ve} u_{f_\ve} \cdot u_{f_\ve}^{(\ve)}) \to \nu_f \rho_f |u_f|^2 \quad \text{in}\quad L^1(D_+).
\end{align*}
On $D_0 := [(0,T)\times \Omega]\setminus D_+$, we have $f(t,x,v)=0$ a.e. $v\in\R^d$. Therefore, it follows that
\begin{align*}
    \int_{D_0} \lt| (\nu_{f_\ve} + \ve)(\rho_{f_\ve} u_{f_\ve} \cdot u_{f_\ve}^{(\ve)}) - \nu_f \rho_f |u_f|^2\rt| &= \int_{D_0} \lt|(\nu_{f_\ve} + \ve) (\rho_{f_\ve} u_{f_\ve} \cdot u_{f_\ve}^{(\ve)}) \rt| \,\ds\dx \\
    &\le (\|\nu\|_{L^\infty}+1) \int_{D_0} \rho_{f_\ve} |u_{f_\ve}|^2 \,\ds\dx\\
    &\le (\|\nu\|_{L^\infty}+1) \int_{D_0}\lt(\int_{\R^d} |v|^2 f_\ve \dv \rt) \,\ds\dx \\
    &\to (\|\nu\|_{L^\infty}+1) \int_{D_0}\int_{\R^d} |v|^2 f \,\dv\ds\dx \quad (\because \eqref{eq: v^2}) \\
    &= 0.
\end{align*}
We conclude together that
\begin{align*}
    (\nu_{f_\ve} + \ve)(\rho_{f_\ve} u_{f_\ve} \cdot u_{f_\ve}^{(\ve)}) \to \nu_f \rho_f |u_f|^2 \quad \text{in}\quad L^1((0,T)\times\Omega),
\end{align*}
and thus the integral in \eqref{eq: mass last int} converges.
\end{proof}

\subsubsection{Proof of Proposition \ref{prop: energy}}

It is enough to collect the results so far. Altogether, we have that
\begin{align*}
    &\int_{\calO} (1+|v|^2) f(t) \,\dx\dv - \int_{\calO} (1+|v|^2) f^0 \,\dx\dv + \int_{\Sigma_+^t} (1+|v|^2) \gamma f (n(x)\cdot v)_+ \,\ds\tnd\sigma\dv \\
    &\quad + \int_{(0,t)\times\calO} 2\nu_f|v|^2 f \,\ds\dx\dv \\
    &\le - \int_{\Sigma_-^t} (1+|v|^2) g (n(x)\cdot v)_-  \,\ds\tnd\sigma\dv   + 2\int_{(0,t)\times \calO} \nu_f d\calT_f f \,\ds\dx\dv + 2 \int_{(0,t)\times \Omega} \nu_f \rho_f |u_f|^2 \,\ds\dx\\
    &= - \int_{\Sigma_-^t} (1+|v|^2) g (n(x)\cdot v)_- \,\ds\tnd\sigma\dv + 2\int_{(0,t)\times\calO} 2 \nu_f |v|^2 f \,\ds\dx\dv.
\end{align*}
in other words
\begin{align*}
    &\int_{\calO} (1+|v|^2) f(t) \,\dx\dv - \int_{\calO} (1+|v|^2) f^0 \,\dx\dv + \int_{\Sigma_+^t} (1+|v|^2) \gamma f (n(x)\cdot v)_+ \,\ds\tnd\sigma\dv\\
    &\quad\le -\int_{\Sigma_-^t} (1+|v|^2) g (n(x)\cdot v)_- \,\ds\tnd\sigma\dv.
\end{align*}

%
%
%
%
%
%

\subsection{The entropy inequality} \label{sec: entropy}
In this section we prove the entropy inequality is satisfied by $f$.

\begin{proposition}\label{prop: entropy}
The limit $f$ satisfies the following entropy inequality
    \begin{align*}
    &\int_{\calO} f(t) \log f(t) \,\dx\dv - \int_{\calO} f^0 \log f^0 \,\dx\dv + \int_{\Sigma_+^t} \gamma f \log \gamma f (n(x)\cdot v)_+ \,\ds\tnd\sigma\dv\\
    &\quad + \int_{(0,t)\times \calO} \nu_f \frac{1}{f\calT_f} \lt|\calT_f \nabla_v f + (v-u_f) f\rt|^2 \,\ds\dx\dv\\
    &\le - \int_{\Sigma_-^t} g\log g (n(x)\cdot v)_- \,\ds\tnd\sigma\dv.
    \end{align*}
\end{proposition}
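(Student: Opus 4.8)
The plan is to pass to the limit $\ve\downarrow 0$ in the entropy evolution identity established for $f_\ve$ in Lemma \ref{lem: entropy}, which after integration over $(0,t)$ reads
\begin{align*}
&\int_{\calO}f_\ve(t)\log f_\ve(t)\,\dx\dv - \int_{\calO}f_\ve^0\log f_\ve^0\,\dx\dv + \int_{\Sigma_+^t}\gamma f_\ve\log\gamma f_\ve\,(n(x)\cdot v)_+\,\ds\tnd\sigma\dv \\
&\qquad + 4\int_{(0,t)\times\calO}(\nu_{f_\ve}+\ve)\calT_{f_\ve}^{(\ve)}|\nabla_v\sqrt{f_\ve}|^2\,\ds\dx\dv \\
&\quad = -\int_{\Sigma_-^t}g_\ve\log g_\ve\,(n(x)\cdot v)_-\,\ds\tnd\sigma\dv + \int_{(0,t)\times\calO}(\nu_{f_\ve}+\ve)f_\ve\,(\nabla_v\cdot\renorm{v}{1}{\ve})\,\ds\dx\dv .
\end{align*}
The strategy is: show the right-hand side converges, show the left-hand side is lower semicontinuous, and finally recombine the limiting Fisher-information term into the dissipation square written in the statement.

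For the right-hand side, the boundary term $\int_{\Sigma_-^t}g_\ve\log g_\ve(n\cdot v)_-$ converges to $\int_{\Sigma_-^t}g\log g(n\cdot v)_-$ and $\int_{\calO}f_\ve^0\log f_\ve^0\to\int_{\calO}f^0\log f^0$, both directly from the approximation hypotheses \eqref{ASSUMP f0 g}. For the last term I would use $\nabla_v\cdot\renorm{v}{1}{\ve}\to d$ pointwise and boundedly; since $(\nu_{f_\ve}+\ve)f_\ve\to\nu_f f$ strongly in $L^1((0,T)\times\calO)$ (already established in Section \ref{sec: very weak}), splitting $(\nu_{f_\ve}+\ve)f_\ve(\nabla_v\cdot\renorm{v}{1}{\ve}) = (\nu_{f_\ve}+\ve)f_\ve(\nabla_v\cdot\renorm{v}{1}{\ve}-d) + d(\nu_{f_\ve}+\ve)f_\ve$ and applying the Vitali convergence theorem to the first summand (dominated by a constant times the equiintegrable family $f_\ve$ and tending to zero a.e.) yields convergence of this term to $d\int_{(0,t)\times\Omega}\nu_f\rho_f\,\ds\dx$.

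For the left-hand side, the Fisher-information term is lower semicontinuous: by Lemma \ref{lem: prod sqrt STR0} and Lemma \ref{lem: nabla SQRT WEAK}, $\sqrt{\nu_{f_\ve}+\ve}\,(\calT_{f_\ve}^{(\ve)})^{1/2}\nabla_v\sqrt{f_\ve}\weakto\sqrt{\nu_f}\,\calT_f^{1/2}\nabla_v\sqrt f$ weakly in $L^2((0,t)\times\calO)$, so weak $L^2$ lower semicontinuity of the norm gives $\liminf_\ve 4\int_{(0,t)\times\calO}(\nu_{f_\ve}+\ve)\calT_{f_\ve}^{(\ve)}|\nabla_v\sqrt{f_\ve}|^2\ge 4\int_{(0,t)\times\calO}\nu_f\calT_f|\nabla_v\sqrt f|^2$. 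The entropy terms $\int_{\calO}f(t)\log f(t)$ and $\int_{\Sigma_+^t}\gamma f\log\gamma f(n\cdot v)_+$ are lower semicontinuous along the sequence by the (standard) weak-$L^1$ lower semicontinuity of the $\calH$-functional: using $z\log z\ge z\log M(v) - z + M(v)$ with the fixed Gaussian $M(v)=(2\pi)^{-d/2}e^{-|v|^2/2}$ to control the negative part and provide $v$-tightness via the uniform energy bounds of Lemma \ref{lem: energy}, together with the weak limits $f_\ve(t)\weakto f(t)$ in $L^1(\calO)$ and $\gamma f_\ve\weakto\gamma f$ in $L^1(\Sigma_+^t,(n\cdot v)_+)$; here the strong $L^1((0,T)\times\calO)$ convergence of Lemma \ref{lem: ptw} both identifies these weak limits as the correct objects and makes the passage at a fixed time routine. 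Collecting all of this gives the limiting inequality
\begin{align*}
&\int_{\calO}f(t)\log f(t)\,\dx\dv - \int_{\calO}f^0\log f^0\,\dx\dv + \int_{\Sigma_+^t}\gamma f\log\gamma f\,(n\cdot v)_+\,\ds\tnd\sigma\dv + 4\int_{(0,t)\times\calO}\nu_f\calT_f|\nabla_v\sqrt f|^2\,\ds\dx\dv \\
&\qquad \le -\int_{\Sigma_-^t}g\log g\,(n\cdot v)_-\,\ds\tnd\sigma\dv + d\int_{(0,t)\times\Omega}\nu_f\rho_f\,\ds\dx .
\end{align*}

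It remains to identify $4\int\nu_f\calT_f|\nabla_v\sqrt f|^2 - d\int\nu_f\rho_f$ with $\int\nu_f\frac{1}{f\calT_f}|\calT_f\nabla_v f+(v-u_f)f|^2$. Since $\nu_f$ depends only on $(t,x)$, this reduces to the pointwise-in-$(t,x)$ identity $\frac1{f\calT_f}\int_{\R^d}|\calT_f\nabla_v f+(v-u_f)f|^2\,\dv = 4\calT_f\int_{\R^d}|\nabla_v\sqrt f|^2\,\dv - d\rho_f$, obtained by expanding the square with $\nabla_v f = 2\sqrt f\,\nabla_v\sqrt f$ on $\{f>0\}$, using $\int_{\R^d}|v-u_f|^2 f\,\dv = d\rho_f\calT_f$, and integrating the cross term by parts: $\int_{\R^d}\nabla_v f\cdot(v-u_f)\,\dv = \int_{\R^d}\nabla_v f\cdot v\,\dv = -d\rho_f$ (using $\int_{\R^d}\nabla_v f\,\dv = 0$), while everything vanishes on $D_0=\{\rho_f=0\}$. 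I expect this recombination to be the step demanding most care: the $v$-integration by parts must be justified for the limit $f$, whose velocity gradient is controlled only through the possibly degenerate weight $\nu_f\calT_f$. The needed facts — that $\nabla_v f(t,x,\cdot)\in L^1(\R^d)$ with $\int_{\R^d}|v|\,|\nabla_v f|\,\dv<\infty$ and that the flux at $|v|\to\infty$ vanishes along a suitable sequence of radii — follow for a.e. $(t,x)\in D_+=\{\rho_f>0\}$ from the finiteness of $\int_{\R^d}|\nabla_v\sqrt{f(t,x,\cdot)}|^2\,\dv$ (a consequence of Theorem \ref{thm: inflow}(III) and Fubini on $D_+$, where $\nu_f\calT_f>0$) together with the second moment of $f(t,x,\cdot)$; the classically delicate fixed-time passage of the entropy is, as noted, defused by the strong $L^1$ convergence already in hand.
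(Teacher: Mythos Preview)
Your approach is correct but follows a different route from the paper. The paper first rewrites the entropy identity at the $\ve$-level---via integration by parts, justified by the decay \eqref{eq: ve decay}---so that the dissipation already appears in the squared form $\int(\nu_{f_\ve}+\ve)\frac{1}{f_\ve\calT_{f_\ve}^{(\ve)}}\lt|\calT_{f_\ve}^{(\ve)}\nabla_v f_\ve + (v-u_{f_\ve})f_\ve\rt|^2$; it then passes to the limit directly in that form, which requires showing that the extra right-hand-side terms $\int(\nu_{f_\ve}+\ve)\bigl(\frac{|v-u_{f_\ve}|^2}{\calT_{f_\ve}^{(\ve)}}-d\bigr)f_\ve$ and $\int(\nu_{f_\ve}+\ve)\bigl(\nabla_v\cdot\renorm{v}{1}{\ve}-d\bigr)f_\ve$ both vanish in the limit. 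You instead pass to the limit in the Fisher-information form \eqref{eq: entropy identity}---which the paper's own remark concedes is the simpler passage---and then recombine algebraically at the limit level. The paper's route buys that all integration-by-parts manipulations are performed on the smooth, rapidly decaying approximants $f_\ve$; your route buys a shorter limit argument, at the cost of having to justify the recombination identity $\int_{\R^d}\frac{1}{f\calT_f}|\calT_f\nabla_v f+(v-u_f)f|^2\,\dv = 4\calT_f\int_{\R^d}|\nabla_v\sqrt{f}|^2\,\dv - d\rho_f$ for the rough limit $f$. Your sketched justification (a cutoff argument, using $\int_{\R^d}|v|\,|\nabla_v f|\,\dv<\infty$ from Cauchy--Schwarz applied to $2|v|\sqrt{f}\,|\nabla_v\sqrt{f}|$ together with the second-moment bound) is valid for a.e.\ $(t,x)\in D_+$, so both arguments go through.
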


The proof method is similar as in the previous section. We recall from Lemma \ref{lem: entropy} (namely \eqref{eq: entropy identity}) that
\begin{equation} \label{eq: ENT ve}
\begin{split}
    &\int_{\calO} f_\ve(t) \log f_\ve(t) \,\dx\dv - \int_{\calO} f_\ve^0 \log f_\ve^0 \,\dx\dv + \int_{\Sigma_+^t} \gamma f_\ve \log \gamma f_\ve (n(x)\cdot v)_+ \,\ds\tnd\sigma\dv \\
        &\quad + 4\int_{(0,t)\times\calO} (\nu_{f_\ve} + \ve) \calT_{f_\ve}^{(\ve)} |\nabla_v \sqrt{f_\ve}|^2 \,\ds\dx\dv\\
        &= -\int_{\Sigma_-^t} g_\ve \log g_\ve (n(x)\cdot v)_- \,\ds\tnd\sigma \dv + \int_{(0,t)\times \calO} (\nu_{f_\ve} + \ve) f_\ve \lt(\nabla_v\cdot \renorm{v}{1}{\ve}\rt) \,\ds\dx\dv.
\end{split}
\end{equation}
Through subsequent integration by parts (validated through the pointwise decay \eqref{eq: ve decay}), we can easily check that the above is equivalent to
\begin{equation} \label{eq: ENT EQUIV}
\begin{split}
    &\int_{\calO} f_\ve(t) \log f_\ve(t) \,\dx\dv - \int_{\calO} f_\ve^0 \log f_\ve^0 \,\dx\dv + \int_{\Sigma_+^t} \gamma f_\ve \log \gamma f_\ve (n(x)\cdot v)_+ \,\ds\tnd\sigma\dv\\ 
    &\quad + \int_{(0,t)\times \calO} (\nu_{f_\ve} + \ve) \frac{1}{f_\ve \calT_{f_\ve}^{(\ve)}} \lt| \calT_{f_\ve}^{(\ve)} \nabla_v f_\ve + (v-u_{f_\ve}) f \rt|^2 \,\ds\dx\dv\\
    &= - \int_{\Sigma_-^t} g_\ve \log g_\ve (n(x)\cdot v)_- \,\ds\tnd\sigma\dv   + \int_{(0,t)\times \calO} (\nu_{f_\ve} + \ve) \lt( \frac{|v-u_{f_\ve}|^2 }{\calT_{f_\ve}^{(\ve)}} - d \rt) f_\ve \,\ds\dx\dv  \\
    &\quad + \int_{(0,t)\times \calO} (\nu_{f_\ve} + \ve) \lt( \nabla_v\cdot \renorm{v}{1}{\ve} - d \rt) f_\ve \,\ds\dx\dv \\
    &=: -R_1 + R_2 + R_3.
\end{split}
\end{equation}
The goal is to show that the left-hand side is lower semicontinuous as $\ve\downarrow 0$, whereas the right-hand side converges.

Let us check each term of the left-hand side first. Using the uniform energy estimates provided by Lemma \ref{lem: energy}, we can find by classical lower semicontinuity arguments \cite[p.9]{JKO98} that
\begin{align*}
    \int_{\calO} f(t) \log f(t) \,\dx\dv \le \liminf_{\ve\downarrow 0} \int_{\calO} f_\ve(t) \log f_\ve(t) \,\dx\dv .
\end{align*}
The assumption \eqref{ASSUMP f0 g} on the regularized initial data provides also that
\begin{align*}
    \lim_{\ve\downarrow 0} \int_{\calO} f_\ve^0 \log f_\ve^0 \,\dx\dv = \int_{\calO} f^0 \log f^0 \,\dx\dv.
\end{align*}
For the trace term $\gamma f_\ve \log \gamma f_\ve$, we handle it in the same way as in $f_\ve \log f_\ve$. Namely, we use the energy estimates of Lemma \ref{lem: energy}, the weak convergence
\begin{align*}
    \gamma f_\ve \weakto \gamma f \quad \text{in} \quad L^1(\Sigma_+^T, |n(x)\cdot v|\,\dt\tnd\sigma\dv),
\end{align*}
and the arguments of \cite{JKO98} to find
\begin{align*}
    \int_{\Sigma_+^t} \gamma f \log \gamma f \, (n(x)\cdot v)_+ \,\ds\tnd\sigma\dv \le \liminf_{\ve\downarrow 0} \int_{\Sigma_+^t} \gamma f_\ve \log \gamma f_\ve \, (n(x)\cdot v)_+ \,\ds\tnd\sigma\dv.
\end{align*}

Regarding the entropy dissipation, we save it for last. Let us move on to the right-hand side. The assumption \eqref{ASSUMP f0 g} on $\{g_\ve\}$ provides that
\begin{align*}
    \lim_{\ve\downarrow 0} R_1 := \lim_{\ve\downarrow 0} \int_{\Sigma_-^t} g_\ve \log g_\ve \, (n(x)\cdot v)_- \,\ds\tnd\sigma\dv = \int_{\Sigma_-^t} g \log g \, (n(x)\cdot v)_- \,\ds\tnd\sigma\dv.
\end{align*}
We now prove that $R_2\to 0$ in the limit. Let us begin by discussing the convergence of the following integral:
\begin{align*}
    \int_{(0,t)\times\calO} (\nu_{f_\ve} + \ve) \frac{|v-u_{f_\ve}|^2 f_\ve}{\calT_{f_\ve}^{(\ve)}} \,\ds\dx\dv.
\end{align*}
Integrating with respect to $v$ first, we find
\begin{align*}
    \int_{(0,t)\times\calO} (\nu_{f_\ve} + \ve) \frac{|v-u_{f_\ve}|^2 f_\ve}{\calT_{f_\ve}^{(\ve)}} \,\ds\dx\dv = \int_{(0,t)\times \Omega} (\nu_{f_\ve} + \ve) \frac{dV_{f_\ve}}{\calT_{f_\ve}^{(\ve)}} \,\ds\dx.
\end{align*}
On $D_+ := \{\rho_f > 0\}$, due to \eqref{eq: MAC convergence}, \eqref{eq: U T convergence}, and \eqref{eq: NU convergence}, it is evident that the integrand converges pointwise to
\begin{align}\label{eq: ENT LAST}
    (\nu_{f_\ve} + \ve) \frac{dV_{f_\ve}}{\calT_{f_\ve}^{(\ve)}} \to \frac{\nu_f}{\calT_f} V_f = \nu_f \rho_f.
\end{align}
Then, we observe that the relation
\begin{align*}
    \frac{1}{\calT_{f_\ve}^{(\ve)}} \le \frac{1}{\renorm{V_{f_\ve}}{\rho_{f_\ve}}{\ve}} := \frac{\rho_{f_\ve} + \ve(1 + V_{f_\ve})}{V_{f_\ve}} \le \frac{\rho_{f_\ve}}{V_{f_\ve}} + \frac{1}{V_{f_\ve}} + 1
\end{align*}
and the $L^1$ convergences in \eqref{eq: MAC convergence}, \eqref{eq: V convergence} show that for any measurable set $E\subset (0,T)\times \Omega$:
\begin{align*}
    &\sup_{\ve\in (0,1]} \int_E (\nu_{f_\ve} + \ve) \frac{dV_{f_\ve}}{\calT_{f_\ve}^{(\ve)}} \,\ds\dx  \le d(\|\nu\|_{L^\infty} + 1)   \sup_{\ve \in (0,1]} \int_E \lt( \rho_{f_\ve} + 1 + V_{f_\ve} \rt) \,\ds\dx  \to 0 \quad \text{as} \quad \scrL^{d+1}(E)\downarrow 0.
\end{align*}
The above, along with \eqref{eq: ENT LAST} and Egorov's theorem (applied in $(0,T)\times \Omega$), prove that
\begin{equation}\label{eq: ENT LAST 1}
\begin{split}
    \lim_{\ve\downarrow 0} \int_{(0,t)\times \calO} (\nu_{f_\ve} + \ve) \frac{|v-u_{f_\ve}|^2 f_\ve}{\calT_{f_\ve}^{(\ve)}} \,\ds\dx\dv  &= \lim_{\ve\downarrow 0} \int_{(0,t)\times \Omega} (\nu_{f_\ve} + \ve) \frac{d V_{f_\ve}}{\calT_{f_\ve}^{(\ve)}} \,\ds\dx \\
    &= \int_{(0,t)\times \Omega} \nu_f d\rho_f \,\ds\dx.
\end{split}
\end{equation}
To fully obtain the convergence of $R_2$, we must show that the following integral converges:
\begin{align*}
    \int_{(0,t)\times \calO} (\nu_{f_\ve} + \ve) df_\ve \,\ds\dx\dv = \int_{(0,t)\times \Omega} (\nu_{f_\ve} + \ve) d\rho_{f_\ve} \,\ds\dx \to \int_{(0,t)\times\Omega} \nu_f d \rho_f \,\ds\dx ,
\end{align*}
but this is easily checked using similar methods as before. Combining this with \eqref{eq: ENT LAST 1}, it follows that $R_2 \to 0$ as $\ve\downarrow 0$, as desired.

Next we similarly claim that $R_3\to 0$. A direct computation shows that
\begin{align*}
    \lt|\nabla_v\cdot \renorm{v}{1}{\ve} - d\rt| &= \ve \lt|\frac{d + (d+1)|v| + \ve d (1 + |v|)^2}{(1 + \ve (1+|v|))^2} \rt| \le \ve \lt| d + (d+1)|v| + \ve d (1 + |v|)^2 \rt|  \le \ve C_d (1+|v|^2),
\end{align*}
which, combined with the uniform bounds for the energy, gives
\begin{align*}
    |R_3| \le \ve \, C(\|\nu\|_{L^\infty},d) \int_{(0,t)\times \calO} (1+|v|^2) f_\ve \,\ds\dx\dv \xrightarrow[\ve\downarrow 0]{} 0.
\end{align*}

It only remains for us to discuss the entropy dissipation, the final term in the left-hand side of \eqref{eq: ENT EQUIV}. We need to show that it is lower semicontinuous in the limit. By our arguments so far, the right-hand side of \eqref{eq: ENT EQUIV} is convergent in the limit $\ve\downarrow 0$, and thus bounded independently of $\ve$. This provides first that the dissipation is bounded:
\begin{align*}
    \int_{(0,t)\times \calO} (\nu_{f_\ve} + \ve) \frac{1}{f_\ve \calT_{f_\ve}^{(\ve)}} \lt| \calT_{f_\ve}^{(\ve)} \nabla_v f_\ve + (v-u_{f_\ve}) f_\ve \rt|^2 \,\ds\dx\dv \le C_T,
\end{align*}
from which we deduce that the sequence
\begin{align*}
    \lt\{\sqrt{\nu_{f_\ve} + \ve} \frac{\calT_{f_\ve}^{(\ve)}\nabla_v f_\ve + (v-u_{f_\ve}) f_\ve}{\sqrt{f_\ve \calT_{f_\ve}^{(\ve)}}} \rt\}
\end{align*}
is uniformly bounded in $L^2((0,T)\times \calO)$ as $\ve\downarrow 0$. Owing to the pointwise convergence of all terms in the sequence, the weak $L^2$-limit can be identified in the same way as in the beginning of the proof of Lemma \ref{lem: prod sqrt STR0}. Since the $L^2$-norm is lower semicontinuous with respect to convergence in the weak topology,
\begin{align*}
    &\int_{(0,t)\times \calO} \nu_f \frac{1}{f \calT_f} \lt| \calT_f \nabla_v f + (v-u_f) f \rt|^2 \,\ds\dx\dv \\
    &\quad \le \liminf_{\ve\downarrow 0} \int_{(0,t)\times \calO} (\nu_{f_\ve} + \ve) \frac{1}{f_\ve \calT_{f_\ve}^{(\ve)}} \lt| \calT_{f_\ve}^{(\ve)} \nabla_v f_\ve + (v-u_{f_\ve}) f_\ve \rt|^2 \,\ds\dx\dv,
\end{align*}
as desired.

Collecting everything established so far, we have shown that the left-hand side of \eqref{eq: ENT EQUIV} is lower semicontinuous whereas the right-hand side is convergent in the limit $\ve\downarrow 0$. The assertion of Proposition \ref{prop: entropy} follows.

\begin{remark}
    One can choose to pass to the limit in \eqref{eq: ENT ve} instead. We then obtain using similar methods that
    \begin{equation*}
\begin{split}
    &\int_{\calO} f(t) \log f(t) \,\dx\dv - \int_{\calO} f^0 \log f^0 \,\dx\dv + \int_{\Sigma_+^t} \gamma f \log \gamma f (n(x)\cdot v)_+ \,\ds\tnd\sigma\dv \\
        &\quad + 4\int_{(0,t)\times\calO} \nu_f \calT_f |\nabla_v \sqrt{f}|^2 \,\ds\dx\dv \\
        &\le -\int_{\Sigma_-^t} g\log g (n(x)\cdot v)_- \,\ds\tnd\sigma\dv + d \int_{(0,t)\times \calO} \nu_f f \,\ds\dx\dv.
\end{split}
\end{equation*}
The passage to the limit of \eqref{eq: ENT ve} is simpler than passing to the limit in \eqref{eq: ENT EQUIV} and thus we omit the proof of the above identity.
\end{remark}

\noindent \textbf{Conclusion.} The results of Section \ref{sec: very weak}, \ref{sec: weak}, \ref{sec: energy}, and \ref{sec: entropy} demonstrate that all assertions of Theorem \ref{thm: inflow} hold, thereby completing its proof.

%
%
%
%
%
%

\section{The partial absorption-reflection problem} \label{sec: reflect}

Compared with the inflow case studied in Section \ref{sec: inflow}, the partially absorbing-reflecting boundary condition introduces a new mechanism at the boundary: when particles reach $\Sigma_-$, a fraction $\theta$ of the incoming flux is reflected specularly, while the remaining part is absorbed. This mixture of absorption and reflection modifies the energy and entropy balances by introducing boundary terms weighted by $(1-\theta)$. In particular, one expects the uniform bounds for moments and entropy to weaken as $\theta \uparrow 1$, reflecting the fact that pure reflection preserves the boundary flux. Nevertheless, as we shall see, the fundamental structure of the a priori estimates is still preserved, and the compactness arguments developed in Section \ref{sec: comp} continue to apply without essential changes. Our task in this section is therefore to adapt the approximation, renormalization, and compactness framework to this mixed boundary setting and establish the existence of weak solutions under the same physical assumptions on the initial data.

 Let us recall:
\begin{align*}
    \begin{cases}
        \p_t f + v\cdot \nabla_x f = \nu_f \nabla_v\cdot \lt( \calT_f \nabla_v f + (v - u_f) f \rt), \\
        f|_{t=0} = f^0, \\
        \gamma f(t,x,v) = \theta \gamma f(t,x,L_xv), \quad \forall (t,x,v)\in \Sigma_-^T, \quad \theta\in [0,1),\\
        L_xv := v - 2(n(x)\cdot v) n(x).
    \end{cases}
\end{align*}
We emphasize once more that the only assumptions on the initial data are in \eqref{init phys}.

In analogy with the previous section, we fix $\ve>0$ and consider the following regularized problem
\begin{align} \label{spec: eq: reg}
    \begin{cases}
        \p_t f_\ve + v\cdot \nabla_x f_\ve = (\nu_{f_\ve} + \ve) \nabla_v \cdot \lt(\calT_{f_\ve}^{(\ve)} \nabla_v f_\ve + ( \renorm{v}{1}{\ve} - u_{f_\ve}^{(\ve)}) f_\ve \rt), \\
        f|_{t=0} = f_\ve^0, \\
        \gamma f_\ve(t,x,v) = \theta \gamma f_\ve(t,x,L_x v) \quad \forall (t,x,v)\in \Sigma_-^T,
    \end{cases}
\end{align}
in $(0,T)\times \Omega \times \R^d$. The regularized initial data is assumed to be of class
\begin{align*}
    f_\ve^0 \in C_c(\overline{\Omega}\times \R^d),
\end{align*}
and to satisfy
\begin{align*}
\begin{cases}
    \displaystyle\sup_{\ve\in (0,1]} \int_{\calO} (1+|v|^2 + |\log f_\ve^0|) f_\ve^0 \, \dx\dv < + \infty, \\
    \displaystyle f_\ve^0 \to f^0 \quad \text{in} \quad L^1(\calO),\\
    \displaystyle \lim_{\ve\downarrow 0} \int_{\calO} |v|^2 f_\ve^0 \,\dx\dv = \int_{\calO} |v|^2 f^0 \,\dx\dv, \\
    \displaystyle \lim_{\ve\downarrow 0} \int_{\calO} f_\ve^0 \log f_\ve^0 \,\dx\dv = \int_{\calO} f^0 \log f^0 \,\dx\dv.
\end{cases}
\end{align*}

Similarly, we also consider the corresponding linear counterpart of \eqref{spec: eq: reg} as
\begin{align} \label{spec: eq: lin}
    \begin{cases}
        \p_t F + v\cdot \nabla_x F = (\nu(\varrho, \bfj, V) + \ve) \lt(\renorm{V}{\varrho}{\ve} + \ve \rt) \Delta_v F + (\nu(\varrho, \bfj, V) + \ve) \nabla_v\cdot \lt( (\renorm{v}{1}{\ve} - \renorm{\bfj}{\varrho}{\ve}) F\rt) ,\\
        F|_{t=0} = f^0_\ve , \\
        \gamma F(t,x,v) = \theta \gamma F(t,x,L_xv) \quad \forall (t,x,v)\in \Sigma_-^T,
    \end{cases}
\end{align}
where the coefficients are extracted from (see Section \ref{sec: fixed})
\begin{align*}
    (\varrho, \bfj, V) \in \scrX.
\end{align*}
We refer again to \cite[Theorem 1.1]{Zhu24} for the existence, uniqueness, renormalization properties, and weighted estimates for the (bounded) solution $(F,\gamma F)$ to \eqref{spec: eq: lin}. Properly speaking, Zhu's main result is written only for the purely reflecting case of $\theta=1$, but following that work we note that the cases of $\theta\in [0,1)$ can be treated within their framework (see \cite[Sections 2, 5]{Zhu24}). 

\begin{remark}
It is also worth mentioning that the solution $(F,\gamma F)$ to \eqref{spec: eq: lin} is nonnegative (in analogue with Lemma \ref{lem: nonnegative}. Indeed, the solution $(F,\gamma F)$ is constructed in \cite[Section 2.2.2]{Zhu24} by approximating \eqref{spec: eq: lin} with a sequence of inflow problems (see also the more classical work \cite[Section 5]{Carrillo98}). Since the maximum principle Lemma \ref{lem: nonnegative} demonstrates that we can guarantee nonnegativity of the solutions to the inflow problem, this carries over to the solution to \eqref{spec: eq: lin} in passage to the limit.
\end{remark}

Then through the same fixed point argument, we obtain the existence of a weak solution to \eqref{spec: eq: reg}.

\begin{lemma}
    There exists a weak and renormalized solution $(f_\ve,\gamma f_\ve)$ to the regularized system \eqref{spec: eq: reg} which satisfies (not necessarily uniformly in $\ve$)
    \begin{align*}
        \begin{cases}
         f_\ve, \gamma f_\ve \ge 0 ; \\
        \lt<v\rt>^q f_\ve \in L^\infty(0,T;L^1 \cap L^\infty(\calO)) \quad \forall q \ge 0, \\
         \gamma f_\ve \in L^1\cap L^\infty(\Sigma^T,|n(x)\cdot v|\dt\tnd\sigma\dv).
            \end{cases}
    \end{align*}
\end{lemma}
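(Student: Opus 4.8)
The plan is to repeat the scheme of Section~\ref{sec: inflow} essentially verbatim, with the inflow problem replaced by the partial absorption--reflection problem throughout; I only flag the places where the reflecting boundary condition demands care.

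\textbf{Linear reflection problem.} First I would fix $\ve>0$ and $(\varrho,\bfj,V)\in\scrX$, read off the coefficients $A,\bfB,c$ from \eqref{spec: eq: lin} exactly as in \eqref{eq: abc}, and note that the ellipticity and boundedness bounds \eqref{eq: abc bounds} hold uniformly in $(\varrho,\bfj,V)$. Invoking \cite[Theorem~1.1]{Zhu24} in its $\theta\in[0,1)$ form (cf.\ \cite[Sections~2,5]{Zhu24}) gives a unique bounded renormalized solution $(F,\gamma F)$ to \eqref{spec: eq: lin} together with the weighted estimate of Lemma~\ref{lem: zhu}(III); since $f_\ve^0$ is compactly supported this yields $F(t,x,v)\le C_{T,d,k,\ve}(1+|v|)^{-k}$ for every $k$, with constant independent of $(\varrho,\bfj,V)$, hence $F\in L^\infty(0,T;L^1\cap L^\infty(\calO))$ uniformly in the coefficients. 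Nonnegativity of $(F,\gamma F)$ I would obtain from the maximum principle as in the Remark following \eqref{spec: eq: lin}: $(F,\gamma F)$ is built in \cite[Section~2.2.2]{Zhu24} as a limit of inflow solutions, to which Lemma~\ref{lem: nonnegative} applies, and nonnegativity passes to the limit. Finally, to get $\gamma F\in L^1\cap L^\infty(\Sigma^T,|n(x)\cdot v|\dt\tnd\sigma\dv)$ uniformly in $(\varrho,\bfj,V)$, I would run the argument of \cite{Mischler2000, Mischler2010}: plug $\chi(F)=F^p$, $\varphi\equiv1$ into the renormalization formula, split $\Sigma^T=\Sigma_+^T\cup\Sigma_-^T$, and use the reflection relation $\gamma F(s,x,v)=\theta\,\gamma F(s,x,L_x v)$ on $\Sigma_-^T$ together with $|\det DL_x|=1$ to bring the incoming-flux term to the left-hand side; since $\theta<1$ the resulting coefficient $1-\theta^p$ is positive and the estimate closes.

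\textbf{Fixed point.} Next I would define $\scrR:\scrX\to\scrX$ by $\scrR(\varrho,\bfj,V)=(\rho_F,j_F,V_F)$, which is well-defined by the decay of $F$, \eqref{eq: variance and second moment}, and the uniqueness of $(F,\gamma F)$. Continuity of $\scrR$ would be proved as in Lemma~\ref{lem: R continuous}: along $(\varrho_n,\bfj_n,V_n)\to(\varrho,\bfj,V)$ in $\scrX$ the renormalized coefficients and $\nu(\varrho_n,\bfj_n,V_n)$ converge strongly in every $L^p$, $p<\infty$; the $\scrX$-uniform bounds from the previous step give weak-$*$ compactness of $(F_n,\gamma F_n)$; one passes to the limit in the weak form of \eqref{spec: eq: lin}, whose boundary term now reads $\int_{\Sigma_+^t}\gamma F_n\,[\varphi(s,x,v)-\theta\varphi(s,x,L_x^{-1}v)]\,(n(x)\cdot v)_+\,\ds\tnd\sigma\dv$, and the uniqueness of Lemma~\ref{lem: zhu} identifies the limit as $(F,\gamma F)$, removing the need for subsequences; velocity averaging, applied on the bounded domain through the partition-of-unity argument in the Remark after Lemma~\ref{lem: R continuous}, together with the decay of $F_n$, gives strong $L^1$ compactness of $\rho_{F_n}$, $j_{F_n}$, and $\int_{\R^d}|v|^2 F_n\,\dv$, hence of $V_{F_n}$ through \eqref{eq: variance and second moment}. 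Compactness of $\scrR$ and boundedness of its eigenvector set are verified exactly as in Lemmas~\ref{lem: R comp} and~\ref{lem: R eigen}, using again that the linear bounds are uniform in $(\varrho,\bfj,V)$ and that a fixed point of $\lambda\scrR$ inherits the mass and energy estimates.

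\textbf{Conclusion and main obstacle.} Schaefer's fixed-point theorem then produces a fixed point $(f_\ve,\gamma f_\ve)$ of $\scrR$, which is by construction a weak solution of \eqref{spec: eq: reg}; being realized through the linear problem \eqref{spec: eq: lin}, it inherits renormalization, the decay $\lt<v\rt>^q f_\ve\in L^\infty(0,T;L^1\cap L^\infty(\calO))$ for all $q\ge0$, nonnegativity $f_\ve,\gamma f_\ve\ge0$, and $\gamma f_\ve\in L^1\cap L^\infty(\Sigma^T,|n(x)\cdot v|\dt\tnd\sigma\dv)$. The step I expect to be the main obstacle is the $\scrX$-uniform control of the trace in the linear problem: unlike the inflow case there is no prescribed incoming datum, so the bound on $\gamma F$ must be extracted from the equation itself, and closing an estimate of the form $(1-\theta^p)\int_{\Sigma_+^T}|\gamma F|^p(n(x)\cdot v)_+\,\dt\tnd\sigma\dv\le(\text{interior terms})$ relies essentially on $\theta<1$. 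A secondary point deserving attention is confirming that Zhu's linear theory, stated in \cite{Zhu24} chiefly for $\theta=1$, indeed extends to $\theta\in[0,1)$ with uniqueness, renormalization, and the weighted $L^\infty$ bounds intact.
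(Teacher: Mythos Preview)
Your proposal is correct and follows essentially the same approach as the paper: define the solution operator $\scrR$ via the linear reflection problem \eqref{spec: eq: lin}, verify continuity and compactness as in Lemmas~\ref{lem: R continuous}--\ref{lem: R eigen}, and apply Schaefer's theorem. In fact your sketch is more detailed than the paper's own proof, which simply refers back to Section~\ref{sec: fixed}; the points you flag as needing care (the $\theta<1$ factor in closing the trace estimate, and the extension of \cite{Zhu24} to $\theta\in[0,1)$) are exactly the ones the paper handles in the surrounding discussion and the Remark preceding the lemma.
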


\begin{proof}
Since the scheme is the same as that of Section \ref{sec: fixed} we only provide a sketch of proof. We note that the solution to \eqref{spec: eq: reg} is realized through a fixed point of the operator
\begin{align*}
    \scrR : (\varrho, \bfj, V) \mapsto (\rho_F, j_F, V_F).
\end{align*}
We can then check, in the same way as before, that $\scrR$ is continuous and compact as a map from $\scrX$ into itself. Schaefer's theorem then yields that $\scrR$ has a fixed point in $\scrX$, from which we deduce the existence of a solution $(f_\ve,\gamma f_\ve)$ to \eqref{spec: eq: reg}.
\end{proof}

Then, using the renormalization properties of \eqref{spec: eq: reg}, we can derive the following uniform estimates in similar manner as that of Lemmas \ref{lem: energy}, \ref{lem: entropy}.

\begin{lemma} \label{spec: lem: unif}
    Let $\{f_\ve\}_{\ve\in (0,1]}$ denote the family of solutions to \eqref{spec: eq: reg}. Then the following estimates hold uniformly in $\ve$:
    \begin{align*}
        \begin{cases}
        \displaystyle \sup_{t\in [0,T]} \int_{\calO} (1+|v|^2) f_\ve(t) \,\dx\dv  \le C_T,\\
        \displaystyle \int_{\Sigma_+^T} (1+|v|^2) \gamma f_\ve \,\dt\tnd\sigma\dv \le \frac{C_T}{1-\theta}, \\
        \displaystyle \sup_{t\in [0,T]} \int_{\calO} f_\ve(t) \log f_\ve(t) \,\dx\dv \le C_T, \\
        \displaystyle \int_{\Sigma_+^T} \gamma f_\ve \log \gamma f_\ve \,\dt\tnd\sigma\dv \le \frac{C_T}{1-\theta}, \\
        \displaystyle \int_{(0,T)\times \calO} (\nu_{f_\ve} + \ve) \calT_{f_\ve}^{(\ve)} |\nabla_v \sqrt{f_\ve}|^2 \,\dt\dx\dv \le C_T. 
        \end{cases}
    \end{align*}
\end{lemma}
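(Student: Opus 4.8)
The plan is to reproduce the proofs of Lemmas \ref{lem: energy} and \ref{lem: entropy} almost line by line: the interior estimates and the $\delta\downarrow 0$ limits are unchanged, so the only point needing care is the boundary contribution, where the specular law $\gamma f_\ve(t,x,v) = \theta\,\gamma f_\ve(t,x,L_xv)$ on $\Sigma_-^T$ replaces the prescribed inflow datum. As in Section \ref{sec: fixed}, $f_\ve$ is nonnegative and enjoys an $\ve$-dependent algebraic decay analogous to \eqref{eq: ve decay}, so that $\varphi = 1+|v|^2$ and the regularized renormalizing functions $\chi_\delta$ are admissible after the usual cutoff procedure. The one new input is the effect of the reflection change of variables: for fixed $x\in\p\Omega$ the map $v\mapsto L_xv = v - 2(n(x)\cdot v)n(x)$ is the orthogonal reflection in $\{n(x)\cdot v = 0\}$, an involution with unit Jacobian, $|L_xv| = |v|$ and $n(x)\cdot(L_xv) = -(n(x)\cdot v)$, so it exchanges $\Gamma_-$ and $\Gamma_+$ and preserves the weight $|n(x)\cdot v|\,\tnd\sigma\dv$; hence for any weight $w$ invariant under $L_x$ (e.g.\ $w = 1+|v|^2$) and any Borel $\Phi\colon[0,\infty)\to[0,\infty)$,
\begin{align*}
    \int_{\Sigma_-^t} w\,\Phi(\gamma f_\ve)\,(n(x)\cdot v)_-\,\ds\tnd\sigma\dv = \int_{\Sigma_+^t} w\,\Phi(\theta\,\gamma f_\ve)\,(n(x)\cdot v)_+\,\ds\tnd\sigma\dv.
\end{align*}

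For the energy bound I would test \eqref{spec: eq: reg} with $\varphi = 1+|v|^2$, arriving at the analogue of \eqref{eq: energy est}; the identity above with $\Phi = \mathrm{id}$ collapses the two boundary integrals into the single nonnegative term $(1-\theta)\int_{\Sigma_+^t}(1+|v|^2)\gamma f_\ve\,(n(x)\cdot v)_+$ on the left. The interior terms are bounded by $C\int_{(0,t)\times\calO}(1+|v|^2)f_\ve$ via \eqref{eq: reg major} and Jensen's inequality, exactly as in Lemma \ref{lem: energy}; discarding the boundary term and invoking Gr\"onwall's lemma gives $\sup_t\int_\calO(1+|v|^2)f_\ve(t)\le C_T$, and reinstating it gives $\int_{\Sigma_+^T}(1+|v|^2)\gamma f_\ve\,(n(x)\cdot v)_+\le C_T/(1-\theta)$. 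The same computation with $w\equiv 1$ records the mass identity $\int_\calO f_\ve(t) + (1-\theta)\int_{\Sigma_+^t}\gamma f_\ve\,(n(x)\cdot v)_+ = \int_\calO f_\ve^0$, which I reuse below.

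For the entropy bound I would run the $\chi_\delta$-renormalization of Lemma \ref{lem: entropy}; every step is identical except the boundary term, which by the change-of-variables identity with $\Phi = \chi_\delta$ equals $\int_{\Sigma_+^t}[\chi_\delta(\gamma f_\ve) - \chi_\delta(\theta\gamma f_\ve)]\,(n(x)\cdot v)_+$ and, as $\delta\downarrow 0$, converges to $\int_{\Sigma_+^t}[\gamma f_\ve\log\gamma f_\ve - \theta\gamma f_\ve\log(\theta\gamma f_\ve)]\,(n(x)\cdot v)_+$. The decisive elementary identity is
\begin{align*}
    z\log z - \theta z\log(\theta z) = (1-\theta)\,z\log z + (-\theta\log\theta)\,z, \qquad -\theta\log\theta\ge 0 \ \text{ for } \theta\in[0,1),
\end{align*}
so the boundary term splits into $(1-\theta)\int_{\Sigma_+^t}\gamma f_\ve\log\gamma f_\ve\,(n(x)\cdot v)_+$, which I keep on the left, plus the nonnegative linear term $(-\theta\log\theta)\int_{\Sigma_+^t}\gamma f_\ve\,(n(x)\cdot v)_+$, which moves to the right with a favorable sign and is in any case controlled by the mass identity above. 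The interior right-hand side is $\le C\int_{(0,t)\times\calO}f_\ve$ by \eqref{eq: reg major}, hence bounded by the energy estimate; and the lower bounds on $\int f_\ve\log f_\ve(t)$ and on $\int\gamma f_\ve\log\gamma f_\ve\,(n(x)\cdot v)_+$ needed to close the inequality come, as in the proof of Lemma \ref{lem: entropy}, from the $L^1$ moment bounds already established. Collecting everything yields $\sup_t\int_\calO f_\ve(t)\log f_\ve(t)\le C_T$, $\int_{\Sigma_+^T}\gamma f_\ve\log\gamma f_\ve\,(n(x)\cdot v)_+\le C_T/(1-\theta)$, and $\int_{(0,T)\times\calO}(\nu_{f_\ve}+\ve)\calT_{f_\ve}^{(\ve)}|\nabla_v\sqrt{f_\ve}|^2\le C_T$, all uniform in $\ve$.

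I expect the only genuine obstacle to be bookkeeping: tracking the factors of $\theta$ through the boundary change of variables and accounting for the non-homogeneity of $z\mapsto z\log z$ under scaling by $\theta$. Once the split $z\log z - \theta z\log(\theta z) = (1-\theta)z\log z + (-\theta\log\theta)z$ and the corresponding mass identity are in hand, the remainder is a verbatim transcription of Lemmas \ref{lem: energy} and \ref{lem: entropy}, using in addition that the solutions $(F,\gamma F)$ to the linear problem \eqref{spec: eq: lin} constructed in \cite{Zhu24} are nonnegative and algebraically decaying, which legitimizes all the integrations by parts.
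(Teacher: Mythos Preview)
Your proposal is correct and follows essentially the same route as the paper: test with $1+|v|^2$ for energy, run the $\chi_\delta$-renormalization for entropy, and in both cases use the reflection change of variables $v\mapsto L_xv$ to reduce the two boundary integrals to a single term on $\Sigma_+^t$ weighted by $(1-\theta)$. Your explicit decomposition $z\log z - \theta z\log(\theta z) = (1-\theta)z\log z + (-\theta\log\theta)z$ is exactly what the paper uses (they place $(\theta\log\theta)\int_{\Gamma_+}\gamma f_\ve$ on the right and drop it by sign), and your remark about needing lower bounds on $\int f_\ve\log f_\ve$ and $\int\gamma f_\ve\log\gamma f_\ve$ via the moment bounds is a point the paper leaves implicit.
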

\begin{proof}
    We only provide a formal proof, and refer to Lemmas \ref{lem: energy} and \ref{lem: entropy} for the rigorous derivation of these identities. A direct computation and integrating by parts allow us to find
    \begin{align*}
        &\ddt \int_{\calO} (1+|v|^2) f_\ve \,\dx\dv + \int_{\Gamma_+} (1+|v|^2) \gamma f_\ve (n(x)\cdot v)_+ \,\tnd\sigma\dv \\
        &\quad = -\int_{\Gamma_-} (1+|v|^2) \gamma f_\ve \,(n(x)\cdot v)_- \,\tnd\sigma\dv + \int_{\calO} (\nu_{f_\ve} + \ve) \lt(2 d \calT_{f_\ve}^{(\ve)} f_\ve - 2v\cdot (\renorm{v}{1}{\ve} - u_{f_\ve}^{(\ve)}) f_\ve \rt) \,\dx\dv.
    \end{align*}
    Using the prescribed boundary condition, and also that the reflection map $L_x$ has unit Jacobian, we observe 
    \begin{align*}
        -\int_{\Gamma_-} (1+|v|^2) \gamma f_\ve (n(x)\cdot v)_- \,\tnd\sigma\dv = \theta \int_{\Gamma_+} (1+|v|^2)  \gamma f_\ve (n(x)\cdot v)_+ \,\tnd\sigma\dv.
    \end{align*}
    Therefore
    \begin{align*}
        &\ddt \int_{\calO} (1+|v|^2) f_\ve \,\dx\dv + (1-\theta) \int_{\Gamma_+} (1+|v|^2) \gamma f_\ve (n(x)\cdot v)_+ \,\tnd\sigma\dv \\
        &\quad = \int_{\calO} (\nu_{f_\ve} + \ve) \lt(2d \calT_{f_\ve}^{(\ve)} f_\ve - 2v\cdot (\renorm{v}{1}{\ve} - u_{f_\ve}^{(\ve)}) f_\ve \rt) \,\dx\dv.
    \end{align*}
    The right-hand side of the above was already estimated in Lemma \ref{lem: energy}, and from there, applying Gr\"onwall's lemma yields
    \begin{equation}\label{spec: eq: energy}
        \sup_{t\in [0,T]} \int_{\calO} (1+|v|^2) f_\ve \,\dx\dv \le C_T.
    \end{equation}
    Using this bound, we can then in turn deduce
    \begin{align*}
        \int_{\Sigma_+^T} (1+|v|^2) \gamma f_\ve (n(x)\cdot v)_+ \,\dt\tnd\sigma\dv \le \frac{C_T}{1-\theta}.
    \end{align*}

    In similar nature, we can find the following formal identity for the entropy:
    \begin{equation} \label{spec: eq: entropy}
    \begin{split}
        &\ddt \int_{\calO} f_\ve \log f_\ve \,\dx\dv + (1 - \theta) \int_{\Gamma_+} \gamma f_\ve \log \gamma f_\ve \,(n(x)\cdot v)_+ \,\tnd\sigma\dv   + 4 \int_{\calO} (\nu_{f_\ve} + \ve) \calT_{f_\ve}^{(\ve)} |\nabla_v \sqrt{f_\ve}|^2 \,\dx\dv \\
        &\quad = \int_{\Gamma_+} \gamma f_\ve (\theta \log \theta) (n(x)\cdot v)_+ \,\tnd\sigma\dv +  d \int_{\calO} (\nu_{f_\ve} + \ve) f_\ve \,\dx\dv \\
        &\quad \le  d \int_{\calO} (\nu_{f_\ve} + \ve) f_\ve \,\dx\dv \quad (\because \theta \log \theta \le 0).
    \end{split}
    \end{equation}
    We note that since $\nu\in L^\infty$, \eqref{spec: eq: energy} allows us to estimate the right-hand side of \eqref{spec: eq: entropy} as
    \begin{align*}
        \int_{\calO} (\nu_{f_\ve} + \ve) f_\ve\,\dx\dv \le (\|\nu\|_{L^\infty} + 1) C_T \quad \forall \ve\in (0,1].
    \end{align*}
    Therefore, we deduce from \eqref{spec: eq: entropy} and Gr\"onwall's lemma
    \begin{align*}
        \sup_{t\in [0,T]} \int_{\calO} f_\ve \log f_\ve \,\dx\dv \le C_T,
    \end{align*}
    which in turn yields
    \begin{align*}
        \int_{\Sigma_+^T} \gamma f_\ve \log \gamma f_\ve \,\dt\tnd\sigma\dv \le \frac{C_T}{1-\theta}, \qquad \int_{(0,T)\times\calO} (\nu_{f_\ve} + \ve) \calT_{f_\ve}^{(\ve)}|\nabla_v \sqrt{f_\ve}|^2 \,\dt\dx\dv \le C_T.
    \end{align*}
\end{proof}

The estimates of Lemma \ref{spec: lem: unif} imply through the Dunford--Pettis criteria that we can pass to a subsequence such that there exists a limit $f$ satisfying
\begin{align*}
    &f_\ve \to f \quad \text{weakly in} \quad L^1((0,T)\times\calO), \\
    &\gamma f_\ve \to \gamma f \quad \text{weakly in}\quad L^1(\Sigma_+^T, (v\cdot n(x))\,\dt\tnd\sigma\dv).
\end{align*}
We now assume always that we are along this subsequence.

Next, analogously as with Lemma \ref{lem: ve higher moments}, we provide the gain of moments, which is key to obtaining compactness for the temperature.

\begin{lemma}
    Let $\{f_\ve\}_{\ve\in (0,1]}$ denote the family of solutions to \eqref{spec: eq: reg}. Then
\[
        \sup_{\ve\in (0,1]} \int_{(0,T)\times \calO} |v|^3 f_\ve(t) \,\dt\dx\dv \le C_T.
\]
\end{lemma}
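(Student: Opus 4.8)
The plan is to reduce the statement to the general moment-gain result of Lemma \ref{lem: higher moments}, in exactly the same way the inflow case was handled in Lemma \ref{lem: ve higher moments}. Concretely, I would put, in the notation of Lemma \ref{lem: higher moments},
\[
  a_\ve(t,x) = (\nu_{f_\ve}+\ve)\calT_{f_\ve}^{(\ve)}, \quad
  b_\ve(t,x) = \nu_{f_\ve}+\ve, \quad
  \bfc_\ve(t,x,v) = \renorm{v}{1}{\ve} - u_{f_\ve}^{(\ve)}, \quad
  d_\ve \equiv 0,
\]
so that \eqref{spec: eq: reg} takes the required form, and apply the lemma with $k=2$ to deduce boundedness of $\{\langle v\rangle^3 f_\ve\}$ in $L^1((0,T)\times\calO)$, which is the assertion.

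It then remains to verify the six hypotheses of Lemma \ref{lem: higher moments} with $k=2$. The bounds on $\{f_\ve^0\}$ in $L^1(\calO,\langle v\rangle^2\dx\dv)$ and on $\{f_\ve\}$ in $L^\infty(0,T;L^1(\calO,\langle v\rangle^2\dx\dv))$ come directly from the assumptions on the regularized initial data and from the energy estimate of Lemma \ref{spec: lem: unif}. The bound on $\{a_\ve f_\ve\}$ in $L^1((0,T)\times\calO)$ and on $\{b_\ve|\bfc_\ve|f_\ve\}$ in $L^1((0,T)\times\calO,\langle v\rangle\dt\dx\dv)$ follow from \eqref{eq: reg major}, the universal bound $\nu\in L^\infty$, the Cauchy--Schwarz and Jensen inequalities (in particular $\rho_{f_\ve}|u_{f_\ve}|^2\le\intr|v|^2 f_\ve\,\dv$), and Lemma \ref{spec: lem: unif}; this is the identical computation carried out in the proof of Lemma \ref{lem: ve higher moments} and need not be repeated. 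The term $d_\ve f_\ve$ vanishes identically.

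The only hypothesis that genuinely requires an extra argument — and hence the main, though minor, obstacle — is the boundary condition $\{\gamma f_\ve\}\in L^1(\Sigma^T,\langle v\rangle^2|n(x)\cdot v|\dt\tnd\sigma\dv)$, since Lemma \ref{spec: lem: unif} controls only the outgoing part $\Sigma_+^T$. To recover the incoming part, I would use the reflection condition $\gamma f_\ve(t,x,v)=\theta\gamma f_\ve(t,x,L_xv)$ on $\Sigma_-^T$ together with the fact that, for each $x\in\p\Omega$, $L_x$ is an orthogonal involution with unit Jacobian mapping $\Gamma_-$ bijectively onto $\Gamma_+$ and satisfying $|L_xv|=|v|$, $n(x)\cdot(L_xv)=-n(x)\cdot v$. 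Changing variables $w=L_xv$ then gives
\[
  \int_{\Sigma_-^T}\langle v\rangle^2\gamma f_\ve\,|n(x)\cdot v|\,\dt\tnd\sigma\dv
  = \theta\int_{\Sigma_+^T}\langle v\rangle^2\gamma f_\ve\,|n(x)\cdot v|\,\dt\tnd\sigma\dv
  \le \frac{\theta\,C_T}{1-\theta},
\]
while $\Sigma_0^T$ contributes nothing since $n(x)\cdot v=0$ there; summing over $\Sigma^T=\Sigma_+^T\cup\Sigma_-^T\cup\Sigma_0^T$ yields the required uniform-in-$\ve$ bound. With all hypotheses in place, Lemma \ref{lem: higher moments} applies and delivers $\sup_{\ve\in(0,1]}\int_{(0,T)\times\calO}|v|^3 f_\ve\,\dt\dx\dv\le C_T$.
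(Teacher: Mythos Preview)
Your proposal is correct and follows essentially the same approach as the paper: both reduce to Lemma \ref{lem: higher moments} with $k=2$ via the same choice of coefficients, and both identify the one new ingredient as the control of $\gamma f_\ve$ on $\Sigma_-^T$, obtained by the change of variables $w=L_xv$ in the reflection condition to pull the incoming trace back to the outgoing one already bounded in Lemma \ref{spec: lem: unif}.
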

\begin{proof}
    We just remark that it is clear from the boundary condition and Lemma \ref{spec: lem: unif} that $\lt\{\gamma f_\ve|_{\Sigma_-^T}\rt\}_{\ve\in(0,1]}$ also has uniformly bounded second moments. Indeed, changing variables shows
    \begin{align*}
        \int_{\Sigma_-^T} |v|^2 \gamma f_\ve |v\cdot n(x)|\,\dt\tnd\sigma\dv &= \theta \int_{\Sigma_+^T} |v|^2 \gamma f_\ve (v\cdot n(x))_+ \,\dt\tnd\sigma\dv \le \theta C_{\theta,T}.
    \end{align*}
    Therefore, it is enough to apply Lemma \ref{lem: higher moments} with $k=2$ and in the same setting as with Lemma \ref{lem: ve higher moments}. 
\end{proof}

We may then provide the analogue of Lemma \ref{lem: subcubic}.

\begin{lemma}\label{spec: lem: subcubic}
    For any $\varphi\in C^0(\R^d)$ with subcubic growth, it holds that the averages
    \begin{align*}
        \lt\{\intr f_\ve \varphi(v)\,\dv\rt\}_{\ve\in (0,1]}
    \end{align*}
    are compact in $L^1((0,T)\times \Omega)$.
\end{lemma}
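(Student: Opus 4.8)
The plan is to mirror the proof of Lemma \ref{lem: subcubic} essentially verbatim, since on the interior $(0,T)\times\Omega\times\R^d$ the regularized reflection system \eqref{spec: eq: reg} is governed by exactly the same equation as the inflow regularization \eqref{eq: reg}; the only change is the boundary condition, and velocity averaging is a purely interior, local statement (applied through a partition of unity, as in the remark following Lemma \ref{lem: R continuous}), so the reflecting boundary plays no role. First I would introduce the renormalization $\Gamma_\delta(f_\ve) = f_\ve/(1+\delta f_\ve)$, which is a bounded weak solution, and rewrite its equation in the form $\partial_t \Gamma_\delta(f_\ve) + v\cdot\nabla_x \Gamma_\delta(f_\ve) = H_\ve^1 + \nabla_v\cdot H_\ve^2$ with the very same $H_\ve^1, H_\ve^2$ as in \eqref{eq: h1 h2}. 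The entropy-dissipation bound from Lemma \ref{spec: lem: unif} together with $\nabla_v\cdot \renorm{v}{1}{\ve} = O(1)$ and $|u_{f_\ve}^{(\ve)}| \le |u_{f_\ve}|$ shows that $\{H_\ve^1\}$ and $\{H_\ve^2\}$ are bounded in $L^1((0,T)\times\calO)$.

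Next, since $\Gamma_\delta(f_\ve) \in L^1\cap L^\infty((0,T)\times\calO) \subset L^2((0,T)\times\calO)$, the velocity averaging Lemma \ref{lem: L2 vel avg} (localized to the bounded domain via a partition of unity) yields, for each fixed $\delta>0$ and each $\varphi \in C_c(\R^d)$, compactness of $\bigl\{\intr \Gamma_\delta(f_\ve)\varphi(v)\,\dv\bigr\}_\ve$ in $L^1((0,T)\times\Omega)$. To transfer this to $f_\ve$ itself, I would use the pointwise bound $|\Gamma_\delta(f_\ve) - f_\ve| = \delta f_\ve^2/(1+\delta f_\ve) \le \delta M f_\ve + f_\ve \mathbf{1}_{f_\ve > M}$: the weak $L^1$ convergence of $\{f_\ve\}$ (from the Dunford--Pettis criterion applied to the bounds of Lemma \ref{spec: lem: unif}) makes $f_\ve \mathbf{1}_{f_\ve>M}$ small in $L^1$ uniformly in $\ve$ as $M\uparrow\infty$, so first fixing $M$ large and then letting $\delta\downarrow 0$ gives $\|\Gamma_\delta(f_\ve) - f_\ve\|_{L^1((0,T)\times\calO)}\to 0$ uniformly in $\ve$. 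Hence $\bigl\{\intr f_\ve\varphi(v)\,\dv\bigr\}_\ve$ is compact in $L^1((0,T)\times\Omega)$ for every $\varphi\in C_c(\R^d)$.

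Finally, to pass from $\varphi\in C_c(\R^d)$ to $\varphi$ of subcubic growth, I would invoke the uniform third-moment bound $\sup_{\ve\in(0,1]} \int_{(0,T)\times\calO} |v|^3 f_\ve\,\dt\dx\dv \le C_T$ proved just above: it supplies the tightness needed to approximate such $\varphi$ by compactly supported functions in the relevant weighted sense and to conclude. There is no genuine obstacle here — the argument is a routine adaptation — and the only point worth flagging is that the reflecting boundary condition is irrelevant to the velocity averaging step; the crucial inputs are the interior, $\ve$-uniform entropy-dissipation and third-moment estimates of Lemma \ref{spec: lem: unif} and the moment-gain lemma, all of which were arranged to hold.
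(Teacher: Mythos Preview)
Your proposal is correct and follows essentially the same approach as the paper's proof, which simply refers back to the identity \eqref{eq: h1 h2} and the argument of Lemma \ref{lem: subcubic}, using the equiintegrability of $\{f_\ve\}$ to pass from $\Gamma_\delta(f_\ve)$ to $f_\ve$ and then the third-moment bound for subcubic growth. Your explicit remark that the reflecting boundary condition is irrelevant to the interior velocity-averaging step is exactly the point the paper leaves implicit.
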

\begin{proof}
    Using the renormalization formula, namely the same identity as in \eqref{eq: h1 h2}, we may apply the velocity averaging Lemma \ref{lem: L2 vel avg} to deduce the compactness of the averages
    \begin{align*}
        \lt\{\intr \frac{f_\ve}{1 + \delta f_\ve} \varphi(v) \,\dv\rt\}_{\ve\in (0,1]}
    \end{align*}
    for each $\delta>0$. Then, in the same way as in the proof of Lemma \ref{lem: subcubic}, we deduce using the equiintegrability of $\{f_\ve\}$ and taking $\delta\downarrow 0$ that the averages $\lt\{\intr f_\ve \varphi(v)\,\dv\rt\}$ are also compact in $L^1((0,T)\times\Omega)$.
\end{proof}

Finally, we may apply Proposition \ref{prop: weighted fisher} to deduce that
\begin{lemma}\label{spec: lem: ptw}
    Modulo a subsequence, 
    \begin{align*}
        f_\ve \to f \quad \text{a.e. and in} \quad L^1((0,T)\times\calO).
    \end{align*}
\end{lemma}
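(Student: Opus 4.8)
The plan is to verify that $\{f_\ve\}$ together with the weights $a_\ve := (\nu_{f_\ve} + \ve)\calT_{f_\ve}^{(\ve)}$ satisfies the hypotheses (P1)--(P4) of Proposition \ref{prop: weighted fisher}, and then invoke that proposition; this mirrors the proof of Lemma \ref{lem: ptw} in the inflow case. Hypothesis (P1) is already available: the uniform energy and entropy bounds of Lemma \ref{spec: lem: unif} together with the Dunford--Pettis criterion give, along the subsequence we have fixed, $f_\ve \weakto f$ in $L^1((0,T)\times\calO)$. Hypothesis (P2) for $\psi\in C_c(\R^d)$ is precisely Lemma \ref{spec: lem: subcubic}. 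Hypothesis (P3) follows from the weighted Fisher information bound $\int_{(0,T)\times\calO}(\nu_{f_\ve}+\ve)\calT_{f_\ve}^{(\ve)}|\nabla_v\sqrt{f_\ve}|^2\,\dt\dx\dv \le C_T$ of Lemma \ref{spec: lem: unif}.

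The remaining step, hypothesis (P4), I would carry out exactly as in the inflow case. Applying Lemma \ref{spec: lem: subcubic} with $\psi(v)=1,v,|v|^2$ gives, after a further subsequence, a.e.\ and $L^1((0,T)\times\Omega)$ convergence $\rho_{f_\ve}\to\rho_f$, $j_{f_\ve}\to j_f$, and $\int|v|^2 f_\ve\,\dv\to\int|v|^2 f\,\dv$; the variance identity \eqref{eq: variance and second moment} then forces $V_{f_\ve}\to V_f$ a.e.\ and in $L^1$, hence $u_{f_\ve}\to u_f$, $\calT_{f_\ve}\to\calT_f$, and $\calT_{f_\ve}^{(\ve)}\to\calT_f$ a.e.\ on $D_+:=\{\rho_f>0\}$. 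Continuity of $\nu$ yields $\nu_{f_\ve}+\ve\to\nu_f$ a.e.\ on $(0,T)\times\Omega$. Setting $a:=\nu_f\calT_f$, one gets $a_\ve\to a$ a.e.\ on $D_+$, and the structural assumption \eqref{ASSUMP NU} (which makes $\nu_f\calT_f$ vanish precisely where $\rho_f=0$) identifies $D_+=\{a>0\}$ with $\{\rho_f>0\}$ up to a Lebesgue-null set. Proposition \ref{prop: weighted fisher} then delivers $f_\ve\to f$ a.e.\ and strongly in $L^1((0,T)\times\calO)$ along a subsequence.

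I do not expect any genuine obstacle here: the compactness machinery of Section \ref{sec: comp} is insensitive to the type of boundary condition, and every input it requires --- uniform moment and entropy bounds, the third-moment gain, and velocity averaging for renormalized solutions --- has already been established for the absorption--reflection system. The only mild subtlety is bookkeeping: the various a.e.\ convergences arise along successive subsequences, so a diagonal extraction is needed to land on a single subsequence before applying Proposition \ref{prop: weighted fisher}, exactly as in the proof of Lemma \ref{lem: ptw}.
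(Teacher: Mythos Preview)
Your proposal is correct and follows exactly the paper's approach: the paper's own proof is a one-line reference stating that the argument is identical to Lemma \ref{lem: ptw} via Proposition \ref{prop: weighted fisher} with $a_\ve=(\nu_{f_\ve}+\ve)\calT_{f_\ve}^{(\ve)}$, and you have faithfully unpacked that reference, verifying (P1)--(P4) just as in the inflow case.
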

\begin{proof}
    The proof is the same as that with Lemma \ref{lem: ptw}, and it is a simple application of Proposition \ref{prop: weighted fisher} applied with $a_\ve = (\nu_{f_\ve} + \ve)\calT_{f_\ve}^{(\ve)}$.
\end{proof}

The main result now follows analogously as with the previous Section \ref{sec: inflow}.

\begin{proof}[Proof of Theorem \ref{thm: reflection}]
Collecting Lemmas \ref{spec: lem: subcubic} and \ref{spec: lem: ptw}, we have (up to some subsequence) that
\begin{align*}
    \begin{cases}
    f_\ve \to f \quad \text{a.e. and in}\quad L^1((0,T)\times \calO), \\
    \rho_{f_\ve} \to \rho_f \quad \text{a.e. and in}\quad L^1((0,T)\times \Omega),\\
    j_{f_\ve} \to j_f \quad \text{a.e. and in}\quad L^1((0,T)\times\Omega), \\
    V_{f_\ve} \to V_f \quad \text{a.e. and in}\quad L^1((0,T)\times \Omega), \\[4pt]
    u_{f_\ve}^{(\ve)}\to u_f \quad \text{a.e. in}\quad D_+ := \{(t,x)\in (0,T)\times\Omega: \rho_f(t,x) > 0 \}, \\[4pt]
    \calT_{f_\ve}^{(\ve)} \to \calT_f \quad \text{a.e. in} \quad D_+, \\
    \nu_{f_\ve} + \ve \to \nu_f \quad \text{a.e. in} \quad (0,T)\times\Omega.
    \end{cases}
\end{align*}
Finally we recall also that $\gamma f_\ve \weakto \gamma f$ weakly in $L^1(\Sigma_+^T,v\cdot n(x)\,\dt\tnd\sigma\dv)$. Then we may proceed in the same way as with Sections \ref{sec: very weak}, \ref{sec: weak}, \ref{sec: energy}, \ref{sec: entropy}, using the above convergences and Proposition \ref{prop: weighted fisher} to pass to the limit in each property satisfied by the $(f_\ve,\gamma f_\ve)$. The assertions of Theorem \ref{thm: reflection} are then consequently deduced.
\end{proof}

%
%
%
%
%
%

\section*{Acknowledgments}
This work is supported by NRF grant no. 2022R1A2C1002820 and RS-2024-00406821. The authors would like to thank Jos\'e Carrillo for his valuable comments on this work. 

%
%
%
%
%
%
\appendix
\section{Construction of regularized initial and boundary data} \label{app: REG INIT}

In this appendix, we present a construction of regularizations satisfying the assumption \eqref{ASSUMP f0 g}.

\begin{lemma} \label{lem: INIT}
    Let $(1+|v|^2 + |\log f^0|)f^0\in L^1_+(\calO)$. Then there exists a family $\{f^0_\ve\}_{\ve\in (0,1]} \subset C_c(\overline{\Omega}\times\R^d)$ such that
    \begin{align}
        &f_\ve^0 \to f^0 \quad \text{in} \quad L^1(\calO), \label{eq: INIT L1 conv}\\
        &\lim_{\ve\downarrow 0} \int_{\calO} f_\ve^0 \log f_\ve^0 \,\dx\dv  = \int_{\calO} f^0 \log f^0 \,\dx\dv , \label{eq: INIT ENT conv}\\
        &\lim_{\ve\downarrow 0} \int_{\calO} |v|^2 f_\ve^0 \,\dx\dv = \int_{\calO} |v|^2 f^0 \,\dx\dv. \label{eq: INIT EGY conv}
    \end{align}
\end{lemma}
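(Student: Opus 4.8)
\section*{Proof proposal for Lemma \ref{lem: INIT}}

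The plan is to construct $f_\ve^0$ from $f^0$ by a two-stage approximation: first a truncation in both the value and the velocity variable, producing a bounded function with compact support in $v$; then a mollification, after extension by zero, to gain continuity up to $\overline\Omega$; and finally a diagonal choice of the two parameters. Throughout, the three quantities to be tracked are $\|\cdot-f^0\|_{L^1(\calO)}$, the energy $\int_\calO|v|^2(\cdot)\,\dx\dv$, and the entropy $\int_\calO(\cdot)\log(\cdot)\,\dx\dv$.

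For the truncation I would set $g_N:=\min(f^0,N)\,\mathbf{1}_{\{|v|\le N\}}$ for $N\in\N$. Since $0\le g_N\le f^0$ and $g_N\to f^0$ a.e., the convergences $\|g_N-f^0\|_{L^1(\calO)}\to 0$ and $\int_\calO|v|^2g_N\to\int_\calO|v|^2f^0$ follow immediately from monotone/dominated convergence, using $|v|^2f^0\in L^1(\calO)$. For the entropy one splits $g_N\log g_N$ into its positive and negative parts: the positive part satisfies $(g_N\log g_N)_+\le (f^0\log f^0)_+$ pointwise by monotonicity of $s\mapsto s\log s$ on $[1,\infty)$, and $(f^0\log f^0)_+\in L^1(\calO)$ by hypothesis. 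For the negative part one uses that $r\mapsto -r\log r$ is increasing near $0$ to derive the elementary pointwise bound $(s\log s)_-\le |v|^2 s + G(v)$, valid for all $s\ge 0$, where $G(v):=|v|^2e^{-|v|^2}\mathbf{1}_{\{|v|\ge 1\}}+\mathbf{1}_{\{|v|<1\}}\in L^1(\R^d)$; applied with $s=g_N\le f^0$ this gives $(g_N\log g_N)_-\le |v|^2f^0+G(v)\in L^1(\calO)$ (here boundedness of $\Omega$ is used so that $\int_\Omega G<\infty$). Dominated convergence on each part then yields $\int_\calO g_N\log g_N\to\int_\calO f^0\log f^0$.

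For the mollification stage, fix a standard mollifier $\eta_\delta$ on $\R^d\times\R^d$ and let $g_N^\delta:=\big(g_N^{\mathrm{ext}}*\eta_\delta\big)\big|_{\overline\Omega\times\R^d}$, where $g_N^{\mathrm{ext}}$ is the extension of $g_N$ by zero to $\R^d\times\R^d$. Since $g_N^{\mathrm{ext}}$ is nonnegative, bounded by $N$, and supported in the compact set $\overline\Omega\times\overline{B_N}$ (compact in $x$ because $\Omega$ is bounded, in $v$ because of the truncation), we have $g_N^\delta\in C_c(\overline\Omega\times\R^d)$ with $g_N^\delta\ge 0$, and for $\delta\le 1$ all the $g_N^\delta$ are bounded by $N$ with support in the fixed compact set $\overline\Omega\times\overline{B_{N+1}}$. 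As $\delta\downarrow 0$ one has $g_N^\delta\to g_N$ in $L^1(\calO)$, and along a subsequence a.e.\ in $\calO$; dominated convergence then gives $\|g_N^\delta-g_N\|_{L^1(\calO)}\to 0$, $\int_\calO|v|^2g_N^\delta\to\int_\calO|v|^2g_N$, and (by continuity of $s\mapsto s\log s$ on $[0,N]$) $\int_\calO g_N^\delta\log g_N^\delta\to\int_\calO g_N\log g_N$; alternatively Jensen's inequality $g_N^\delta\log g_N^\delta\le(g_N^{\mathrm{ext}}\log g_N^{\mathrm{ext}})*\eta_\delta$ gives the $\limsup$ and the a.e.\ convergence the $\liminf$ for the entropy.

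Finally, combine the two stages diagonally: given $\ve\in(0,1]$, first choose $N=N(\ve)$ so large that $g_N$ is within $\ve$ of $f^0$ in each of the three quantities, then $\delta=\delta(\ve)$ so small that $g_{N(\ve)}^{\delta(\ve)}$ is within $\ve$ of $g_{N(\ve)}$ in the same three quantities, and set $f_\ve^0:=g_{N(\ve)}^{\delta(\ve)}\in C_c(\overline\Omega\times\R^d)$; the triangle inequality then gives \eqref{eq: INIT L1 conv}, \eqref{eq: INIT ENT conv}, \eqref{eq: INIT EGY conv}. I expect the only genuine obstacle to be the uniform-in-$N$ control of the negative entropy, i.e.\ the quadratic domination $(s\log s)_-\lesssim |v|^2 s + G(v)$, which is precisely what prevents the entropy from leaking to $|v|=\infty$ under truncation; once the truncated functions are bounded with fixed compact support the mollification step is harmless, and the rest is routine dominated convergence.
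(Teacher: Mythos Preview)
Your proposal is correct and follows essentially the same two-stage architecture as the paper: truncate in value and velocity to land in $L^\infty(\Omega;L^\infty_c(\R^d))$, then pass to continuous functions, with a diagonal selection at the end. The only noteworthy difference is in how you control the negative part of the entropy under truncation: the paper uses the inequality $|\min\{z\log z,0\}|\le Cz^\zeta$ together with H\"older against the weight $(1+|v|^2)$, whereas you use the pointwise domination $(s\log s)_-\le |v|^2 s + G(v)$ with an explicit $G\in L^1(\R^d)$; both are standard and lead to the same dominated-convergence conclusion.
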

\begin{proof}
    \textit{Step 1: $f^0\in L^\infty(\Omega;L^\infty_c(\R^d))$.} We assume first that $f^0\in L^\infty(\Omega;L^\infty_c(\R^d))$, where
    \begin{align*}
        L^\infty(\Omega;L^\infty_c(\R^d)) := \{h\in L^\infty(\calO) \, : \, \textnormal{supp}_v (h) \text{ is a compact subset of $\R^d$} \}.
    \end{align*}
    Fix $r>0$ such that $f^0(x,v) = 0$ outside $\Omega\times B_r$. Denote $M=\|f^0\|_{L^\infty(\calO)}$. Define $\{f_\ve^0\}_{\ve\in (0,1]}$ to be a sequence of continuous functions such that
    \begin{align*}
    \begin{cases}
        \textnormal{supp}(f_\ve^0) \subset \Omega\times B_r \quad \forall \ve\in (0,1],\\
        f_\ve^0 \to f^0 \quad \text{a.e. in}\quad \Omega\times B_r,\\
        0\le f_\ve^0 \le M.
    \end{cases}
    \end{align*}
    Then, it follows immediately from the compact $v$-support and the Lebesgue dominated convergence theorem that \eqref{eq: INIT L1 conv}, \eqref{eq: INIT ENT conv}, \eqref{eq: INIT EGY conv} all hold.

    \textit{Step 2: General case.} Next, we assume only that $f^0$ satisfies the conditions given in the lemma. Thanks to Step 1, it is enough for us to find a sequence $\{f_\ve^0\}$ such that
    \begin{align*}
        \begin{cases}
        f_\ve^0\in L^\infty(\Omega;L^\infty_c(\R^d)),\\
        \textnormal{$f_\ve^0$ satisfies \eqref{eq: INIT L1 conv}, \eqref{eq: INIT ENT conv}, \eqref{eq: INIT EGY conv}}.
        \end{cases}
    \end{align*}
    Let us adapt some of the arguments in \cite{JKO98}. For any $\zeta\in (0,1)$, we know there exists a constant $C>0$ for which
    \begin{align*}
        |\min\{z\log z, 0 \}| \le C z^\zeta.
    \end{align*}
    This gives us the following estimate:
    \begin{equation} \label{eq: FLOGF}
    \begin{split}
        &\int_{\Omega\times (\R^d\setminus B_R)} |\min\{f^0 \log f^0, 0\}| \,\dx\dv \\
        &\quad \le C \int_{\Omega\times (\R^d\setminus B_R)} (f^0)^\zeta \,\dx\dv \\
        &\quad  \le C\lt(\int_{\Omega\times(\R^d\setminus B_R)} (1+|v|^2) f^0 \,\dx\dv \rt)^{\zeta} \lt(\int_{\Omega\times (\R^d\setminus B_R)} \frac{\dx\dv}{(1+|v|^2)^{\frac{\zeta}{(1-\zeta)}}}\rt)^{1-\zeta} \\
        &\quad  \le C_d \lt(\frac{1}{1+R^2}\rt)^{\zeta - \frac{d}{2}(1-\zeta)}.
    \end{split}\end{equation}
    Now take
    \begin{align*}
        f_\ve^0(x,v) := \min\lt\{\mathbf{1}_{|v|\le 1/\ve} f^0(x,v), \frac{1}{\ve} \rt\} \in L^\infty(\Omega;L^\infty_c(\R^d)).
    \end{align*}
    Then $f_\ve^0$ increases to $f^0$ as $\ve\downarrow 0$. Hence, the monotone convergence theorem immediately implies \eqref{eq: INIT L1 conv} and \eqref{eq: INIT EGY conv}. Moreover,
    \begin{align*}
        \sup_{\ve} \int_{\calO} (1+|v|^2) f_\ve^0 \,\dx\dv \le C,
    \end{align*}
    and therefore the same estimate of \eqref{eq: FLOGF} gives
    \begin{align*}
        \sup_\ve \int_{\Omega\times (\R^d\setminus B_R)} |\min\{f_\ve^0 \log f_\ve^0 , 0 \}|\,\dx\dv  \le C_d  \lt(\frac{1}{1+R^2}\rt)^{\zeta - \frac{d}{2}(1-\zeta)}, \quad \forall \zeta \in (0,1).
    \end{align*}
    Consequently, for any $\tau>0$, we may fix $\zeta > \frac{d}{d+2}$ and $R$ so large that
    \begin{align*}
         \int_{\Omega\times (\R^d\setminus B_R)} |\min\{f^0 \log f^0, 0 \}| \,\dx\dv  \le \tau, \quad \sup_\ve \int_{\Omega\times (\R^d\setminus B_R)} |\min\{f_\ve^0 \log f_\ve^0 , 0 \}| \,\dx\dv \le \tau.
    \end{align*}
    Now write
    \begin{align*}
        \int_{\calO} f_\ve^0 \log f_\ve^0 \,\dx\dv &= \int_{\Omega\times B_R} \min\{f_\ve^0 \log f_\ve^0 , 0 \} \,\dx\dv + \int_{\Omega\times (\R^d\setminus B_R)} \min\{f_\ve^0 \log f_\ve^0 , 0 \} \,\dx\dv \\
        &\quad + \int_{\calO} \max\{f_\ve^0 \log f_\ve^0, 0 \} \,\dx\dv  \\
        &=: A_1 + A_2 + A_3.
    \end{align*}
    For $A_1$, we observe that $|\min\{f_\ve^0\log f_\ve^0, 0 \}| \le 1/e$, the right-hand side of which is integrable over $\Omega\times B_R$. By the Lebesgue dominated convergence theorem, we deduce that
    \begin{align*}
        A_1 \xrightarrow[\ve\downarrow 0]{} \int_{\Omega\times B_R} \min\{f^0 \log f^0, 0 \} \,\dx\dv .
    \end{align*}
    For $A_2$, our previous estimate gives
    \begin{align*}
        \sup_\ve A_2 \le \tau.
    \end{align*}
    Finally, since $f_\ve^0$ increases to $f^0$, we have that $\max\{f_\ve^0 \log f_\ve^0, 0\}$ increases to $\max\{f^0\log f^0, 0\}$. The monotone convergence theorem yields
    \begin{align*}
        A_3 \xrightarrow[\ve\downarrow 0]{} \int_{\calO} \max\{f^0 \log f^0, 0 \}.
    \end{align*}
    In conclusion,
    \begin{align*}
        &\limsup_{\ve\downarrow 0} \int_{\calO} (f_\ve^0 \log f_\ve^0 - f^0 \log f^0) \,\dx\dv \\
        &\quad \le \limsup_{\ve\downarrow 0} \int_{\Omega\times (\R^d\setminus B_R)} \lt( |\min\{f_\ve^0 \log f_\ve^0, 0 \}| + |\min\{f^0 \log f^0, 0 \} | \rt) \,\dx\dv \\
        &\quad \le 2\tau.
    \end{align*}
    Since our choice of $\tau>0$ was arbitrary, we deduce \eqref{eq: INIT ENT conv}.
\end{proof}

\begin{lemma}
    Assume that $g$ is a measurable function defined on $\Sigma^T$, and that $(1+|v|^2 + |\log g|)g\in L^1_+(\Sigma_-^T, |n(x)\cdot v|\dt\tnd\sigma\dv)$. Then there exists a sequence $\{g_\ve\} \in C_c([0,T]\times \p\Omega \times\R^d)$ such that
    \begin{align}\label{eq: app: condition}
        \begin{cases}
        g_\ve \to g \quad \text{in} \quad L^1(\Sigma_-^T, |n(x)\cdot v|\dt\tnd\sigma\dv), \\
        \displaystyle\lim_{\ve\downarrow 0} \int_{\Sigma_-^T} (1+|v|^2) g_\ve |n(x)\cdot v| \,\dt\tnd\sigma\dv = \int_{\Sigma_-^T} (1+|v|^2) g |n(x)\cdot v| \,\dt\tnd\sigma\dv, \\
        \displaystyle\lim_{\ve\downarrow 0} \int_{\Sigma_-^T} g_\ve \log g_\ve |n(x)\cdot v| \,\dt\tnd\sigma\dv = \int_{\Sigma_-^T} g \log g |n(x)\cdot v| \,\dt\tnd\sigma\dv.
        \end{cases}
    \end{align}
\end{lemma}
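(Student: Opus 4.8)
The plan is to follow, almost verbatim, the two--step scheme of the proof of Lemma \ref{lem: INIT}, with the Lebesgue measure on $\calO$ replaced by the flux measure $\tnd\mu := |n(x)\cdot v|\,\dt\tnd\sigma\dv$ on $\Sigma_-^T$, and exploiting that $\p\Omega$ is a compact $C^1$ hypersurface, so that $\sigma(\p\Omega)<\infty$. First I would truncate: set $g_\ve^{(1)} := \min\{\mathbf{1}_{\{|v|\le 1/\ve\}}\, g,\, 1/\ve\}$, extended by $0$ on $\Sigma^T\setminus\Sigma_-^T$; since $g\ge 0$, the family $g_\ve^{(1)}$ increases pointwise to $g$ on $\Sigma_-^T$, so the monotone convergence theorem immediately yields the $L^1(\tnd\mu)$ convergence and the convergence of the second moments $\int_{\Sigma_-^T}(1+|v|^2)g_\ve^{(1)}\,\tnd\mu$, and in particular $\sup_{\ve}\int_{\Sigma_-^T}(1+|v|^2)g_\ve^{(1)}\,\tnd\mu<\infty$.

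The only delicate point is the entropy convergence, which I would treat exactly as in Step 2 of the proof of Lemma \ref{lem: INIT}, keeping track of the extra factor $|n(x)\cdot v|\le |v|$ in $\tnd\mu$. Splitting $z\log z$ into positive and negative parts, the positive part converges by monotone convergence (as $g_\ve^{(1)}\uparrow g$, also $\max\{g_\ve^{(1)}\log g_\ve^{(1)},0\}\uparrow\max\{g\log g,0\}$). For the negative part I would use $|\min\{z\log z,0\}|\le C_\zeta z^\zeta$ for $\zeta\in(0,1)$ together with Hölder's inequality against $\tnd\mu$,
\[
  \int_{\Sigma_-^T\cap\{|v|>R\}} g^\zeta\,\tnd\mu \le C_\zeta\Big(\int_{\Sigma_-^T}(1+|v|^2)g\,\tnd\mu\Big)^{\zeta}\Big(\int_{\Sigma_-^T\cap\{|v|>R\}}\frac{\tnd\mu}{(1+|v|^2)^{\zeta/(1-\zeta)}}\Big)^{1-\zeta},
\]
and bound the last factor by $\big(T\,\sigma(\p\Omega)\int_{\{|v|>R\}}|v|(1+|v|^2)^{-\zeta/(1-\zeta)}\dv\big)^{1-\zeta}$, which is finite and tends to $0$ as $R\to\infty$ provided $\zeta/(1-\zeta)>(d+1)/2$, i.e.\ $\zeta>(d+1)/(d+3)$ --- one power of $|v|$ worse than the threshold $\zeta>d/(d+2)$ appearing in \eqref{eq: FLOGF}, precisely because of the flux weight. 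Applying this to $g$ and, via the uniform moment bound above, to all the $g_\ve^{(1)}$ shows that the negative tails over $\{|v|>R\}$ are uniformly $\tnd\mu$--small; on the bounded region $\{|v|\le R\}$ the negative part is dominated by $(1/e)\,\tnd\mu$, which is $\tnd\mu$--integrable there since $\sigma(\p\Omega)<\infty$, so dominated convergence closes the argument and reproduces the analogue of \eqref{eq: INIT ENT conv} for $g_\ve^{(1)}\to g$.

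It then remains to replace each bounded, $v$--compactly supported $g_\ve^{(1)}$ by a genuine element of $C_c([0,T]\times\p\Omega\times\R^d)$. Fixing $r_\ve$ with $\mathrm{supp}_v(g_\ve^{(1)})\subset \overline{B_{r_\ve}}$ and $M_\ve:=\|g_\ve^{(1)}\|_{L^\infty}$, I would invoke Lusin's theorem (equivalently, density of $C_c$ in $L^1$ of the product measure on $[0,T]\times\p\Omega\times\R^d$, extraction of an a.e.\ convergent subsequence, then truncation by $\min\{\cdot,M_\ve\}$ and multiplication by a fixed cutoff supported in $B_{r_\ve+1}$ equal to $1$ on $\overline{B_{r_\ve}}$) to produce $h_k\in C_c([0,T]\times\p\Omega\times\R^d)$ with $0\le h_k\le M_\ve$, $\mathrm{supp}(h_k)\subset[0,T]\times\p\Omega\times\overline{B_{r_\ve+1}}$, and $h_k\to g_\ve^{(1)}$ a.e. Since the flux weight is bounded by $r_\ve+1$ on that ball, $\tnd\mu$ has finite mass there, and $z\mapsto z\log z$ is uniformly continuous on the compact interval $[0,M_\ve]$, the dominated convergence theorem delivers the $L^1(\tnd\mu)$, second--moment, and entropy convergences of $h_k\to g_\ve^{(1)}$. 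A diagonal extraction in $\ve\downarrow 0$ and $k\to\infty$ then yields a sequence $\{g_\ve\}$ satisfying all three properties of \eqref{eq: app: condition}. The main --- and essentially the only --- obstacle in the whole argument is the entropy tail estimate of the second paragraph, with its shifted exponent threshold; every other step is routine.
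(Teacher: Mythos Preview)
Your proposal is correct and follows essentially the same route as the paper: the same truncation $g_\ve^{(1)}=\min\{\mathbf{1}_{|v|\le 1/\ve}\,g,\,1/\ve\}$, the same monotone-convergence argument for mass and energy, and the same treatment of the entropy via $|\min\{z\log z,0\}|\le C_\zeta z^\zeta$ together with the shifted threshold $\zeta>(d+1)/(d+3)$ arising from the extra factor $|n(x)\cdot v|\le|v|$ in the flux measure. Your Lusin/density step for passing from $L^\infty([0,T]\times\p\Omega;L^\infty_c(\R^d))$ to $C_c$ is spelled out in more detail than in the paper (which simply refers back to Step~1 of Lemma~\ref{lem: INIT}), but the logic is identical.
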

\begin{proof}
The proof follows the same lines as Lemma \ref{lem: INIT}, so we only indicate the main modifications. By proceeding as in Step 1 of the previous lemma, it is enough to find a sequence $\{g_\ve\}$ in $L^\infty([0,T]\times \p\Omega; L^\infty_c(\R^d))$ which satisfies the conditions in \eqref{eq: app: condition}. We set
    \begin{align*}
        g_\ve(t,x,v) := \min\lt\{ \frac{1}{\ve} \; , \; \mathbf{1}_{|v|\le 1/\ve} g(t,x,v)  \rt\}.
    \end{align*}
    In this way, $g_\ve$ increases pointwise to $g$, and thus the monotone convergence theorem shows
    \begin{align*}
        &g_\ve \to g \quad \text{in} \quad L^1(\Sigma_-^T, |n(x)\cdot v| \dt\tnd\sigma\dv),\\
        &\int_{\Sigma_-^T} (1 + |v|^2) g_\ve |n(x)\cdot v| \,\dt\tnd\sigma\dv \le \int_{\Sigma_-^T} (1 + |v|^2) g |n(x)\cdot v| \,\dt\tnd\sigma\dv \le  C,\\
        &\lim_{\ve\downarrow 0} \int_{\Sigma_-^T} (1+|v|^2) g_\ve |n(x)\cdot v| \,\dt\tnd\sigma\dv = \int_{\Sigma_-^T} (1+|v|^2) g |n(x)\cdot v| \,\dt\tnd\sigma\dv.
    \end{align*}
    To complete the proof of \eqref{eq: app: condition} it only remains to obtain the convergence of the entropy. We can obtain as in \eqref{eq: FLOGF}, that for any $R>0$ and $\zeta\in (0,1)$
    \begin{align*}
        &\int_{\Sigma_-^T , |v|\ge R} |\min\{g_\ve \log g_\ve, 0\}| |n(x)\cdot v| \,\dt\tnd\sigma\dv \\
        &\quad \le C \lt(\int_{\Sigma_-^T, |v|\ge R} (1+|v|^2) g_\ve |n(x)\cdot v| \,\dt\tnd\sigma\dv \rt)^\zeta \lt(\int_{\Sigma_-^T, |v|\ge R} \frac{|n(x)\cdot v|}{(1+|v|^2)^{\frac{\zeta}{1-\zeta}}} \dt\tnd\sigma\dv \rt)^{1-\zeta} \\
        &\quad  \le C T \sigma(\p\Omega) \lt(\frac{1}{1+R^2}\rt)^{\zeta - \frac{(d+1)(1-\zeta)}{2}}.
    \end{align*}
    Thus, for any $\tau>0$, we can then choose $\zeta > \frac{d+1}{d+3}$ and set $R$ arbitrarily large so that
    \begin{align*}
        &\sup_\ve \int_{\Sigma_-^T, |v|\ge R} |\min\{g_\ve\log g_\ve, 0\}| |n(x)\cdot v| \,\dt\tnd\sigma\dv \le \tau,\\
        &\int_{\Sigma_-^T, |v|\ge R} |\min\{g\log g, 0\}| |n(x)\cdot v| \,\dt\tnd\sigma\dv \le \tau.
    \end{align*}
    The remaining parts of the proof are then the same as with Lemma \ref{lem: INIT}.
\end{proof}

%
%
%
%
%
%

\end{document}